\definecolor{brass}{rgb}{0.71, 0.65, 0.36}
\theoremstyle{plain}
\newtheorem{theorem}{Theorem}[section]
\newtheorem{lemma}[theorem]{Lemma}
\newtheorem{corollary}[theorem]{Corollary}
\newtheorem{proposition}[theorem]{Proposition}
\theoremstyle{definition}
\newtheorem{remark}[theorem]{Remark}
\newtheorem*{rem}{Remark}
\newtheorem{definition}[theorem]{Definition}
\numberwithin{equation}{section}
\def\Ric{\operatorname{Ric}}
\def\E{\mathcal{E}}
\def\sup{\operatorname{sup}}
\def\inf{\operatorname{inf}}
\def\sup{\operatorname{sup}}
\def\supp{\operatorname{supp}}
\def\bb{\mathbb}
\def\th{\theta}
\def\S{\Sigma}
\def\G{\Gamma}
\def\a{\alpha}
\def\p{\partial}
\theoremstyle{plain}
\numberwithin{equation}{section}
\begin{document}

\title[Mass Lower Bounds for ALF Manifolds]{Mass Lower Bounds for Asymptotically Locally Flat Manifolds}

%\title[Positive Mass Theorems for ALF Manifolds]{Positive Mass Theorems for Asymptotically Flat and Asymptotically Locally Flat %Manifolds} 
%and $\bb{S}^1$ symmetry}

\author[Khuri]{Marcus Khuri}
\address{Department of Mathematics\\
Stony Brook University\\
Stony Brook, NY 11794, USA}
\email{marcus.khuri@stonybrook.edu}

\author[Wang]{Jian Wang}
\address{State Key Laboratory of Mathematical Sciences,\\
Academy of Mathematics and Systems Science, Chinese Academy of Sciences, \\
Beijing 100190, China}
\email{jian.wang.4@amss.ac.cn}

%\address{}

%\date{\today}

\thanks{M. Khuri acknowledges the support of NSF Grants DMS-2104229 and DMS-2405045.}

\begin{abstract}
We establish positive mass type theorems for asymptotically locally flat (ALF) manifolds, which have asymptotic ends 
modeled on circle bundles over a Euclidean base with fibers of constant length. In particular for dimensions
$n\leq 7$, the mass of AF manifolds is shown to be nonnegative under the assumption of nonnegative scalar curvature if a codimension-two coordinate sphere in the asymptotic end is trivial in homology, with zero mass achieved only for the product $\mathbb{R}^{n-1}\times S^1$. The same conclusions are obtained in dimension four for ALF manifolds admitting an almost free $U(1)$ action. Moreover, in this setting the mass is shown to be bounded below by a multiple of the degree of the circle bundle at infinity. This is the first such result illustrating how nontrivial topology of the end contributes to the mass.
\end{abstract}

\maketitle
%\tableofcontents

\section{Introduction} 
\label{sec1} \setcounter{equation}{0}
\setcounter{section}{1}

\begin{comment}
[[Need to scale the mass correctly for the degree inequality. We should have $m(cg)=c^{1/2} m(g)$ as with Penrose inequality in 3D. So when writing the mass-degree inequality it should be $\text{mass}\geq \text{deg}\cdot \ell$ where $\ell$ is the length of the circle at infinity. Since the circle scales like $\ell(cg)=c^{1/2}\ell(g)$, and the degree is scale invariant, the whole inequality is scale invariant. The $\mathrm{deg}$ should be defined to be scale (topological) invariant, ie. use $(\eta\wedge d\eta)/|\eta|^4$ in the definition of the contact form defining the degree. To see that this is scale invariant, let $\eta=f^2(d\phi+A)$ where $f=|\partial_{\phi}|$. Then $(\eta\otimes \eta)/|\eta|^2 =f^2(d\phi+A)^2$ which is in the metric. Thus if $g\rightarrow cg$, then $f\rightarrow c^{1/2}f$, $\eta\rightarrow \lambda \eta$, and since
$|\eta|^2=g^{ij}\eta_i\eta_j=g_{ij}\eta^i \eta^j=|\partial_{\phi}|^2$ we have $|\eta|\rightarrow c^{1/2}|\eta|$.]]
\end{comment}

A landmark result in the study of scalar curvature and in mathematical relativity is the positive mass theorem, proved originally for asymptotically Euclidean (AE) manifolds of nonnegative scalar curvature by Schoen-Yau \cite{schoen-yau1979} and Witten \cite{Witten}. Versions of this theorem have been successfully extended in the asymptotically hyperbolic regime by Andersson-Cai-Galloway \cite{ACG}, Chru\'{s}ciel-Herzlich \cite{ChruscielHerzlich}, Wang \cite{XWang}, and Zhang \cite{XZhang}, to the asymptotically locally hyperbolic realm by Alaee-Hung-Khuri \cite{AHK}, Brendle-Hung \cite{BrendleHung}, and Lee-Neves \cite{LeeNeves},  as well as to the complex hyperbolic setting by Herzlich \cites{Herzlich1,Herzlich2}.  Motivated by considerations in quantum gravity, this result was also conjectured to hold for asymptotically locally Euclidean (ALE) manifolds, however counterexamples where found by LeBrun \cite{LeBrun}. Nevertheless, the positive mass theorem in the ALE case has been established for certain K\"{a}hler manifolds or under a spin structure matching condition through the work of Hein-LeBrun \cite{HeinLeBrun} and Dahl \cite{Dahl}, respectively. Furthermore, with inspiration coming from the study of gravitational instantons and Kaluza-Klein theory, Minerbe \cite{minerbe} has obtained positivity of mass for the asymptotically flat\footnote{It should be noted that the terminology of an asymptotically flat manifold has taken on two inequivalent meanings within the context of the positive mass theorem, one arising from mathematical relativity and another from the study of gravitational instantons. In this article we will use the latter notion.} (AF) and asymptotically locally flat (ALF) settings, and Liu-Shi-Zhu \cite{LSZ} as well as Chen-Liu-Shi-Zhu \cite{CLSZ} have proved incarnations for the AF and other cases; related results were additionally found by Dai \cite{Dai}, Dai-Sun \cite{DaiSun}, and Barzegar-Chru\'{s}ciel-H\"{o}rzinger \cite{BCH}. However, the AF and ALF settings remain the least well understood.

\begin{definition}\label{AF}
A connected Riemannian manifold $(M^n, g)$ of dimension $n\geq 3$ will be called \textit{asymptotically flat} (AF) if there is a compact subset $\mathcal{C}\subset M^n$, such that $M^n\setminus \mathcal{C}$ has a finite number of components (referred to as ends) each of which is diffeomorphic to $(\mathbb{R}^{n-1}\setminus B_1)\times S^1$, and after pullback the metric asymptotes to a flat product with decay
\begin{equation}
|\mathring{\nabla}^l (g-g_0)|_{g_0}=O(r^{-q-l}),\quad\quad l=0,1,2,
\end{equation}
where $q>(n-3)/2$ and $\mathring{\nabla}$ denotes the Levi-Civita connection of $g_0$. Here $g_0 =dr^2 +r^2 g_{S^{n-2}}+\ell^2 d\theta^2$ is the product metric on $\mathbb{R}^{n-1}\times S^1$, where $\theta\in [0,2\pi)$ parameterizes the $S^1$ factor and $\ell>0$ is a constant. Moreover, the scalar curvature of $g$ is required to be integrable, $R_g \in L^1(M^n)$.
\end{definition}

To each end of an AF manifold there is a well-defined notion of mass, see \eqref{massdef} below. The mass may be viewed as a geometric invariant which connects scalar curvature with the global geometry and topology of the manifold. The Euclidean Reissner-Nordstr\"{o}m metrics on $\mathbb{R}^2 \times S^2$ are AF, complete, and scalar flat, but they can have negative mass for certain choices of parameters. In \cite{minerbe}*{Theorem 1} Minerbe sought to explain this phenomenon by asserting that mass is nonnegative for ALF manifolds under the assumption of nonnegative Ricci curvature, which Reissner-Nordstr\"{o}m does not satisfy; note that Minerbe's definition of mass in this result does not coincide with the standard one. In a different direction, Liu-Shi-Zhu \cite{LSZ}*{Theorem 1.2} show that for AF manifolds of dimensions less than 8 the positive mass theorem holds if the circle at infinity is homotopically nontrivial. Our first result is related to that of Liu-Shi-Zhu, in that topological aspects of the asymptotic end lead to positivity of mass. The proof is based on stable minimal hypersurfaces. On an end $\mathcal{E}$, we will refer to the intersection of the $r$ and $\theta$ level sets as a \textit{coordinate sphere} and will denote it by $\mathcal{S}_{r,\theta}^{n-2}$. Throughout the paper all manifolds will be assumed to be smooth and orientable for convenience, however the main theorems continue to hold in the nonorientable case by passing to the orientation cover.

\begin{theorem}\label{A} 
Let $(M^n, g)$ be a complete AF manifold with nonnegative scalar curvature and $4\leq n\leq 7$. If some coordinate sphere $\mathcal{S}^{n-2}_{r,\theta}$ of an end $\mathcal{E}$ is trivial in homology $H_{n-2}(M^n;\mathbb{Z})$, then the mass is nonnegative in this end. Moreover, the mass of $\mathcal{E}$ vanishes if and only if $(M^n,g)$ is isometric to $\mathbb{R}^{n-1}\times S^1$.
\end{theorem}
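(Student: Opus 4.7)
The overall strategy is to adapt the Schoen--Yau dimension reduction to the AF setting by producing a complete, stable, minimal hypersurface $\Sigma^{n-1}\subset M^n$ asymptotic to a coordinate plane $P_{\theta_0}:=\{\theta=\theta_0\}$ in the end $\mathcal{E}$, and then applying the positive mass theorem for asymptotically Euclidean (AE) manifolds to $\Sigma$. The homological hypothesis is precisely what enables the existence of such a $\Sigma$, and the dimension bound $4\leq n\leq 7$ ensures both smoothness of codimension-one area minimizers and the applicability of the AE positive mass theorem in dimension $n-1$.

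To construct $\Sigma$, fix $\theta_0$ and use the assumption $[\mathcal{S}^{n-2}_{r_0,\theta_0}]=0$ in $H_{n-2}(M^n;\mathbb{Z})$ to obtain a compact chain $\Omega$ with $\partial\Omega=\mathcal{S}^{n-2}_{r_0,\theta_0}$. For each $R>r_0$, minimize area among integral currents with boundary $\mathcal{S}^{n-2}_{R,\theta_0}$ that are homologous to $\Omega\cup(P_{\theta_0}\cap\{r_0\leq r\leq R\})$ inside $\{r\leq R\}$, obtaining a smooth stable minimizer $\Sigma_R$. The coordinate planes $P_{\theta_0\pm\epsilon}$ have mean curvature $O(r^{-q-1})$ in $g$, so after small perturbations they become strict barriers, and the maximum principle traps $\Sigma_R$ in a thin slab about $P_{\theta_0}$ outside a compact set. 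Letting $R\to\infty$ and extracting a subsequence yields a complete, properly embedded stable minimal hypersurface $\Sigma$ which, in the end, is the graph $\theta=\theta_0+u(\vec{x})$ over the model plane with $u$ decaying at infinity. Standard minimal graph estimates then show the induced metric $g_\Sigma$ is asymptotically Euclidean in dimension $n-1$.

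To compare masses, write out the AF mass flux for $g$ and the AE mass flux for $g_\Sigma$ in asymptotic coordinates: up to error terms controlled by the decay $q>(n-3)/2$, the former integrated over $\mathcal{S}^{n-2}_{r,\theta}\times S^1$ equals (a constant multiple of) the average over $\theta$ of the latter, so $\theta_0$ can be selected with the AE mass of $g_\Sigma$ at most a constant times $m(\mathcal{E})$. Next, the Gauss equation $R_\Sigma=R_g-2\operatorname{Ric}_g(\nu,\nu)-|A_\Sigma|^2$ combined with stability of $\Sigma$ allows one to solve $-c_{n-1}\Delta_\Sigma w+R_\Sigma w=0$ with $w\to 1$ at infinity; the conformally deformed metric $w^{4/(n-3)}g_\Sigma$ is scalar-flat, AE, and has mass no greater than that of $g_\Sigma$. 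Applying the AE positive mass theorem in dimension $n-1\leq 6$ then gives $m(\mathcal{E})\geq 0$.

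For rigidity, if $m(\mathcal{E})=0$ then every inequality above is saturated: equality in the AE positive mass theorem forces the conformal slice to be flat Euclidean, whence $w\equiv 1$ and $(\Sigma,g_\Sigma)$ is itself flat, while equality in the stability-Gauss step gives $A_\Sigma\equiv 0$ and $\operatorname{Ric}_g(\nu,\nu)\equiv 0$ along $\Sigma$. Since $\theta_0$ may be varied, one obtains a foliation of $M^n$ near infinity by totally geodesic flat hypersurfaces and a local product structure which, combined with completeness and the asymptotic model, extends globally to identify $(M^n,g)$ with $\mathbb{R}^{n-1}\times S^1$. The main technical obstacle is the mass comparison step: because $\Sigma$ is only asymptotic to $P_{\theta_0}$, the flux integral for $g_\Sigma$ must be related to that for $g$ via a careful analysis of the graph function $u$, including the vanishing of boundary contributions coming from the mean curvature of $\Sigma$ in $M^n$, and these estimates are most delicate when $q$ is only marginally above $(n-3)/2$.
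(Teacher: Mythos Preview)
Your overall strategy---Schoen--Yau dimension reduction via a stable minimal hypersurface asymptotic to a coordinate plane, followed by the AE positive mass theorem---is indeed the paper's approach. However, there is a genuine gap in your conformal step, and several places where your route diverges nontrivially from the paper's.

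\textbf{The main gap: enhanced stability.} You write that stability of $\Sigma$ ensures the conformally deformed metric $w^{4/(n-3)}g_\Sigma$ has mass no greater than that of $g_\Sigma$. Concretely this requires
\[
\int_{\Sigma}\left(\tfrac{2(n-2)}{n-3}|\nabla w|^2+\tfrac{1}{2}R_{g_\Sigma}w^2\right)\geq 0
\]
for $w\to 1$ at infinity. Ordinary stability gives $\int|\nabla\varphi|^2+\tfrac{1}{2}R_{g_\Sigma}\varphi^2\geq \tfrac{1}{2}\int(R_g+|A|^2)\varphi^2$ only for \emph{compactly supported} $\varphi$, and the cutoff trick fails here: if $\chi_r$ is a standard cutoff then $\int|\nabla\chi_r|^2\sim r^{n-3}\to\infty$ for $n\geq 4$. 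The paper resolves this by \emph{height picking}: rather than fixing $\theta_0$, it chooses $\theta_j$ so that $\mathcal{H}^{n-1}(\Sigma_{r_j,\theta_j})=\inf_\theta\mathcal{H}^{n-1}(\Sigma_{r_j,\theta})$ before passing to the limit. This extra minimization yields (following \cite{EHLR}, \cite{LUY}) a stability inequality valid for test functions of the form $\varphi-c\in W^{1,2}_{(n-3)/2}(\Sigma)$, which is exactly what is needed. Your proposal fixes $\theta_0$ from the start and never recovers this.

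\textbf{Other differences.} Your barrier argument (perturbed coordinate planes) to trap $\Sigma$ in a slab is plausible but not what the paper does; instead it proves topologically that the projection to the base has degree $\pm 1$ and that the induced map on $\pi_1$ is trivial, then lifts to the universal cover and applies Allard regularity to obtain the graph structure. Your mass comparison via ``averaging over $\theta$'' is vague; the paper instead first invokes a density result (harmonically AF approximation) so that the comparison becomes an explicit computation yielding $m=\tfrac{2(n-1)^2}{n-2}m_\Sigma$. Finally, your rigidity sketch via a foliation of totally geodesic slices is not the paper's method: because the inequality was obtained through a density argument, the paper instead proves rigidity by the harmonic-coordinate technique (showing $R_g\equiv 0$, then $\mathrm{Ric}\equiv 0$, then flatness via parallel $1$-forms), exactly as in its Theorem~\ref{B}.
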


In \cite{minerbe}*{Theorem 2} Minerbe considered AF manifolds with nonnegative scalar curvature and a matching condition for the spin structure at infinity, to establish a positive mass theorem. However, the restriction to AF asymptotics does not allow for an application to examples such as the (multi)-Taub-NUT geometries, which have nontrivial fibrations in the asymptotic end. To the authors' knowledge, there are no known positive mass theorems in the literature for general ALF manifolds with nonnegative scalar curvature. The two theorems below aim to fill this gap.  We first recall the notion of an ALF 4-manifold inspired by Biquard-Gauduchon-LeBrun \cite{BGL}*{Definition 1}. For a single asymptotic end, the definition in \cite{BGL} is more general than that presented here in terms of the topology of the asymptotic end, and properties of the Killing field $V$ which we require to have closed orbits. Thus, while the (multi)-Taub-NUT examples are covered by our definition, the Euclidean Kerr instanton lies outside its scope due to the requirement of closed orbits. In what follows $\iota$ and $\mathcal{L}$ will denote interior product and Lie derivative, respectively.

\begin{definition}\label{ALF} 
A connected Riemannian 4-manifold $(M^4, g)$ will be called \textit{asymptotically locally flat} (ALF) if the following conditions are satisfied.
\begin{enumerate}[(i)]
\item There is a compact subset $\mathcal{C}\subset M^4$ such that $M^4\setminus \mathcal{C}$ has a finite number of components, and any such component $\mathcal{E}$ (referred to as an end) is diffeomorphic to $\mathbb{R}_+\times \mathcal{S}^3$, where $\mathcal{S}^3$ is a principal $U(1)$ bundle over $S^2$. 

\item $\mathcal{S}^3$ is equipped with a connection 1-form $\tau$, and a vector field $V$ which serves as the infinitesimal generator of the $U(1)$ action. These objects satisfy $\iota_V \tau=1$ and $\mathcal{L}_{V}\tau=0$.
%$\mathcal{S}$ is equipped with a sign-ambiguous pair $\pm (V,\tau)$ consisting of a vector field $V$ having circular orbits, and a %$1$-form $\tau$ which satisfy $\iota_V \tau=1$ and $\iota_V d\tau=0$. 

%\item $\mathcal{S}$ is also equipped with a positive-semi-definite symmetric $2$-tensor field $\gamma\in \Gamma(\odot T^*\Sigma)$ %such that $\mathcal{L}_V \gamma=0$ and $\ker \gamma=\text{span} ~ T$, and which moreover locally defines a Gauss-curvature $+1$ %metric on the space of leaves of the foliation normal to $V$ ; and

\item $\mathbb{R}_+ \times\mathcal{S}^3$ is equipped with a model metric
\begin{equation}\label{def-model}
g_0 =dr^2 +r^2 g_{S^2} + \ell^2 \tau^2,
\end{equation}
where $r$ parameterizes $\mathbb{R}_+$, $g_{S^2}$ is a pullback of the unit round metric from the base space of $\mathcal{S}^3$, and $\ell>0$ is a constant.  

\item After pulling back via the diffeomorphism $\mathbb{R}_+\times \mathcal{S}^3\rightarrow \mathcal{E}$, metric $g$ asymptotes to the model with decay
\begin{equation}\label{ALF-metric-decay}
|\mathring{\nabla}^l (g-g_0)|_{g_0}=O(r^{-q-l}), \quad\quad l=0,1,2,
\end{equation}
where $q>1/2$ and $\mathring{\nabla}$ denotes the Levi-Civita connection of $g_0$.

\item The scalar curvature of $g$ is integrable, $R_g \in L^1(M^4)$.
\end{enumerate}
\end{definition}

Observe that an AF 4-manifold is the special case of an ALF manifold when the connection 1-form is trivial. Since the classifying space is $BU(1)=\mathbb{CP}^{\infty}$ and the relevant homotopy classes of maps satisfies
$[S^2,\mathbb{CP}^{\infty}]=H^2(S^2;\mathbb{Z})=\mathbb{Z}$,
the bundles $\mathcal{S}^3$ are classified by the integers. In particular, the possible topologies for $\mathcal{S}^3$ are either $S^1 \times S^2$ or the lens spaces $L(p,1)$ for an integer $p\geq 1$. Note also that the curvature 2-form $d\tau$ descends to the base, and thus may be expressed as the pullback $\pi_s^* F$ for some 2-form $F$ on $S^2$, where $\pi_s:\mathcal{S}^3 \rightarrow S^2$ is the projection map. In fact, the de Rham cohomology class $[\tfrac{1}{2\pi}F]=\mathbf{c}_1$ is the Chern class of this bundle, and hence $F[S^2]$ is independent of the connection and gives an integer multiple (Chern number) of $2\pi$ \cite{Chern}. In the context of Definition \ref{ALF} we will refer to this integer as the \textit{degree of asymptotic end} $\mathcal{E}$. Its relation to the Chern-Simons 3-form is given by
\begin{equation}\label{degree-def1}
\deg(\mathcal{\mathcal{E}}):=\frac{1}{4\pi^2}\int_{\mathcal{S}^3}\tau \wedge d\tau=\frac{1}{2\pi}\int_{S^2}F.
\end{equation}
By choosing the orientation of the circle fibers appropriately, this integer may be assumed to be nonnegative and corresponds with the lens space parameter $p$, with $p=0$ indicating the product $S^1 \times S^2$. Another feature of ALF asymptotics concers the scalar curvature of the model metric, which decays but is not zero in general. In particular, using O'Neill's formula \cite{Besse}*{Corollary 9.37} we find
\begin{equation}\label{decay-scal}
R_{g_0}=-\frac{1}{4\ell^2}|d\tau|^2=O(r^{-4})
\end{equation}
as $r\rightarrow \infty$. 

To each end of an AF or ALF manifold there is a well-defined notion of mass. In analogy with the AE setting, this is expressed in an asymptotically Cartesian coordinate system. More precisely, each end $\mathcal{E}$ is a $U(1)$ bundle over $\mathbb{R}^{n-1} \setminus B_1$, and after pulling back with a local trivialization while changing from polar to Cartesian coordinates $x=(x^1,\dots,x^{n-1})$ on the base, the model metric takes the form 
\begin{equation}\label{connection-form-metric}
g_0 =\delta_{ij}dx^i dx^j +\ell^2 (d\theta +A_i dx^i)^2
\end{equation}
where $A_i dx^i=\sigma^* \tau$ is the local connection 1-form on $\mathbb{R}^{n-1} \setminus B_1$ obtained with the help of a local section $\sigma$. The \textit{mass} of the end $\mathcal{E}$ is then defined by
\begin{equation}\label{massdef}
m=\lim_{r\rightarrow \infty}\frac{1}{2\pi\ell \omega_{n-2}}\int_{\mathcal{S}^{n-1}_r} \star_{g_0}\left(\text{div}_{g_0}g-d \text{tr}_{g_0}g \right),
\end{equation}
where the integrand is computed with respect to the coordinates $(x,\theta)$. Here $\mathcal{S}_{r}^{n-1}$ denotes level sets of the radial coordinate $r=|x|$, the Hodge star operator with respect to $g_0$ is labeled $\star_{g_0}$, and $\omega_{n-2}$ is the volume of the unit $(n-2)$-sphere. Note that the mass of the model metric $g_0$ is zero. Moreover in local coordinates $(\text{div}_{g_0}g)(\partial_b)=\mathring{\nabla}^a g_{ab}$, so this mass expression formally agrees up to a positive multiple with the ADM mass in the asymptotically Euclidean context. The fact that the mass \eqref{massdef} is a geometric invariant independent of the choice of ALF structure is established in Proposition \ref{mass-well-def} below. Previously such a statement was claimed by Minerbe \cite{minerbe}*{pg. 952} for AF asymptotics; in \cite{minerbe}*{Proposition 6} an analogous result was established for the so called `Gauss-Bonnet mass' (tailored to Ricci curvature) in the ALF case. Note that in the AF setting, the mass \eqref{massdef} agrees with, up to a positive multiple, those of \cites{BCH,CLSZ,Dai,LSZ} and \cite{minerbe}*{Theorem 2}.

Toric and $U(1)$ symmetries play an important role in the study of gravitational instantons \cite{AADN}, \cite{MingyangLi}. In fact, in a dramatic recent development Li-Sun \cite{LiSun} have found toric instantons on infinitely many new diffeomorphism types of 4-manifolds, which are not locally Hermitian. Therefore, it is natural to consider positive mass theorems in the ALF setting which assume such symmetries. We will restrict attention to the case in which the circle action has a finite number of fixed points. In order to state the next definition note that the infinitesimal generator $V$, for the $U(1)$ action on $\mathcal{S}^3$ in the model geometry of an end $\mathcal{E}$, may be extended trivially to all of $\mathbb{R}_+ \times \mathcal{S}^3$; the same notation $V$ will be used for this extended vector field. 

\begin{definition}\label{def-alf-mfd}  
A $U(1)$ action on an ALF manifold $(M^4, g)$ with designated end $\mathcal{E}$ is called \emph{almost free with respect to $\mathcal{E}$} if the following properties are satisfied.
\begin{enumerate}[(i)]
\item $U(1)$ acts on $(M^4,g)$ by isometries.

\item There are finitely many points $\{p_1, \ldots, p_k\}$ such that the action of $U(1)$ on $M^4\setminus\{p_1, \ldots, p_k\}$ is free. 

\item The $U(1)$ action asymptotes to that of the model geometry associated with $\mathcal{E}$. More precisely, if $T$ denotes the infinitesimal generator of the action on $(M^4, g)$ then
\begin{equation}\label{inf-gen-asy}
|\mathring{\nabla}^l (T-V)|_{g_0}=O(r^{-q-l}), \quad\quad l=0,1,2,3,
\end{equation}
where $q>1/2$ and $V$ are as in Definition \ref{ALF}. Here the notation $T$ is used without change for its pushforward to the model geometry via the asymptotic diffeomorphism.
%The infinitesimal generator of the $U(1)$ action $T$, is asymptotic to the infinitesimal generator of the $U(1)$ action on the model %metric in $C^{2, \alpha}_{q}$ sense, where $q>1/2$.
%The $U(1)$ action preserves the bundle structure of the model asymptotic geometry. More precisely,  
%The $\mathbb{S}^1$ action preserves the principle $\mathbb{S}^1$ bundle $\mathcal{X}$( i.e. the pullback bundle from $\mathcal{X}$ %under the diffeomorphism is bundle isomorphic to the the bundle arising from the $S^1$ action on $M$.)
\end{enumerate}
\end{definition}

Roughly speaking, condition $(\text{iii})$ is included to ensure that the quotient $\mathcal{E}/U(1)$ is an asymptotically Euclidean end, as will be shown in Proposition \ref{AEend-decay}. Furthermore, O'Neill's formula for the scalar curvature of Riemannian submersions shows that if the fibers are (scalar) flat, the difference of the base and ambient scalar curvatures is weakly nonnegative. Based on this observation, we are able to perform a reduction argument to the 
3-dimensional AE setting in order to obtain mass positivity in the presence of an almost free action.

\begin{theorem}\label{B} 
Let $(M^4, g)$ be a complete ALF manifold with nonnegative scalar curvature. If there is an almost free $U(1)$ action with respect to end $\mathcal{E}$, then the mass is nonnegative in this end. Moreover, the mass vanishes if and only if $(M^4,g)$ is isometric to $\mathbb{R}^3\times S^1$. 
\end{theorem}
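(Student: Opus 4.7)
The plan is to perform a Kaluza--Klein reduction along the $U(1)$ orbits and convert the ALF mass problem into a three-dimensional asymptotically Euclidean one, where the classical positive mass theorem applies. Let $Q=M^4/U(1)$ denote the orbit space, $q_i=\pi(p_i)$ the images of the fixed points, and $Q_0=Q\setminus\{q_1,\ldots,q_k\}$ the smooth part, on which $\pi:M^4\setminus\{p_1,\ldots,p_k\}\to Q_0$ is a principal circle bundle. Writing the $U(1)$-invariant metric in the standard form $g=\pi^{*}\tilde g+f^{2}\eta^{2}$, where $f=|T|_g$, $\eta$ is the connection 1-form metrically dual to $T/f^2$, and $\tilde g$ is the horizontal metric descending to $Q_0$, I pass to the conformally rescaled quotient metric
\begin{equation*}
\bar g:=f\,\tilde g\quad\text{on }Q_0.
\end{equation*}
Combining the Kaluza--Klein identity $R_g=R_{\tilde g}-2f^{-1}\Delta_{\tilde g}f-\tfrac{1}{4}f^{2}|d\eta|^{2}_{\tilde g}$ with the three-dimensional conformal-change law (conformal factor $u=f^{1/4}$) then yields
\begin{equation*}
R_{\bar g}=f^{-1}R_g+\tfrac{1}{4}f^{3}|d\eta|^{2}_{\bar g}+\tfrac{3}{2}f^{-2}|\nabla f|^{2}_{\bar g}\geq 0,
\end{equation*}
which is the quantitative incarnation of the heuristic noted before the statement about scalar-flat $1$-dimensional fibers.

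By Proposition~\ref{AEend-decay}, the quotient end $\mathcal{E}/U(1)$ is asymptotically Euclidean, so $(Q_0,\bar g)$ carries a distinguished AE end. Starting from the Cartesian form \eqref{connection-form-metric} of the model and using \eqref{ALF-metric-decay} together with \eqref{inf-gen-asy} to control the deviation of $T$ from $V$, a direct computation shows that $\bar g$ satisfies the standard AE decay with exponent bounded below in terms of $q>1/2$, and that the three-dimensional ADM mass of $(Q_0,\bar g)$ equals a positive universal constant, depending on $\ell$, times the ALF mass $m(g)$ defined in \eqref{massdef}; this constant arises from integrating the asymptotic flux along circle fibers of length $2\pi\ell$.

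The principal technical obstacle is the treatment of the singular points $q_i\in Q$, since $f\to 0$ there. Near each $p_i$ the action is locally modeled on $\mathbb{R}^{4}/U(1)$ with coprime weights $(a_i,b_i)$; in the prototype case $(1,1)$, an explicit reduction using the radial variable $\rho=\tfrac{2}{3}r^{3/2}$ produces $\bar g=d\rho^{2}+\tfrac{9}{16}\rho^{2}g_{S^2}$, a cone metric with strict angular deficit, and analogous calculations for general weights produce isolated conical singularities with positive deficit. Such singularities contribute nonnegative distributional scalar curvature, so either a version of the three-dimensional AE positive mass theorem admitting isolated conical singularities is invoked, or $\bar g$ is conformally smoothed in a small neighborhood of each $q_i$ preserving $R_{\bar g}\geq 0$ and the ADM mass, and the classical Schoen--Yau or Witten theorem is then applied. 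Either route gives $m(\bar g)\geq 0$ and hence $m(g)\geq 0$.

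For rigidity, $m(g)=0$ forces $m(\bar g)=0$; a genuine conical deficit contributes strictly positively to the ADM mass, so the fixed-point set must be empty and $(Q,\bar g)$ is a smooth AE 3-manifold of zero mass, hence isometric to $(\mathbb{R}^{3},\delta)$ by classical rigidity. The scalar-curvature identity then forces $|\nabla f|\equiv 0$ and $|d\eta|\equiv 0$, so $f$ is constant and $\eta$ is a flat connection on a trivial $U(1)$ bundle over $\mathbb{R}^{3}$; reassembling $g=\pi^{*}\tilde g+f^{2}\eta^{2}$ produces the desired isometry $(M^{4},g)\cong\mathbb{R}^{3}\times S^{1}$.
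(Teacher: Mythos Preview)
Your strategy---the Einstein-frame Kaluza--Klein reduction $\bar g = f\,\tilde g$---is a legitimate alternative to what the paper does, and the scalar-curvature identity you state is correct. The paper instead works with the \emph{unrescaled} submersion metric $\tilde g$ (which does not satisfy $R_{\tilde g}\ge 0$), exploits the divergence structure in O'Neill's formula (Proposition~\ref{mean-scalar-formula}) to prove a coercivity inequality for the conformal Laplacian, solves for a zero-scalar-curvature conformal factor (Theorem~\ref{factor}), and only then applies a singular AE positive mass theorem. Because the unrescaled quotient scalar curvature need not be integrable, the paper first passes through a density result approximating by harmonically ALF metrics (Theorem~\ref{ALF-density}); your approach may sidestep that step, since each term in your $R_{\bar g}$ formula is separately controllable at infinity.

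That said, there are genuine gaps. First, you assert that the ADM mass of $(Q_0,\bar g)$ is a positive constant times the ALF mass, but this is the heart of the matter and you do not compute it; the paper devotes Corollary~\ref{quo-sep} to the analogous relation for $\tilde g$ and finds a nontrivial boundary contribution from fiber mean curvature, which only reduces to a clean multiple of $m$ after the harmonically-ALF approximation. Second, the singular-point analysis needs the content of Theorem~\ref{multi}: the almost-free hypothesis forces weights $(\pm 1,\pm 1)$ via a linking-number argument, not arbitrary coprime $(a_i,b_i)$. Third, whichever singular PMT you invoke requires a specific regularity class for the metric across the cone points, and you have not verified it; the paper handles this by a preliminary conformal change $\psi=\bar r^{1/2}$ that renders the resulting metric $W^{1,6-\varsigma}$ near each singular point, so that \cite{ShiTamSing} applies.

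The rigidity argument is where your approach has a real hole. The claim that a genuine conical deficit ``contributes strictly positively to the ADM mass'' is not established---mass is an asymptotic quantity, and an interior cone point does not obviously force it to be positive---and rigidity statements for the positive mass theorem with isolated conical singularities are not standard. The paper explicitly abandons the quotient route for rigidity and argues directly on $M^4$: zero mass forces $R_g\equiv 0$ (otherwise a conformal change via Theorem~\ref{conformal} lowers mass), then $\mathrm{Ric}_g\equiv 0$ (otherwise an infinitesimal Ricci flow plus conformal change lowers mass), and finally asymptotically linear harmonic functions together with the Bochner formula (Lemma~\ref{harmonic}) produce a global parallel coframe, forcing $g$ flat and hence $(M^4,g)\cong\mathbb{R}^3\times S^1$. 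This bypasses both the density argument and any singular-rigidity statement on the quotient.
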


\begin{remark}
As a consequence of the rigidity proof, we obtain a positive mass theorem for ALF gravitational instantons. More precisely, if the nonnegative scalar curvature hypothesis is replaced by the condition of Ricci flatness (or more generally nonnegative Ricci curvature with strong decay), then the conclusions of Theorem \ref{B} continue to hold as an immediate corollary of Lemma \ref{harmonic}. This should be compared with Minerbe's result \cite{minerbe}*{Theorem 1} for a different definition of mass and a weaker assumption on the decay of Ricci curvature.
\end{remark}

Generalizations that include positive lower bounds for the mass in terms of horizon area, charge, and angular momentum are well-known in the AE context, and are referred to as Penrose-type inequalities \cite{Mars}. With the next result we establish the first Penrose-type inequality in the ALF regime, providing a mass lower bound in terms of the degree of the asymptotic end. Heuristically, such a statement may be anticipated by recalling that the model metric $g_0$ of Definition \ref{ALF} has zero mass and nonpositive scalar curvature \eqref{decay-scal} depending on the bundle curvature. Thus, the degree of the end seemingly contributes positively to the mass by canceling negative effects from the scalar curvature. Although the constant $\ell/16$ appearing in this theorem is not sharp (see Remark \ref{notsharp}), we speculate that with an optimal constant rigidity should be obtained only for the (multi)-Taub-NUT manifolds.

\begin{theorem}\label{C} 
Let $(M^4,g)$ be a complete ALF manifold of nonnegative scalar curvature, with mass $m$ in end $\mathcal{E}$. If there is an almost free $U(1)$ action with respect to $\mathcal{E}$, then 
\begin{equation}\label{foainfinapfinpqinp}
m\geq \frac{\ell}{16} |\deg(\mathcal{E})|,
\end{equation}
where $2\pi\ell$ is the asymptotic length of the circle fibers.
\end{theorem}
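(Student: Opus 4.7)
The plan is to refine the Kaluza-Klein reduction used in Theorem \ref{B}, then extract the degree contribution by decomposing the mass via Stokes' theorem into a nonnegative bulk term and positive boundary contributions at the fixed points of the $U(1)$ action, which sum to the degree.

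\textbf{Step 1: Reduction.} Following the proof of Theorem \ref{B}, form the orbit space $N^3 = (M^4 \setminus \{p_1,\ldots,p_k\})/U(1)$ with the submersion metric $\bar g$, which is asymptotically Euclidean by Proposition \ref{AEend-decay}. Let $f = |T|_g$ denote the fiber-length function (with $f \to \ell$ at infinity and $f \to 0$ at each nut $\bar p_i$), and let $F = d\tau$ be the curvature of the induced $U(1)$ connection on $N^3 \setminus \{\bar p_i\}$; by construction the flux of $F$ through any 2-cycle homologous to $S^2_\infty$ equals $2\pi \deg(\mathcal{E})$.

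\textbf{Step 2: Maxwell-augmented 3D mass inequality.} O'Neill's formula for the submersion yields
\begin{equation}
R_g = R_{\bar g} - 2 f^{-1}\Delta_{\bar g}f - \tfrac{1}{4}f^2 |F|^2_{\bar g}.
\end{equation}
The 3-dimensional conformal rescaling $\hat g = f \bar g$ absorbs the $f^{-1}\Delta f$ term into the transformed scalar curvature, giving schematically
\begin{equation}
R_{\hat g} \;=\; f^{-1} R_g + \tfrac{1}{4} f\, |F|^2_{\bar g} + \tfrac{3}{2} f^{-3}|\nabla f|^2_{\bar g} \;\geq\; \tfrac{1}{4} f\,|F|^2_{\bar g},
\end{equation}
so $(N^3,\hat g)$ is AE with nonnegative scalar curvature and an explicit Maxwell-type lower bound. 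A boundary-term computation analogous to the mass identification in Theorem \ref{B}, using $f \to \ell$ at infinity, matches the ALF mass $m$ with a natural ADM mass of $\hat g$, up to local integrals over small linking spheres around each $\bar p_i$. Applying the 3D positive mass theorem (or equivalently a Witten--spinor identity) to $\hat g$ on $N^3 \setminus \bigcup_i B_\varepsilon(\bar p_i)$ then yields
\begin{equation}
m \;\geq\; c_0 \int_{M^4} R_g\, dV_g \;+\; \sum_i m_{\bar p_i},
\end{equation}
for an explicit $c_0 > 0$, where $m_{\bar p_i}$ is a local boundary contribution picked up by the limit $\varepsilon \to 0$.

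\textbf{Step 3: Nut contributions sum to the degree.} Near each nut $\bar p_i$, the almost-free $U(1)$ action is modeled by a standard Taub-NUT fixed point with local winding number $n_i \in \mathbb{Z}$. An explicit evaluation in Gibbons-Hawking--type coordinates yields a contribution of the form $m_{\bar p_i} = \tfrac{\ell}{16}|n_i|$ with consistent sign (after choosing the orientation of the circle fibers so that $\deg(\mathcal{E}) \geq 0$). Stokes' theorem applied to $F$ on $N^3 \setminus \bigcup_i \{\bar p_i\}$ between the linking spheres and $S^2_\infty$ gives the topological identity $\sum_i n_i = \deg(\mathcal{E})$. Combined with the nonnegativity of the bulk integral, this produces the claimed bound $m \geq \tfrac{\ell}{16}|\deg(\mathcal{E})|$.

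\textbf{Main obstacle.} The technical heart of the proof is the explicit evaluation of the boundary terms $m_{\bar p_i}$ in the limit of small linking spheres and the verification that their signs and magnitudes assemble correctly into the degree. One must track the interplay between the conformal rescaling $\hat g = f\bar g$, the mass boundary integral, and the collapsing of the fiber at each nut, so that no contribution from the Maxwell source spoils the inequality. The authors' explicit remark that $\ell/16$ is not sharp suggests some loss in this step; obtaining the sharp constant and the conjectured Taub-NUT rigidity would require a more delicate accounting than the one sketched here.
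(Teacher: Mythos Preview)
Your outline diverges substantially from the paper's argument, and the two crucial steps you flag as ``explicit evaluations'' are in fact unproved assertions that do not obviously hold.

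The paper does \emph{not} extract the degree from local nut contributions. Instead, it works with the conformal factor $\bar u$ solving $\Delta_{\bar g}\bar u - \tfrac18 R_{\bar g}\bar u = 0$ (Theorem~\ref{factor}), whose decay $\bar u = O(\bar r^{1/2})$ near each $\bar p_i$ is engineered precisely so that the inner boundary terms \emph{vanish} in the limit. The mass lower bound from Remark~\ref{angle} is combined with Proposition~\ref{form-express} to get an integral inequality on $\bar M^3$, which is then lifted to $M^4$. The degree enters via the pointwise 4-dimensional inequality $|d\eta|^2\,\mathrm{dV}_{g'} \geq \pm\, d\eta\wedge d\eta$, after which Stokes' theorem produces $\int_{\mathcal S^3_r} \eta\wedge d\eta/|\eta|^4$ as a boundary term \emph{at infinity}, identified with $\deg(\mathcal E)$ by Proposition~\ref{degree-eta}. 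The constant $\ell/16$ falls out of a purely numerical Cauchy--Schwarz balancing of the remaining gradient cross-terms (equations~\eqref{detadegree}--\eqref{pahfihqit933}); it has no local interpretation at the nuts.

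Your Step~2 invokes a positive mass theorem on $N^3\setminus\bigcup_i B_\varepsilon(\bar p_i)$ to produce a formula $m \geq c_0\int R_g + \sum_i m_{\bar p_i}$, but no such theorem is available off the shelf: the conformal metric $\hat g = f\bar g$ degenerates as $f\to 0$, and a positive mass theorem with inner boundary requires specific mean-curvature conditions that you do not verify. Your Step~3 then asserts $m_{\bar p_i} = \tfrac{\ell}{16}|n_i|$ without computation; there is no reason the constant $\tfrac{1}{16}$ should appear from a local Taub-NUT model, since in the paper it arises from a global estimate with slack (Remark~\ref{notsharp}). Moreover, by Theorem~\ref{multi} every nut has $|n_i|=1$, so your topological identity would give $m\geq \tfrac{\ell}{16}k$ for $k$ nuts rather than $\tfrac{\ell}{16}|\deg(\mathcal E)|$, and $k\geq |\deg(\mathcal E)|$ is only the trivial direction. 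The ``main obstacle'' you identify is real, and your sketch does not overcome it.
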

 
This paper is organized as follows. In the next section the mass \eqref{massdef} is shown to be a geometric invariant, and other preliminary facts are recorded. The local structure of singularities for almost free $U(1)$ actions is studied in Section \ref{sec3}, while a density result for ALF manifolds is discussed in Section \ref{sec4}. Furthermore, Sections \ref{sec5} and \ref{sec6} are dedicated to the proof of Theorem \ref{B}, and the proof of Theorem \ref{C} is provided in Section \ref{sec7}. Lastly, the proof of Theorem \ref{A} is given in Sections \ref{sec8} and \ref{sec9}.

%\subsection*{Acknowledgements}
%The authors would like to thank Santiago Cordero-Misteli for helpful comments.

\section{Asymptotically Euclidean Quotients and Geometric Invariance of the Mass}
%\section{Preliminary Observations} 
\label{sec2} \setcounter{equation}{0}
\setcounter{section}{2}

In this section we will collect several observations to be used later in the paper. These include showing that the quotient space of an ALF end is asymptotically Euclidean, that the mass \eqref{massdef} is a geometric invariant, and recording 
%basic facts concerning $U(1)$-invariant functions as well as 
an alternate expression for the degree of an asymptotic end. 

\subsection{The degree of an asymptotic end.} Consider an ALF manifold $(M^4,g,\E)$ having an almost free $U(1)$ action with respect to a designated end $\E$. Let $T$ be the infinitesimal generator of the action, and denote its dual 1-form by $\eta=g(T,\cdot)$. Similarly, the dual 1-form for the infinitesimal generator $V$ of the action on the model end $(\mathbb{R}_+ \times \mathcal{S}^3,g_0)$ will be denoted by $\eta_0=g_0(V,\cdot)$. Note that since the $\eta_0 (V)=\ell^2$ the connection 1-form may be expressed as $\tau=\frac{\eta_0}{|\eta_0|^2}$, and thus the degree of the end becomes
\begin{equation}\label{degree-def2}
\mathrm{deg}(\mathcal{E})=\frac{1}{4\pi^2}\int_{\mathcal{S}^3_r}\frac{\eta_0}{|\eta_0|^2}\wedge d\left(\frac{\eta_0}{|\eta_0|^2}\right),
\end{equation}
for any $r$. Moreover, Definition \ref{def-alf-mfd} part $(\mathrm{iii})$ implies that
\begin{equation}\label{dual-form-asy}
|\mathring{\nabla}^l (\eta-\eta_0)|_{g_0}=O(r^{-q-l}), \quad\quad l=0,1,2.
\end{equation}
%where $q_0=\min(q,1)>1/2$. 
These properties allow for a computation of the degree in terms of $\eta$.

\begin{proposition}\label{degree-eta} Let $(M^4, g, \mathcal{E})$ be an ALF manifold with an almost free $U(1)$ action. Then the degree of the designated asymptotic end is given by 
\begin{equation}
\mathrm{deg}(\E)=\lim_{r\rightarrow\infty}\frac{1}{4\pi^2}\int_{\mathcal{S}^3_r}\frac{\eta}{|\eta|^2}\wedge d\left(\frac{\eta}{|\eta|^2}\right). 
\end{equation}
\end{proposition}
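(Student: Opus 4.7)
The plan is to show that the two integrands differ by a term whose integral over $\mathcal{S}_r^3$ vanishes in the limit. Set $\alpha = \eta/|\eta|^2$ and $\alpha_0 = \eta_0/|\eta_0|^2 = \tau$, and let $\beta = \alpha - \alpha_0$. First I would use the algebraic identity
\begin{equation*}
\alpha - \alpha_0 = \frac{\eta - \eta_0}{|\eta|^2} + \eta_0 \cdot \frac{|\eta_0|^2 - |\eta|^2}{|\eta|^2 \, |\eta_0|^2}
\end{equation*}
together with \eqref{dual-form-asy} and the fact that $|\eta_0|_{g_0}^2 = \ell^2$ is bounded below to deduce the pointwise estimates $|\beta|_{g_0} = O(r^{-q})$ and, by differentiating and applying the analogous decay for $\mathring{\nabla}(\eta - \eta_0)$, $|d\beta|_{g_0} = O(r^{-q-1})$.

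Next, expand the integrand as
\begin{equation*}
\alpha \wedge d\alpha - \alpha_0 \wedge d\alpha_0 = \alpha_0 \wedge d\beta + \beta \wedge d\alpha_0 + \beta \wedge d\beta.
\end{equation*}
Since each coordinate sphere $\mathcal{S}_r^3$ is a closed 3-manifold (either $S^1 \times S^2$ or a lens space), Stokes' theorem applied to the exact 3-form $d(\alpha_0 \wedge \beta) = d\alpha_0 \wedge \beta - \alpha_0 \wedge d\beta$ gives
\begin{equation*}
\int_{\mathcal{S}_r^3} \alpha_0 \wedge d\beta = \int_{\mathcal{S}_r^3} d\alpha_0 \wedge \beta = \int_{\mathcal{S}_r^3} \beta \wedge d\alpha_0,
\end{equation*}
so the integrated difference reduces to
\begin{equation*}
\int_{\mathcal{S}_r^3}(\alpha \wedge d\alpha - \alpha_0 \wedge d\alpha_0) = 2\int_{\mathcal{S}_r^3} \beta \wedge d\alpha_0 + \int_{\mathcal{S}_r^3} \beta \wedge d\beta.
\end{equation*}

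To finish, I would estimate these two remaining integrals using $|d\alpha_0|_{g_0} = |d\tau|_{g_0} = O(r^{-2})$ (since $d\tau = \pi_s^* F$ is pulled back from the base $S^2$ whose metric scales as $r^2 g_{S^2}$) together with $\Vol(\mathcal{S}_r^3) = O(r^2)$, which yields
\begin{equation*}
\left|\int_{\mathcal{S}_r^3}\beta \wedge d\alpha_0\right| = O(r^{-q}), \qquad \left|\int_{\mathcal{S}_r^3}\beta \wedge d\beta\right| = O(r^{1-2q}),
\end{equation*}
both vanishing as $r \to \infty$ since $q > 1/2$. Combined with \eqref{degree-def2}, which asserts $\int_{\mathcal{S}_r^3}\alpha_0 \wedge d\alpha_0 = 4\pi^2 \deg(\E)$ independently of $r$, this will prove the claim.

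The main subtlety is that a direct pointwise bound on $\int_{\mathcal{S}_r^3} \alpha_0 \wedge d\beta$ only produces $O(r^{1-q})$, which fails to vanish when $q \in (1/2, 1]$. The key idea is to apply Stokes' theorem on the closed 3-manifold $\mathcal{S}_r^3$ to transfer the derivative from $\beta$ onto $\alpha_0$, thereby replacing the insufficient factor $|d\beta| = O(r^{-q-1})$ with the stronger $|d\alpha_0| = O(r^{-2})$ coming from the curvature of the model connection.
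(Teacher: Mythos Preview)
Your proof is correct and follows essentially the same approach as the paper. The paper performs the identical algebraic expansion, writing it as $\alpha\wedge d\alpha=\alpha_0\wedge d\alpha_0 - d(\alpha_0\wedge\beta)+2\,d\alpha_0\wedge\beta+\beta\wedge d\beta$ at the level of the integrand (so the exact form drops out upon integrating over the closed $\mathcal{S}_r^3$), whereas you integrate first and then apply Stokes to transfer the derivative; this is the same manipulation, and both proofs rely on the same decay $|d\tau|_{g_0}=O(r^{-2})$ together with $|\beta|_{g_0}=O(r^{-q})$, $|d\beta|_{g_0}=O(r^{-q-1})$.
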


\begin{proof} 
Observe that basic manipulations yield
\begin{align}
\begin{split}
\frac{\eta}{|\eta|^2}\wedge d\left(\frac{\eta}{|\eta|^2}\right)&=\frac{\eta_0}{|\eta_0|^2}\wedge d\left(\frac{\eta_0}{|\eta_0|^2}\right)+\frac{\eta_0}{|\eta_0|^2}\wedge d\left(\frac{\eta}{|\eta|^2}-\frac{\eta_0}{|\eta_0|^2}\right)
+\left(\frac{\eta}{|\eta|^2}-\frac{\eta_0}{|\eta_0|^2}\right)\wedge d\left(\frac{\eta}{|\eta|^2}\right)\\
&=\frac{\eta_0}{|\eta_0|^2}\wedge d\left(\frac{\eta_0}{|\eta_0|^2}\right)-d\left[\frac{\eta_0}{|\eta_0|^2}\wedge\left(\frac{\eta}{|\eta|^2}-\frac{\eta_0}{|\eta_0|^2}\right)\right]\\
&+2d\left(\frac{\eta_0}{|\eta_0|^2}\right)\wedge\left(\frac{\eta}{|\eta|^2}-\frac{\eta_0}{|\eta_0|^2}\right)
+\left(\frac{\eta}{|\eta|^2}-\frac{\eta_0}{|\eta_0|^2}\right)\wedge d\left(\frac{\eta}{|\eta|^2}-\frac{\eta_0}{|\eta_0|^2}\right).
\end{split}
\end{align}
Moreover, the decay of \eqref{decay-scal} implies that $|d\eta_0|_{g_0}=O(r^{-2})$, and hence together with \eqref{dual-form-asy} we have
\begin{equation}
\Big|\frac{\eta}{|\eta|^2}-\frac{\eta_0}{|\eta_0|^2} \Big|_{g_0}=O(r^{-q}),\quad\quad
\Big|d\left(\frac{\eta}{|\eta|^2}-\frac{\eta_0}{|\eta_0|^2}\right) \Big|_{g_0}=O(r^{-q -1}).
\end{equation}
It follows that
\begin{equation}
\lim_{r\rightarrow\infty}\int_{\mathcal{S}^3_r}\frac{\eta}{|\eta|^2}\wedge d\left(\frac{\eta}{|\eta|^2}\right)
=\lim_{r\rightarrow\infty}\int_{\mathcal{S}^3_r}\frac{\eta_0}{|\eta_0|^2}\wedge d\left(\frac{\eta_0}{|\eta_0|^2}\right)
=4\pi^2 \deg(\E),
\end{equation}
where in the last equality we used \eqref{degree-def2}.
\end{proof}

\subsection{The quotient space of ALF ends} 
Consider the quotient map $\pi: M^4 \rightarrow \bar{M}^3:=M^4 /U(1)$ and the Riemannian quotient space $(\bar{M}^3,\bar{g})$ of the ALF manifold $(M^4, g)$, and observe that the metric may be expressed in Riemannian submersion format as
\begin{equation}\label{quo-form}
g=\pi^{*}\bar{g}+\frac{\eta^2}{|\eta|^2}.
\end{equation} 
Note that $\bar{M}^3$ is a manifold away from a finite number of points.
Moreover, the quotient of the model asymptotic end $(\mathbb{R}_{+}\times \mathcal{S}^3)/U(1)$ is diffeomorphic to the exterior of a ball in $\mathbb{R}^3$, and the model metric can also be written in the submersion context with a flat base 
\begin{equation}\label{model-case}
g_0=\pi_0^*g_{\bb{R}^3}+\frac{\eta_0^2}{|\eta_0|^2}, %=\pi^*(g_{\bb{R}^3})+\ell^{-2}{\eta_0^2}.
\end{equation}
where $\pi_0$ is the quotient map for the model end.
%$\bar{\E} := \E/U(1)$ is an end of $(\bar{M}, \bar{g})$. We assume that $U(1)$ acts freely on $\E$, making $(\bar{\E}, %\bar{g})$ a Riemannian manifold. 
This suggests that the ALF manifold quotient is asymptotically Euclidean with respect to the designated end.

%To  understand the geometry of $(\E, g|_{\E})$, we need to study the geometry of the quotient space $\bar{\E}, \bar{g}$.  

\begin{proposition}\label{AEend-decay}
Let $(M^4,g,\E)$ be a complete ALF manifold with an almost free $U(1)$ action and a designated end $\E$. Then there exists a subset $\E' \subset\E$ such that $\E'/U(1)\subset\bar{M}^3$ is diffeomorphic to $\mathbb{R}^3 \setminus B_{r_1}$ for some $r_1 >0$, and in the coordinates $\bar{x}=(\bar{x}^1,\bar{x}^2,\bar{x}^3)$ provided by this diffeomorphism the quotient metric satisfies the decay
\begin{equation}\label{AE-metric-decay}
|\partial^l (\bar{g}_{ij}-\delta_{ij})(\bar{x})|=O(|\bar{x}|^{-q-l}),\quad\quad\quad l=0,1,2,
\end{equation}
where $q > 1/2$. In particular, $(\bar{M}^3,\bar{g})$ is asymptotically Euclidean with respect to the designated end.
\end{proposition}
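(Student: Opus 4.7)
The plan is to proceed in three stages: identify the topology of the quotient end, construct Cartesian-type coordinates on it, and then derive the metric decay via the submersion structure. First, since the almost free action has only finitely many fixed points, these lie in a compact set, so I can choose $r_1>0$ large enough that the subset $\mathcal{E}':=\{r>r_1\}\cap\mathcal{E}$ avoids all fixed points; then $U(1)$ acts freely on $\mathcal{E}'$ and $\mathcal{E}'/U(1)$ is a smooth 3-manifold. Under the asymptotic diffeomorphism, the model $V$-quotient of $\mathbb{R}_+\times\mathcal{S}^3$ is $\mathbb{R}_+\times S^2\cong \mathbb{R}^3\setminus B_{r_1}$ via the map $(r,p)\mapsto r\,\pi_s(p)$. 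Because \eqref{inf-gen-asy} gives $|T-V|_{g_0}=O(r^{-q})$ with derivatives, the $T$ and $V$ actions on $\mathcal{E}'$ are close free $U(1)$ actions, and by stability of $U(1)$ actions at infinity they can be conjugated by a diffeomorphism of $\mathcal{E}'$ asymptotic to the identity, identifying $\mathcal{E}'/U(1)$ with $\mathbb{R}^3\setminus B_{r_1}$.

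Next, I would obtain explicit Cartesian-type coordinates on the quotient by averaging. Let $\bar{x}^i:=x^i\circ\pi_0$ denote the pullbacks to $\mathcal{E}$ of the Euclidean coordinates on the model quotient; these are $V$-invariant but not $T$-invariant. Define
\begin{equation}
\tilde{x}^i(p):=\frac{1}{2\pi}\int_0^{2\pi}\bar{x}^i(\phi_s^T(p))\,ds,
\end{equation}
where $\phi_s^T$ is the $T$-flow. These are $T$-invariant and descend to $\mathcal{E}'/U(1)$. Since $T(\bar{x}^i)=(T-V)(\bar{x}^i)=O(r^{-q})$, we have $\tilde{x}^i-\bar{x}^i=O(r^{-q})$ along with derivative bounds from the $C^3$ control on $T-V$. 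Hence by the inverse function theorem, $(\tilde{x}^1,\tilde{x}^2,\tilde{x}^3)$ realizes the desired diffeomorphism from $\mathcal{E}'/U(1)$ onto an exterior of a ball in $\mathbb{R}^3$, after enlarging $r_1$ if necessary.

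For the metric decay, I would use the two submersion decompositions \eqref{quo-form} and \eqref{model-case}. Subtracting and viewing both as tensors on $\mathcal{E}$ via the asymptotic diffeomorphism,
\begin{equation}
\pi^*\bar{g}-\pi_0^* g_{\mathbb{R}^3}=(g-g_0)-\Bigl(\frac{\eta^2}{|\eta|^2}-\frac{\eta_0^2}{|\eta_0|^2}\Bigr).
\end{equation}
The first term is $O(r^{-q})$ in $C^2$ by \eqref{ALF-metric-decay}, and the second is $O(r^{-q})$ in $C^2$ by \eqref{dual-form-asy} combined with the lower bound $|\eta_0|_{g_0}^2=\ell^2$. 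Since the horizontal distributions of $\pi$ and $\pi_0$ also differ by $O(r^{-q})$, pushing down to the base via $\tilde{x}$, whose Jacobian differs from that of $\pi_0$ by $O(r^{-q})$, and noting $|\bar{x}|\sim r$ on $\mathcal{E}'$, yields the pointwise estimate \eqref{AE-metric-decay}. The second derivative bound is inherited from the $C^2$ decay of $g-g_0$ and $\eta-\eta_0$.

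The main obstacle lies in the first two stages, namely identifying $\mathcal{E}'/U(1)$ with $\mathbb{R}^3\setminus B_{r_1}$ with enough quantitative derivative control for the decay estimates to transfer cleanly. The stability-of-actions conjugation is classical in compact settings but requires producing a conjugating diffeomorphism with a prescribed decay rate in $r$; equivalently, the averaging approach requires a uniform quantitative inverse function theorem at infinity. Once the chart is in place, the third stage is essentially a bookkeeping computation using the ALF and almost-free decay hypotheses.
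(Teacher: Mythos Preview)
Your approach is essentially the paper's: both average the model Cartesian coordinates $x^i$ along the $T$-flow to obtain $T$-invariant functions $\tilde{x}^i$, control the Jacobian via $\partial_{x^j}\tilde{x}^i=\delta^i_j+O_2(r^{-q})$ from the decay of $T-V$, and then read off the quotient-metric decay from the submersion decomposition $\bar{g}_{ij}=g(\partial_{\tilde{x}^i},\partial_{\tilde{x}^j})-|\eta|^{-2}\eta(\partial_{\tilde{x}^i})\eta(\partial_{\tilde{x}^j})$. Your preliminary ``stability of $U(1)$ actions'' conjugation is redundant once you do the averaging explicitly.

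The one step the paper supplies that you only gesture at is the passage from local to global diffeomorphism. Invoking the inverse function theorem gives only a local diffeomorphism; the paper establishes that $\bar{x}:\mathcal{E}'/U(1)\to\mathbb{R}^3\setminus B_{r_1}$ is surjective (inverse function theorem plus $\pi^*\bar{x}^i=x^i+O(r^{1-q})$), proper (preimages of compact sets are bounded by the Jacobian estimate), hence a covering map, and then injective because $\mathbb{R}^3\setminus B_{r_1}$ is simply connected. This covering argument is exactly the ``uniform quantitative inverse function theorem at infinity'' you flagged as the obstacle.
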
 

\begin{remark}\label{Prop2.2} 
The notion of an asymptotically Euclidean end sometimes includes not only \eqref{AE-metric-decay}, but also a requirement that the scalar curvature $R_{\bar{g}}$ is integrable. Although this latter condition may not be valid in the current setting, we will nevertheless use the AE terminology. When $R_{\bar{g}}$ is integrable, the ADM mass of the designated AE end is well-defined and given by
\begin{equation}\label{massaeend}
\bar{m}=\lim_{\bar{r}\rightarrow\infty}\frac{1}{16\pi}\int_{S_{\bar{r}}}(\bar{g}_{ij,i}-\bar{g}_{ii,j})\bar{\nu}^j ,
\end{equation}
where $S_{\bar{r}}$ is a coordinate sphere with unit outer normal $\bar{\nu}$, and $\bar{r}=|\bar{x}|$.
\end{remark}

\begin{proof}
By pushing the boundary of the end away from any fixed points if necessary, we may assume that the $U(1)$ action is free on $\E$ so that $\E/U(1)$ is a manifold. Consider the local coordinates $(x,\theta)$ on $\E$ as in \eqref{connection-form-metric}, and note that the functions $x$ are globally defined and yield a corresponding map $x=(x^1,x^2,x^3): \E \rightarrow \mathbb{R}^3$. Although these functions descend to the quotient in the context of the model end, they do not necessarily have this property on $\E$. However, new related functions will be constructed which do descend and yield the desired diffeomorphism. To do this, we will make use of the flow $\varphi_{t}$ for the Killing field $T$ that generates the $U(1)$ action. Recall that for $\mathbf{x}\in \E$ the following properties are satisfied
\begin{equation}\label{FF}
\varphi_0 (\mathbf{x})=\mathbf{x},\quad\quad \varphi_{t+s}(\mathbf{x})=\varphi_t(\varphi_s(\mathbf{x})), \quad\quad \partial_t \varphi_t (\mathbf{x})=T(\varphi_t (\mathbf{x})).
\end{equation}

Let $\mathcal{E}_{r_0}\subset\E$ denote the portion of the end for which $|x|>r_0$. % and having boundary $\pi^{-1}(S^2_{r_0})$. 
It will be assumed that $r_0$ is sufficiently large, so that for each $t\in\mathbb{R}$ we have $\varphi_t :\E_{r_0} \rightarrow \E$. In local coordinates we may write 
\begin{equation}
\varphi_t(\mathbf{x})=(y^1(t,\mathbf{x}),y^2(t,\mathbf{x}),y^3(t,\mathbf{x}),\vartheta(t,\mathbf{x})),
\end{equation}
and observe that if $\mathbf{x}=(x,\theta)$ then
\begin{equation}
y^i(t,\mathbf{x}) -x^i =\int_{0}^{t} \partial_s y^i(s,\mathbf{x}) ds, \quad\quad i=1,2,3,
\end{equation}
and hence
\begin{equation}
\frac{\partial y^i}{\partial x^j}-\delta^i_j =\int_{0}^{t}\partial_{x^j} T^i (s,\mathbf{x})ds, \quad\quad i,j=1,2,3,
\end{equation}
where $T^i =T(dx^i)$ are components of $T$ in local coordinates. Notice that $V=\partial_{\theta}$ and thus
\begin{equation}\label{aonfoinh}
\partial_{x^j} T^i =\partial_{x^j}\left(T -V\right)^i =\mathring{\nabla}_j\left(T-V\right)^i -\mathring{\Gamma}_{ja}^i \left(T-V\right)^a ,
\end{equation}
where $\mathring{\Gamma}_{ja}^i$ are Christoffel symbols for $g_0$. Next observe that by comparing the expressions in \eqref{def-model} and \eqref{connection-form-metric} we find $\partial^l A_i=O(r^{-1-l})$ for all $l$, which implies that $\partial^l \mathring{\Gamma}_{ja}^i=O(r^{-2-l})$. Here $\partial^l$ represents derivatives with respect to $x$ of order $l$, and below the notation $O_k$ represents corresponding fall-off for derivatives up to order $k$. Combining these observations with \eqref{inf-gen-asy} and \eqref{aonfoinh} produces $\partial_{x^j} T^i =O_2(r^{-q-1})$, and thus
\begin{equation}\label{fpoapoiniohh}
\frac{\partial y^i}{\partial x^j}=\delta^i_j +O_2(r^{-q-1}).
\end{equation}

Preliminary coordinate functions will be obtained by averaging along the flow of $T$. Since $T$ generates a $U(1)$ action,  a complete orbit is achieved after the flow parameter passes $2\pi$. 
%let $t(\mathbf{x})$ denote the smallest flow parameter value required to make a complete orbit starting at $\mathbf{x}$. %Using \eqref{inf-gen-asy} it may be shown that $t(\mathbf{x})=2\pi+O_2(r^{-q})$. 
We then define functions on $\E_{r_0}$ by
%\begin{equation}
%\tilde{x}^i(\mathbf{x})=\frac{1}{t(\mathbf{x})}\int_{0}^{t(\mathbf{x})}y^i (t,\mathbf{x})dt, \quad\quad i=1,2,3,
%\end{equation}
\begin{equation}
\tilde{x}^i(\mathbf{x})=\frac{1}{2\pi}\int_{0}^{2\pi}y^i (t,\mathbf{x})dt, \quad\quad i=1,2,3,
\end{equation}
and note that \eqref{fpoapoiniohh} implies that
\begin{equation}\label{fpoapoiniohh1}
\frac{\partial \tilde{x}^i}{\partial x^j}=\delta^i_j +O_2(r^{-q}).
\end{equation}
The collection $(\tilde{x},\theta)$ then form local coordinates on $\E_{r_0}$, for $r_0$ sufficiently large.
Moreover, by \eqref{FF} we have $y^i(t,\varphi_{s}(\mathbf{x}))=y^i(t+s,\mathbf{x})$, from which it follows that the $\tilde{x}^i$ are constant on $T$-orbits, namely $T(\tilde{x}^i)=0$ for $i=1,2,3$. Therefore, these descend to smooth functions on the orbit space $\bar{x}=(\bar{x}^1 ,\bar{x}^2, \bar{x}^3):\E_{r_0}/U(1)\rightarrow \mathbb{R}^3$,
%where $r_1 =r_0+O(r_{0}^{1-q})$. 
and estimate \eqref{fpoapoiniohh1} implies 
\begin{equation}\label{aonfoiniknhj}
\pi^* \bar{x}^i = x^i +O_2(r^{1-q}). 
\end{equation}
Let $\E'_{r_1}\subset\E$ be such that $\E'_{r_1}/U(1)=\bar{x}^{-1}(\mathbb{R}^3 \setminus B_{r_1})$. We will show that 
\begin{equation}\label{goanoinijjj}
\bar{x}: \E'_{r_1}/U(1)\rightarrow \mathbb{R}^3 \setminus B_{r_1}
\end{equation}
is a diffeomorphism for $r_1$ sufficiently large.

To show that the map of \eqref{goanoinijjj} is surjective let $\bar{x}_0 \in\mathbb{R}^3 \setminus B_{r_1}$, and observe that for fixed $\theta_0$ we may solve $\bar{x}(\pi(x_0,\theta_0))=\bar{x}_0$ for $x_0$ close to $\bar{x}_0$ if $r_1$ is large, by the inverse function theorem and \eqref{aonfoiniknhj}. The map is also a local diffeomorphism for large $r_1$, as can be seen from similar arguments used to show that $(\tilde{x},\theta)$ form local coordinates. Moreover, the map is proper since if $\mathcal{C}\subset\mathbb{R}^3 \setminus B_{r_1}$ is compact then $\mathcal{C}':=(\pi^* \bar{x})^{-1}(\mathcal{C})\subset\E$ is compact, as this set is clearly closed and is bounded by \eqref{fpoapoiniohh1}. Since $\pi$ is continuous we have that $\pi(\mathcal{C}')=\bar{x}^{-1}(\mathcal{C})\subset\E'_{r_1}/U(1)$ is compact. Thus, $\bar{x}$ of \eqref{goanoinijjj} is a covering map, and since $\mathbb{R}^3 \setminus B_{r_1}$ is simply connected it follows that it is injective. In particular, it is a diffeomorphism.

We will write $\E'$ instead of $\E'_{r_1}$ for convenience. Consider the quotient metric $\bar{g}$ on $\E'/U(1)$, which in the $\bar{x}$-coordinates is given by
\begin{equation}
\bar{g}_{ij}=\bar{g}(\partial_{\bar{x}^i},\partial_{\bar{x}^j})=g(\partial_{\tilde{x}^i},\partial_{\tilde{x}^j})
-\frac{\eta(\partial_{\tilde{x}^i})\eta(\partial_{\tilde{x}^j})}{|\eta|^2}.
\end{equation}
According to \eqref{fpoapoiniohh1}, changing to $(x,\theta)$ coordinates produces
\begin{equation}
\tilde{g}_{ij}:=g(\partial_{\tilde{x}^i},\partial_{\tilde{x}^j})=g(\partial_{x^i},\partial_{x^j})+O_2(r^{-q})=\delta_{ij}+O_2(r^{-q}).
\end{equation}
Furthermore observe that
\begin{equation}
\eta(\partial_{\tilde{x}^i})=\eta\left(\partial_{\tilde{x}^i}-\nabla\tilde{x}^i\right)
=\eta\left((\tilde{g}_{ij}-\delta_{ij})\nabla\tilde{x}^j\right)=O_2 (r^{-q}),
\end{equation}
where we have used
\begin{equation}
\eta(\nabla \tilde{x}^i)=g(T,\nabla\tilde{x}^i)=0,\quad\quad\quad
\tilde{g}_{ij}\nabla\tilde{x}^j =\tilde{g}_{ij}\left(\tilde{g}^{ab}\partial_{\tilde{x}^b}\tilde{x}^j \right)\partial_{\tilde{x}^a}=\partial_{\tilde{x}^i}.
\end{equation}
Combining these facts with $r=|x|=|\bar{x}|+O_2(|\bar{x}|^{1-q})$ yields the desired decay \eqref{AE-metric-decay}.
\end{proof}

\subsection{Geometric invariance of the mass}
The ADM mass of an asymptotically Euclidean end is well-known (\cites{bartnik1986,Chrusciel}) to be independent of the choice of asymptotic coordinates as well as the choice of limiting surfaces, and is thus well-defined. An analogous statement has been obtained by Minerbe \cite{minerbe}*{Proposition 6} for the so called `Gauss-Bonnet mass' (tailored to Ricci curvature) in the ALF case, and was claimed  \cite{minerbe}*{pg. 952} in the setting of AF asymptotics for mass \eqref{massdef}. In this subsection we will establish such a result for mass \eqref{massdef} in the ALF setting.

\begin{definition} 
An ALF end $(\E, g)$ is said to have \textit{ALF structure} $(\Psi,\mathcal{S}^3, \pi_{s},\tau,\ell)$ if there is a subset $\E_\Psi\subset\E$ such that $\E\setminus \E_\Psi$ is precompact and $\Psi:[r_0, \infty)\times \mathcal{S}^3 \rightarrow \E_{\Psi}$ is a diffeomorphism for some $r_0 >0$. Additionally, if $\pi_s: \mathcal{S}^3\rightarrow S^2$ is the projection map for principal $U(1)$ bundle $\mathcal{S}^3$ and $\pi_s^* g_{S^2}$ is extended trivially in the radial direction, then the model metric on $[r_0, \infty)\times \mathcal{S}^3$ is given by
\begin{equation}\label{model}
g_0=dr^2+r^2\pi^*_s g_{S^2}+\ell^2\tau^{2},
\end{equation}
and serves as the decay limit for the pullback metric
\begin{equation}\label{def-ALF-str-decay}
\Psi^* g-g_0=O_2(r^{-q}),
\end{equation}
where $q>1/2$. 
%\begin{enumerate}
%\item[(i)] the map $\pi_0: \mathcal{S}\rightarrow \mathbb{S}^2$ is a principal $U(1)$-bundle and there is a  subset $\E_\Psi\subset\E$ %and a constant $R$ so that  $\E\setminus \E_\Psi$ is pre-compact and $\Psi:  \mathbb{R}_+\times \mathcal{S} \rightarrow\E_\Psi$ is a %diffeomorphism of  $U(1)$-bundles over $\mathbb{R}^3\setminus B_{R}$; 
% \item[(ii)] let $\tau$ be a $U(1)$-connection and a model metric $g_0$  on $\mathbb{R}\times \mathcal{S}^3$ is 
% \begin{equation}\label{model}g_0=(dr)^2+r^2\pi^*_0(g_{\mathbb{S}^2})+\ell^2\tau^2;\end{equation}
% \item[(iii)] the pull back metric $\Psi^* (g)$ asymptotes to the model with decay
%\begin{equation}\label{def-ALF-str-decay}\Psi^*(g)-g_0=O_2(r^{-q}),\end{equation}where $q>1/2$. 
%\end{enumerate}
\end{definition}

Let $d:\E \rightarrow \mathbb{R}_+$ denote the distance function from $\partial\E_{\Psi}$. Due to the fall-off 
of \eqref{def-ALF-str-decay}, we observe that this function is comparable to the radial coordinate $r$.

\begin{lemma}\label{dist} 
Let $(\Psi, \mathcal{S}^3, \pi_s, \tau, \ell)$ be an ALF structure on $(\E, g)$, and consider the distance function $d$ from $\partial\E_\Psi$. There is a constant $c>0$ such that on $\E_{\Psi}$ the following inequality holds
\begin{equation}\label{dist-comparison}
\frac{d}{2}- c \leq r\leq 2d+c.
\end{equation}
\end{lemma}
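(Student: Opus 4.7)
The plan is to exploit the decay \eqref{def-ALF-str-decay}, which forces the metrics $g$ and $g_0$ to be pointwise bi-Lipschitz equivalent in the far part of $\E_\Psi$, combined with the observation that in the warped product model $g_0=dr^2+r^2\pi_s^*g_{S^2}+\ell^2\tau^2$ the coordinate $r$ itself measures distance from $\partial\E_\Psi=\Psi(\{r_0\}\times\mathcal{S}^3)$. First I would fix $R_0\geq r_0$ large enough that $|g-g_0|_{g_0}\leq 1/2$ on $\Psi([R_0,\infty)\times\mathcal{S}^3)$, which yields the pointwise comparison $\tfrac{1}{\sqrt{2}}|v|_{g_0}\leq|v|_g\leq\sqrt{3/2}\,|v|_{g_0}$ there. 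Set $\mathcal{K}:=\E\setminus\Psi((R_0,\infty)\times\mathcal{S}^3)$, which is precompact by hypothesis and so has finite $g$-diameter.

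For the lower bound on $r$ in \eqref{dist-comparison}, that is $d\leq 2r+2c$, I would join $p=\Psi(r_p,x)$ with $r_p\geq R_0$ to $\partial\E_\Psi$ by the radial curve $\sigma(t)=\Psi(r_0+t(r_p-r_0),x)$, $t\in[0,1]$. The bi-Lipschitz bound gives $L_g(\sigma)\leq\int_{r_0}^{R_0}(1+Cr^{-q})^{1/2}\,dr+\sqrt{3/2}(r_p-R_0)$, in which the first integral is bounded independently of $r_p$ and the second is at most $2r_p$. The remaining case $r_p<R_0$ is handled by noting that $p$ lies in $\mathcal{K}$, so $d(p)$ is bounded by the $g$-diameter of $\mathcal{K}$.

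For the upper bound $r\leq 2d+c$, let $\gamma:[0,1]\to\E$ be any curve from $\partial\E_\Psi$ to $p$ with $r_p\geq R_0$, and let $t_1\in[0,1)$ be the last parameter value for which $\gamma(t_1)\in\mathcal{K}$. By continuity $r(\gamma(t_1))\leq R_0$, and for $t>t_1$ the curve lies in $\Psi((R_0,\infty)\times\mathcal{S}^3)$, where the bi-Lipschitz bound applies. Since $|dr(v)|\leq|v|_{g_0}$ in the model, the $g_0$-length of $\gamma|_{[t_1,1]}$ is at least $r_p-R_0$, whence
\begin{equation*}
L_g(\gamma)\geq L_g(\gamma|_{[t_1,1]})\geq\tfrac{1}{\sqrt{2}}(r_p-R_0)\geq\tfrac{1}{2}(r_p-R_0).
\end{equation*}
Passing to the infimum over such curves yields $r_p\leq 2d(p)+R_0$; the case $r_p<R_0$ is immediate.

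The only delicate point is that a distance-minimizing sequence of curves in $\E$ may wander through the precompact region $\mathcal{K}$, where the bi-Lipschitz comparison is unavailable. The ``last exit'' decomposition sidesteps this by retaining only the tail of the curve, which lies in the well-controlled asymptotic region; the radial projection estimate $|dr(v)|\leq|v|_{g_0}$ then produces the required lower bound on length. Combining this with the upper bound coming from the radial curve gives \eqref{dist-comparison} with a single constant $c$ depending on $R_0$ and the diameter of $\mathcal{K}$.
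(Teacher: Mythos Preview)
Your proof is correct and follows essentially the same approach as the paper: the radial curve $t\mapsto\Psi(t,x)$ gives the upper bound on $d$, and comparing $|dr|$ to arclength via the metric decay gives the lower bound. The paper works directly with a minimizing geodesic from $\partial\E_\Psi$ to $\mathbf{x}$ and bounds $|\dot r|\leq 2$ once $r$ is large, whereas your last-exit decomposition through the precompact set $\mathcal{K}$ is a slightly more careful variant that explicitly handles competitor curves which may wander through the near region before escaping.
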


\begin{proof}
Let $\mathbf{x}=\Psi(r_1,v_1)$ be a point of $\E_{\Psi}$, and consider a curve $\alpha(t)=\Psi(t,v_1)$ connecting $\partial\E_{\Psi}$ to $\mathbf{x}$, where $t\in[r_0,r_1]$. Using \eqref{def-ALF-str-decay} we then have
\begin{equation}\label{upper-dist}
d(\mathbf{x})\leq \int^{r_1}_{r_0} \sqrt{ g(\dot{\alpha}, \dot{\alpha})}dt \leq \int^{r_1}_{r_0} \sqrt{ g_0(\partial_r, \partial_r) + c_0{t^{-q}}} dt\leq 2r_1 +c_1=2r(\mathbf{x}) +c_1,
\end{equation}
for some positive constants $c_0$ and $c_1$. Next consider a minimizing geodesic $\gamma(t)=\Psi(\gamma_0(t))$
connecting $\partial\E_{\Psi}$ to $\mathbf{x}$, which is parameterized by arclength and where $\gamma_0(t)=(r(t),v(t))$. According to \eqref{def-ALF-str-decay} it follows that
\begin{equation}
1=|\dot{\gamma}(t)|_{g}^2 \geq (1-c_1 r(t)^{-q})|\dot{\gamma}_0(t)|^2_{g_0}\geq (1-c_0 r(t)^{-q})\dot{r}(t)^2\geq \frac{\dot{r}(t)^2}{4},
\end{equation}
for $r(t)$ sufficiently large. We then have
\begin{equation}
r(\mathbf{x})-r_0 =\int_{0}^{d(\mathbf{x})}\dot{r}(t)dt \leq 2 d(\mathbf{x}) +c_1.
\end{equation}
\begin{comment}
Let $(r, v)$ be the coordinate system on $\E_\Psi=\mathbb{R}_+\times \mathcal{S}$. For any point $x=(r, v)$, the curve $\alpha(t)=(t, v)$ is a $g_0$-geodesic  from $\partial \E_\Psi$ to $x$, where $t\in [R, r(x)]$.  By comparing the expression in \eqref{model} and \eqref{ALF-metric-decay}, we find that 
\begin{equation}\label{upper-dist}
d(x)\leq \int^{r(x)}_{R} \sqrt{ g(\dot{\alpha}, \dot{\alpha})} \leq \int^{r(x)}_{R} \sqrt{ g_0(\dot{\alpha}, \dot{\alpha}) + {C}'{r^{-q}}}\leq r(x)+ C' r^{1-q}+C'
\end{equation}
Consider the  shortest $g$-geodesic $\gamma= (r(t), v(t))$ from $\partial \E_\Psi$ to $x$ , with $t=d(\gamma(t))$. Observe that $\dot{r}(0)=1$ and $\dot{v}(0)=0$.
Comparing the $g_0$-geodesic equation and  \eqref{ALF-metric-decay} products $\ddot{r}(t)=O(r^{-1-q})$. Combining these observations with \eqref{upper-dist}  implies that 
\[\ddot r(t)\leq \min\{\frac{C''} {t^{1+q}}, C''\} \text { and  }\dot{r}(t)\leq \min\{1 +\frac{C'} {t^{q}}, C'\} .\]Thus, we have that $r(x)=\int^{d(x)}_0\dot{ r}(t) dt\leq  C' d(x)+ R$.
\end{comment}
\end{proof}

%Generally, there may be  infinitely many ALF structures on the designated ALF end $(\E, g)$. The following results may be %interpreted as saying that any two ALF structures differ by a rigid motion and error term which are $O(r^{-q})$.   
 
Although a given ALF end will have many different ALF structures, they must satisfy strong relations which we now discuss. Let $\mathcal{F}=(\Psi, \mathcal{S}^3, \pi_s, \tau,\ell)$ and $\mathcal{F}'=(\Psi', \mathcal{S}'^{3}, \pi'_s, \tau', \ell')$ be two ALF structures on a single ALF end $(\E, g)$. Since $\E\setminus \E_{\Psi}$ is precompact, there are two constant $r_1$ and $r'_1$ such that  $\Psi([r_1, \infty)\times \mathcal{S}^3)\subset \Psi'([r'_1, \infty)\times \mathcal{S}'^3)\subset \E_{\Psi}$, which gives the following homomorphisms between fundamental groups induced by the inclusion maps
\begin{equation}
\pmb{\pi_1}(\Psi([r_1, \infty)\times \mathcal{S}^3))\rightarrow \pmb{\pi_1}(\Psi'([r'_1,\infty)\times \mathcal{S}'^3))\rightarrow \pmb{\pi_1}(\E_\Psi)
\end{equation}
The composition of these two homomorphisms is the identity, and thus there is an injection $\pmb{\pi_1}(\mathcal{S}^3)\rightarrow \pmb{\pi_1}(\mathcal{S}'^3)$ along with a surjection $\pmb{\pi_1}(\mathcal{S}'^3)\rightarrow \pmb{\pi_1}(\mathcal{S}^3)$. By reversing the roles of $\mathcal{S}^3$ and $\mathcal{S}'^3$, it follows that $\pmb{\pi_1}(\mathcal{S}^3)$ and $\pmb{\pi_1}(\mathcal{S}'^3)$ are isomorphic. Therefore, since the total space of a principal $U(1)$ bundle over $S^2$ is completely determined by its fundamental group \cite{Chern}, we find that $\mathcal{S}^3$ and $\mathcal{S}'^3$ are diffeomorphic. Next note that by pushing the boundary $\partial \E_{\Psi'}$ away from $\E\setminus \E_\Psi$, it may be assumed that $\E_{\Psi'} \subset \E_{\Psi}\subset \E$. The end $\E_{\Psi'}$ can then be equipped with two model metrics from the two ALF structures, namely
\begin{equation}\label{2model}
g_0=dr^2+ r^2 \pi_s^*g_{S^2}+ \ell^2 \tau^2 ,\quad\quad\quad
g'_0={dr'}^2+ {r'}^2 {\pi'}_s^* g_{S^2}+ {\ell'}^2 {\tau'}^2 .
\end{equation} 
Furthermore, observe that Lemma \ref{dist} together with the fact that both $g_0$ and $g'_0$ asymptote to $g$ implies the following decay between the two model metrics
%suggests two fall-offs for derivatives up to order $k$ are equivalent: for any $a<0$ \begin{equation}
%O_k (r^a)=O_k({r'}^a)
%\end{equation} 
% Since $g$ asymptotes to both $g_0$ and $g'_0$, with same types of decay, we find that 
\begin{equation}\label{model-decay}
|\mathring{\nabla}^l\left((\Psi'^{-1}\circ \Psi)^*g'_0-g_0\right)|_{g_0}=O(r^{-q_0-l}), \qquad l=0,1,2,
\end{equation}
where $\mathring{\nabla}$ is the Levi-Civita connection of $g_0$ and $q_0=\min\{q,q'\}$.  
%This implies  that the infinitesimal generators of $U(1)$ actions for $g_0$ and $g'_0$ asymptotes at infinity. 

%For our convenience, we abuse the notations and write $g_0'$ for $(\Psi'^{-1}\circ \Psi)^*g'_0$, and $V'$ for $(\Psi'^{-1}\circ %\Psi)_*V'$

\begin{lemma}\label{vector-asy} 
Let $V$ and $V'$ be the infinitesimal generators of the $U(1)$ action for the model geometries of the two ALF structures $\mathcal{F}$ and $\mathcal{F}'$. If $\ell$ and $\ell'$ denote the respective lengths of these two vectors with respect to  $g_0$ and $g'_0$ then
\begin{equation}\label{vector-model-decay}
\ell=\ell', \quad\quad\quad |\mathring{\nabla}^l \left((\Psi^{-1}\circ \Psi')_{*}V' -V\right)|_{g_0}=O(r^{-q_1 -l}), 
\end{equation}
where $q_1=\min\{1, q_0\}$ and $l=0,1,2$.
\end{lemma}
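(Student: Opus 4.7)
The plan is to compare $V$ with $\hat V := (\Psi^{-1}\circ\Psi')_* V'$ as vector fields on a common subset of the model 1 end $[r_0,\infty)\times\mathcal{S}^3$, where I set $\Phi := \Psi'^{-1}\circ\Psi$ (so $\hat V = (\Phi^{-1})_* V'$ is defined on the model 1 side), and to split the argument into two parts: first proving $\ell=\ell'$, and then proving the pointwise and derivative decay of $\hat V - V$.

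For the length equality, I would identify both $2\pi\ell$ and $2\pi\ell'$ with the asymptotic $g$-length of a closed fiber in $\E$. Every $V$-orbit in the model 1 geometry is a closed curve of $g_0$-length exactly $2\pi\ell$, and \eqref{def-ALF-str-decay} implies that its $g$-length (after transport by $\Psi$) tends to $2\pi\ell$ as $r\to\infty$. Similarly each $\hat V$-orbit is closed with $\Phi^*g_0'$-length $2\pi\ell'$; by \eqref{model-decay} its $g_0$-length, and therefore the $g$-length of the corresponding closed curve in $\E$, tends to $2\pi\ell'$. Since the $V$- and $\hat V$-orbits through nearby points both generate the fiber class of the principal $U(1)$-bundle structure on $\mathcal{S}^3\cong\mathcal{S}'^3$ (which is intrinsic to the end, as observed in the discussion preceding \eqref{2model}), and since the metric is asymptotic to a flat product, a standard length-comparison argument for freely homotopic closed curves in the asymptotically model region forces $\ell=\ell'$.

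Once $\ell=\ell'$ is in hand, the decay estimate $|\hat V - V|_{g_0} = O(r^{-q_1})$ follows from comparing the metric duals. The $g_0$-dual of $V$ is $\ell^2\tau$ while the $\Phi^*g_0'$-dual of $\hat V$ is $\ell^2\Phi^*\tau'$; using \eqref{model-decay} to convert between the two metrics, the problem reduces to estimating $\Phi^*\tau'-\tau$ on the model 1 end. Both 1-forms serve as connections for the asymptotic $U(1)$-bundle structure (up to $O(r^{-q_0})$ corrections coming from the fact that $\Phi$ need not intertwine the two bundle projections), so their difference is a basic 1-form whose exterior derivative equals the difference of two bundle curvatures and is controlled by \eqref{model-decay}. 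Combining this with the intrinsic $O(r^{-1})$ decay of a connection 1-form in any local trivialization (cf.\ the appearance of $A_i dx^i$ in \eqref{connection-form-metric}) yields the exponent $q_1 = \min\{1,q_0\}$. The higher-derivative estimates for $l=1,2$ then follow by differentiating the dual-form comparison and invoking the corresponding derivative bounds in \eqref{model-decay}, together with Lemma \ref{dist} to pass between the comparable radial coordinates $r$ and $r'$.

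The main obstacle I expect is the length equality $\ell=\ell'$. Because $\Psi$ and $\Psi'$ are not assumed to intertwine the bundle projections, the two families of closed orbits are not obviously related, and showing that they are freely homotopic and asymptotically of equal $g$-length requires a careful topological identification of the two $U(1)$-fibration structures on $\E$ at infinity. Once this is secured, the pointwise and derivative bounds on $\hat V - V$ reduce to a linear-algebra bootstrap from \eqref{model-decay} together with the structure of connections on a fixed principal $U(1)$-bundle, and should be essentially routine.
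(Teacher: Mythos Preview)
Your proposal has the right shape in several places but contains a genuine circularity in the heart of the $l=0$ decay argument, and it misses the mechanism the paper actually uses.

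For the $l=0$ bound you propose to compare $\Phi^*\tau'$ and $\tau$ as two connection $1$-forms for ``the asymptotic $U(1)$-bundle structure.'' But $\Phi^*\tau'$ is a connection for the action generated by $\hat V=(\Phi^{-1})_*V'$, not for the action generated by $V$, and these two circle actions are only known to coincide \emph{after} you have proved the lemma. Concretely, the defining property $(\Phi^*\tau')(V)=1$ is equivalent to $\tau'(\Phi_*V)=1$; from \eqref{model-decay} you only know $|\Phi_*V|_{g_0'}=\ell+O(r^{-q_0})$, which is an upper bound on $|\tau'(\Phi_*V)|$ and says nothing about its direction. So the claim that $\Phi^*\tau'-\tau$ is basic ``up to $O(r^{-q_0})$ corrections'' already presupposes $\hat V-V=O(r^{-q_0})$. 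The same issue infects the appeal to the ``intrinsic $O(r^{-1})$ decay of a connection $1$-form in any local trivialization'': that decay holds for $\tau'$ in a trivialization adapted to $V'$, not in one adapted to $V$, and transporting between the two is exactly the content of the lemma.

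The paper avoids this circularity by reversing the order of the argument. First, it obtains the $l=1,2$ bounds directly, using only that each individual Killing field satisfies $|\mathring\nabla^l V|_{g_0}=O(r^{-1-l})$ and $|\mathring\nabla'^l V'|_{g_0'}=O(r^{-1-l})$ (Christoffel decay in the model geometries), together with the connection comparison $|\mathring\nabla^l-\psi^*\mathring\nabla'^l|=O(r^{-q_0-l})$ from \eqref{model-decay}. This gives $|\mathring\nabla^l(\hat V-V)|_{g_0}=O(r^{-q_1-l})$ for $l\geq 1$ without any $l=0$ input. Second, it establishes the \emph{qualitative} statement $|\hat V-V|_{g_0}\to 0$ by a pointed-limit argument: both model geometries converge (in the pointed $C^\infty$ sense) to $\mathbb{R}^3\times S^1$, the two limits must be isometric (forcing $\ell=\ell'$), and if $\hat V$ did not converge to $V$ then the $\hat V$-orbits, which are closed of period $2\pi$, would limit to non-closed geodesics in $\mathbb{R}^3\times S^1$. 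Finally, it integrates the $l=1$ bound along a radial ray from a given point to infinity, using the qualitative convergence to kill the boundary term, and obtains the quantitative $l=0$ rate.

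Your $\ell=\ell'$ argument via ``freely homotopic closed curves'' is also shaky: in the genuinely ALF case $\mathcal{S}^3$ is $S^3$ or a lens space, so the fiber class is either trivial or of finite order in $\pi_1$, and a length-minimization argument in $\E$ does not directly apply. The paper's pointed-limit argument sidesteps this because the \emph{local} limit is always $\mathbb{R}^3\times S^1$ regardless of the global bundle topology.
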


\begin{proof}
We will first establish a weak version of \eqref{vector-model-decay} for $l=0$.
Assume by way of contradiction that there exists a sequence $\{\mathbf{p}_k \}_{k=1}^{\infty} \subset \Psi^{-1}(\E_{\Psi'})$ such that $r(\mathbf{p}_k)\rightarrow\infty$ along with 
\begin{equation}\label{aofjoianihh}
|(\Psi^{-1}\circ \Psi')_{*}V' -V|_{g_0}(\mathbf{p}_k)\geq c>0,
\end{equation}
for some constant $c$ and all $k$.
%We first show $\lim_{r(x)\rightarrow \infty} |V-V'|(x)=0$ and then use the asymptotic decay \eqref{model-decay} to prove that %$V'$ asymptotes to $V$. 
Recall that the model end may be viewed as a $U(1)$ bundle over a Euclidean base $\pi_0 :[r_0,\infty)\times\mathcal{S}^3 \rightarrow \mathbb{R}^3 \setminus B_{r_0}$, and the metric $g_0$ may be expressed by \eqref{connection-form-metric} in this context. Let $\mathbf{B}_{r_k}(\mathbf{p}_k)$ be the preimage under $\pi_0$ of the Euclidean ball of radius $r_k=\frac{1}{2}\left(r(\mathbf{p}_k)-r_0\right)$ centered at $\pi_0(\mathbf{p}_k)$. As in the proof of Proposition \ref{AEend-decay} we have decay of the metric coefficients $|\partial^l A_i|=O(r^{-1-l})$ for all $l$, and thus
\begin{equation}\label{aoofinoinghhhh}
\left(\mathbf{B}_{r_k}(\mathbf{p}_k),g_0,\mathbf{p}_k \right)\rightarrow\left(\mathbb{R}^3 \times S^1 , \delta +\ell^2 d\vartheta^2,\mathbf{p}_{\infty}\right)
\end{equation}
in the pointed $C^\infty$-topology, where $\mathbf{p}_{\infty}$ is a designated point of the limit manifold. Moreover, through the pushforward by diffeomorphisms relating the sequence and limit manifolds we find that (with an abuse of notation) $V(\mathbf{p}_k)\rightarrow \partial_{\vartheta}$. Similarly, if $\mathbf{p}'_k =\Psi'^{-1} \circ\Psi(\mathbf{p}_k)$ then in the model geometry for ALF structure $\mathcal{F}'$ it follows that
\begin{equation}\label{foioigjioqnigh}
\left(\mathbf{B}'_{1}(\mathbf{p}'_k),g'_0,\mathbf{p}'_k \right)\rightarrow\left(B_1 \times S^1 , \delta +\ell'^2 d\vartheta'^2,\mathbf{p}'_{\infty}\right),
\end{equation}
and $V'(\mathbf{p}'_k)\rightarrow \partial_{\vartheta'}$. On the other hand, by \eqref{model-decay} we find that
\begin{equation}\label{foioigjioqnigh1}
\left((\Psi'^{-1} \circ\Psi)^{-1}(\mathbf{B}'_1(\mathbf{p}'_k)),(\Psi'^{-1} \circ\Psi)^{*} g'_0,\mathbf{p}_k\right)
\rightarrow\left(B_1 \times S^1 , \delta +\ell^2 d\vartheta^2,\mathbf{p}_{\infty}\right).
\end{equation}
These last two limits must be isometric, and therefore $\ell=\ell'$; note that this conclusion does not rely on the hypothesis \eqref{aofjoianihh}, and thus it holds independently. 

Consider the sequence of vectors $W_k =(\Psi^{-1}\circ \Psi')_{*}V'(\mathbf{p}'_k)$, and observe that \eqref{model-decay} shows this forms a Cauchy sequence in $T_{\mathbf{p}_{\infty}}(\mathbb{R}^3 \times S^1)$ through the pushforward by diffeomorphisms associated with the convergence \eqref{aoofinoinghhhh}. Hence, we may write $W_k \rightarrow W_{\infty}$ with $|W_{\infty}|=\ell$, and additionally $|W_{\infty}-\partial_{\vartheta}|\geq c$ in light of \eqref{aofjoianihh}. Using the isometry $\Psi^{-1}\circ \Psi'$ between the sequences of \eqref{foioigjioqnigh} and \eqref{foioigjioqnigh1}, it follows that
the geodesics $\gamma_k(t)=\exp_{\mathbf{p}'_k}(tV'(\mathbf{p}'_k))$, $t\in[0,2\pi]$ within $\mathbf{B}'_{1}(\mathbf{p}'_k)$ remain uniformly away from being closed. However, due to \eqref{foioigjioqnigh} these curves asymptote to a closed geodesic
as $k\rightarrow\infty$, yielding a contradiction. We conclude that 
\begin{equation}\label{ofioaininhhhh}
\lim_{r\rightarrow\infty}|(\Psi^{-1}\circ \Psi')_{*}V' -V|_{g_0}=0.
\end{equation}

We will now obtain the full statement of \eqref{vector-model-decay}. The Christoffel symbol decay from the proof of Proposition \ref{AEend-decay} implies that $|\mathring{\nabla}^l V|_{g_0}=O(r^{-1-l})$ for $l\geq 1$, with a corresponding estimate for the derivatives of $V'$. Furthermore, if $\psi:=\Psi'^{-1}\circ\Psi$ then \eqref{model-decay} produces a comparison between the model and pullback connections
\begin{equation}
|(\mathring{\nabla}^l-\psi^* \mathring{\nabla}'^l ) \psi^{-1}_{*} V'|_{g_0}=|((\psi^{-1})^* \mathring{\nabla}^l-\mathring{\nabla}'^l ) V'|_{(\psi^{-1})^* g_0}=O(r^{-q_0 -l}),\quad\quad l=1,2.
\end{equation}
It follows that
\begin{equation}\label{onainfoinaoihh}
|\mathring{\nabla}^l(V-\psi^{-1}_* V')|_{g_0}\leq |\mathring{\nabla}^l V|_{g_0}
+|(\mathring{\nabla}^l-\psi^* \mathring{\nabla}'^l ) \psi^{-1}_{*} V'|_{g_0}
+|(\psi^* \mathring{\nabla}'^l ) \psi^{-1}_{*} V'|_{g_0}=O(r^{-q_1 -l}),
\end{equation}
where $q_1 =\min\{1,q_0\}$. Let $\mathbf{p}=(r(\mathbf{p}),v(\mathbf{p}))\in \Psi^{-1}(\E_{\Psi'})$ and consider a curve $\alpha(t)=(t,v(\mathbf{p}))$, $t\in[r(\mathbf{p}),\infty)$ connecting $\mathbf{p}$ to infinity. Then applying \eqref{ofioaininhhhh} and \eqref{onainfoinaoihh} produces
\begin{align}\label{0r8y30q498hi}
\begin{split}
|V-\psi^{-1}_{*}V'|_{g_0}(\mathbf{p})
&=-\int_{r(\mathbf{p})}^{\infty}\frac{d}{dt}|V-\psi^{-1}_{*}V'|_{g_0}(\alpha(t))dt\\
&\leq \int_{r(\mathbf{p})}^{\infty}\ell|\mathring{\nabla}(V-\psi^{-1}_{*}V')|_{g_0}(\alpha(t))dt\\
&\leq Cr(\mathbf{p})^{-q_1},
\end{split}
\end{align}
for some constant $C$. Together, \eqref{onainfoinaoihh} and \eqref{0r8y30q498hi} give the desired result.
\begin{comment}
Using the expression in \eqref{def-model} and \eqref{decay-scal} suggests that $|\mathring{\nabla}^l V|_{g_0}=O(r^{-1-l})$. Such an estimate also holds for $\mathring{\nabla}'^l V'$.  From \eqref{model-decay}, we find that $|(\mathring{\nabla}^l-\mathring{\nabla}'^l)V'|=O(r^{-q-l})$ for  $l=1,2$, which implies 
\begin{equation}\label{diff-diff}
    |\mathring{\nabla}^l(V-V')|_{g_0}\leq |\mathring{\nabla}^lV|_{g_0}+|(\mathring{\nabla}^l-\mathring{\nabla}'^l)V'|_{g_0}+ |\mathring{\nabla'}V'|_{g_0}=O(r^{-q'-l}).
\end{equation}
In what follows, we let  $\alpha_x(t)=\Psi(r(x)+t, v(x))$ be a curve from $x$ to the infinity, where $(r, v)$ are the coordinate system on $ \mathbb{R}_+\times \mathcal{S}$.  The equation \eqref{diff-diff} implies that 
\begin{equation}\begin{split}
|V-V'|(x)&\leq \int^{\infty}_{r(x)} |\mathring{\nabla}(V-V')|(\alpha_x(t)) dt+\lim_{t\rightarrow \infty} |V-V'|(\alpha_x(t))\\
&=\int^{\infty}_{r(x)} |\mathring{\nabla}(V-V')|(\alpha_x(t)) dt  \quad \quad \quad \quad\text{from Step 1}\\
&\leq \int^\infty_{r(x)} C'_1 (r(x)+t)^{-1-q'}\leq C'_2r(x)^{-q'}.
\end{split}
\end{equation} 
\end{comment}
\end{proof}

Let $(x,\theta)$ denote the local coordinates on $\Psi^{-1}(\E_{\Psi'})$ in which the model metric $g_{0}$ takes the form \eqref{connection-form-metric}. A $g_0$-orthonormal frame on this model end is then given by
%For the model case $(\E_\Psi, g_0)$, there are global coordinate functions $x_i$ from the $U(1)$-bundle map $(x_1, x_2, x_3): %\mathcal{S}\times [r_0, \infty)\rightarrow \mathbb{R}^3\setminus B_{R}$ and the model metric $g_0$ may be expressed by %\eqref{connection-form-metric}. Define orthonormal vectors fields $\{X_i\}_i$:
\begin{equation}\label{def-frame}
X_i=\partial_{x^i}-A_{i}\partial_{\theta},\quad i=1,2,3,\quad\quad X_4 =\ell^{-1}V.
\end{equation}
%They form an $g_0$-orthonormal basis for $T(\mathcal{S}\times [r_0, \infty))$.   
In an analogous way, we may construct a $g'_0$-orthonormal frame $\{X'_1, X'_2, X'_3, X'_4\}$ on $\Psi'^{-1}(\E_{\Psi'})$. If $\psi=\Psi'^{-1}\circ \Psi$, then the two sets of frames are related by a transition matrix $Q=(Q_a^b)$ such that
%we define the difference between $\{X_i\}$ and $\{\psi_* X'_i\}$: 
\begin{equation}\label{orthnormal-relation}
\psi^{-1}_*X'_a=Q^b_{a}(\mathbf{p})X_b ,\quad\quad\quad Q^b_a(\mathbf{p})= g_0(\psi^{-1}_*X'_a, X_b),
\end{equation} 
for $\mathbf{p}\in \Psi^{-1}(\E_{\Psi'})$.
Furthermore, Lemma \ref{vector-asy} and \eqref{model-decay} imply that 
%$|\mathring{\nabla}^l(X_4-(\psi^{-1})_*X'_4)|_{g_0}=O(r^{-q_1-l})$. That's to say:
\begin{equation}\label{formula-vector-asy}
Q_a^b-\delta_{a}^{b}=O_2(r^{-q_1}) \quad \text{ for  } a=4 \text{ or } b=4.
\end{equation}
In general, the transition matrix asymptotes to an orthogonal matrix.

\begin{lemma}\label{orthnormal} 
There exists a constant $4\times 4$ orthogonal matrix $\mathbf{O}=(\mathbf{O}^b_a)$ such that
\begin{equation}\label{transf-estimate}
Q_a^b-\mathbf{O}_a^b=O_2(r^{-q_1}),
\end{equation}
with $\mathbf{O}_a^{b}=\delta_{a}^{b}$ for $a=4$ or $b=4$. 
\end{lemma}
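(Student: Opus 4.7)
The plan is to produce a constant orthogonal limit $\mathbf{O}$ for the transition matrix $Q$ in three stages: asymptotic orthogonality, a covariant derivative estimate, and integration to a $v$-independent limit. With $\psi=\Psi'^{-1}\circ\Psi$, the first stage starts from $g'_0(X'_a,X'_b)|_{\psi(\mathbf{p})}=\delta_{ab}$, which by the pullback formula equals $(\psi^{*}g'_0)(\psi^{-1}_{*}X'_a,\psi^{-1}_{*}X'_b)|_{\mathbf{p}}$. Using \eqref{model-decay} to replace $\psi^{*}g'_0$ by $g_0$ modulo $O(r^{-q_0})$, and expanding $\psi^{-1}_{*}X'_a=Q^c_a X_c$ via \eqref{orthnormal-relation}, yields $Q^c_a Q^c_b=\delta_{ab}+O(r^{-q_0})$. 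Combined with \eqref{formula-vector-asy}, the $3\times 3$ sub-block of $Q$ is therefore asymptotically orthogonal while the fourth row and column asymptote to $(0,0,0,1)$.

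For the covariant derivative estimate, let $\mathring\nabla''$ denote the Levi-Civita connection of $\psi^{*}g'_0$. The $C^2$ content of \eqref{model-decay} gives $\mathring\nabla-\mathring\nabla''=O(r^{-q_0-1})$, while the model Christoffel symbols of $g'_0$ satisfy $|\mathring\nabla' X'_a|_{g'_0}=O(r^{-2})$, so $|\mathring\nabla''(\psi^{-1}_{*}X'_a)|_{g_0}=|\psi^{-1}_{*}(\mathring\nabla' X'_a)|_{g_0}=O(r^{-2})$. It follows that
$$|\mathring\nabla(\psi^{-1}_{*}X'_a)|_{g_0}\leq|\mathring\nabla''(\psi^{-1}_{*}X'_a)|_{g_0}+|(\mathring\nabla-\mathring\nabla'')(\psi^{-1}_{*}X'_a)|_{g_0}=O(r^{-q_1-1}),$$
with $q_1=\min\{1,q_0\}$. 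Combined with $|\mathring\nabla X_b|_{g_0}=O(r^{-2})$, this gives $|\mathring\nabla Q^b_a|_{g_0}=O(r^{-q_1-1})$; repeating the argument at second order yields $|\mathring\nabla^2 Q^b_a|_{g_0}=O(r^{-q_1-2})$.

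Finally, to exhibit the limit matrix $\mathbf{O}$, I would integrate $\partial_r Q^b_a$ along rays $t\mapsto\Psi(t,v)$ for $t\in[r,\infty)$ to obtain a pointwise limit $Q^b_a(\infty,v)$ satisfying $|Q-Q(\infty,v)|=O(r^{-q_1})$. To remove the $v$-dependence, connect any two points $v_0,v_1\in\mathcal{S}^3$ by a path on the cross-section $\Psi(\{r\}\times\mathcal{S}^3)$, whose $g_0$-diameter is $O(r)$; the bound $|\mathring\nabla Q|_{g_0}=O(r^{-q_1-1})$ then forces $|Q(r,v_0)-Q(r,v_1)|=O(r^{-q_1})\to 0$. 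Hence the limit is a constant matrix $\mathbf{O}$, orthogonal by the first stage and of the declared block form by \eqref{formula-vector-asy}. The main obstacle is the derivative step: the exponent $q_1=\min\{1,q_0\}$ reflects the competition between the $r^{-q_0-1}$ decay of the connection difference and the $r^{-2}$ decay of the ambient Christoffel symbols, and this same exponent ultimately governs \eqref{transf-estimate}.
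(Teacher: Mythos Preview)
Your proposal is correct and follows essentially the same route as the paper: both obtain the $O_2(r^{-q_1})$ derivative bounds on $Q$ by comparing $\mathring\nabla$ with the pullback connection $\psi^{*}\mathring\nabla'$ via \eqref{model-decay} together with the $O(r^{-2})$ decay of the model Christoffel symbols, then integrate radially and across the cross-section (whose $g_0$-diameter is $O(r)$) to produce a constant limit, and finally deduce orthogonality from $\lim_r Q^c_aQ^c_b=\delta_{ab}$. The only difference is the order of exposition---you establish asymptotic orthogonality first, whereas the paper postpones it until after the limit is in hand.
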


\begin{proof} 
We will first establish the estimate for derivatives in \eqref{transf-estimate}. Recall that the Christoffel symbol decay from the proof of Proposition \ref{AEend-decay} implies that $|\mathring{\nabla}^l X_a|_{g_0}=O(r^{-1-l})$ for $l\geq 1$, with a corresponding estimate for the derivatives of $X'_a$. Observe that \eqref{model-decay} yields a comparison between the model and pullback connections to produce
\begin{equation}
|(\mathring{\nabla}^l-\psi^* \mathring{\nabla}'^l ) \psi^{-1}_*X'_a|_{g_0}=|((\psi^{-1})^* \mathring{\nabla}^l-\mathring{\nabla}'^l ) X'_a|_{(\psi^{-1})^* g_0}=O(r^{-q_0 -l}),\quad\quad l=1,2.
\end{equation}
It follows that 
\begin{equation}
|\mathring{\nabla}^l \psi^{-1}_* X'_a|_{g_0}\leq 
|(\mathring{\nabla}^l-\psi^* \mathring{\nabla}'^l ) \psi^{-1}_{*} X'_a|_{g_0}
+|(\psi^* \mathring{\nabla}'^l ) \psi^{-1}_{*} X'_a|_{g_0}=O(r^{-q_1 -l}),
\end{equation}
and hence for $l=1,2$ we have
\begin{equation}\label{transfer-decay}
|\mathring{\nabla}^l Q^b_a|_{g_0}\leq \sum^l_{k=0}2|\mathring{\nabla}^{l-k} \psi^{-1}_*X'_a|_{g_0}|\mathring{\nabla}^k X_b|_{g_0}=O(r^{-q_1 -l}).
\end{equation}

We will now address the existence of an asymptotic limit for $Q$. Let $\mathbf{p} =(r,v)$ and $\bar{\mathbf{p}}=(\bar{r},\bar{v})$ be points of $\Psi^{-1}(\E_{\Psi'})$ with $\bar{r}\geq r$. Consider the curve $\alpha(t)=(t,v)$, $t\in[r,\bar{r}]$ connecting $\mathbf{p}$ to $\tilde{\mathbf{p}}=(\bar{r},v)$, and a minimizing geodesic $\gamma\subset \mathcal{S}_{\bar{r}}^3$ parameterized by arclength which connects $\tilde{\mathbf{p}}$ to $\bar{\mathbf{p}}$; note that the length of this geodesic satisfies $L=O(\bar{r})$.
%We will then obtain the existence of $\lim_{x\rightarrow \infty } Q^j_{i}(x)$. Recall that  $(r, v)$ is the coordinate system %on $\bb{R}_+\times \mathcal{S}$. For any two points $x$ and $x'$, choose a new point $x'':=(r(x'), v(x))$ and define two curves 
%\begin{itemize}
%\item the curve $\gamma_1 (t):=(r(x)+t, v(x))$ connects $x$ and $x''$ for $t\in [0, r(x')-r(x))]$
%\item the distancing minimizing curve $\gamma_2\subset (\mathcal{S}^3_{r(x')}, g_0)$ from $x''$ to $x'$ has length  %$L(\gamma_2)\leq  C'_3r(x')$. 
%\end{itemize}
With the help of \eqref{transfer-decay} we then find
\begin{align}\label{f0-9qu09hjy}
\begin{split}
|Q^b_a(\mathbf{p})-Q^b_a(\bar{\mathbf{p}})|&\leq|Q^b_a(\mathbf{p})-Q^b_a(\tilde{\mathbf{p}})|+|Q^b_a(\tilde{\mathbf{p}})-Q^b_a(\bar{\mathbf{p}})|\\
&\leq \int_{r}^{\bar{r}}\ell|\mathring{\nabla} Q^b_a|_{g_0}(\alpha(t))dt 
+\int_{0}^{L} |\mathring{\nabla} Q^b_a|_{g_0}(\gamma(s))ds \\
&\leq Cr^{-q_1},
\end{split}
\end{align}
for some constant $C$. It follows that for each $a$, $b$ the 1-parameter family of functions $Q_a^b (r,\cdot)$ is Cauchy in $C^0(\mathcal{S}^3)$, and thus $\lim_{r\rightarrow\infty}Q_a^b (r,\cdot) =: \mathbf{O}_a^b$ exists for some $\mathbf{O}_a^b \in C^0(\mathcal{S}^3)$. Moreover, \eqref{f0-9qu09hjy} also implies that the matrix $\mathbf{O}=(\mathbf{O}_a^b)$ is constant on $\mathcal{S}^3$.
%That is to say,  for any $\epsilon>0$, there is a constant $R>0$ such that for any $r(x)\geq R$ and $r(x')\geq R$, we find that %$|Q^j_i(x)-Q^{j}_i(x')|\leq \epsilon$. Therefore, we conclude that $\lim_{r(x)\rightarrow \infty} Q^j_i(x)$ exists.

To complete the proof, notice that taking the limit as $\bar{r}\rightarrow\infty$ in \eqref{f0-9qu09hjy} produces
\begin{equation}
|Q_a^b(\mathbf{p}) -\mathbf{O}_a^b|\leq Cr^{-q_1}. 
\end{equation}
This, combined with \eqref{formula-vector-asy} and \eqref{transfer-decay} yields \eqref{transf-estimate}, along with the special values for the limit when $a=4$ or $b=4$.  
%We may assume that $\lim_{r(x)\rightarrow \infty}Q^j_i(x)={O}^j_i$. Using the curve $\alpha_x(t)=(r(x)+t, v(x))$  from $x$ to %the infinity and \eqref{transfer-decay} products that
%\begin{equation}
%    |Q^j_i(x)-\bar{O}_i^j|\leq \int^\infty_0 |\partial Q^j_i|(\alpha_x(t))dt\leq \int_0^\infty C'_6(t+r(x))^{-q_1-1}dt=O(r^{-%q_1}).
%\end{equation}
Moreover, \eqref{model-decay} and \eqref{orthnormal-relation} imply 
\begin{equation} 
\delta_{a}^b=\lim_{r\rightarrow \infty} g_0(\psi^{-1}_*X'_a, \psi^{-1}_*X'_b)=\lim_{r\rightarrow\infty} \sum_{n=1}^4 Q^n_aQ^n_b= \sum_{n=1}^{4} \mathbf{O}^n_a \mathbf{O}^n_b,
\end{equation}
showing that $\mathbf{O}$ is an orthogonal matrix. 
%We also find that $O_{i, j}=\delta_{i, j}$ for $i=4$ or $j=4$ from \eqref{formula-vector-asy}. 
\end{proof}

We are now able to establish the main result of this section.

\begin{proposition}\label{mass-well-def} 
The mass \eqref{massdef} of an ALF end $(\E,g)$ is well-defined and independent of the choice of ALF structure. 
\end{proposition}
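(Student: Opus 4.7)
The plan is to establish two independent statements: (a) for a fixed ALF structure $\mathcal{F}=(\Psi,\mathcal{S}^3,\pi_s,\tau,\ell)$, the limit \eqref{massdef} exists and is independent of the exhaustion $r_i\to\infty$; (b) the limit value is the same for any two ALF structures on $(\E,g)$.

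Part (a) is a standard Stokes'-theorem argument on the annulus $\{r\leq|x|\leq R\}$ of the pulled-back model end. Computing in the base-fiber coordinates \eqref{connection-form-metric}, the exterior derivative of $\star_{g_0}(\operatorname{div}_{g_0}g-d\operatorname{tr}_{g_0}g)$ equals the linearized scalar curvature operator at $g_0$ applied to $g-g_0$, which agrees with $R_g-R_{g_0}$ up to a quadratic remainder $Q=Q(g-g_0)$ of schematic type $|g-g_0|\cdot|\partial^2(g-g_0)|+|\partial(g-g_0)|^2$. Since $R_g\in L^1$ by hypothesis, $R_{g_0}=O(r^{-4})\in L^1$ by \eqref{decay-scal}, and $|Q|_{g_0}=O(r^{-2q-2})$ is integrable on the end precisely because $q>1/2=(n-3)/2$ (the measure of an annulus scales like $r^{n-2}drd\theta d\Omega$), the flux integrals over $\mathcal{S}^{n-1}_{r_i}$ form a Cauchy family, yielding a well-defined limit independent of the exhaustion.

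For part (b), consider two ALF structures $\mathcal{F},\mathcal{F}'$ and set $\psi=\Psi'^{-1}\circ\Psi$. Lemma \ref{vector-asy} already forces $\ell=\ell'$, and Lemma \ref{orthnormal} supplies a constant orthogonal matrix $\mathbf{O}$ with $\mathbf{O}^4_4=1$, $\mathbf{O}^b_4=\mathbf{O}^4_b=0$ for $b\neq 4$, such that $Q-\mathbf{O}=O_2(r^{-q_1})$. Integrating this $C^2$ decay along radial paths in the base and along the $S^1$ fibers, I expect to conclude that, up to an $O_2(r^{1-q_1})$ remainder, $\psi$ is asymptotic to a product-type affine map: a rigid motion $\tilde{\mathbf{O}}x+b$ on $\mathbb{R}^{n-1}$ composed with a constant translation of the $S^1$ fiber, possibly followed by a gauge transformation relating the two connection 1-forms. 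Both the model metric in the form \eqref{connection-form-metric} and the integrand of \eqref{massdef} are manifestly invariant under such product-type motions (and gauge changes of $A$), so the problem reduces to estimating the boundary contribution of the $O_2(r^{1-q_1})$ remainder.

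That remaining estimate is obtained by a second Stokes-type computation applied to the difference of integrands: after pullback, the two mass integrands are cohomologous modulo terms of the form $\partial^2\epsilon+\partial\epsilon\cdot\partial(g-g_0)$ with $|\epsilon|_{g_0}=O_2(r^{1-q_1})$, hence integrable over annular regions by the same dimensional accounting as in part (a), since $q_0+q_1>n-3=1$. Therefore the difference of boundary integrals over $\mathcal{S}^{n-1}_r$ vanishes in the limit, giving $m(\mathcal{F})=m(\mathcal{F}')$. The main obstacle is the passage from the frame-level decay in Lemma \ref{orthnormal} to an honest affine approximation of $\psi$ in the underlying $(x,\theta)$ coordinates: one must integrate along paths in the simply connected end, verify path-independence of the translation constants $b$ and the $S^1$-shift, and handle the gauge ambiguity of the connection form $A$. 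Once this is done, the invariance of the mass under product-type rigid motions and the Stokes argument from part (a) close the proof.
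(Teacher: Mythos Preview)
Part (a) of your proposal matches the paper's argument: both express the exterior derivative of the mass integrand as scalar curvature plus an $O(r^{-2-2q})$ remainder (the paper packages this via Bartnik's identity $R_{\Psi^*g}\star_{g_0}1=dZ+O(r^{-2-2q})$ with $Z=g_{ab}\omega^a_c\wedge\eta^{cb}$), then use $R_g\in L^1$ and $q>1/2$ to conclude the fluxes form a Cauchy family.

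Part (b) takes a genuinely different route. The paper never integrates the frame relation $Q-\mathbf{O}=O_2(r^{-q_1})$ back to a coordinate-level approximation of $\psi$. It stays entirely at the level of the 3-forms $Z$ and $Z'$: a direct computation gives
\[
\psi^*Z'-Z=dQ^b_a\wedge\star_{g_0}\bigl(\psi^*X'^a\wedge g_{bc}X^c\bigr)+O(r^{-1-2q_1}),
\]
and then $Q^b_a\mathbf{O}^n_a=\delta^{bn}+O(r^{-q_1})$ together with $d\star_{g_0}(X^n\wedge X^b)=O(r^{-2})$ lets one rewrite the right-hand side as an exact 3-form plus $O(r^{-1-2q_1})$. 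The exact piece integrates to zero over $\mathcal{S}^3_r$, and the remainder has vanishing flux since $q_1>1/2$. No affine approximation of $\psi$, no gauge bookkeeping for $A$, no $S^1$-shift constants are ever needed.

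Your route is the classical Bartnik--Chru\'sciel template: pass from frame decay to an asymptotically affine coordinate change, then invoke invariance of the integrand under product-type rigid motions. This is in principle workable, but the obstacle you flag is genuine and unresolved in your sketch: the end is a nontrivial circle bundle, so there is no global $\theta$-coordinate to translate, the radial integration must be done on the simply connected base while the fiber direction is handled through the connection, and the two connection 1-forms $\tau,\tau'$ differ by a gauge transformation that must be tracked separately from the base rotation $\tilde{\mathbf{O}}$. The paper's form-level computation sidesteps all of this, which is exactly why Lemmas \ref{vector-asy} and \ref{orthnormal} were set up in frame language rather than in coordinates.
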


\begin{proof} 
%We first show that for any ALF structure $(\Psi, \mathcal{S}, \pi_0)$, the following limit exits: 
%\begin{equation}\label{mass-formula}
%m(M^4, g , \mathcal{E})=\lim_{r\rightarrow \infty}\frac{1}{2\pi\ell\omega_2}\int_{\mathcal{S}^3_r} \star_{g_0}(\text{div}_{g_0}(g)-d \text{tr}_{g_0}(g) ).\end{equation} and then prove that the limit is independent of the choice of ALF structures. 
%\vspace{2mm}
We will first address the existence of the limit in \eqref{massdef} for a given ALF structure $\mathcal{F}=(\Psi, \mathcal{S}^3, \pi_s, \tau,\ell)$. Let $\{X_1, X_2, X_3, X_4\}$ be the $g_0$-orthonormal frame on $\Psi^{-1}(\E_{\Psi})$ given by \eqref{def-frame}, and denote components of the pullback metric by $g_{ab}=\Psi^{*}g(X_a , X_b)$. Then according to \cite{bartnik1986}*{(4.2) and (4.7)}, the scalar curvature may be expressed as
%Recall that the ALF structure $\mathcal{F}$ is equipped with a model metric $g_0$  on $\Psi^{-1}(\E_\Psi)$ (see \eqref{model}) %and  an $g_0$-orthonormal frame $\{X_1, X_2, X_3, X_4\}$ is given by \eqref{def-frame}. Using this frame, the scalar curvature %$R_{\Psi*(g)}$ takes the form (see Page 680 of \cite[(4.2) and (4.7)]{bartnik1986}):
\begin{equation}\label{Scal-exp}
R_{\Psi^*g}=|\det\Psi^*g|^{-\frac{1}{2}} \partial_b (g_{ab, a}-g_{aa, b})+ O(r^{-2-2q}),\quad  R_{\Psi^*g} \star_{g_0} 1=d( g_{ab}\omega^a_c\wedge \eta^{cb})+O(r^{-2-2q}),
\end{equation} 
where $\omega^a_c$ are the Levi-Civita connection $1$-forms of $\Psi^*g$ with respect to the chosen frame, and the remaining 2-forms are given by $\eta^{ab}=\star_{g_0}(X^a\wedge X^b)$ in which an upper index is used to indicate the dual coframe. %\lrcorner \star_{g_0}1$. 
By setting
\begin{equation}
Z=g_{ab}\omega^a_c\wedge \eta^{cb}
\end{equation}
we then have
\begin{equation}\label{foaibfoiuabifo}
R_{\Psi^*g}\star_{g_0}1=dZ+O(r^{-2-2q}),\quad\quad
Z=\star_{g_0} (\text{div}_{g_0}(\Psi^*g)-d \text{tr}_{g_0}(\Psi^*g))+O(r^{-1-2q}),
\end{equation}
and thus for any $r>\bar{r}>r_0$ it follows that
\begin{equation}
\int_{\mathcal{S}^3_r} Z-\int_{\mathcal{S}^3_{\bar{r}}} Z=\int_{\mathbf{B}_r\setminus \mathbf{B}_{\bar{r}}} dZ\\
=\int_{\mathbf{B}_r\setminus \mathbf{B}_{\bar{r}}}\left(R_{\Psi^*g}+O(r^{-2-2q})\right)\star_{g_0}1,
\end{equation}
where the boundary surfaces are oriented with respect to the unit normal pointing towards infinity and $\mathbf{B}_r =\pi_0^{-1}(B_r)$ as in the proof of Lemma \ref{vector-asy}. Since $R_g\in L^1(\E)$ and $q>1/2$, we conclude that the limit
\begin{equation}\label{mass-formula}
\lim\limits_{r\rightarrow \infty}\int_{\mathcal{S}^3_r} \star_{g_0}(\text{div}_{g_0}(\Psi^*g)-d \text{tr}_{g_0}(\Psi^*g) ) %\frac{1}{2\pi \ell\omega_2}
\end{equation} 
exists and is finite, showing that the mass is well-defined for each ALF structure.

We will now establish the independence of the mass with respect to ALF structure. Consider any other ALF structure $\mathcal{F}'=(\Psi', \mathcal{S}'^{3}, \pi'_s, \tau', \ell')$, with $g'_0$-orthonormal frame $\{X'_a\}_{a=1}^4$ and coframe $\{X'^a\}_{a=1}^4$ on $\Psi'^{-1}(\E_{\Psi'})$. If $\psi=\Psi'^{-1}\circ \Psi$ is the diffeomorphism relating the two ALF structures, then according to Lemma \ref{orthnormal} there exists a transition matrix $Q$ relating the frames and coframes that asymptotes to a constant orthogonal matrix $\mathbf{O}$ with the following decay
\begin{equation}\label{0hqj309rjh9qihh}
\psi_{*}^{-1}X'_a =Q_{a}^b X_b,\quad\quad \psi^* X'^a=(Q^{-1})^a_b X^b,\quad\quad
Q_a^b -\mathbf{O}_a^b =O_2(r^{-q_1}).
\end{equation}
Furthermore, on $\Psi'^{-1}(\E_{\Psi'})$ we may construct the relevant differential forms
\begin{equation}
\eta'^{ab}=\star_{g'_0}(X'^a\wedge X'^b),\quad\quad\quad
Z'=g'_{ab}\omega'^a_c\wedge \eta'^{cb},
\end{equation}
where $\omega'^a_c$ is the Levi-Civita connection 1-form for ${\Psi'}^*g$ with respect to the frame $\{X'_a\}_{a=1}^4$ and $g'_{ab}=(\Psi'^*g)(X'_a,X'_b)$, so that
\begin{equation}
Z'=\star_{g'_0} (\text{div}_{g'_0}(\Psi'^*g)-d \text{tr}_{g'_0}(\Psi'^*g))+O(r'^{-1-2q'}).
\end{equation}
Next, observe that a computation utilizing \eqref{model-decay}, \eqref{0hqj309rjh9qihh}, and Lemma \ref{dist} 
yields
\begin{equation}
\psi^* Z' -Z=dQ_a^b \wedge \star_{g_0} (\psi^* X'^a \wedge g_{bc}X^c) +O(r^{-1-2q_1}).
\end{equation}
Moreover, applying \eqref{model-decay} and \eqref{0hqj309rjh9qihh} one more time produces
\begin{equation}
\star_{g_0} (\psi^* X'^a \wedge g_{bc}X^c)=\mathbf{O}_a^n \star_{g_0}(X^n \wedge X^b) +O(r^{-q_1}).
\end{equation}
It follows that
\begin{align}
\begin{split}
\psi^* Z' -Z &=d\left(Q_a^b \mathbf{O}_a^n \star_{g_0}(X^n \wedge X^b)\right)
+Q_a^b \mathbf{O}_a^n d\star_{g_0}(X^n \wedge X^b)+O(r^{-1-2q_1})\\
&=d\left(Q_a^b \mathbf{O}_a^n \star_{g_0}(X^n \wedge X^b)\right)
+O(r^{-1-2q_1})
\end{split}
\end{align}
since 
\begin{equation}
Q_a^b \mathbf{O}_a^n=\delta^{bn}+O(r^{-q_1}),\quad\quad\quad d\star_{g_0}(X^n \wedge X^b) =O(r^{-2}),
\end{equation}
where the last equation arises from the decay of Christoffel symbols as in the proof of Proposition \ref{AEend-decay}.
Hence, with $q_1 >1/2$ we find that the mass of the end $\E$ as determined by the two ALF structures must agree.
\begin{comment}
For our convenience, we may abuse the natation and write  $dx_i$ for the dual $1$-form of $X_i$ and $\psi^*(dx'_i)$ for the dual $1$-form of $\psi^{-1}_*X'_i$. We  notice that $dx_4:=\tau$ and $dx_4:=\tau'$ are not closed but have that $|d(dx_4)|_{g_0}=O(r^{-2})$ and $|d(\psi^*dx'_4)|_{\psi^*g'_0}=O(r^{-2})$. Observe that 
\begin{equation}
\begin{split}
&(\psi^{-1}_*X_j'\wedge g^{ik}X_k)\lrcorner\star_{g_0}1=  \star_{g_0}(O^k_jdx_k\wedge dx_i)+O(r^{-q_1})\\
   & d(\psi^{-1}_*X_j'\wedge g^{ik}X_k)\lrcorner\star_{g_0}1= d( \star_{g_0}(O^k_jdx_k\wedge dx_i))+O(r^{-q_1-1})=O(r^{-q_1-1})
   \end{split}
\end{equation}where the last equation follows from $|d(dx_i)|_{g_0}=O(r^{-2})$ . This means that 
\begin{equation}
\begin{split}
    Z-Z' &=dQ^j_i\wedge (X'_j\wedge g^{ik}X_k)\lrcorner \star_{g_0}1\\
    &=d(Q^j_i\wedge (X'_j\wedge g^{ik}X_k)\lrcorner \star_{g_0}1) -Q^j_id(i\wedge (X'_j\wedge g^{ik}X_k)\lrcorner \star_{g_0}1\\
    &=d(Q^j_i\wedge (X'_j\wedge g^{ik}X_k)\lrcorner \star_{g_0}1)+O(r^{-1-2q_1}),
    \end{split}
\end{equation}  Thus, the mass integrals over the boundaries $\mathcal{S}_r$ in \eqref{mass-formula} differ by $o(1)$ and a term which integrates to zero. 
\end{comment}
\end{proof}

%Note that if $(M^n, g, \E)$ is a complete AF manifold, the ADM mass is a geometric invariant, independent of choice of $g_0$ (See  \cite[Lemma 4.8]{CLSZ} (this lemma does not prove geometric invariance, it just gives coordinate expression for mass) and \cite[Proposition 3.6]{minerbe}) [Jian believes that Minerbe's proof is wrong, he does not give the correct asymptotics for the difference of distance function coordinate vector fields. We do not understand his proof.] 

%\begin{rem} Let $(X_1, X_2, X_3, \ell^{-1}{V})$ be the $g_0$-orthonormal frame field on $\mathcal{E}$. If  $\lim\limits_{r\rightarrow \infty}\frac{1} {2\pi\ell\omega_2}\int_{\mathcal{S}^3_r} (g_{ij, j}-g_{jj, i})\star_{g_0} dx^i$ exists,  one can use the same argument in the proof of Lemma 4.8 in \cite{CLSZ} to show that 
%\begin{equation}\label{equ-2.11}
%m(M^4, g, \E)=\lim_{r\rightarrow \infty}\frac{1} {2\pi\ell \omega_2}\int_{\mathcal{S}^3_r} (g_{ij, j}-g_{jj, i})\star_{g_0} dx_i-
%\lim_{r\rightarrow \infty}\frac{1}{2\pi\ell\omega_2}\int_{\mathcal{S}^3_r} \star_{g_0} d \{\frac{g(V, V)}{g_0(V, V)}\}
%\end{equation}where $i, j\in \{1,2,3\}$ and $X_i=\frac{\partial}{\partial x^i}$, $g_{ij}=g(X_i, X_j)$.  
%\end{rem}

\begin{corollary}\label{quo-sep}
Let $(M^4, g, \E)$ be a complete ALF manifold with an almost free $U(1)$ action and a designated end $\E$. 
Consider the quotient map $\pi: \E \rightarrow \bar{\E}:=\E /U(1)$ along with the AE Riemannian quotient space $(\bar{\E},\bar{g})$, and assume that $R_{\bar{g}}\in L^1 (\bar{\E})$.
%If the quotient space $(\bar{\E}, \bar{g}|_{\bar{\E}})$ is diffeomorphic to $\mathbb{R}^3\setminus B_{r_1}$ for some $r_1$, %and in the coordinates $\bar{x}=(\bar{x}_1, \bar{x}_2, \bar{x}_3)$ provided by this diffeomorphism the quotient metric %satisfies the decay 
%\begin{equation}
%%    |\partial^l(g_{i, j}-\delta_{ij})|=O(|\bar{x}|^{-q-l}), \quad l=0,1,2 \quad \quad \text{and } R_{\bar{g}}\in %L^1(\bar{\E}),
%\end{equation} then one has a mass splitting
Then the mass $m$ of the original end is related to the ADM mass $\bar{m}$ of the quotient end by
\begin{equation}\label{mass-split}
m =4\bar{m}+\lim_{r\rightarrow \infty}\frac{1}{2\pi}\int_{\bar{S}_{r} } \langle\pi_{*}\mathcal{N},\bar{\nu}\rangle 
\end{equation}
where $\bar{S}_{r}\subset\bar{\mathcal{E}}$ is a coordinate sphere with unit outer normal $\bar{\nu}$, the Euclidean inner product is denoted by $\langle\cdot,\cdot\rangle$, and $\mathcal{N}$ is the mean curvature vector of the $U(1)$ fibers within $\E$. 
\end{corollary}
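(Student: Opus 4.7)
The strategy is to compute the ALF mass integral in a coordinate system adapted to the Riemannian submersion $\pi:\mathcal{E}\to\bar{\mathcal{E}}$, split the integrand into a ``horizontal'' piece (the quotient metric deviation) and a ``vertical'' piece (the fiber length deviation), and verify that the former reproduces $4\bar{m}$ while the latter encodes the mean curvature of the fibers. Using Proposition \ref{mass-well-def}, I may freely choose an ALF structure. I would select coordinates $(\bar x,\theta)$ on a $U(1)$-invariant subset of $\mathcal{E}$, where $\bar x$ are the AE coordinates of $\bar{\mathcal{E}}$ furnished by Proposition \ref{AEend-decay} (pulled back), and $\theta$ is a local fiber coordinate with $T=\partial_\theta$. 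In these coordinates the submersion form \eqref{quo-form} becomes
\begin{equation*}
g=\bar g_{ij}\,d\bar x^i d\bar x^j+u^2(d\theta+\mathcal{A}_i d\bar x^i)^2,\qquad u:=|T|_g,
\end{equation*}
and I would take the reference model
\begin{equation*}
g_0=\delta_{ij}\,d\bar x^i d\bar x^j+\ell^2(d\theta+\mathcal{A}_i d\bar x^i)^2
\end{equation*}
with the \emph{same} connection $\mathcal{A}$; verifying this is a legitimate ALF structure uses Proposition \ref{AEend-decay} together with Lemma \ref{vector-asy}.

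With these choices, the $g_0$-orthonormal frame $\{X_i=\partial_{\bar x^i}-\mathcal{A}_i\partial_\theta,\ X_4=\partial_\theta/\ell\}$ of \eqref{def-frame} diagonalizes the perturbation $h:=g-g_0$: a direct computation gives $h(X_i,X_j)=\bar g_{ij}-\delta_{ij}$, $h(X_i,X_4)=0$, and $h(X_4,X_4)=u^2/\ell^2-1$. I would then use the standard Bartnik-type expansion (as already invoked in the proof of Proposition \ref{mass-well-def} via \eqref{Scal-exp}--\eqref{foaibfoiuabifo}) to write
\begin{equation*}
\bigl(\mathrm{div}_{g_0}g-d\,\mathrm{tr}_{g_0}g\bigr)(X_j)=\bar\partial_i\bar h_{ij}-\bar\partial_j\bar h_{ii}-\bar\partial_j(u^2/\ell^2)+O(r^{-1-2q}),
\end{equation*}
where $\bar h_{ij}=\bar g_{ij}-\delta_{ij}$. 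The $U(1)$-invariance makes the integrand independent of $\theta$ at leading order, while the Christoffel symbols of $g_0$ are $O(r^{-2})$ and contribute only a negligible boundary flux; this is the main technical check.

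Since $\mathcal{S}^3_r$ fibers over $\bar S_r$ with fiber length $2\pi\ell$ in $g_0$, integration in $\theta$ produces a factor $2\pi\ell$, and the prefactor $\frac{1}{8\pi^2\ell}$ in \eqref{massdef} becomes $\frac{1}{4\pi}$. The horizontal contribution is
\begin{equation*}
\lim_{r\to\infty}\frac{1}{4\pi}\int_{\bar S_r}\bigl(\bar\partial_i\bar h_{ij}-\bar\partial_j\bar h_{ii}\bigr)\bar\nu^j\, d\bar\sigma=4\bar m,
\end{equation*}
matching the ADM formula \eqref{massaeend} up to the ratio $16\pi/4\pi=4$. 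For the vertical contribution, I would use the Killing identity $g(\nabla_X T,T)=\tfrac12 X(u^2)$ to deduce $\nabla_T T=-u\,\nabla u$, whence the mean curvature vector of the (one-dimensional) fiber is $\mathcal{N}=u^{-2}\nabla_T T=-\nabla u/u$, a horizontal field whose pushforward is $\pi_*\mathcal{N}=-\bar\nabla u/u$. Then
\begin{equation*}
-\frac{1}{4\pi}\int_{\bar S_r}\bar\partial_j\!\left(\tfrac{u^2}{\ell^2}\right)\!\bar\nu^j\, d\bar\sigma=\frac{1}{2\pi\ell^2}\int_{\bar S_r}u^2\langle\pi_*\mathcal{N},\bar\nu\rangle\, d\bar\sigma\ \xrightarrow[r\to\infty]{}\ \frac{1}{2\pi}\lim_{r\to\infty}\int_{\bar S_r}\langle\pi_*\mathcal{N},\bar\nu\rangle\, d\bar\sigma,
\end{equation*}
using $u\to\ell$. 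Adding the two contributions yields \eqref{mass-split}. The main obstacle is the careful bookkeeping in Step 3: one must verify that the nontrivial $g_0$-Christoffel symbols generated by the twisting $\mathcal{A}$ and the difference between $g$- and $g_0$-connections produce only $O(r^{-1-2q})$ errors in $\omega(\nu)$, so that they vanish in the limit upon integrating over $\bar S_r$ of area $O(r^2)$ with $q>1/2$.
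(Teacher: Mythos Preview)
Your proposal is correct and essentially matches the paper's proof: both construct an ALF structure from the quotient AE coordinates of Proposition~\ref{AEend-decay} together with a fiber coordinate along $T$, split the mass flux into a horizontal part yielding $4\bar m$ and a vertical part $-\nu(u^2/\ell^2)$ identified with the mean-curvature term via $\mathcal{N}=-\nabla\log|T|_g$, and invoke Proposition~\ref{mass-well-def} for coordinate independence. The only cosmetic difference is that you build $g_0$ with the same connection $\mathcal{A}$ as $g$ (forcing $h(X_i,X_4)=0$ exactly), whereas the paper allows $g_{i4}=O_2(r^{-q})$ cross terms and absorbs them into the $O(r^{-1-2q})$ error.
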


\begin{comment}
It is important to note that $(\bar{\E}, \bar{g}|_{\bar{\E}})$ is an asymptotically Euclidean end and the  mass is   
\begin{equation}m(\bar{M}, \bar{g}, \bar{\E})=\lim_{r\rightarrow \infty}\frac{1}{\omega_2}\int_{\bar{\mathcal{S}}^2_r}\star_{g_{\bb{R}^3}}(\text{div}_{g_{\bb{R}^3}}\bar{g}-d \text{tr}_{g_{\bb{R}^3}}\bar{g})\end{equation}

%begin proof
We will make use of the quotient map $\pi: \E\rightarrow \bar{\E}$ for constructing an ALF structure $\mathcal{F}$. Under this ALF structure, the quotient form of the model metric $g_0$ and $g$ will provide the required splitting for the mass. To do this, using the coordinates $(\bar{x}_1,\bar{x}_2, \bar{x}_3 )$ on $\bar{\E}$ and the killing vector field $T$ yields an ALF structure $\mathcal{F}$ and a model metric $g_0$
\begin{equation}
g_0=\delta_{i,j}dx_idx_j+\ell^2\tau^2=g_{\mathbb{R}^3}+\ell^2\tau^2,\end{equation} where $x_i$ is the lifting function of $\bar{x}_i$ via $\pi$ and $\tau$ is a $1$-form with $\mathcal{L}_T\tau=0$ and $\iota_T\tau=\ell$. The quotient form of $g$ also provides an analogous splitting
\begin{equation}
    g=\bar{g}+\ell^2|\tau|_{g}^{-2}\tau^2
\end{equation}
Using 
\begin{equation}
\begin{split}
&\text{div}_{g_0}(g)(X)=\text{div}_{g_{\mathbb{R}^3}}(\bar{g})(\pi_*X)+\ell^{-2}Tg(T, X)-\ell^2g(T, \mathring{\nabla}_TX)=\text{div}_{g_{\mathbb{R}^3}}(\bar{g})+O(r^{-1-2q})\\
       & \text{tr}_{g_0}g=\text{tr}_{g_{\mathbb{R}^3}}\bar{g}+|\tau|^2_{g}
    \end{split}
\end{equation}
where we have used $g$ is $U(1)$-invariant.  

------------------------------------------
\end{comment}

\begin{proof}
Recall that the metric on $\E$ may be expressed in Riemannian submersion format 
\begin{equation}
g=\pi^* \bar{g}+\frac{\eta^2}{|\eta|^2}, 
\end{equation}
where $\eta=g(T,\cdot)$ is the dual 1-form to the $U(1)$ generator $T$. According to
Proposition \ref{AEend-decay}, there exists a coordinate system $\bar{x}=(\bar{x}^1, \bar{x}^2, \bar{x}^3)$ on the quotient end $\bar{\E}$ that yields an AE structure. By pulling these functions back to $\E$ and denoting them by $\tilde{x}^i =\pi^* \bar{x}^i$ we obtain
\begin{equation}
\pi^* \bar{g} =\bar{g}_{ij} d\tilde{x}^i d\tilde{x}^j =(\delta_{ij}+O_2(r^{-q}))d\tilde{x}^i d\tilde{x}^j,
\end{equation}
where $r=|\bar{x}|$. Combining this with the flow parameter $t$, for the vector field $T$, yields local coordinates $(\tilde{x},t)$ on $\E$ such that
\begin{equation}
\frac{\eta}{|\eta|}=\ell (1+O_2(r^{-q}))dt +\tilde{A}_i d\tilde{x}^i,\quad\quad \tilde{A}_i =O_2(r^{-q}).
\end{equation}
These observations yield an ALF structure in which the components of the metric satisfy the following fall-off conditions
\begin{align}\label{890}
\begin{split}
g_{ij}&=g(\partial_{\tilde{x}^i},\partial_{\tilde{x}^j})=\bar{g}_{ij} +\frac{\eta(\partial_{\tilde{x}^i})\eta(\partial_{\tilde{x}^j})}{|\eta|^2}=\bar{g}_{ij}+O_2(r^{-q}),
\\
g_{i4}=g(&\partial_{\tilde{x}^i},\ell^{-1}\partial_{t})=O_2(r^{-q}),\quad\quad\quad 
g_{44}=g(\ell^{-1}\partial_t ,\ell^{-1}\partial_t)=1 +O_2(r^{-q}),
\end{split}
\end{align}
for $i,j=1,2,3$. Moreover, the unit outer normal to the surface $\mathcal{S}^3_{r}$ admits the asymptotics
$\nu=\tfrac{\tilde{x}^i}{r}\partial_{\tilde{x}^i}+O(r^{-q})$. It follows that the mass flux density may be expressed as
\begin{equation}\label{fijqpj9qj39j0u-}
\sum_{a,b=1}^{4}(g_{ab,a}-g_{aa,b})\nu^b
=\sum_{i,j=1}^{3}(\bar{g}_{ij,i}-\bar{g}_{ii,j})\bar{\nu}^j +\sum_{b=1}^{4}(g_{4b,4}-g_{44,b})\nu^b +O(r^{-1-2q}),
\end{equation}
where $\bar{\nu}=\partial_{r}$. Furthermore since
\begin{align}
\begin{split}
\sum_{b=1}^{4}g_{4b,4}\nu^b &=\ell^{-2}\partial_t g(\partial_t,\nu) +O(r^{-1-2q}),\\
\sum_{b=1}^{4}g_{44,b}\nu^b &=2\ell^{-2}g(\nabla_{\partial_t} \nu,\partial_t)+O(r^{-1-2q})=2 g(\mathcal{N},\nu)+O(r^{-1-2q}),
\end{split}
\end{align}
and the $t$-derivative term integrates to zero along the flux surfaces, we find that
\begin{equation}\label{pgfjq-9jr-09j-0y}
\int_{\mathcal{S}^3_{r}}\sum_{b=1}^{4}(g_{4b,4}-g_{44,b})\nu^b 
=4\pi\ell\int_{\bar{S}_{r} } \langle\pi_{*}\mathcal{N},\bar{\nu}\rangle +O(r^{1-2q}).
\end{equation}
Since the mass of $\E$ as computed with respect to this particular ALF structure agrees with $m$ by Proposition \ref{mass-well-def}, and the ADM mass of $\bar{\E}$ is well-defined in light of the integrability of $R_{\bar{g}}$, the desired result is obtained by integrating \eqref{fijqpj9qj39j0u-} over $\mathcal{S}^3_{r}$ and passing to the limit, after applying \eqref{pgfjq-9jr-09j-0y}.
\end{proof}

\section{The Local Structure of Quotient Space Singularities}
\label{sec3} \setcounter{equation}{0}
\setcounter{section}{3}

%[[Note that the paper https://arxiv.org/abs/1703.05464 completely analyzes the fixed point set in dimension 4. Should cite or use it here? It is mentioned at the bottom of page 10 in Lars Andersson's paper https://arxiv.org/abs/2306.14567. Also look in this Andersson paper to see how their analysis of the $S^1$-action is related our results.]]

%{\color{blue}{You are right! We should mention their results. However, they just described the algebraic characterisation, instead of the geometric characterisation of the quotient space. 

%Actually, the definition of almost free action is much stronger than the action studied in Jang's paper. This is why we could study the geometry of quotient space.

%A remark about Jang's paper is that a key point in our proof also appears in the proof of [Theorem 2.10, Jang's paper]. However, I did not see the proof for the key point in Jang's paper. }}

In this section we will study the quotient space of 4-manifolds under an almost free U(1) action. Previous work on this topic has been carried out in \cites{AADN,Jang}, however little attention has been dedicated to the local geometry of the quotient space. 
%$U(1)$-actions on ALF 4-manifolds have been studied extensively in \cite{Jang} and \cite{AADN},  the local geometry of the %corresponding  quotient space has received relatively little attention. Here, we will  examine the local form of the quotient %metric near singular points.
Our main result in this direction provides an asymptotic expression for the quotient metric near singular points.
Note that although Definition \ref{def-alf-mfd} describes almost free $U(1)$ actions with respect to an ALF end, here these ends will play no role. Thus, when referring to such actions in this section, part (iii) of the definition may be ignored.

\begin{theorem}\label{local} 
Let $(M^{4}, g)$ be a complete Riemannian 4-manifold with an almost free $U(1)$ action. 
Then the Riemannian quotient space $(\bar{M}^3, \bar{g})$ is a smooth Riemannian 3-manifold in the compliment of finitely many points $\{\bar{p}_1 ,\dots, \bar{p}_k\}$. Moreover, for each $i\in \{1,\dots,k\}$ there exists a neighborhood $\bar{U}_i \subset \bar{M}^3$ with polar coordinates such that 
%If  the $U(1)$ action is free outside finitely many points $\{p_1, \cdots, p_k\}$, then $(M/U(1), \bar{g})$ is a smooth %Riemannian manifold except at finitely many points, $\bar{p}_1, \bar{p}_2, \cdots \bar{p}_k$. Moreover, for each $\bar{p}_i$, %there is a neighborhood $\bar{U}_i$ such that 
\begin{equation}
\bar{g}= d\bar{r}^2+{\bar{r}^2}g_{CP^{1}}+ \pmb{\epsilon}_{\bar{r}}    %O_2(\bar{r}^3) 
\quad \text{ on } \bar{U}_i \setminus\{\bar{p}_i\},
\end{equation}
where $g_{CP^1}=\frac{1}{4}g_{S^2}$ is the Fubini-Study metric on the complex projective line $CP^1$ and
where $\pmb{\epsilon}_{\bar{r}}=O_{2}(\bar{r}^3)$ is a 1-parameter family of symmetric 2-tensors on $S^2$.
\end{theorem}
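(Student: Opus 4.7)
The plan is to show that the only obstruction to smoothness of $\bar M^3$ comes from the fixed points of the $U(1)$ action, and then to linearize at each fixed point using $U(1)$-equivariant normal coordinates. First I would verify that the fixed-point set is precisely $\{p_1,\dots,p_k\}$ as given in Definition \ref{def-alf-mfd}(ii), and that away from these points the quotient is a smooth Riemannian $3$-manifold since the free isometric $U(1)$-action yields a Riemannian submersion. The remaining content of the theorem is thus entirely local around each $p_i$.

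Fix $i$ and consider the isotropy representation of $U(1)$ on $T_{p_i}M^4$. Because $U(1)$ acts by isometries fixing $p_i$, we obtain a homomorphism $U(1)\to SO(4)$, and because the action is almost free this representation must split as $\mathbb{R}^4=\mathbb{C}\oplus\mathbb{C}$ with generator acting by $(e^{ia\theta},e^{ib\theta})$. The requirement that the action be free on a punctured neighborhood of $p_i$ forces $|a|=|b|=1$, so (up to reorientation) the linear action is the standard Hopf action. The exponential map $\exp_{p_i}$ is $U(1)$-equivariant and identifies a small geodesic ball $B_\varepsilon\subset M^4$ with a Euclidean ball in $\mathbb{C}^2$ equipped with the Hopf action, reducing the problem to computing the quotient of a metric on $\mathbb{C}^2$ that agrees with the flat one at the origin.

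Now I would exploit the Gauss lemma. In normal coordinates $x$ on $T_{p_i}M^4$ one has the standard expansion $g_{ij}(x)=\delta_{ij}-\tfrac13 R_{ikjl}(p_i)x^k x^l+O(|x|^3)$ and the identity $g_{ij}x^j=x^i$. Writing $x=r\omega$ with $\omega\in S^3$, the latter identity eliminates all $dr$-cross-terms in the non-flat piece, so $g=dr^2+r^2g_r$ where $g_r$ is a family of metrics on $S^3$ satisfying $g_r=g_{S^3}+O(r^2)$ (with two-derivative control). Crucially, the Killing field $V$ generating the Hopf action is everywhere tangent to the $r$-spheres, so $g(\partial_r,V)=0$ and $\partial_r$ is $g$-horizontal with respect to the Riemannian submersion; this ensures that the radial coordinate descends to a smooth function $\bar r=r$ on the quotient which is $\bar g$-unit and orthogonal to the angular directions.

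Consequently the quotient metric takes the form $\bar g=d\bar r^2+\bar r^2\bar g_{\bar r}$ on $\bar U_i\setminus\{\bar p_i\}$, where $\bar g_{\bar r}$ is the $U(1)$-quotient of $g_r$ on $S^3$ modded out by the Hopf circles. A direct computation comparing the $g_r$-horizontal projection to the $g_{S^3}$-horizontal projection (the discrepancy being $O(r^2)$ because $g_r-g_{S^3}=O(r^2)$ and $|V|_{g_r}^2=1+O(r^2)$) yields $\bar g_{\bar r}=g_{CP^1}+O(\bar r^2)$, and differentiating the normal-coordinate expansion propagates this to $C^2$-control. Setting $\pmb\epsilon_{\bar r}:=\bar r^2(\bar g_{\bar r}-g_{CP^1})=O_2(\bar r^4)$, in particular $O_2(\bar r^3)$, yields the claimed expansion. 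The main technical point is the horizontality of $\partial_r$ together with the Gauss-lemma identity $h_{ij}x^j=0$, which together promote the crude $O(|x|^2)$ bound on the metric perturbation into an $O(\bar r^3)$ bound on the deviation of the angular part from the Fubini--Study cone, and the rest is a careful but routine submersion calculation.
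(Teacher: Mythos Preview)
Your proposal is correct and, in one key respect, more direct than the paper's argument. Both approaches linearize at a fixed point $p_i$ via the exponential map and reduce to understanding the quotient of $(S^3, g_r)$ by the isotropy $U(1)$-action; the difference lies in how one establishes that this isotropy action is the standard Hopf action (i.e., that the weights satisfy $|a|=|b|=1$).

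You observe that $\exp_{p_i}$ is $U(1)$-equivariant because each $\varphi_t$ is an isometry fixing $p_i$, so the lifted action on the punctured tangent ball is \emph{exactly} the linear action and hence free (inherited from almost freeness on $M^4\setminus\{p_1,\dots,p_k\}$). Freeness of $(e^{ia\theta},e^{ib\theta})$ on $S^3$ immediately forces $|a|=|b|=1$, since otherwise one of the coordinate circles $\{z_j=0\}$ has nontrivial isotropy. The paper instead records the lift only as approximately linear (its equation for $\hat\varphi_t$), passes through equivariant Gromov--Hausdorff convergence of $(S^3,g_r,\hat\varphi_t^r)$ to $(S^3,g_{S^3},\tilde\varphi_t)$, and then develops a linking-number argument (Lemmas~\ref{degree-lk}, \ref{hopf}, Corollary~\ref{quo-hopf}) comparing orbits of $\hat\varphi_t^r$ with those of $\tilde\varphi_t$ to conclude $k_1k_2=\pm 1$ and hence prove Theorem~\ref{multi}. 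Your equivariance observation renders that machinery unnecessary. The remaining submersion computation---Gauss lemma forcing $\partial_r$ to be horizontal, the form $\bar g=d\bar r^2+\bar r^2\bar g_{\bar r}$, and the comparison of $\bar g_{\bar r}$ with $g_{CP^1}$---is essentially the content of the paper's Proposition~\ref{lim-action}, though you obtain the slightly sharper $\pmb\epsilon_{\bar r}=O_2(\bar r^4)$ by invoking the full normal-coordinate expansion $g_{ij}=\delta_{ij}+O(|x|^2)$ rather than only $O(|x|)$.
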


In order to establish this theorem, we will first observe that at any singular point $p_i \in M^4$ of the $U(1)$ action
the tangent space is a metric cone over the unit round 3-sphere $(S^3, g_{S^3})$, and the lift of the almost free action to the tangent space produces an isometric and free $U(1)$ action on these cross-sections. It will then be shown that the quotient of these spheres by the lifted action is the cross section of the tangent cone of $\bar{M}^3$ at $\bar{p}_i$. A computation of linking numbers can then be used to conclude that the tangent cone cross-section is $CP^1$.
%and then make use of the linking number for showing that the cross section is $\mathbb{P}^1$ (see Theorem \ref{multi}). Using %these two observations, we can conclude that     the tangential cone of $M/U(1)$ at $\bar{p}_i$ is a metric cone over %$\mathbb{P}^1$. 
%For our convenience, throughout this section, we will assume that $\textbf{p}$ is the unique fixed point of the $U(1)$-action. 

\subsection{Preliminary observations}  
Let $\{p_1,\dots,p_k\}$ denote the singular points of the almost free $U(1)$ action on $M^4$. We first note that each $p_i$ must be a fixed point of the action, in that the isotropy group at that point is the whole group. To see this, note that if the isotropy is not the whole group then it must be a finite cyclic group, making $p_i$ an exceptional point. However, according to \cite{Fintushel1}*{Proposition 3.1} and \cite{Fintushel}*{Section 9} the set of exceptional points must be open, which contradicts the almost free assumption.  

In what follows, $p\in M^4$ will denote a generic fixed point and $\bar{p}\in\bar{M}^3$ will denote its image under the quotient map. It is clear that the quotient space $(\bar{M}^3 ,\bar{g})$ is a smooth Riemannian manifold in the compliment of the fixed point images. Consider the flow $\varphi_t$ associated with the Killing field generator of the $U(1)$ action, so that $\varphi_t \in\text{Isom}(M^4, g)$ for each $t\in\mathbb{R}$. Observe that the linearized map $(d\varphi_t)_p \in \text{Isom}(T_{p}M^4, g_{p})$ 
%Denote the lift to the tangent space $T_p M^4$ of the almost free action by $\tilde{\varphi}_t := (d\varphi_t)_p \in \text{Isom}%(T_{p}M^4, g_{p})$.
%In this part, we will lift the almost free action by $U(1)$ to the tangent space $T_{\textbf{p}}M$. To do this, we introduce the corresponding group homomorphism to the $U(1)$-action and its tangential map
%\begin{equation}
%A(t): U(1)\rightarrow \text{Isom}(M^4, g) \quad \text{ and }\quad \tilde{A}_0:=dA|_{\textbf{p}}: U(1)\rightarrow \text{Isom}%(T_{\textbf{p}}M, g_{\textbf{p}}). 
%\end{equation}
%Observe that the restriction  of $\tilde{\varphi}_t$ to the $S^3\subset T_{p}M^4$ yields 
induces an isometric $U(1)$ action, denoted $\tilde{\varphi}_t$, on the unit sphere $S^3\subset T_{p}M^4$. Let $(\Sigma^2, g_{\Sigma})$ be the quotient space of $(S^3, g_{S^3})$ by this action, and write $\Sigma_{\bar{r}}^2 \subset\bar{M}^3$ for the geodesic sphere centered at $\bar{p}$ of $\bar{g}$-distance $\bar{r}$ to this point. We will now show that the tangent cone at $\bar{p}\in\bar{M}^3$ is a metric cone over this surface $(\Sigma^2, g_{\Sigma})$. 

\begin{proposition}\label{lim-action}
Let $(M^{4}, g)$ be a complete Riemannian 4-manifold with an almost free $U(1)$ action. Consider a singular point $p\in M^4$ of the action. Then in the Gromov-Hausdorff topology
\begin{equation}
(\Sigma_{\bar{r}}^2 , \bar{r}^{-2}\bar{g}|_{\Sigma_{\bar{r}}^2})\rightarrow (\Sigma^2 , g_{\Sigma}) \quad\text{ as }\bar{r}\rightarrow 0.
\end{equation}
Moreover, if the induced $U(1)$ action on $S^3\subset T_p M^4$ is free, there exists a neighborhood $\bar{U} \subset \bar{M}^3$ of $\bar{p}$ such that
\begin{equation}\label{fajw-9rjq093oj}
\bar{g}= d\bar{r}^2+{\bar{r}^2}g_{\Sigma}+\pmb{\epsilon}_{\bar{r}} %+O_2(\bar{r}^3) 
\quad \text{ on } \bar{U} \setminus\{\bar{p}\},
\end{equation}
where $\pmb{\epsilon}_{\bar{r}}=O_{2}(\bar{r}^3)$ is a 1-parameter family of symmetric 2-tensors on $\Sigma^2$.
%Let $(M^4, g)$ be a complete $4$-manifold and consider an almost free $U(1)$-action $A(t)$ on $(M^4, g)$ with the unique fixed %point $\textbf{p}$. Then, one has that as $r\rightarrow 0$,  in the Gromov-Hausdorff topology
%\begin{equation}
%(\bar{\mathcal{S}}_{\bar{\textbf{p}}, r}, r^{-2} \bar{g}|_{\bar{\mathcal{S}}_{\bar{\textbf{p}}, r}})\longrightarrow %(\Sigma^{2}, g_{\S^{2}}),
%\end{equation}
%where $\bar{\mathcal{S}}_{\bar{\textbf{p}}, r}$ is the boundary of the geodesic ball $B(\bar{\textbf{p}}, r)\subset (M/U(1), %\bar{g})$. Moreover, if the $U(1)$-action $\tilde{A}_0(t)$  on $\mathbb{S}^3\subset T_p M$ is free, the quotient metric %$\bar{g}$ near $\bar{p}$ can be expressed as follows:  
%\begin{equation}\label{singular-tay-exp}
%\bar{g}=dr^2+r^2 g_{\Sigma}+O(r^3).
%\end{equation}
\end{proposition}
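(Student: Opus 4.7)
The plan is to leverage the fact that the exponential map at a fixed point of an isometric action is equivariant, and to reduce the assertions to the Taylor expansion of $g$ in normal coordinates. Since $\varphi_t$ is an isometry fixing $p$, the differential $(d\varphi_t)_p$ is an orthogonal transformation of $T_p M^4$ whose restriction to the unit sphere is by definition $\tilde{\varphi}_t$. The naturality identity $\exp_p \circ (d\varphi_t)_p = \varphi_t \circ \exp_p$ on a small ball $B_{\rho}(0)\subset T_p M^4$ shows that $\exp_p$ is a $U(1)$-equivariant diffeomorphism onto its image, and hence descends to an orbit-space homeomorphism $\overline{\exp}_p: B_{\rho}(0)/U(1) \to \bar{U}$. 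Moreover, every radial geodesic emanating from $p$ is horizontal for the $U(1)$-action on $M^4\setminus\{p_1,\dots,p_k\}$, since the vertical distribution vanishes at the fixed point $p$; this identifies the Riemannian distance function $\bar{r}$ on $\bar{U}$ with the Euclidean radial coordinate on $T_p M^4$ pushed down through $\overline{\exp}_p$.

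For the Gromov--Hausdorff convergence, I would use the standard normal-coordinate expansion
\begin{equation}
g_{ij}(x)=\delta_{ij}+O(|x|^2)
\end{equation}
together with the identification of the geodesic sphere $S_r(p)\subset M^4$ with the unit sphere $S^3\subset T_p M^4$ via the map $y\mapsto \exp_p(ry)$. After this identification, the expansion yields $r^{-2}(g|_{S_r(p)}) = g_{S^3} + O(r^2)$ in $C^0$-norm as $r\to 0$. Since the identification is $U(1)$-equivariant, passing to the quotient gives $\bar{r}^{-2}(\bar{g}|_{\Sigma^2_{\bar{r}}})\to g_\Sigma$ in $C^0$ on the (possibly orbifold) space $S^3/U(1)$, and uniform convergence of the metric tensors on a fixed compact quotient space immediately implies Gromov--Hausdorff convergence of the induced length structures.

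For the second assertion, the freeness of the $U(1)$ action on $S^3 \subset T_p M^4$ propagates radially to freeness on $T_p M^4\setminus\{0\}$, and via the equivariant exponential map to freeness on $B_{\rho}(p)\setminus\{p\}$; the quotient $\bar{U}\setminus\{\bar{p}\}$ is therefore a smooth Riemannian 3-manifold and $\pi$ is a genuine Riemannian submersion on this set. The Gauss lemma upstairs gives $g = dr^2 + h_r$, where $h_r$ is the first fundamental form of $S_r(p)$. Refining the normal-coordinate expansion to $g_{ij}(x)=\delta_{ij}-\tfrac{1}{3}R_{ikjl}(p)x^k x^l + O(|x|^3)$ yields, after pulling back to $S^3$ under the dilation identification, $h_r = r^2 g_{S^3} + O_2(r^4)$ as a family of symmetric 2-tensors. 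Since the radial direction is horizontal, $\bar{g}=d\bar{r}^2 + \bar{h}_{\bar{r}}$ with $\bar{h}_{\bar{r}}$ the pushforward of $h_r$ under the Riemannian submersion $S^3 \to S^3/U(1) = \Sigma^2$, and one reads off $\bar{g} = d\bar{r}^2 + \bar{r}^2 g_\Sigma + \pmb{\epsilon}_{\bar{r}}$ with $\pmb{\epsilon}_{\bar{r}} = O_2(\bar{r}^4)$, which is stronger than the asserted bound.

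The main technical point is verifying that the $C^2$ estimates transfer cleanly to the quotient. Because $\pi$ is a Riemannian submersion on the punctured neighborhood, the O'Neill identities and horizontal-lift formulas allow each covariant derivative of $\bar{g}$ to be controlled by corresponding derivatives of $g$, together with $O(\bar{r})$-bounded contributions from the second fundamental form of the vertical fibers (which shrink as $\bar{r}\to 0$). The uniform bounds on $\riem$ and $\mathring{\nabla}\riem$ on a compact neighborhood of $p$ then close the derivative estimates to the required order.
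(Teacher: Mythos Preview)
Your argument is correct and in fact cleaner than the paper's. The key observation you make---and which the paper does not exploit---is the exact equivariance $\varphi_t\circ\exp_p=\exp_p\circ (d\varphi_t)_p$ for an isometry fixing $p$. This means the lifted action $\hat{\varphi}_t:=\exp_p^{-1}\circ\varphi_t\circ\exp_p$ is \emph{equal} to the linearized action $(d\varphi_t)_p$, so that under the dilation identification $S_r(p)\cong S^3$ the $U(1)$ action is literally $\tilde{\varphi}_t$ for every $r$, and the quotient spaces $\Sigma^2_{\bar r}$ are canonically identified with $\Sigma^2$ via a single fixed map.

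The paper instead treats $\hat{\varphi}_t$ as only approximately linear (writing $\hat{\varphi}_t=r\tilde{\varphi}_t+O_2(r^2)$), introduces a rescaled family of actions $\hat{\varphi}^r_t$ converging to $\tilde{\varphi}_t$, invokes equivariant Gromov--Hausdorff convergence \`a la Fukaya--Yamaguchi for the first claim, and then for the second claim constructs, via the flows of the Killing fields, a family of diffeomorphisms $f_{\bar r}:\Sigma^2\to\Sigma^2_{\bar r}$ (in the spirit of Proposition~\ref{AEend-decay}) that are $O(\bar r)$-close to the fixed quotient map in $C^2$. This last step is where the paper loses an order: it ends up with $\pmb{\epsilon}_{\bar r}=O_2(\bar r^3)$, whereas your direct route---pushing down the Jacobi-field expansion $h_r=r^2 g_{S^3}+O_2(r^4)$ through the \emph{fixed} submersion $S^3\to\Sigma^2$---yields $O_2(\bar r^4)$. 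Your justification for horizontality of radial geodesics (vertical distribution vanishes at $p$, so $g(\dot\gamma,T)\equiv g(\dot\gamma(0),T(p))=0$ by the Killing property) and your handling of the $C^2$ transfer to the quotient via O'Neill-type formulas are both sound, if somewhat telegraphic.

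In short: both approaches work, but yours is more elementary, avoids the equivariant GH machinery and the auxiliary diffeomorphism construction, and gives a sharper asymptotic.
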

 
%\noindent\textbf{Remark} To prove Theorem \ref{local}, it is sufficient to establish that the $U(1)$ action $\tilde{A}_0(t)$ is free on $(\mathbb{S}^3, g_{\bb{S}^3})$. Additionally, the quotient map corresponding to  the $U(1)$ action aligns with   the  Hopf fiberation.

\begin{proof}
According to the discussion preceding this proposition, $p$ must be a fixed point of the $U(1)$ symmetry. If $r_0 >0$ is less than the injectivity radius at $p$, then we may lift the action to obtain $\hat{\varphi}_t \in  \text{Isom}(B_{r_0}(0)\subset T_{p}M^4, \exp_p^* g)$ given by
\begin{equation}\label{act-lift}
\hat{\varphi}_t (v)=\exp^{-1}_p \circ \varphi_t \circ \exp_p(v),\quad\quad\quad v\in B_{r_0}(0).
\end{equation}
%We will first  make use of the exponential map for lifting the $U(1)$-action to $T_{\textbf{p}}M$. Let $r_0$ be the constant %less than the injective radius of $(M, g)$ at $\textbf{p}$ and $\exp_{\textbf{p}}: B(\textbf{0}, r_0)\subset %T_\textbf{p}M\rightarrow B(\textbf{p}, r_0)$. By setting 
%\begin{equation}\label{act-lift}
%\tilde{A}(t)v=\exp^{-1}_\textbf{p}\circ A(t)\circ \exp_\textbf{p}(v),
%\end{equation}
%we have that $\tilde{A}(t)$ is  an $U(1)$-action on $B(\textbf{0}, r_0)\subset T_\textbf{p}M$  with the unique fixed point %$\textbf{0}$ and it is  isometric with respect to the metric $\exp_{\textbf{p}}^*(g)|_{B(\textbf{0},r_0)}$.  
Using geodesic polar coordinates and the Gauss lemma, the pullback metric may be expressed as
%We will then construct the approximated $U(1)$-action $\tilde{A}_r(t)$ for $\tilde{A}_0$. To do this, we take the Polar %coordinate at $\textbf{0}\in T_\textbf{p}M$ and consider the Taylor expression for the metric $\exp^*_\textbf{p}(g)$
\begin{equation}\label{total-asy}
\exp^*_p g=dr^2+r^2g_{r},\quad\quad \quad 
g_{r}=g_{S^3}+O_2(r),
\end{equation}
%where $g_{\textbf{p}, r}=r^{-2}\exp_\textbf{p}^*(g)|_{\partial B(\textbf{p}, r)}$ is a metric on $\mathbb{S}^3$. T
for some family of metrics $g_r$ on the 3-sphere. It follows that
\begin{equation}\label{taylor-group-order}
\hat{\varphi}_t=r\tilde{\varphi}_t+O_2(r^2), 
\end{equation}
and on the sphere $(S^3,g_r)$ we obtain the isometric $U(1)$ action defined by
%This also  produces the Taylor expression of $\tilde{A}(t)$ at $\textbf{0}$
%\begin{equation}\label{taylor-group-order}
%\tilde{A}(t)=r\tilde{A}_0(t)+O(r^2). 
%\end{equation}
%We descend the $U(1)$-action $\tilde{A}(t)$ to the space $(\mathbb{S}^3, g_{\textbf{p}, r})$ and define that for $v\in %\bb{S}^3\subset T_\textbf{p}M$  
%\begin{equation}\label{scal-action-def}
%\tilde{A}_r(t)(v)=r^{-1}\tilde{A}(t)(rv) 
%\end{equation} 
\begin{equation}\label{scal-action-def}
\hat{\varphi}^r_t (w):=r^{-1} \hat{\varphi}_t(rw),\quad\quad\quad w\in T_p M^4, \quad |w|=1. 
\end{equation}
%Comparing with \eqref{taylor-group-order} and \eqref{scal-action-def}, we find that  for any $v\in \bb{S}^3$, 
Moreover, the expansion \eqref{taylor-group-order} implies that
\begin{equation} 
\lim_{r\rightarrow 0}\hat{\varphi}_t^r (w)=\tilde{\varphi}_t (w).
\end{equation}
Combining this observation with \eqref{total-asy} shows that  $(S^{3}, g_{r}, \hat{\varphi}^r_t)$ converges to $(S^{3}, g_{S^{3}}, \tilde{\varphi}_t)$ in the equivariant Gromov-Hausdorff topology (see \cite{FY}*{Definition 3.3}). The following diagram expresses the relation with quotient spaces, where the vertical arrows indicate the quotient operation and $\bar{r}$ is the descent of $r$ to the quotient space.
\begin{center}\begin{tikzcd}
(S^{3}, g_{r})\arrow[r,"r\rightarrow 0"]\arrow[d,"U(1)\text{ action} ~\hat{\varphi}^r_t"]& (S^{3}, g_{S^{3}})\arrow[d, "U(1)\text{ action}~ \tilde{\varphi}_t"]
\\ ( \Sigma^2_{\bar{r}}, \bar{r}^{-2}\bar{g}|_{\Sigma^2_{\bar{r}}})\arrow[r, "\bar{r}\rightarrow 0"]&(\Sigma^2, g_{\Sigma})
\end{tikzcd}
\end{center}
%the quotient space of $(\mathbb{S}^{3}, g_{\textbf{p}, r})$ by $\tilde{A}_r(t)$ is $(\bar{\mathcal{S}}_{\bar{\textbf{p}}, r}, r^{-2}\bar{g}|_{\bar{\mathcal{S}}_{\bar{\textbf{p}}, r}})$. 
According to \cite{Fintushel1}*{Proposition 3.1} and \cite{Fintushel}*{Section 9} the orbit spaces are Riemannian manifolds possibly with boundary, away from a potentially empty collection of curves and a finite number of points.
Furthermore, \cite{FY}*{Lemma 3.4} implies that the lower horizontal arrow of the diagram holds in the Gromov-Hausdorff sense. 

Now assume that $\tilde{\varphi}_t$ acts freely on $S^3$, so that $\Sigma^2$ and $\Sigma_{\bar{r}}^2$ are smooth Riemannian manifolds. We will obtain the asymptotics of $\bar{g}$ in $\bar{U}\ni \bar{p}$, the image under the quotient map of $B_{r_0}(p)\subset M^4$. Note that since the curvature of $\Sigma^2_{\bar{r}}$ is uniformly controlled by O'Neill's formula, Theorem 0.6 of \cite{Colding97} shows that this sequence of manifolds converges to $\Sigma^2$ in the $C^{1,\alpha}$ topology; in particular, $\Sigma^2_{\bar{r}}$ is diffeomorphic to $\Sigma^2$.
%Perelman's stability theorem \cite[Theorem 1.1]{Kapovitch} implies that these manifolds are homeomorphic (and hence diffeomorphic) to %$\Sigma^2$ via Gromov-Hausdorff approximations. 
Consider the Killing fields $\hat{X}$, $\hat{X}^{r}$, and $\tilde{X}$ associated with the flows $\hat{\varphi}_t$, $\hat{\varphi}^r_t$, and $\tilde{\varphi}_t$, and their dual 1-forms $\hat{\eta}$, $\hat{\eta}^r$, and $\tilde{\eta}$. Observe that \eqref{scal-action-def} implies $\hat{X}^r (w)=r^{-1}\hat{X}(rw)$, and thus comparing with the time derivative of \eqref{taylor-group-order} produces
\begin{equation}\label{f0hj-0r9hjq-0hj}
|\tilde{\nabla}^{l}(\hat{X}^r -\tilde{X})|_{g_{S^3}}=O(r) ,\quad\quad\quad |\tilde{\nabla}^l (\hat{\eta}^r -\tilde{\eta})|_{g_{S^3}}=O(r),\quad\quad l=0,1,2,
\end{equation}
where $\tilde{\nabla}$ denotes covariant differentiation with respect to $g_{S^3}$. Next, express the metrics on the 3-sphere in Riemannian submersion format
\begin{equation}
g_r=\hat{\pi}_{\bar{r}}^* \left(\bar{r}^{-2} \bar{g}|_{\Sigma^2_{\bar{r}}}\right)+\frac{(\hat{\eta}^r)^2}{|\hat{\eta}^r|_{g_r}^2},\quad\quad\quad
g_{S^3}=\tilde{\pi}^* g_{\Sigma} +\frac{\tilde{\eta}^2}{|\tilde{\eta}|^2_{g_{S^3}}},
\end{equation}
where 
\begin{equation}
\hat{\pi}_{\bar{r}}: S^3 \rightarrow \Sigma^2_{\bar{r}},\quad\quad\quad \tilde{\pi}:S^3 \rightarrow \Sigma^2 ,
\end{equation}
are quotient maps. Since the action of $\tilde{\varphi}_t$ is free it follows that $\min_{S^3}|\tilde{\eta}|_{g_{S^3}}>0$, and hence \eqref{total-asy} and \eqref{f0hj-0r9hjq-0hj} yield
\begin{equation}\label{fpjw-9h4j9poq}
\Big|\hat{\pi}_{\bar{r}}^* \left(\bar{r}^{-2}\bar{g}|_{\Sigma^2_{\bar{r}}}\right)-\tilde{\pi}^* g_{\Sigma}\Big|_{g_{S^3}}\leq|g_r -g_{S^3}|_{g_{S^3}}+\Bigg|\frac{(\hat{\eta}^r)^2}{|\hat{\eta}^r|_{g_r}^2}-\frac{\tilde{\eta}^2}{|\tilde{\eta}|^2_{g_{S^3}}}\Bigg|_{g_{S^3}}=O(r),
\end{equation}
with corresponding asymptotics for derivatives.
\begin{comment}
%We will use the killing vector fields of $\tilde{A}_r(t)$ to clarify the asymptotic behavior of $\bar{g}$ near $\bar{\textbf{p}}$. To %do this, we let $\tilde{X}$ be the killing vector fields of the action $\tilde{A}(t)$ and its dual $1$-form $\tilde{\eta}$. Using %\eqref{scal-action-def}, we take the form of the killing vector $\tilde{X}_r$ and its dual $1$-form $\tilde{\eta}_r$ for $(\bb{S}^3, %g_{\textbf{p}, r})$: 
%\begin{equation}
% \tilde{X}_r(v)=r^{-1}\tilde{X}(rv) \quad \text{ and } \quad \tilde{\eta}_r(v)=r^{-1}\tilde{\eta}(rv)  
%\end{equation}
%Comparing with \eqref{taylor-group-order} produces that 
%\begin{equation}\label{dual-lim}
%|\tilde{X}_r-\tilde{X}_0|=O(r)\quad \text{ and }\quad |\tilde{\eta}_r-\tilde{\eta}_0|=O(r)
%\end{equation}where $X_0$ is the killing vector field for $\tilde{A}_0(t)$ on $(\bb{S}^3, g_{\bb{S}^3})$ and $\eta_0$ is its dual 1-%form. We observe that the quotient metrics is related with the dual $1$-forms: 
\begin{equation}\label{quo-comp}
g_{p, r}=r^{-2}\bar{g}_{\bar{\mathcal{S}}_{p, r}}+\frac{\tilde{\eta}^2_r}{|\tilde{\eta}_r|^2} \quad \text{ and } \quad  g_{\bb{S}^3}=g_{\S^2}+\frac{\tilde{\eta}^2_0}{|\tilde{\eta}_0|^2}
\end{equation}
The action $\tilde{A}_0$ freely acts on $(\bb{S}^3, g_{\bb{S}^3})$, which implies that $\inf_{x\in \mathbb{S}^3}|\tilde{\eta}_0|(x)>0$. Using \eqref{total-asy}, \eqref{dual-lim}
 and \eqref{quo-comp}, we can conclude that 
 \begin{equation}
\Big|r^{-2}\bar{g}|_{\bar{\mathcal{S}}_{\bar{p}, r}}-g_{\Sigma^2}\Big|\leq |g_{p, r}-g_{\bb{S}^3}|+|\frac{\tilde{\eta}^2_r}{|\tilde{\eta}_r|^2}-\frac{\tilde{\eta}^2_0}{|\tilde{\eta}_0|^2}|=O(r). 
 \end{equation}
 This completes the proof.  
\end{comment} 
Moreover, with the estimate between Killing fields \eqref{f0hj-0r9hjq-0hj}, we may use their flows as in Proposition \ref{AEend-decay} to find diffeomorphisms $f_{\bar{r}} :\Sigma^2 \rightarrow \Sigma^2_{\bar{r}}$ such that the maps $f_{\bar{r}}\circ\tilde{\pi}$ and $\hat{\pi}_{\bar{r}}$ are $O(\bar{r})$-close in the $C^2(S^3,\Sigma_{\bar{r}}^2)$ topology. Therefore \eqref{fpjw-9h4j9poq} gives
\begin{equation}
\Big| \tilde{\pi}^* f_{\bar{r}}^* (\bar{r}^{-2}\bar{g}|_{\Sigma^2_{\bar{r}}})-\tilde{\pi}^* g_{\Sigma}\Big|_{g_{S^3}}
\leq \Big|\hat{\pi}^* \left(\bar{r}^{-2}\bar{g}|_{\Sigma^2_{\bar{r}}}\right)-\tilde{\pi}^* g_{\Sigma}\Big|_{g_{S^3}}+O(\bar{r}) =O(\bar{r}),
\end{equation}
with corresponding estimates for derivatives. The desired expansion \eqref{fajw-9rjq093oj} now follows.
\end{proof}

The behavior of the quotient metric near $\bar{p}\in\bar{M}^3$ is closely tied to the $U(1)$ action of $\tilde{\varphi}_t$ on $(S^3, g_{S^3})$. Since this is the action of a 1-parameter subgroup of $SO(4)$, it is given by $\tilde{\varphi}_t =e^{\tilde{\mathcal{K}}t}$ for some $\tilde{\mathcal{K}}\in so(4)$. Moreover, as $i\tilde{\mathcal{K}}$ is Hermitian and purely imaginary, it has eigenvalues $\pm \tilde{k}_1 , \pm \tilde{k}_2 \in \mathbb{R}$. Neither of these values can vanish, since the action has an isolated fixed point at $p\in M^4$.
%; we may assume that at least one of these values is nonzero, say $\tilde{k}_2 \neq 0$, due to nontriviality of the action. 
By choosing appropriate normal coordinates and identifying $T_{p}M^4$ with $\mathbb{C}^2$, the linear transformation $\tilde{\mathcal{K}}$ may be represented by the matrix
\begin{equation}
\mathcal{K}=\begin{bmatrix}
i\tilde{k}_1& 0\\
0 &i\tilde{k}_2\\
\end{bmatrix}.
\end{equation}
\begin{comment}
isometrical $U(1)$ action on $(\bb{S}^3, g_{\bb{S}^3})\subset T_pM $, we write $\tilde{A}_0$ as follows
\begin{equation}
   \tilde{A}_0(t)=e^{Et} 
\end{equation}
where $E\in so(4)$. Using the normal coordinate on $T_pM=\bb{C}^2$ produces that 
\begin{equation*}
E=\begin{bmatrix}
ik_1&\\
&ik_2\\
\end{bmatrix}\quad \tilde{A}_0(t)=\begin{bmatrix}e^{ik_1t}&\\
&e^{ik_2t}\\ 
\end{bmatrix}
\end{equation*}
The periodicity of $\tilde{A}_0(t)$ implies that  $k_i\in \mathbb{Z}$ for $i=1, 2$.  We will make use of the almost free action $A(t)$ to find out all possible values of $k_i$. 
\end{comment}
Furthermore, periodicity of the action implies that $\tilde{k}_1 /\tilde{k}_2 = k_1 /k_2$ for some relatively prime $k_1 , k_2 \in \mathbb{Z}$. 
%with $k_2 \neq 0$, in which the new pair are relatively prime when $k_1 \neq 0$. 
Upon rescaling the parameter $t$ if necessary (the notation will remain unchanged), the $U(1)$ action on $S^3 \subset\mathbb{C}^2$ may then be represented as
\begin{equation}\label{expforphi}
\tilde{\varphi}_t =\begin{bmatrix}
e^{i k_1 t }& 0\\
0 &e^{i k_2 t}\\
\end{bmatrix}, \quad\quad t\in[0,2\pi].
\end{equation}
The next result will be established in Section \ref{Section 3.3}, and when combined with Proposition \ref{lim-action} will complete the proof of Theorem \ref{local}.

\begin{theorem}\label{multi} 
Let the hypotheses of Proposition \ref{lim-action} hold. Then $\tilde{\varphi}_t$ is a free $U(1)$ action on the 3-sphere, and $k_j =\pm 1$ for $j=1,2$.
\end{theorem}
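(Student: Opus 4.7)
The plan is to establish the two conclusions in sequence: first freeness of $\tilde{\varphi}_t$ on $S^3$, and then the constraint $k_j = \pm 1$ as a short consequence. The freeness is the main step; once it is known, the statement on $k_j$ is an elementary algebraic check on the matrix form \eqref{expforphi}.

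For freeness, I would argue by contradiction. Suppose $\tilde{\varphi}_{t_0}(w) = w$ for some $t_0 \in (0, 2\pi)$ and $w \in S^3 \subset T_p M^4$. By differentiating \eqref{taylor-group-order} at $r = 0$, the differential $d(\hat{\varphi}_{t_0})_0$ agrees with $\tilde{\varphi}_{t_0}$, so $d(\hat{\varphi}_{t_0})_0 (w) = w$. Since $\hat{\varphi}_{t_0}$ is an isometry of $(B_{r_0}(0), \exp_p^{*} g)$ fixing the origin, and isometries of Riemannian manifolds fixing a point preserve the radial geodesics emanating from that point whose initial velocity is fixed by the linearization, the geodesic $r \mapsto rw$ is pointwise fixed by $\hat{\varphi}_{t_0}$. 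Applying $\exp_p$ transfers this to $\varphi_{t_0}(\exp_p(rw)) = \exp_p(rw)$ for all $r \in [0, r_0)$, producing an entire arc of points with nontrivial isotropy. This contradicts the almost free hypothesis, because the points $\exp_p(rw)$ for $r > 0$ small are distinct from the isolated fixed points $\{p_1, \dots, p_k\}$ and yet have nontrivial stabilizer.

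Once freeness of $\tilde{\varphi}_t$ is in hand, I would read off $k_j = \pm 1$ directly from \eqref{expforphi}. The fixed set of $\tilde{\varphi}_{t_0}$ on $S^3$ for $t_0 \in (0, 2\pi)$ is nonempty precisely when $e^{i k_1 t_0} = 1$ or $e^{i k_2 t_0} = 1$, and the equation $e^{i k_j t_0} = 1$ has a solution in $(0, 2\pi)$ exactly when $|k_j| \geq 2$. Hence freeness forces $|k_j| \leq 1$ for $j = 1, 2$. The case $k_j = 0$ is already excluded in the paragraph preceding Theorem \ref{multi}, since a vanishing eigenvalue of $-i\tilde{\mathcal{K}}$ would produce a nontrivial fixed subspace in $T_p M^4$ and hence a positive-dimensional fixed set for $\varphi_t$ near $p$, contradicting the isolatedness of $p$ as a fixed point. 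Therefore $k_j = \pm 1$.

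The main obstacle is the freeness step, and specifically the transfer from fixed directions of the linearized isometry to genuinely fixed points of $\varphi_{t_0}$ on $M^4$. The clean formulation above relies on two standard but crucial facts: that geodesic normal coordinates identify the geodesics through $p$ with straight rays, and that an isometry which fixes a point and a tangent direction fixes the corresponding geodesic pointwise. Once those are invoked, the contradiction with the almost freeness is immediate, and the remainder of the argument is essentially automatic.
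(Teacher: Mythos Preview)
Your proof is correct and takes a genuinely different route from the paper's.

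The paper argues via linking numbers. It fixes the two coordinate circles $\tilde{C}_1=\{(z_1,0)\}$, $\tilde{C}_2=\{(0,z_2)\}$ in $S^3$, which form a Hopf link, and compares them with the nearby orbits $\hat{C}^r_j$ of the action $\hat{\varphi}^r_t$. Lemma~\ref{degree-lk} gives $\mathrm{lk}(\hat{C}^r_1,\hat{C}^r_2)=\pm k_1k_2$, while Corollary~\ref{quo-hopf} (which uses the almost free hypothesis to show $\hat{\varphi}^r_t$ is a free action inducing the Hopf fibration) and Lemma~\ref{hopf} force $\mathrm{lk}(\hat{C}^r_1,\hat{C}^r_2)=\pm 1$. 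Hence $k_1k_2=\pm 1$, and freeness of $\tilde{\varphi}_t$ follows a posteriori. (The paper's opening sentence, asserting freeness directly from $k_j\neq 0$, is in fact not justified at that point and plays no role in the argument.)

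Your approach bypasses the linking number machinery entirely. You establish freeness of $\tilde{\varphi}_t$ first, by the clean observation that a fixed direction $w$ for $(d\varphi_{t_0})_p$ forces the geodesic $\exp_p(rw)$ to be pointwise fixed by $\varphi_{t_0}$, producing a continuum of points with nontrivial isotropy and contradicting almost freeness. From freeness, the conclusion $|k_j|\leq 1$ is then read off from the matrix \eqref{expforphi} by the elementary remark that $|k_j|\geq 2$ would give $t_0=2\pi/|k_j|\in(0,2\pi)$ with $e^{ik_jt_0}=1$ and hence a fixed point on the corresponding coordinate circle. This is shorter and more elementary; it avoids Lemmas~\ref{degree-lk}, \ref{hopf} and Corollary~\ref{quo-hopf} altogether. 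What the paper's approach buys is an explicit topological identification of the quotient bundle $S^3\to\Sigma^2$ with the Hopf fibration via linking data, which feeds into the later asymptotic description of the quotient metric; your argument reaches the same endpoint but through local Riemannian geometry rather than knot invariants.
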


%\begin{theorem|
%Let $(M, g)$, $A(t)$, $\tilde{A}_0(t)$ and $k_i$ be as in Proposition \ref{lim-action}. Then $\tilde{A}_0(t)$ is a free action on $(\mathbb{S}^3, g_{\bb{S}^3})$. Moreover, $k_i=\pm 1$ %for $i=1, 2$. 
%\end{theorem}

\subsection{Linking number} 
As we will see, the possible values of $k_j$ are closely tied to the linking number of specific knots in $S^3$. Here we will briefly recall the concept of linking number, and indicate its application to the $U(1)$ action on $M^4$. In what follows $L_1$ and $L_2$ will denote closed curves in the 3-sphere. Furthermore, a \textit{meridian} of $L_1$ is a generator for the kernel of the inclusion map $\pmb{\pi_1}(\partial B_{\varepsilon}(L_1))\rightarrow \pmb{\pi_1}(B_{\varepsilon}(L_1))$, where $B_{\varepsilon}(L_1)$ is the closed tubular neighborhood of $L_1$ in $S^3$ of radius $\varepsilon$. When additional curves are involved in a link, $\varepsilon$ will implicitly be taken suffiently small so that the tubular neighborhood is disjoint from the other curves.

\begin {definition}\label{lk-number}
Let $L_1\sqcup L_2\subset S^3$ be a link with two components. Its \textit{linking number} is the value $\mathrm{lk}(L_1, L_2)\in \mathbb{Z}$ such that
\begin{equation}
[L_2]=\mathrm{lk}(L_1, L_2)[m_1]\quad\text{ in } H_1(S^3\setminus L_1 ;\mathbb{Z}),
\end{equation} 
where $m_1$ is a meridian of $L_1$.
%, a generator of the kernel of $\pi_1(\partial B(L_1, \epsilon))\rightarrow \pi_1(B(L_1, \epsilon))$ and $B(L_1, \epsilon)$ is the [closed] tubular neighborhood of $L_1$ in $\mathbb{S}%^3\setminus L_2$ with radius $\epsilon$. 
\end{definition}

\begin{lemma}\label{degree-lk} 
Let $L_1\sqcup L_2$ and $L'_1 \sqcup L'_2$ be two links in $S^3$, each with two components. If $L'_j \subset B_{\varepsilon_j}(L_j)$ for $j=1,2$ then
%Let $L=L_1\amalg L_2 \subset \mathbb{S}^3$ be a link with two components and let $L'=L'_1\amalg L'_2$ be another link, where $L'_i$ is a subset of $B(L_i, \epsilon_i)$. Then 
\begin{equation}
\mathrm{lk}(L'_1, L'_2)=\deg (\mathrm{Pr}_{L_1}|_{L'_1})\cdot \deg (\mathrm{Pr}_{L_2}\vert_{L'_2})\cdot \mathrm{lk}(L_1, L_2),
\end{equation}
where $\mathrm{Pr}_{L_j}: B_{\varepsilon_j}(L_j) \rightarrow L_j$ denotes the projection map.
\end{lemma}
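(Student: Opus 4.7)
The plan is to compute $\mathrm{lk}(L'_1, L'_2)$ from Definition~\ref{lk-number} by transferring the homology class of $L'_2$ first into $H_1(S^3\setminus L_1;\mathbb{Z})$ and then into $H_1(S^3\setminus L'_1;\mathbb{Z})$, picking up one degree factor at each stage. After shrinking the $\varepsilon_j$ if necessary, I will assume $B_{\varepsilon_1}(L_1)$ and $B_{\varepsilon_2}(L_2)$ are mutually disjoint and also disjoint from the opposite curves $L'_{3-j}$ and $L_{3-j}$. Since each closed tubular neighborhood is a trivial disk bundle diffeomorphic to $S^1\times D^2$, the projection $\mathrm{Pr}_{L_j}$ is a homotopy equivalence, and consequently the inclusion $L'_j\hookrightarrow B_{\varepsilon_j}(L_j)$ sends the fundamental class of $L'_j$ to $\deg(\mathrm{Pr}_{L_j}|_{L'_j})[L_j]$ in $H_1(B_{\varepsilon_j}(L_j);\mathbb{Z})$.

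Pushing the identity $[L'_2]=\deg(\mathrm{Pr}_{L_2}|_{L'_2})[L_2]$ forward along $B_{\varepsilon_2}(L_2)\hookrightarrow S^3\setminus L_1$ and invoking Definition~\ref{lk-number} gives
\begin{equation*}
[L'_2]=\deg(\mathrm{Pr}_{L_2}|_{L'_2})\cdot \mathrm{lk}(L_1,L_2)\,[m_1] \quad\text{in }H_1(S^3\setminus L_1;\mathbb{Z}).
\end{equation*}
Both cycles $L'_2$ and $m_1$ are represented in the subspace $T_1:=S^3\setminus\mathrm{int}\,B_{\varepsilon_1}(L_1)$, which is a deformation retract of $S^3\setminus L_1$ and is contained in $S^3\setminus L'_1$. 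Consequently the same linear relation persists in $H_1(S^3\setminus L'_1;\mathbb{Z})$.

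The remaining task is to express $[m_1]$ as a multiple of the meridian generator $[m'_1]$ of $H_1(S^3\setminus L'_1;\mathbb{Z})\cong\mathbb{Z}$. The meridian $m_1$ bounds a disk $D\subset B_{\varepsilon_1}(L_1)$ corresponding under the trivialization $B_{\varepsilon_1}(L_1)\cong S^1\times D^2$ to a fiber $\{t_0\}\times D^2$. Its algebraic intersection with $L'_1$ counts the signed preimages of $t_0$ under $\mathrm{Pr}_{L_1}|_{L'_1}$, hence equals $\deg(\mathrm{Pr}_{L_1}|_{L'_1})$; by Alexander duality this intersection number is precisely the coefficient in the expansion of $[m_1]$, so
\begin{equation*}
[m_1]=\deg(\mathrm{Pr}_{L_1}|_{L'_1})\,[m'_1] \quad\text{in }H_1(S^3\setminus L'_1;\mathbb{Z}).
\end{equation*}
Combining this with the relation from the previous paragraph and reapplying Definition~\ref{lk-number} yields the asserted identity. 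The main conceptual point to verify carefully is the transfer of the class $[m_1]$ between $H_1(S^3\setminus L_1)$ and $H_1(S^3\setminus L'_1)$ through the common subspace $T_1$; this is the only place where the disjointness assumptions on the tubular neighborhoods are essential, and everything else reduces to routine bookkeeping with the mapping degree inside the trivial $D^2$-bundle.
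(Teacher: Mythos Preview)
Your proof is correct and follows essentially the same route as the paper's: both reduce the computation to the two relations $[L'_2]=\deg(\mathrm{Pr}_{L_2}|_{L'_2})[L_2]$ and $[m_1]=\deg(\mathrm{Pr}_{L_1}|_{L'_1})[m'_1]$, transferred into $H_1(S^3\setminus L'_1)$ through the appropriate intermediate subspaces. The only minor difference is that the paper appeals to the loop lemma to produce the meridian disk $D_1$ and then explicitly exhibits the $2$-chain $D_1\setminus B_{\varepsilon'_1}(L'_1)$, whereas you take $D$ to be the fiber $\{t_0\}\times D^2$ of the solid-torus trivialization and read off the coefficient via the intersection number---a slightly more direct path that makes the loop lemma unnecessary here.
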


\begin{proof}
According to the loop lemma \cite{HA}*{Theorem 3.1}, there exists an embedded disc $D_1 \subset B_{\varepsilon_1}(L_1)$ bounded by a meridian $m_1$ of $L_1$. It may be assumed without loss of generality that $L'_1$ intersects $D_1$ transversely. Thus each  component of $B_{\varepsilon'_1}(L'_1)\cap D_1$ is a disc, where $\varepsilon'_1$ is chosen small enough so that $B_{\varepsilon'_1}(L'_1)\subset B_{\varepsilon_1}(L_1)$. The boundary of each such component is homologous to the meridian $m'_1$ of $L'_1$, up to a sign determined by orientation. Using the surface $D_1 \setminus B_{\varepsilon'_1}(L'_1)$ this implies that
\begin{equation}\label{degree1}
[m_1]=\deg (\mathrm{Pr}_{L_1}|_{L'_1})[m'_1] \quad\text{ in }  H_1(B_{\varepsilon_1}(L_1)\setminus \mathring{B}_{\varepsilon'_1}(L'_1);\mathbb{Z}),
\end{equation}
where the top circle notation indicates the interior of a set. Furthermore, this equation also holds in $H_1 (S^3\setminus L'_1 ;\mathbb{Z})$. Next observe that
%Moreover, using the link number of $L_1\cup L_2$  and the projection map $Pr_{L_2}$ produces that :
\begin{align}\label{degree2}
\begin{split}
 [L_2]&=\mathrm{lk}(L_1, L_2)[m_1]\quad\text{ in }H_1(S^3 \setminus \mathring{B}_{\varepsilon_1}(L_1);\mathbb{Z}),\\
%\begin{equation}\label{degree3}
[L'_2]&=\deg(\mathrm{Pr}_{L_2}|_{L'_2})[L_2] \quad\text{ in } H_1(B_{\varepsilon_2}(L_2);\mathbb{Z}),
\end{split}
\end{align}
and these equations hold in $H_1(S^3\setminus L'_1 ;\mathbb{Z})$ as well.
Consequently, in $H_1(S^3\setminus L'_1;\mathbb{Z})$ we find
\begin{align}
\begin{split}
[L'_2]&=\deg(\mathrm{Pr}_{L_2}|_{L'_2})[L_2] \\
%\text{ from equation } \eqref{degree2}\\
&=\deg(\mathrm{Pr}_{L_2}|_{L'_2})\mathrm{lk}(L_1, L_2)[m_1]\\ %\quad\quad  \quad\quad \quad \quad \quad  \text{from equation }\eqref{degree1}\\
&=\deg(\mathrm{Pr}_{L_2}|_{L'_2})\mathrm{lk}(L_1, L_2)\deg(\mathrm{Pr}_{L_1}|_{L'_1})[m'_1],% \quad \text{from equation \eqref{degree3}}. 
\end{split}
\end{align} 
from which the desired result now follows.
\end{proof}

 \begin{lemma}\label{hopf} 
If $\pi:S^3\rightarrow S^2$ be a principal $U(1)$ bundle, then it is the Hopf fibration. Moreover, for any two distinct points $p, q \in S^2$, the pre-image  $\pi^{-1}(\{p, q\})$ forms  a (Hopf) link with linking number  $\pm 1$. 
 \end{lemma}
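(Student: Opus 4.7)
The plan is to establish the two assertions in turn: first that $\pi$ must be the Hopf fibration, and then that the linking number of two distinct fibers is $\pm 1$.

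For the classification, I would invoke the discussion following Definition \ref{ALF}: principal $U(1)$ bundles over $S^2$ are classified by their Chern number $n \in H^2(S^2;\mathbb{Z}) \cong \mathbb{Z}$, and the total space is either $S^1 \times S^2$ when $n = 0$ or the lens space $L(|n|,1)$ when $n \neq 0$. Since $\pmb{\pi_1}(L(|n|,1)) = \mathbb{Z}/|n|\mathbb{Z}$ while $\pmb{\pi_1}(S^3) = 0$, the hypothesis that the total space is $S^3$ forces $|n| = 1$, identifying $\pi$ with the Hopf fibration up to bundle isomorphism.

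For the linking-number assertion, I would work in the standard model $S^3 = \{(z_1, z_2) \in \mathbb{C}^2 : |z_1|^2 + |z_2|^2 = 1\}$ with $\pi(z_1, z_2) = [z_1 : z_2] \in \mathbb{CP}^1 \cong S^2$. The space of ordered pairs of distinct points in $S^2$ is connected, and the linking number $\mathrm{lk}(\pi^{-1}(p),\pi^{-1}(q))$ is a locally constant integer in $(p,q)$ since the fibers vary by ambient isotopy as $(p,q)$ moves. Hence it suffices to verify the claim for a single convenient pair, and I would take $p = [1:0]$ and $q = [0:1]$, whose fibers are the disjoint unknots
\begin{equation*}
L_p = \{(e^{it}, 0) : t \in [0, 2\pi]\}, \qquad L_q = \{(0, e^{is}) : s \in [0, 2\pi]\}.
\end{equation*}
The embedded $2$-disk
\begin{equation*}
\Sigma = \{(\zeta, r) \in S^3 : \zeta \in \mathbb{C},\ r \in [0, 1]\}
\end{equation*}
satisfies $\partial \Sigma = L_p$ and meets $L_q$ transversely at the single point $(0, 1)$. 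By Definition \ref{lk-number} this yields $\mathrm{lk}(L_p, L_q) = \pm 1$, with the sign determined by orientation conventions.

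The main technical point is the classification reduction to the Hopf bundle, which relies on recognizing the total spaces of circle bundles over $S^2$ as lens spaces and ruling out higher Chern numbers by fundamental-group considerations. Once this reduction is in place, the linking calculation is essentially tautological via the explicit Seifert disk $\Sigma$.
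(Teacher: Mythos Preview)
Your proof is correct and follows essentially the same approach as the paper. The classification argument via lens spaces is identical in spirit; the paper simply cites \cite{Chern} for the correspondence with $\pmb{\pi_1}(SO(2))\cong\mathbb{Z}$ and asserts that $S^3$ forces $n=\pm 1$, while you make the fundamental-group obstruction explicit. For the linking number, the paper treats it as a known fact about the Hopf link and states only that ``it follows,'' whereas you supply the explicit Seifert-disk computation---this is additional detail rather than a different method.
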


 \begin{proof}
As described in \cite{Chern}, the isomorphism classes of principal $U(1)$ bundles over $S^2$ are in one-to-one correspondence with $\pmb{\pi_1}(SO(2))\cong\mathbb{Z}$.
%\begin{center}
%\begin{tabular}{ccc}
%$\mathbb{Z}\cong \pi_1(SO(2))$&$\longleftrightarrow$& \{principle $\mathbb{S}^1$-bundle over $\mathbb{S}^2$\}\\
%&&\\
%$n$&&$P_n\rightarrow \bb{S}^2$
%\end{tabular}
%\end{center}
%Each element in $\pi_1(SO(2))$ provides a continuous map from $U_1\cap U_2\rightarrow \text{Isom}(\mathbb{S}^1)\cong\mathbb{S}^1$, where $U_1=\mathbb{S}^2\setminus \{N\}$ and $U_2=\mathbb{S}^2\setminus \{S\}$. It gives the transform map on $U_1\cap U_2$, which uniquely determines the principle $\mathbb{S}^1$ bundle over $\mathbb{S}^2$. 
%
%Let $\pi: M^3\rightarrow  \mathbb{S}^2$ be a principle $\mathbb{S}^1$-bundle. For any two distinct points, $p, q\in \mathbb{S}^1$, the pre-image $\pi^{-1}(\{p, q\})$ is a link in $M$ whose link number is given by the element in $\pi_1(SO(2))$ corresponding to the principle $\mathbb{S}^1$-bundle
In particular, the total space of the bundle corresponding to integer $n$ is the lens space $L(n,1)$. Thus, such a bundle with $S^3$ total space corresponds to $n=\pm 1$ and must be the Hopf fibration. It follows that $\pi^{-1}(\{p, q\})$ is a Hopf link with linking number  $\pm 1$. 
%Using the clutching construction, the total space of $P_n$ is homeomorphic to some Lens space $L(n, 1)$. 
%The principal $U(1)$-bundle $\pi: \mathbb{S}^3\rightarrow \mathbb{S}^2$ corresponds to   some $P_{n_0}\rightarrow \mathbb{S}^2$. We note that if  $L(n_0, 1)\cong \bb{S}^3$, %the integer $n_0$ must be $\pm 1$. Therefore, the principal bundle $\pi: \bb{S}^3\rightarrow \bb{S}^2$ is indeed the Hopf fibration. Namely,  the link $\pi^{-1}(\{p, q\})$ is a Hopf %link with link number  $\pm 1$. 
 \end{proof}

 \begin{corollary}\label{quo-hopf} 
Assume the setting of Proposition \ref{lim-action}. The $U(1)$ action of $\hat{\varphi}_t^r$ on $(S^3 ,g_r)$ is free for $r\in (0,r_0)$. Moreover, the corresponding quotient space $\Sigma_{\bar{r}}^2$ is diffeomorphic to $S^2$, and the quotient map yields the Hopf fibration.
%Let $\tilde{A}_r(t): \mathbb{S}^3\subset T_p M \rightarrow \mathbb{S}^3\subset T_p M $ be a free $U(1)$ action defined in the proof of Proposition \ref{lim-action}. Then, the %quotient space of  $\bb{S}^3$ by the $U(1)$ action $\tilde{A}_r(t)$ is homeomorphic to $\mathbb{S}^2$ and the quotient map is the Hopf fiberation. 
 \end{corollary}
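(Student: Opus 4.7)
The plan is to derive freeness of $\hat{\varphi}_t^r$ directly from the almost free hypothesis, and then identify the quotient via a short topological argument combined with Lemma \ref{hopf}.

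First, I would establish freeness. Since the $U(1)$ action on $M^4$ is almost free with finite fixed-point set, by shrinking $r_0$ if necessary I may assume $p$ is the only fixed point in $B_{r_0}(p)$. The isometry $\varphi_t$ fixes $p$, hence preserves the geodesic ball $B_{r_0}(p)$, so the conjugated flow $\hat{\varphi}_t = \exp_p^{-1}\circ\varphi_t\circ\exp_p$ is well-defined on $B_{r_0}(0)\subset T_p M^4$. Any nonzero fixed vector of $\hat{\varphi}_t$ would produce a fixed point of $\varphi_t$ in $B_{r_0}(p)\setminus\{p\}$, forcing $t\in 2\pi\mathbb{Z}$ by the almost free hypothesis. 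The rescaling relation $\hat{\varphi}_t^r(w) = r^{-1}\hat{\varphi}_t(rw)$ then transports freeness to $(S^3,g_r)$ for every $r\in(0,r_0)$.

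Next, I would identify the quotient topologically. With $\hat{\varphi}_t^r$ free and isometric on the compact manifold $S^3$, the quotient $\Sigma^2_{\bar r}$ is a closed smooth $2$-manifold and the projection $S^3\to\Sigma^2_{\bar r}$ is a principal $U(1)$ bundle. Using $\pmb{\pi_1}(S^3)=0$ and $\pmb{\pi_0}(U(1))=0$, the homotopy long exact sequence of this fibration forces $\pmb{\pi_1}(\Sigma^2_{\bar r})=0$, so by the classification of closed surfaces $\Sigma^2_{\bar r}$ is diffeomorphic to $S^2$. Applying Lemma \ref{hopf} to the resulting principal $U(1)$ bundle $S^3\to S^2$ then identifies the quotient map with the Hopf fibration.

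The argument is essentially a direct chain of implications with no substantive obstacle. The only point requiring any care is the transition from freeness of $\hat{\varphi}_t$ on the punctured tangent ball to freeness of the rescaled action $\hat{\varphi}_t^r$ on $S^3$, which is immediate from the rescaling formula.
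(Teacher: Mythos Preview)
Your proof is correct and follows essentially the same approach as the paper: freeness is deduced from the almost free hypothesis via the rescaling relation $\hat{\varphi}_t^r(w)=r^{-1}\hat{\varphi}_t(rw)$, the quotient is identified with $S^2$ through the homotopy long exact sequence of the fibration, and Lemma \ref{hopf} then pins down the bundle as the Hopf fibration. The only cosmetic difference is that the paper also reads off $\pmb{\pi_2}(\Sigma_{\bar r}^2)\cong\mathbb{Z}$ from the sequence before concluding $\Sigma_{\bar r}^2\cong S^2$, whereas you (more efficiently) invoke the classification of closed surfaces directly from $\pmb{\pi_1}=0$.
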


 \begin{proof}
%\eqref{expforphi} implies that $\tilde{\varphi}_t$ is free.
According to the definition \eqref{scal-action-def} of $\hat{\varphi}_t^r$, a singular point for this action with $r>0$ produces a singular point for $\varphi_t$ away from $p$, which contradicts the almost free property of $\varphi_t$. Thus, the action of $\hat{\varphi}_t^r$ is free for $r\in (0,r_0)$. For each such $r$, the quotient map gives a principal $U(1)$ bundle over $\Sigma_{\bar{r}}^2$.
%The proof of Proposition \ref{lim-action} implies that  that   the $U(1)$ action $\tilde{A}_r(t): \mathbb{S}^3\rightarrow \bb{S}^3$ is  free. The quotient map gives a principal $U(1)%$-bundle over $\partial B(\bar{p}, r)$. 
Using the long exact sequence for fibrations \cite{HA1}*{Theorem 4.41}, we find that the quotient manifold is simply connected and $\pmb{\pi_2} (\Sigma_{\bar{r}}^2)\cong \bb{Z}$, which implies that $\Sigma_{\bar{r}}^2 \cong S^2$. Furthermore, by Lemma \ref{hopf} the quotient map yields the Hopf fibration. 
 \end{proof}

\subsection{Proof of Theorem \ref{multi}}\label{Section 3.3}
%We will  utilize the link number to complete the proof of Theorem \ref{multi}. 
Since $\tilde{\varphi}_t$ may be represented by \eqref{expforphi} with integers $k_1$ and $k_2$ that are nonzero, its action on $(S^3 ,g_{S^3})$ is free. Consider the two orbits of this action given by
\begin{equation}
\tilde{C}_1=\{(z_1, 0)\in \mathbb{C}^2 \mid |z_1|=1\}, \quad \quad\quad \tilde{C}_2=\{(0, z_2)\in\mathbb{C}^2 \mid |z_2|=1\}.
\end{equation}
Note that the points $w_1 =(1, 0)$ and $w_2=(0,1)$ lie in $\tilde{C}_1$ and $\tilde{C}_2$, respectively, and $\tilde{C}_1\sqcup \tilde{C}_2$ forms a Hopf link with $\mathrm{lk}(\tilde{C}_1, \tilde{C}_2)=\pm1$.
According to the proof of Proposition \ref{lim-action}, we have that $(S^3, g_{r}, \hat{\varphi}_t^r)$ converges to $(S^3, g_{S^3}, \tilde{\varphi}_t)$ in the equivariant Gromov-Hausdorff topology. Therefore, the orbits $\tilde{C}_j$, $j=1,2$ of $\tilde{\varphi}_t$ are approximated by the two orbits of $\hat{\varphi}_t^r$ defined as
\begin{equation}
\hat{C}_j^r =\{\hat{\varphi}_t^r (w_j) \mid t\in U(1)\}, \quad\quad j=1,2.
\end{equation}
In particular, given $\varepsilon>0$ there exists $r_{\varepsilon}\in (0,r_0)$ such that for all $r<r_{\varepsilon}$ we have $\hat{C}_j^r \subset B_{\varepsilon}(\tilde{C}_j)$ for $j=1,2$.
Observe that the integer $k_j$ is the degree of the map $\mathrm{Pr}_{\tilde{C}_j}|_{\hat{C}^r_j}$, where $\mathrm{Pr}_{\tilde{C}_j}: B_{\varepsilon}(\tilde{C}_j)\rightarrow \tilde{C}_j$ is the projection map.  Applying Lemma \ref{degree-lk} produces 
\begin{equation}
\mathrm{lk}(\hat{C}^r_1, \hat{C}^r_2)=\deg(\mathrm{Pr}_{\tilde{C}_1}|_{\hat{C}^r_1})\cdot \deg(\mathrm{Pr}_{\tilde{C}_2}|_{\hat{C}^r_2})\cdot \mathrm{lk}(\tilde{C}_1, \tilde{C}_2)=\pm k_1 k_2. 
\end{equation}
Furthermore, Corollary \ref{quo-hopf} shows that the action of $\hat{\varphi}_t^r$ induces the Hopf fibration, and hence Lemma \ref{hopf} implies that $\mathrm{lk}(\hat{C}^r_1, \hat{C}^r_2)=\pm1$. We conclude that $k_1k_2=\pm1$, which yields the desired result.

\subsection{Proof of Theorem \ref{local}}
The almost free $U(1)$ action on $(M^{4}, g)$ immediately produces a Riemannian quotient space $(\bar{M}^3, \bar{g})$ which is a smooth Riemannian 3-manifold in the compliment of finitely many points $\{\bar{p}_1 ,\dots, \bar{p}_k\}$. For each $i$, Proposition \ref{lim-action} and Theorem \ref{multi} show that
there exists a neighborhood $\bar{U}_i \subset \bar{M}^3$ of $\bar{p}_i$ such that
\begin{equation}
\bar{g}= d\bar{r}^2+{\bar{r}^2}g_{\Sigma}+\pmb{\epsilon}_{\bar{r}}   %O_2(\bar{r}^3) 
\quad \text{ on } \bar{U}_i \setminus\{\bar{p}_i\},
\end{equation}
where $(\Sigma^2 , g_{\Sigma})$ is the quotient of $(S^3, g_{S^3})$ by the $U(1)$ action $\tilde{\varphi}_t$. Corollary \ref{quo-hopf} implies that $\Sigma^2$ is topologically a 2-sphere. Furthermore, the action $\tilde{\varphi}_t$ is given by \eqref{expforphi} and Theorem \ref{multi} shows that $k_1 k_2 =\pm 1$. It follows this quotient yields the Hopf fibration, and therefore $g_{\Sigma}=\frac{1}{4}g_{S^2}$.
\begin{comment}
Moreover, for each $i\in \{1,\dots,k\}$ there exists a neighborhood $\bar{U}_i \subset \bar{M}^3$ with polar coordinates such that 
%If  the $U(1)$ action is free outside finitely many points $\{p_1, \cdots, p_k\}$, then $(M/U(1), \bar{g})$ is a smooth %Riemannian manifold except at finitely many points, $\bar{p}_1, \bar{p}_2, \cdots \bar{p}_k$. Moreover, for each $\bar{p}_i$, %there is a neighborhood $\bar{U}_i$ such that 
\begin{equation}
\bar{g}= dr^2+{r^2}g_{CP^{1}}+O_2(r^3) \quad \text{ on } \bar{U}_i \setminus\{\bar{p}_i\},
\end{equation}
where $g_{CP^1}=\frac{1}{4}g_{S^2}$ is the Fubini-Study metric on the complex projective line $CP^1$.  
[[Following remark moved from earlier location.]]

\begin{remark}\label{local-metric} 
By Theorem \ref{multi}, we find that $(\S^2, g_{\Sigma})$ is a smooth manifold.  Proposition \ref{lim-action} implies
 $$\bar{g}= dr^2+ r^2g_{\S^2}+ O(r^3)$$
Moreover, all possible values of $k_i$ (given in Theorem \ref{multi}) provide two distinct cases of the $U(1)$ action
\begin{enumerate}
\item if $k_1k_2=1$, the quotient map $\mathbb{S}^{3}\rightarrow \S$ is    the circle bundle of the $\mathcal{O}(1)$-bundle over $\mathbb{P}^1$;
\item if $k_1k_2=-1$, the quotient map $\mathbb{S}^{3}\rightarrow \S$ is  the circle bundle of the $\mathcal{O}(-1)$-bundle over $\mathbb{P}^1$. 
\end{enumerate}
To sum up, $(\Sigma, g_{\Sigma})$ is isometric to $(\bb{P}^1, g_{FS})$, where $g_{FS}$ represents the Fubini-Study metric on $\bb{P}^1$. Thus, the quotient metric can be rewritten as: \[\bar{g}=(dr)^2+r^2 g_{\mathbb{P}^1}+O(r^3). \]This provides the foundation for concluding that Theorem \ref{local} is a consequence of Theorem \ref{multi}
 \end{remark}
\end{comment}

\begin{remark}\label{singular-beh}
For later use, we note that the proofs of Theorems \ref{local} and \ref{multi} imply that in each neighborhood $U_i$ about a singular point $p_i$, there exists a normal coordinate system such that the Killing field generator $T$ of the $U(1)$ action and its dual 1-form $\eta$ take the form
\begin{align}\label{local-killing}
\begin{split}
T(x)&=\pm\left(x^2 \partial_{x^1} -x^1 \partial_{x^2}\right) \pm\left(x^4 \partial_{x^3} -x^3 \partial_{x^4} \right)+ O_2(|x|^2),\\
\eta(x)&=\pm\left(x^2 dx^1-x^1 dx^2 \right)\pm \left(x^4 dx^3 -x^3 dx^4 \right)+O_2(|x|^2).
\end{split}
\end{align}
In particular $|T|_g=|\eta|_g =|x| +O(|x|^2)$ and $|d\eta|_g =\sqrt{8} +O(|x|)$.
\begin{comment}
Consider the killing vector field $T$, its dual $1$-form $\eta$ and choose the normal coordinate $(x_1, x_2, x_3, x_4)$ at some neighborhood at $p_i$. From the proof of Theorems \ref{local} and \ref{multi}, we find 
\begin{equation}\label{local-killing}
\pm T(x)=\pm\left(-x^2 \partial_{x^1} +x^1 \partial_{x^2}-x^4 \partial_{x^3} +x^3 \partial_{x^4} \right)+ O(|x|^2),
\end{equation}which implies that $\pm\eta$ is $-x_2dx_1+x_1dx_2\pm (-x_4dx_3+x_3dx_4)+O(r^2)$. Using \eqref{local-killing} and the local coordinate produces that on $U_i$ 
\begin{equation}
|\eta|^2=|x|^2+ O(|x|^3) \text{ and }|d\eta|=4+O(|x|)
\end{equation}
\end{comment}
\end{remark}

\section{Density for Harmonically ALF Manifolds} 
\label{sec4} \setcounter{equation}{0}
\setcounter{section}{4}

Although Proposition \ref{AEend-decay} implies that the quotient space of a complete ALF manifold with almost free $U(1)$ action is AE, it does not guarantee that the quotient scalar curvature is integrable, see Remark \ref{Prop2.2}. This lack of integrability prevents a direct application of the mass splitting that occurs in Corollary \ref{quo-sep}. Nevertheless, as will be shown in this section, the original ALF metric may be approximated by those which are harmonically ALF. This will allow us to recover the desired mass splitting in later sections.
\begin{comment}
 $(\bar{M}, \bar{g})$ admits an asymptotic Euclidean (AE) end (see Remark \ref{Prop2.2}), but the scalar curvature $\bar{R}$ may not lie in  $L^1$, which prevents a direction application of   the mass splitting  as described in \eqref{mass-split} (Corollary \ref{quo-sep}). Nonetheless, the original metric can be approximated by Schwarzschild-type metrics, allowing us to recover the desired mass splitting. 
To accomplish this, we will use the conformal deformation for generating Schwarzchild-type metrics. Recall that the model metric $g_0$ can be expressed as follows: 
\begin{equation}
g_0= \delta_{ij}dx^i dx^j+\ell^{2}\tau^2, 
\end{equation}
where  $x: (x_1, x_2, x_3): \mathcal{E}\rightarrow \bb{R}^3$ are globally defined as in \eqref{massdef}, $\tau$ is the 1-form defined in \eqref{def-model}. There are harmonic functions on $\mathcal{E}$: 
\begin{equation}\label{harmonic-construct}
    \Delta_{g_0}(1+\frac{m}{6r})=0
\end{equation}
where $r^2=\sum_{i=1}^3x_i^2$. This produces the ALF metrics $(1+\frac{m}{6r})^2g_0$, whose mass is $m$. We notice that although the new metrics are not scalar-flat or nonnegative scalar curvature, it retains the same decay as in \eqref{decay-scal}. 
\end{comment}
To accomplish this, we will employ a conformal gluing in order to replace the end of the given ALF manifold with one that is asymptotically more simple. Let
\begin{equation}
g_0= \delta_{ij}dx^i dx^j+\ell^{2}\tau^2
\end{equation}
be the model metric for an ALF end, and observe that for $m\in\mathbb{R}$ we have
\begin{equation}\label{harmonic-construct}
 \Delta_{g_0}\left(1+\frac{m}{6r}\right)=0
\end{equation}
where $r^2=\sum_{i=1}^3 (x^i)^2$. Moreover, these harmonic functions yield ALF metrics $\left(1+\frac{m}{6r}\right)^2 g_0$, having mass $m$. Note that although the new metrics do not amit nonnegative scalar curvature, they retain the same decay as in \eqref{decay-scal}. 

\begin{definition}\label{foanofinapinp}
An end $(\mathcal{E},g)$ of an ALF manifold is called \textit{harmonically ALF} if the following decay holds
\begin{equation}
\Big|\mathring{\nabla}^l \left(g-\left(1+\frac{m}{6r}\right)^2g_0 \right)\Big|_{g_0}=O(r^{-\mathring{q}-l}), \quad\quad l=0,1,2,
\end{equation}
for some $m\in\mathbb{R}$ and $\mathring{q}>1$.
%The end $(\E, g)$ is called \emph{Schwarzschild-type ALF} if it satisfies that for some $q>1$
%\begin{equation}
%|\mathring{\nabla}^l (g-(1+\frac{m}{6r})^2g_0)|_{g_0}=O(r^{-q-l}), \quad\quad l=0,1,2,
%\end{equation}
%Its mass $m(M,g, \E)$ is $m$. 
\end{definition}

Observe that the mass of a harmonically ALF end is the parameter $m$ from the definition. In analogy with AE case \cite{SYEL} (see also \cite{CorvinoPollack}*{Proposition 3.3}), any ALF manifold may be approximated by one with harmonically ALF ends while preserving nonnegative scalar curvature.
%$(M, g, \E)$ will be approximated by the Schwarzschild-type metric $g'$ on $M$ in the $L^\infty$-sense. 

\begin{theorem}\label{ALF-density} 
Let $(M^4,g,\E)$ be a complete ALF manifold having nonnegative scalar curvature, and an almost free $U(1)$ action with respect to a designated end $\E$.
For any $\varepsilon>0$, there exists a complete metric $g'$ on $M^4$ which is $\varepsilon$-close to $g$ and has the following properties.
\begin{itemize}
\item[(i)] The ALF manifold $(M^4 ,g')$ admits an almost free $U(1)$ action with respect to $\E$.
\item[(ii)] The scalar curvature of $g'$ is nonnegative, $R_{g'}\geq 0$.
\item[(iii)] The respective masses $m$ and $m'$ of $(\E,g)$ and $(\E,g')$ satisfy $|m-m'|< \varepsilon $. 
\item[(iv)] $(\E,g')$ is a harmonically ALF end. In particular, there exists a function $f\in C^{\infty}(M^4)$ and a compact set $\mathcal{K}\subset M^4$ such that 
\begin{equation}\label{conformal-def}
g'=f^2 g_0 \quad\text{ on }\E \setminus\mathcal{K},\quad\quad\quad f-\left(1+\frac{m'}{6{r}}\right)\in C^{2, \alpha}_{1+q'}(\E),
\end{equation}
where $q'=\min\{1,q\}$, $\alpha\in(0,1)$, and $m'$ is the mass of $(\E,g')$.
\end{itemize}
\end{theorem}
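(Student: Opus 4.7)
The plan is to adapt the Schoen--Yau density procedure (see \cite{SYEL} and \cite{CorvinoPollack}*{Proposition 3.3}) to the ALF setting while preserving the $U(1)$-symmetry throughout. First, by a flow-averaging argument like the one used in the proof of Proposition \ref{AEend-decay}, I would refine the asymptotic diffeomorphism $\Psi$ so that $\Psi^{-1}_{*}T = V$ exactly on the model side; then $g_0$ is invariant under the $U(1)$ action and all subsequent $U(1)$-invariant constructions will automatically preserve the symmetry.

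For a large parameter $\sigma$, introduce a $U(1)$-invariant smooth radial cutoff $\chi_\sigma$ equal to $1$ on $\{r\le\sigma\}$ and $0$ on $\{r\ge 2\sigma\}$, and define the transition metric
\begin{equation*}
g_{\sigma} = \chi_\sigma\, g + (1 - \chi_\sigma)\, g_0
\end{equation*}
on $\mathcal{E}$, keeping $g$ elsewhere. This metric is $U(1)$-invariant, with $R_{g_\sigma} = R_g \ge 0$ on $\{r\le\sigma\}$, $|R_{g_\sigma}| = O(\sigma^{-2-q})$ in the annular transition region, and $R_{g_\sigma} = R_{g_0} = O(r^{-4})$ on $\{r\ge 2\sigma\}$. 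In particular the negative part of $R_{g_\sigma}$ is uniformly small in $L^{4/3}$ as $\sigma\to\infty$.

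Next I would conformally deform by solving the $U(1)$-invariant equation
\begin{equation*}
-6\,\Delta_{g_{\sigma}} v_{\sigma} + R_{g_{\sigma}}\, v_{\sigma}=0, \qquad v_{\sigma}\to 1 \ \text{at infinity},
\end{equation*}
using weighted H\"older theory on ALF manifolds (Lockhart--McOwen, Bartnik) in spaces $C^{2,\alpha}_{-\delta}$ with noncritical weight $\delta\in(0,1)$. A crucial simplification is that $U(1)$-invariant functions on the model end descend to functions $\bar u$ of the $\mathbb{R}^3$-coordinates, and a direct computation (using $\det g_0 = \ell^2$) shows $\Delta_{g_0} u = \Delta_{\mathbb{R}^3}\bar u$ on such functions. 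This reduces the Fredholm analysis at infinity to that of the flat Laplacian on $\mathbb{R}^3$, yielding for $\sigma$ large a unique solution $v_\sigma$ close to $1$ --- hence positive. The new metric $g'= v_\sigma^2 g_\sigma$ is then scalar-flat, $U(1)$-invariant with the same almost free action, and $\varepsilon$-close to $g$, confirming (i) and (ii).

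Finally, on the end outside $\{r\le 2\sigma\}$ one has $g_\sigma = g_0$ and $g' = v_\sigma^2 g_0$, so the equation reduces to $\Delta_{\mathbb{R}^3} v_\sigma = \tfrac{1}{6}R_{g_0} v_\sigma = O(r^{-4})$. Since $1/r$ is flat-harmonic and the particular solution with an $O(r^{-4})$ source decays like $r^{-2}$, one obtains the expansion $v_\sigma = 1 + m'/(6r) + O_{2}(r^{-2})$, establishing the harmonically ALF structure (iv) with $\mathring{q} = 2$. A boundary-integral comparison of mass densities across large spheres, combined with the equation satisfied by $v_\sigma$, then yields $|m - m'| = O(\sigma^{-q})$, giving (iii). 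The main obstacle is the uniform (in $\sigma$) weighted Fredholm solvability and positivity of $-6\Delta_{g_\sigma} + R_{g_\sigma}$: the weight $\delta$ must avoid the indicial set of the model Laplacian at infinity, and the small negative part of $R_{g_\sigma}$ must be shown not to produce a nontrivial kernel. Both reduce, via the $U(1)$-quotient to $\mathbb{R}^3$, to familiar AE arguments.
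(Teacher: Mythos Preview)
Your proposal follows the same Schoen--Yau density template as the paper, but with two implementation choices that differ. The paper glues first to the \emph{harmonic model} $(1+\tfrac{m}{6r})^2 g_0$ (already carrying mass $m$) and then performs a conformal correction using only the \emph{truncated} coefficient $\psi_s R_{g_s}$ supported far out in the end; the result has $R_{g'}=u_s^{-3}R_{g_s}(1-\psi_s)\ge 0$ (nonnegative, not identically zero), and the monopole $\mathcal C_s$ of the conformal factor is shown to be $o(1)$, so $m'=m+6\mathcal C_s\to m$. You instead glue to $g_0$ and solve the full conformal Laplacian to obtain a scalar-flat metric, with the mass carried entirely by the monopole of $v_\sigma$. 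Both routes are valid.

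What the paper's choices buy is a more elementary existence argument: since the coefficient $\psi_s R_{g_s}$ is supported in $\E$ and small in $L^{3/2}$ for $s$ large, solvability (Theorem~\ref{conformal}) follows from an exhaustion together with the Sobolev inequality for $U(1)$-invariant functions on the end (Lemma~\ref{sobolev}), avoiding any global weighted Fredholm theory. Your invocation of Lockhart--McOwen is fine in principle (Minerbe \cite{minerbe}*{Section~2} supplies the relevant Fredholm package on ALF manifolds, and the paper itself uses it later in Lemma~\ref{exist-harmoinic}), but note that the reduction to $\Delta_{\mathbb R^3}$ via the quotient is only valid on the end; globally the quotient $\bar M^3$ has conical singularities at the fixed points of the action, so the argument is best run directly on $M^4$ rather than on the quotient. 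For (iii), the paper proves the mass comparison by a careful two-case integral estimate showing $|\int \psi_s R_{g_s}|=o(1)$; your outline would need the analogous control on $\int R_{g_\sigma}v_\sigma$, which is the same kind of argument but not quite the one-line $O(\sigma^{-q})$ you claim.
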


In order to establish this result, we will first investigate the existence of solutions for certain linear elliptic equations on ALF manifolds, in preparation for conformal deformation.  We begin with a brief technical lemma that will be employed in what follows.

\begin{lemma} \label{Schauder1} 
Let $(M^4,g,\E)$ be a complete ALF manifold having an almost free $U(1)$ action with respect to a designated end $\E$.
%Let $(M^4, g, \mathcal{E})$ be a complete ALF manifold with an almost free action by $U(1)$. 
Suppose that $u$, $b$, and $c$ are smooth $U(1)$-invariant functions on $\E$ satisfying
\begin{equation}\label{ahf09ahj-09fj-a9}
\Delta_g u+cu=b.
\end{equation}
If $u$ is bounded and $b,c\in C^{0, \alpha}_{3+\varsigma}(\E)$ for some $\alpha, \varsigma\in (0, 1)$,
then there are $a_0, a_1 \in\mathbb{R}$ such that
\begin{equation}
u-\left(a_{0}+\frac{a_1}{r}\right)\in C^{2,\alpha}_{1+\varsigma'}
\end{equation}
where $\varsigma'=\min\{q, \varsigma\}$.
\end{lemma}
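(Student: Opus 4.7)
The plan is to exploit the $U(1)$-invariance to descend the PDE to the asymptotically Euclidean quotient $(\bar{\E},\bar{g})$ furnished by Proposition \ref{AEend-decay}, and then invoke standard asymptotic expansion results for bounded solutions of elliptic equations on AE $3$-manifolds. After shrinking $\E$ so the $U(1)$ action is free, $\pi:\E\to\bar{\E}$ is a smooth principal circle bundle and Riemannian submersion, and I may write $u=\pi^{*}\bar u$, $b=\pi^{*}\bar b$, $c=\pi^{*}\bar c$. A direct computation in a local trivialization with $T=\partial_{\theta}$ and $g=\pi^{*}\bar g+|T|_{g}^{2}(d\theta+A)^{2}$ gives for any $U(1)$-invariant function the identity
\begin{equation}\label{sub-laplacian}
\Delta_{g}u \;=\; \Delta_{\bar g}\bar u \;+\; \bar g\!\left(\nabla_{\bar g}\log|T|_{g},\,\nabla_{\bar g}\bar u\right),
\end{equation}
so the original equation descends to
\begin{equation}\label{quotient-eqn}
\Delta_{\bar g}\bar u + X(\bar u) + \bar c\,\bar u \;=\; \bar b, \qquad X:=\nabla_{\bar g}\log|T|_{g}.
\end{equation}

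Next I would record the decay of the coefficients. Definition \ref{def-alf-mfd}(iii) and $|V|_{g_{0}}=\ell$ yield $|T|_{g}-\ell=O_{2}(r^{-q})$, hence $X\in C^{0,\alpha}_{1+q}(\bar{\E})$; the functions $\bar b,\bar c$ inherit the decay $C^{0,\alpha}_{3+\varsigma}$ via Lemma \ref{dist}, and Proposition \ref{AEend-decay} gives $\bar g-\delta\in C^{2,\alpha}_{q}(\bar{\E})$. Rewriting \eqref{quotient-eqn} in the asymptotic coordinates $\bar{x}$ yields
\begin{equation}\label{flat-form}
\Delta_{\delta}\bar u \;=\; \bar b - \bar c\,\bar u - X(\bar u) + \bigl(\Delta_{\delta}-\Delta_{\bar g}\bigr)\bar u \;=:\;F,
\end{equation}
which exhibits the problem as a bounded perturbation of the flat Laplace equation on $\mathbb{R}^{3}$.

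From here I would invoke weighted Schauder theory on AE $3$-manifolds (Bartnik \cite{bartnik1986}, Chru\'sciel \cite{Chrusciel}). Interior Schauder estimates first bootstrap $\bar u\in C^{0}_{0}$ to $\bar u\in C^{2,\alpha}_{0}$, at which point $F\in C^{0,\alpha}_{2+\varsigma'}(\bar{\E})$ with $\varsigma'=\min\{q,\varsigma\}$. Since $\mathbb{R}^{3}$ admits no nonconstant bounded harmonic function, a Liouville argument in weighted spaces produces a constant $a_{0}\in\mathbb{R}$ with $\bar u - a_{0} = o(1)$; this only changes $F$ by $-\bar c\,a_{0}\in C^{0,\alpha}_{3+\varsigma}$. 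The classical expansion theorem for the flat Laplacian on AE ends in dimension three then yields $a_{1}\in\mathbb{R}$ such that
\begin{equation}
\bar u - a_{0} - \frac{a_{1}}{|\bar x|} \;\in\; C^{2,\alpha}_{1+\varsigma'}(\bar{\E}),
\end{equation}
which may also be verified directly by convolving $F$ with the Newton kernel and extracting the leading $1/|\bar x|$ term.

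Finally, lifting back through $\pi$ and using $|\bar x|^{-1}=r^{-1}+O(r^{-1-q})$ (a consequence of Proposition \ref{AEend-decay} together with Lemma \ref{dist}) delivers the claimed expansion on $\E$. The hard part will be the careful bookkeeping of weighted H\"older norms under the submersion, and iterating the Schauder bootstrap so that the first-order drift $X(\bar u)$ coming from the mean curvature of the $U(1)$-fibers does not degrade the remainder below the rate $\varsigma'$; once these are under control, the remaining steps are a routine application of AE asymptotic expansion theory.
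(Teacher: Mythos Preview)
Your proposal is correct and follows essentially the same approach as the paper: descend to the AE quotient $(\bar{\E},\bar g)$ via Proposition \ref{AEend-decay}, compute the submersion Laplacian to obtain the drift term $\bar g(\bar\nabla\log|T|_g,\bar\nabla\bar u)$ with $C^{0,\alpha}_{1+q}$ decay, and then apply AE asymptotic expansion theory. The only difference is packaging: the paper simply invokes \cite{Lee}*{Corollary A.32} for the final step, whereas you spell out the underlying bootstrap, Liouville, and Newton-kernel expansion that constitute that black-box result.
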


\begin{proof}
Near any point of $\E$ coordinates may be introduced so that the metric takes the form
\begin{equation}
g=\pi^* \bar{g}+\frac{\eta^2}{|\eta|^2}=\bar{g}_{ij}dx^i dx^j +|\eta|^2 (dt+A_i dx^i)^2,
\end{equation}
where $\pi:M^4 \rightarrow \bar{M}^3 =M^4 /U(1)$ is the quotient map, $\bar{g}$ is the quotient metric, and $\eta$ is the 1-form dual of the $U(1)$ generator $T=\partial_t$. By $U(1)$-invariance there exist functions $\bar{u}$, $\bar{b}$, and $\bar{c}$ on the quotient $\bar{\E}=\E/U(1)$ such that $u=\pi^* \bar{u}$, $b=\pi^* \bar{b}$, and $c=\pi^* \bar{c}$. We then have
\begin{equation}
\Delta_g u=\sum_{i,j=1}^{3}\frac{1}{\sqrt{|\eta|\det \bar{g}}}\partial_{x^i}\left(\sqrt{|\eta|\det\bar{g}} \text{ }\bar{g}^{ij}\partial_{x^j} u\right)=\pi^*\left(\Delta_{\bar{g}}\bar{u}+\frac{1}{2}\bar{g}(\bar{\nabla}\log|\eta|,\bar{\nabla}\bar{u})\right).
\end{equation}
Thus, equation \eqref{ahf09ahj-09fj-a9} may be rewritten on $\bar{\mathcal{E}}$ as
\begin{equation}
\Delta_{\bar{g}}\bar{u}+\frac{1}{2}\bar{g}(\bar{\nabla}\log|\eta|,\bar{\nabla}\bar{u})+\bar{c}\bar{u}=\bar{b}.
\end{equation}
By Proposition \ref{AEend-decay} the quotient end $\bar{\E}$ is AE of order $q$, and Remark \ref{poanfoinaopinhpohj} shows that $|\bar{\nabla}\log|\eta||_{\bar{g}}\in C^{0,\alpha}_{1+q}(\bar{\E})$. The desired result now follows from \cite{Lee}*{Corollary A.32}.
\end{proof}

\begin{comment}
\begin{proof}
Proposition \ref{AEend-decay} implies that $(\bar{\mathcal{E}}, \bar{g})$ satisfies the metric condition described in Theorem \ref{Schauder}, where $\bar{\E}:=\mathcal{E}/U(1)$ and $\bar{g}$ is the quotient metric. From  \eqref{laplace-quo}, we  rewrite the elliptic equation as follows: 
\[\Delta_{\bar{g}}\bar{u}+\bar{g}(\overset{\rightarrow}{b}, \nabla_{\bar{g}} \bar{u})+\bar{c}\bar{u}=\bar{f}\]
where $u=\pi^*(\bar{u})$, $f=\pi^*(\bar{f})$, $c=\pi^*(\bar{c})$  and $\overset{\rightarrow}{b}:=\frac{1}{2} \nabla_{\bar{g}} \log (||\eta||^2) \in C^{1,\alpha}_{1+q}$.    Theorem \ref{Schauder} allows to find    two constants, $a_\infty$ and $b_\infty$ such that 
\[\bar{u}-(a_\infty+\frac{b_\infty}{r})\in C^{2, \alpha}_{1+\epsilon'}(\bar{\mathcal{E}})\] 
This  asymptotic estimate also holds for $u$. 
\end{proof}
\end{comment}

Throughout the remainder of this section, it will be assumed for convenience that $M^4$ has a single end $\E$. The case of additional ends may be treated with minor modifications.

\subsection{Existence of positive solutions}
Within the setting of Theorem \ref{ALF-density} consider the equation with prescribed asymptotics
\begin{equation}\label{foinaiofnioangf}
\Delta_g u -\mathbf{c} u=0 \quad\text{ on } M^4,\quad\quad\quad u\rightarrow 1 \quad\text{ as }r\rightarrow\infty,
\end{equation}
where $\mathbf{c}\in C^{\infty}(M^4)\cap C^{2,\alpha}_4(\E)$ with $\alpha\in (0,1)$. To study this problem we will need a Sobolev inequality on the asymptotic end.
%By Proposition \ref{AEend-decay}, the asymptotic end of the quotient space $(\bar{M}^3, \bar{g})$ is AE end. Combining it with the Sobolev inequality on AE ends and %\eqref{norm}  produces that [[we use the $|\nabla_g u|^2 =|\nabla_{\bar{g}}u|^2$ combined with \cite[Lemma 3.1]{schoen-yau1979}]]

\begin{lemma}\label{sobolev} 
There exists a constant $C_*$ depending only on the geometry of $(\E,g)$, such that for any $U(1)$ invariant function $u\in C^{\infty}_c (\E)$ it holds that
\begin{equation}\label{fhaoifnhiqwnhh}
\left(\int_\mathcal{E}u^6  \right)^{1/3}\leq C_* \int_{\mathcal{E}} |\nabla u|_g^2 .
\end{equation} 
\end{lemma}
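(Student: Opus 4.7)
The plan is to exploit $U(1)$-invariance to reduce the inequality to the standard three-dimensional Sobolev inequality on the asymptotically Euclidean quotient $(\bar{\mathcal{E}},\bar{g})$ provided by Proposition \ref{AEend-decay}. Write $u=\pi^{*}\bar{u}$ for some $\bar{u}\in C^{\infty}_{c}(\bar{\mathcal{E}})$. Since $g=\pi^{*}\bar{g}+\eta^{2}/|\eta|^{2}$ is in Riemannian submersion form, one has the orthonormal splitting
\begin{equation}
dV_{g}=\pi^{*}(dV_{\bar{g}})\wedge\tfrac{\eta}{|\eta|},\qquad |\nabla u|_{g}^{2}=\pi^{*}|\bar{\nabla}\bar{u}|_{\bar{g}}^{2},
\end{equation}
and Fubini yields
\begin{equation}
\int_{\mathcal{E}}u^{6}\,dV_{g}=2\pi\int_{\bar{\mathcal{E}}}\bar{u}^{6}|\eta|\,dV_{\bar{g}},\qquad \int_{\mathcal{E}}|\nabla u|_{g}^{2}\,dV_{g}=2\pi\int_{\bar{\mathcal{E}}}|\bar{\nabla}\bar{u}|_{\bar{g}}^{2}|\eta|\,dV_{\bar{g}},
\end{equation}
where the factor $2\pi$ is the circumference of the $U(1)$ parameter.

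Next I would invoke that $\mathcal{E}$ is (by assumption or by shrinking it inward from the fixed points $\{p_1,\dots,p_k\}$) disjoint from the singular locus, so the $U(1)$ action is free on $\mathcal{E}$. Combined with the asymptotic decay $|\eta|_{g}\to \ell$ from Lemma \ref{vector-asy} (or directly from \eqref{dual-form-asy}), this gives constants $0<c_{1}\leq|\eta|\leq c_{2}<\infty$ on $\mathcal{E}$. Consequently
\begin{equation}
\left(\int_{\mathcal{E}}u^{6}\,dV_{g}\right)^{1/3}\leq (2\pi c_{2})^{1/3}\Big(\int_{\bar{\mathcal{E}}}\bar{u}^{6}\,dV_{\bar{g}}\Big)^{1/3},\qquad \int_{\mathcal{E}}|\nabla u|_{g}^{2}\,dV_{g}\geq 2\pi c_{1}\int_{\bar{\mathcal{E}}}|\bar{\nabla}\bar{u}|_{\bar{g}}^{2}\,dV_{\bar{g}},
\end{equation}
so everything reduces to a standard Sobolev inequality on the three-dimensional AE end $(\bar{\mathcal{E}},\bar{g})$ applied to the compactly supported function $\bar{u}$.

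For the latter inequality, I would use the AE coordinates $\bar{x}$ from Proposition \ref{AEend-decay} to identify $\bar{\mathcal{E}}$ with (a subset of) $\mathbb{R}^{3}\setminus B_{r_{1}}$, extend $\bar{u}$ by zero across $\partial\bar{\mathcal{E}}$ to a compactly supported function on $\mathbb{R}^{3}$, and invoke the classical Euclidean Sobolev inequality $\|v\|_{L^{6}(\mathbb{R}^{3})}^{2}\leq C\|\partial v\|_{L^{2}(\mathbb{R}^{3})}^{2}$. The decay \eqref{AE-metric-decay} with $q>1/2$ shows that $\bar{g}$ and $\delta$ are uniformly equivalent on $\bar{\mathcal{E}}$, so the Euclidean volume form, gradient, and the $\bar{g}$-counterparts differ only by constants depending on the geometry of $(\bar{\mathcal{E}},\bar{g})$. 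Chaining these estimates produces the desired constant $C_{*}$ depending only on $(\mathcal{E},g)$.

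The main technical point to verify is the uniform equivalence $c_{1}\leq|\eta|\leq c_{2}$ on all of $\mathcal{E}$ rather than only near infinity; this is the only place where one needs the standing assumption that $\mathcal{E}$ lies in the free locus of the almost free $U(1)$ action, and it is precisely what fails near the fixed points $p_i$ (where by Remark \ref{singular-beh} one has $|\eta|_{g}\sim |x|\to 0$). The rest is a routine submersion computation together with the classical AE Sobolev estimate.
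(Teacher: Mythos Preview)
Your proposal is correct and follows essentially the same approach as the paper: descend the $U(1)$-invariant function to the AE quotient end $(\bar{\mathcal{E}},\bar{g})$ via Proposition~\ref{AEend-decay} and invoke the three-dimensional AE Sobolev inequality there. The paper is terser (it cites \cite{schoen-yau1979}*{Lemma 3.1} directly rather than reducing to the Euclidean Sobolev inequality by hand), while you spell out the Fubini formula with the $|\eta|$ weight and the need for uniform bounds on $|\eta|$ away from the fixed points, but the strategy is identical.
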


\begin{proof}
By Proposition \ref{AEend-decay}, the asymptotic end of the quotient space $(\bar{M}^3, \bar{g})$ is AE. Moreover, if $u$ is $U(1)$ invariant then its descent to the quotient space $\bar{u}$ satisfies $|\bar{\nabla}\bar{u}|_{\bar{g}}=|\nabla u|_{g}$, where $\bar{\nabla}$ denotes the gradient of $\bar{M}^3$. The desired inequality \eqref{fhaoifnhiqwnhh} then follows from the corresponding inequality \cite{schoen-yau1979}*{Lemma 3.1} in the AE setting.
\end{proof}

We may now establish the existence of positive solutions to \eqref{foinaiofnioangf} under additional hypotheses on the coefficient $\mathbf{c}$.

\begin{theorem}\label{conformal} 
Let $(M^4, g, \mathcal{E})$ be a complete ALF manifold having an almost free $U(1)$ action with respect to $\E$. Assume that $\mathbf{c}\in C^{\infty}(M^4)\cap C^{0,\alpha}_4(\E)$, $\alpha\in (0,1)$ is a $U(1)$ invariant function supported in $\E$ such that
%\begin{center}
%(1) $f$ is supported in $\mathcal{E}$ and  $(\int_{\mathcal{E}}|f|^{\frac{3}{2}} dx)^{\frac{2}{3}}\leq \frac{1}{2C} $;\quad 
%(2) $f\in L^{\frac{6}{5}}\cap L^\infty$ and  $f\in C^{0, \alpha}_{4}$,
%\end{center}then there exists a positive $U(1)$-invariant function $u$ satisfying that 
\begin{equation}\label{fpoainofinaoinghha}
\left(\int_{\mathcal{E}}|\mathbf{c}|^{\frac{3}{2}} \right)^{\frac{2}{3}}\leq \frac{1}{2C_*}.
\end{equation}
Then there exists a smooth positive solution of \eqref{foinaiofnioangf} satisfying $u-(1+\frac{\mathcal{C}}{r})\in C^{2, \alpha}_{1+q'}(\E)$,
%\begin{equation}\label{elliptic-req}
%\begin{cases}
%& \Delta_g u- f u=0\\
%& u-(1+\frac{A}{r})\in C^{2, \alpha}_{1+\bar{q}}
 %\end{cases}
%\end{equation}
where $q'=\min\{1,q\}$, $\mathcal{C}=-\frac{1}{2\pi\ell \omega_2}\int_{M^4}\mathbf{c}u$,  and  $C_*$ is a constant from Lemma \ref{sobolev}.
\end{theorem}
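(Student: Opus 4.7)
The plan is to write $u = 1+v$ and solve the shifted equation $\Delta_g v - \mathbf{c}v = \mathbf{c}$ with $v \to 0$ at infinity by a variational method, then extract the asymptotic expansion with the prescribed leading constant $\mathcal{C}/r$.

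First I would set up a Lax--Milgram argument in the Hilbert space $H$ of $U(1)$-invariant functions obtained as the completion of $C_c^{\infty}(M^4)$ under the norm $\|v\|_H^2 = \int_{M^4}|\nabla v|_g^2$. The Sobolev inequality of Lemma~\ref{sobolev} (extended globally by combining the end estimate with a standard Poincaré--Sobolev inequality on the compact interior) embeds $H$ into $L^6(M^4)$. Consider
\begin{equation}
B(v,w) := \int_{M^4}\!\!\bigl(\nabla v\cdot\nabla w + \mathbf{c}\,vw\bigr), \qquad
L(w) := -\int_{M^4}\mathbf{c}\,w.
\end{equation}
Using Hölder with $\mathbf{c}\in L^{3/2}(\mathcal{E})$ (a consequence of $\mathbf{c}\in C^{0,\alpha}_4(\mathcal{E})$ and the ALF volume growth) together with the hypothesis \eqref{fpoainofinaoinghha} gives $B(v,v)\ge \tfrac12\int|\nabla v|_g^2$, while $\mathbf{c}\in L^{6/5}(\mathcal{E})$ bounds $L$ on $H$. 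Lax--Milgram then produces a unique weak solution $v\in H$; by $U(1)$-invariance of the data the solution is automatically $U(1)$-invariant, and standard elliptic regularity promotes $v$ to $C^{\infty}(M^4)$.

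Next I would extract the asymptotic expansion. Since $v\in L^6$ solves a uniformly elliptic equation with smooth bounded coefficients and rapidly decaying right-hand side, a Moser iteration on large balls in $\mathcal{E}$ yields $v\in L^{\infty}$ with $v(\mathbf{p})\to 0$ as $r\to\infty$. With $v$ bounded, Lemma~\ref{Schauder1} applies to $\Delta_g v+(-\mathbf{c})v=\mathbf{c}$, giving constants $a_0, a_1\in\mathbb{R}$ such that
\begin{equation}
v-\Bigl(a_0+\tfrac{a_1}{r}\Bigr)\in C^{2,\alpha}_{1+q'}(\mathcal{E}),
\end{equation}
where $q'=\min\{q,\varsigma\}$ for any $\varsigma\in(0,1)$ with $\varsigma$ arbitrarily close to $1$. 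The decay $v\to 0$ forces $a_0=0$. To identify $a_1=\mathcal{C}$, integrate the equation $\Delta_g u=\mathbf{c}u$ over $\mathbf{B}_r=\pi_0^{-1}(B_r)\subset\mathcal{E}$ and apply the divergence theorem; using the asymptotic volume $2\pi\ell\,\omega_2 r^{2}$ of $\mathcal{S}^3_r$ and the expansion $\partial_r u = -a_1/r^2+O(r^{-2-q'})$ one obtains
\begin{equation}
\int_{M^4}\mathbf{c}\,u = \lim_{r\to\infty}\int_{\mathcal{S}^3_r}\partial_\nu u = -2\pi\ell\,\omega_2\,a_1,
\end{equation}
which yields the claimed formula $\mathcal{C}=-\tfrac{1}{2\pi\ell\omega_2}\int_{M^4}\mathbf{c}\,u$.

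Finally, for positivity I would test the equation against $u_-:=\max(-u,0)$, which is compactly supported because $u\to 1$. Integration by parts gives $\int|\nabla u_-|_g^2 = -\int \mathbf{c}\,u_-^{2}$, and the same coercivity estimate used in the Lax--Milgram step forces $u_-\equiv 0$, so $u\ge 0$; the strong maximum principle applied to $(\Delta_g-\mathbf{c})u=0$ then rules out interior zeros, giving $u>0$.

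The main obstacle I anticipate is bookkeeping regularity and decay consistently, in particular establishing the global Sobolev inequality in the form needed and then bootstrapping the weak $H$-solution to a classical pointwise asymptotic expansion of the form required by Lemma~\ref{Schauder1}; a secondary subtlety is that $\mathbf{c}\in C^{0,\alpha}_4$ sits just at the boundary of the weight range permitted by that lemma, so one must pick $\varsigma<1$ close to $1$ and verify that the resulting order of decay still matches $q'=\min\{q,1\}$.
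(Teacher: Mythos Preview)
Your proposal is correct and follows the same overall strategy as the paper: write $u=1+v$, solve the shifted equation by exploiting the coercivity coming from \eqref{fpoainofinaoinghha} together with the Sobolev inequality of Lemma~\ref{sobolev}, then read off the $\mathcal{C}/r$ expansion from Lemma~\ref{Schauder1} and finish with a maximum-principle argument.

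The execution differs in two places. First, you set up a single global Lax--Milgram problem on the Dirichlet-energy completion of $U(1)$-invariant $C_c^\infty$ functions, whereas the paper works by exhaustion: it solves Dirichlet problems $\Delta_g w_i-\mathbf{c}w_i=\mathbf{c}$ on precompact $U(1)$-invariant domains $\Omega_i$, obtains uniform $L^6$ and then $C^{k,\alpha}$ bounds via Nash--Moser iteration and Schauder estimates, and extracts a limit by a diagonal argument. Both routes use exactly the same coercivity computation, so the difference is purely technical; your direct approach is slightly cleaner, while the paper's avoids having to extend the Sobolev inequality from $\E$ to all of $M^4$. Second, for positivity you test against $u_-=\max(-u,0)$, which is compactly supported since $u\to 1$; the paper instead chooses a small regular value $-\epsilon$, works on the smooth sublevel set $\{u<-\epsilon\}$, and argues by contradiction. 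Your argument is more direct and avoids Sard's theorem. The identification of $\mathcal{C}$ via the divergence theorem and the final appeal to Harnack are the same in both.

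Your worry about the weight $\varsigma$ in Lemma~\ref{Schauder1} is legitimate but harmless: since $\mathbf{c}\in C^{0,\alpha}_4$ one may take $\varsigma$ arbitrarily close to $1$, and for $q\le 1$ this already gives $q'=\min\{q,\varsigma\}=q=\min\{1,q\}$; the paper implicitly makes the same identification.
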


\begin{remark}\label{aofnoianfoianbgoianoignoiagn}
The same conclusions hold if only the negative part of the coefficient function, $\min\{\mathbf{c},0\}$, satisfies condition \eqref{fpoainofinaoinghha}.
\end{remark}

\begin{proof}
Let $\{\Omega_i\}_{i=1}^{\infty}$ be an exhaustion of $M^4$ by precompact open sets, each of which is $U(1)$-invariant and has a smooth boundary $\partial \Omega_i\subset \mathcal{E}$.  For each $i$, the kernel of $\Delta_g -\mathbf{c}$ in $W^{1,2}_0(\Omega_i)$ is trivial. To see this, observe that if a function $w\in\ker(\Delta_g-\mathbf{c})\cap W^{1,2}_0(\Omega_i)$ then utilizing Lemma \ref{sobolev} and H\"{o}lder's inequality produces
\begin{align}\label{gradient-end}
\begin{split}
\int_{\Omega_i} |\nabla w|_g^2=-\int_{\Omega_i}\mathbf{c}w^2  &=-\int_{\Omega_i\cap \mathcal{E}} \mathbf{c}w^2 \\
&\leq \left(\int_\mathcal{E} |\mathbf{c}|^{\frac{3}{2}} \right)^{\frac{2}{3}} \left(\int_{\mathcal{E}} w^6 \right)^{\frac{1}{3}}\\
&\leq \frac{1}{2C_*} \left(\int_{\mathcal{E}} w^6 \right)^{\frac{1}{3}} %\quad \quad \quad ~~\quad \text{because $(\int_{\mathcal{E}}|f|^{\frac{3}{2}} dx)^{\frac{2}{3}}\leq \frac{1}{2C} $}\\
\leq \frac{1}{2} \int_{\E} |\nabla w|_g^2 = \frac{1}{2} \int_{\Omega_i \cap\E} |\nabla w|_g^2 ,%\quad \quad \quad \quad \quad \text{ from Lemma \ref{sobolev}}
\end{split}
\end{align}
%This means that $\nabla w=0$ in $\Omega_i$ (i.e. $w=0$ on $\E$), which is is a contradiction. 
and hence $w=0$ in $\Omega_i$. It follows by elliptic theory that there is then a unique smooth solution to the Dirichlet problem
\begin{equation}\label{Prob1}
\Delta_g  w_i-\mathbf{c} w_i=\mathbf{c} \quad \text{ in } \Omega_i , \quad\quad\quad w_i =0 \quad \text{ on }\partial\Omega_i .
\end{equation}
%The Fredholm alternative (see Theorem 5.3 in \cite{GT}) allows to find      the unique  solution $w_i$ for the PDE: 
%\begin{equation}\label{Prob1}
%\begin{cases}
%&\Delta_g  w_i-f w_i=f \text{ in } \Omega_i\\
%& w_{i}=0, \text{ on } \partial \Omega_i
%\end{cases}
%\end{equation}
Moreover, the uniqueness ensures that $w_i$ is also $U(1)$-invariant, as $g$, $\mathbf{c}$, and $\Omega_i$ have this property.
% (since $f$, $g$ and $\Omega_i$ are $U(1)$-invariant). 

We will now make uniform estimates for the sequence $\{w_i\}_{i=1}^{\infty}$. The same manipulations as in \eqref{gradient-end} yield
\begin{equation}
\int_{M^4} |\nabla w_i|_g^2=-\int_{\mathcal{E}} \mathbf{c}w_i^2- \int_\mathcal{E} \mathbf{c}w_i 
%\leq (\int_\mathcal{E} |f|^{\frac{3}{2}} dx)^{\frac{2}{3}} (\int_{\mathcal{E}} w_i^6dx)^{\frac{1}{3}}+  (\int_\mathcal{E} |f|^{\frac{6}{5}} dx)^{\frac{5}{6}} (\int_{\mathcal{E}} w_i^6dx)^{\frac{1}{6}}\\
\leq \frac{1}{2C_*}\left(\int_{\mathcal{E}} w_i^6 \right)^{\frac{1}{3}}+  \left(\int_\mathcal{E} \mathbf{c}^{\frac{6}{5}} \right)^{\frac{5}{6}} \left(\int_{\mathcal{E}} w_i^6 \right)^{\frac{1}{6}}. % \quad \quad \quad \text{because $(\int_{\mathcal{E}}|f|^{\frac{3}{2}} dx)^{\frac{2}{3}}\leq \frac{1}{2C} $}
\end{equation}
This together with Lemma \ref{sobolev} implies that  
\begin{equation}\label{initial-iteration}
\left(\int_{\mathcal{E}}w^{6}_i \right)^{\frac{1}{6}}\leq 2C_* \left(\int_\mathcal{E} \mathbf{c}^{\frac{6}{5}} \right)^{\frac{5}{6}}. 
\end{equation}
Note that the right-hand side is finite since $\mathbf{c}\in C^{0,\alpha}_4(\E)$. 
\begin{comment}
Nash-Moser iteration provides a uniform $L^\infty$-estimate: $||w_i||_{L^{\infty}}\leq C_2(||f||_{L^\infty}, ||f||_{L^\frac{6}{5}}, C)$.Thus,  combining it with 
 Schauder estimate (see  Theorem 6.2 in \cite{GT}) produces that  on any compact set $K$, $| w_{i}|_{C^{k,\alpha}(K)}\leq C(K, k, \alpha,C)$, which implies that $\lim_{i\rightarrow \infty} w_i$ exists in a subsequence. The function $u:=w+1$ is bounded function and satisfies the PDE in \eqref{elliptic-req}, where $w=\lim_{i\rightarrow \infty} w_i$. Moreover, $w$ satisfies that 
 \[\Delta w-fw=f\]
From Lemma \ref{Schauder1}, we find that   ${w}-(c_\infty+\frac{A}{r})\in C^{2,\alpha}_{1+\bar{q}}(\E)$ where $\bar{q}=\min\{1,q\}$. Using \eqref{initial-iteration}, we find that  $w\in L^6(\E)$ and $c_\infty=0$. This means that  
\begin{equation}
 u-(1+\frac{A}{r})\in C^{2,\alpha}_{1+\bar{q}}(\E) \text{ and }  A=-\frac{1}{2\pi\ell \omega_2}\int_M fu
\end{equation}
\end{comment}
Nash-Moser iteration then provides a uniform $L^{\infty}$ bound on any fixed compact subset of the end $\E$, and from this, Schauder estimates give $C^{k,\alpha}$ control for any $k$. In particular, these estimates hold on level sets of the coordinate function $r$, and thus standard elliptic estimates imply that this control extends to the interior of $M^4$. In sum, the sequence is uniformly bounded in $C^{k,\alpha}$ on any compact subset of $M^4$. Therefore, a diagonal argument may be used to extract a convergent subsequence (with notation unchanged for convenience) 
$w_{i} \rightarrow w$. The limit function $w$ is $U(1)$ invariant due to the invariance of $w_i$. Furthermore it is smooth, and from Lemma \ref{Schauder1} together with \eqref{initial-iteration} it satisfies $w-\frac{\mathcal{C}}{r}\in C^{2,\alpha}_{1+q'}(\E)$, for some constant $\mathcal{C}$ where $q'=\min\{1,q\}$. By setting $u=1+w$, we find that this function satisfies equation \eqref{foinaiofnioangf} along with all other desired properties, except perhaps positivity.

To establish the positivity of $u$ we proceed by contradiction. Assume that $\min_{M^4} u<0$, and let $-\epsilon\in (\min_{M^4}u , 0)$ be a small regular value; the existence of such a regular value is a consequence of Sard's theorem. Consider the domain $\Omega_{\epsilon}=\{p\in M^4 \mid u(p)<-\epsilon\}$.  The properties of $u$ imply that this domain is a precompact open set, and since $\epsilon$ is a regular value its boundary $\partial\Omega_{\epsilon}$ is smooth. We may then multiply equation \eqref{foinaiofnioangf} by $u$ and integrate by parts, noting that the boundary term has an advantageous sign and applying similar arguments found in \eqref{gradient-end}, to conclude that $u$ must be constant inside $\Omega_{\epsilon}$. This leads to a contradiction, since $u=-\epsilon$ on the boundary while it must also achieve the minimum value (which differs from $-\epsilon$) inside $\Omega$. We conclude that $u\geq 0$ on $M^4$, and by the Harnack inequality it must in fact be strictly positive.
\begin{comment}
Moreover, we have that $u$ is non-negative. If not, the set $\Omega =\{p\in M~|~u(p)<0\}$ is non-empty. We observe that using the uniqueness of the end of $M$ and \eqref{elliptic-req}, we find that  $\bar{\Omega}$ is compact, which implies  $u\in \ker{(\Delta-f)}\cap C^\infty_0(\bar{\Omega})$. Combining this observation with the same computation in \eqref{gradient-end} produces that $u=0$ on $\Omega$, which leads to a contradiction with the definition of $\Omega$.  [[Actually, we should use a regular value $-\varepsilon$ for $u$ and consider $\Omega_{\varepsilon}$. The sign of the boundary term would be advantageous. So the same argument works to show $u\geq 0$.]]
We use Harnack's inequality to find $u$ is a positive function, a required candidate in our statement.
\end{comment} 
\end{proof}

\subsection{Proof for Theorem \ref{ALF-density}} 
The first step is to glue-in by hand a harmonically ALF end. Thus, in $\E$ consider a smooth 1-parameter family of radial cut-off functions $0\leq \phi_s \leq 1$ with $s>>0$ and satisfying the following properties: 
\begin{equation}
\phi_s(r) =1 \quad\text{ for } r\leq 2s,\quad\quad \phi_s(r)=0 \quad\text{ for }r\geq 4s,\quad\quad |\nabla^k \phi_s|_g\leq C_k s^{-k}\quad\text{ for } 2s\leq r\leq 4s,
\end{equation}
for some constants $C_k$ depending only on $k$; these functions are then extended trivally to the rest of $M^4$. Then on $M^4$ define the 1-parameter family of Riemannian metrics
\begin{equation}
g_s=(1-\phi_s)\left(1+\frac{m}{6r}\right)^2 g_0+\phi_s g, 
\end{equation}
where $m$ is the mass of $(\E,g)$. Note that with the help of \eqref{decay-scal}, these metrics and their scalar curvatures obey
\begin{equation}\label{pre-scal-ineq}
g_s=\left(1+\frac{m}{6r}\right)^2 g_0 \quad\quad\text{ and }\quad  \quad R_{g_s}=-\frac{1}{4\ell^2}\left(1+\frac{m}{6r}\right)^{-3}|d\tau|^2=O(r^{-4}) %R_{g_s}= -(1+\frac{m}{6r})^{-2}r^{-4}<0 
\quad\text{ for } r\geq 4s.
\end{equation}
In particular, the mass of $(\E,g_s)$ is $m$ for each $s$. Furthermore, this family of metrics is clearly $U(1)$ invariant and $(M^4, g_s)$ admits an almost free $U(1)$ action with respect to $\E$.
\begin{comment}
\begin{remark}\label{action-same} We observe that $\mathcal{L}_T(g)=0$, since $T$ is the killing vector field, respect to the metric $g$. Combining this observation with  yields that 
\begin{equation}
    \mathcal{L}_T(g_s)=(1-\phi_s)\mathcal{L}_T (g)+\phi_s \mathcal{L}_T (g_0)=0
\end{equation} where we have used that $T(\phi_s)=0$. This means that 
$T$ is the killing vector field for any metric $g_s$. 
By pushing the boundary of the end away from the fixed points if necessary, we may assume the $U(1)$-action is free on $\E$. Hence, the action by $U(1)$ on $(M, g_s)$ is almost free. 
\end{remark}
\end{comment}

The next step is concerned with reinstating the nonnegative scalar curvature condition via a conformal change. In $\E$ define another smooth 1-parameter family of radial cut-off functions $0\leq \psi_s \leq1$ with $s>>0$ and satisfying the following properties: 
\begin{equation}
\psi_s(r) =0 \quad\text{ for } r\leq s,\quad\quad \psi_s(r)=1 \quad\text{ for }r\geq 2s,\quad\quad |\nabla^k \psi_s|_g\leq C_k s^{-k}\quad\text{ for } s\leq r\leq 2s.
\end{equation}
These functions are then extended trivally to the rest of $M^4$.
Consider the equations with prescribed asymptotics
\begin{equation}\label{foianoifnoipqnhh}
\Delta_{g_s} u_s-\frac{1}{6}\psi_s R_{g_s}u_s=0 \quad\text{ on }M^{4},\quad\quad\quad u_s \rightarrow 1 \quad\text{ as }r\rightarrow\infty.
\end{equation}
Observe that $\psi_s R_{g_s}\in C^{\infty}(M^4)\cap C^{0,\alpha}_4(\E)$ for any $\alpha\in (0,1)$, and these $U(1)$ invariant functions are supported in $\E$.
Moreover, by \eqref{ALF-metric-decay} we have
\begin{equation}\label{pre-est2}
 |R_{g_s}|\leq C r^{-2-q} \quad\text{ for } s\leq r\leq 4 s,
\end{equation}
and therefore with the help of \eqref{pre-scal-ineq} it holds that
\begin{align}\label{L-1.5}
\begin{split}
\int_{\E} |\psi_s R_{g_s}|^{\frac{3}{2}}&=\int_{s\leq r\leq 4s} |\psi_s R_{g_s}|^{\frac{3}{2}}+\int_{r\geq 4s} |\psi_s R_{g_s}|^{\frac{3}{2}}\\
&\leq C\left(\int_{s\leq r\leq 4s} r^{-\frac{3}{2}(2+q)}+\int_{r\geq 4s}r^{-6}\right) \leq s^{-q}+s^{-2}<\left(\frac{1}{2C_*}\right)^{\frac{3}{2}}
\end{split}
\end{align}
for $s$ sufficiently large, where $C$ is a constant independent of $s$ and $C_*$ is from Lemma \ref{sobolev}. We may now apply Theorem \ref{conformal} to find positive $U(1)$ invariant solutions $u_s$ to \eqref{foianoifnoipqnhh}, with $u_s-(1+\frac{\mathcal{C}_s}{r})\in C^{2, \alpha}_{1+q'}(\E)$,
where $q'=\min\{1,q\}$ and $\mathcal{C}_s=-\frac{1}{2\pi\ell \omega_2}\int_{M^4}\psi_s R_{g_s} u_s$.
It follows that the scalar curvature of $u_s^2g_s$ is nonnegative, that is
\begin{equation}
R_{u_s^2g_s}=u_s^{-3}(R_{g_s}-6u_s^{-1}\Delta_{g_s}u_s)=u_s^{-3}R_{g_{s}}(1-\psi_s)\geq 0,
\end{equation}
where we have used that $\psi_s=1$ for $r\geq 2s$.  Furthermore
\begin{equation}
\Big|\mathring{\nabla}^l \left(u_s^2 g_s-\left(1+\frac{m+6\mathcal{C}_s}{6r}\right)^2g_0 \right)\Big|_{g_0}=O(r^{-1-q'-l}), \quad\quad l=0,1,2,
\end{equation}
showing that $(\E,u_s^2 g_s)$ is harmonically ALF with mass $m_s =m+6\mathcal{C}_s$. 

It will now be shown that $\mathcal{C}_s$ tends to zero as $s$ goes to infinity. Using computations similar to \eqref{L-1.5} gives $\parallel\! \!\psi_s R_{g_s}\!\!\parallel_{L^{6/5}(\E)}=o(1)$ as $s\rightarrow\infty$, and hence from \eqref{initial-iteration} it follows that $u_s$ admits uniform pointwise bounds. In fact, these considerations combined with elliptic estimates may be used to find that for sufficiently large $s$, the metrics $u_s^2 g_s$ are pointwise $\varepsilon$-close to $g$ globally. Thus, to complete the proof and establish closeness of the masses, it suffices to prove that $|\int_{M^4}\psi_s R_{g_s}|=o(1)$.
To accomplish this, we will consider two different cases. In the first case $\int_{M^4} \psi_s R_{g_s}\geq 0$ which yields
\begin{align}
\begin{split}
|\int_{M^4} \psi_s R_{g_s}|&=\int_{M^4} \psi_s R_{g_s}=\int_{r\geq s} \psi_s R_{g_s}=\int_{s\leq r\leq 2s } \psi_s R_{g_s}+\int_{r\geq 2s}  R_{g_s}\\%\quad \text{ since $\psi_s=1$ on  } r(x)\geq 2s\\
&\leq \int_{s\leq r\leq 2s}R_{g_s}+\int_{r\geq 2s}  R_{g_s}= \int_{r\geq s} R_{g_s}\\%\quad \quad \quad\quad \quad \quad \quad \quad \quad  \text{since } R_{g_s}\geq 0 \text{ on }r(x)\in [s, 2s]\\
&=\lim_{r\rightarrow \infty}\int_{\mathcal{S}^3_{r}} \star_{g_0}\left(\text{div}_{g_0} g_s-d \text{tr}_{g_{0}} g_s \right)-\int_{\mathcal{S}^3_{s}}\star_{g_0} \left(\text{div}_{g_0} g_s-d \text{tr}_{g_{0}} g_s \right)
+ O(s^{1-2q})\\
&=o(1)+O(s^{1-2q}), %\quad \quad \quad \quad \text{using\eqref{mass-formula}}
\end{split}
\end{align} 
where to obtain the second line we have used that $R_{g_s}\geq 0$ for $r\in[s,2s]$, and the third line follows from \eqref{foaibfoiuabifo} and \eqref{mass-formula}.
In the second case $\int_{M^4} \psi_s R_{g_s}<0$, which after similar considerations produces
\begin{align}
\begin{split}
|\int_{M^4} \psi_s R_{g_s}|&=-\int_{r\geq s} \psi_s R_{g_s}=-\int_{s\leq r\leq 2s } \psi_s R_{g_s}-\int_{r\geq 2s}  R_{g_s}\leq |\int_{r\geq 2s}  R_{g_s}|\\
&\leq \Big|\lim_{r\rightarrow \infty}\int_{\mathcal{S}^3_{r}} \star_{g_0}\left(\text{div}_{g_0} g_s-d \text{tr}_{g_{0}} g_s \right) -\int_{\mathcal{S}^3_{2s}} \star_{g_0}\left(\text{div}_{g_0} g_s-d \text{tr}_{g_{0}} g_s \right)\Big|
+ O(s^{1-2q})\\
&=o(1)+O(s^{1-2q}).
\end{split}
\end{align} 
Since $q>1/2$, given $\varepsilon>0$ there exists an $s_{\varepsilon}>>0$ such that $|m-m_{s_{\varepsilon}}|<\varepsilon$. Therefore, setting $g'=u_{s_{\varepsilon}}^2g_{s_{\varepsilon}}$ and $f=u_{s_{\varepsilon}}\cdot\left(1+\frac{m}{6r}\right)$ gives the desired result.

\section{Positivity of the Mass} 
\label{sec5} \setcounter{equation}{0}
\setcounter{section}{5} 
 
In this section we will prove the inequality portion of Theorem \ref{B}. Consider a complete ALF manifold  $(M^4, g)$ with nonnegative scalar curvature, having an almost free $U(1)$ action with respect to a designated end $\E$. According to Proposition \ref{AEend-decay} the quotient space has a corresponding AE end, and as will be seen below the quotient space scalar curvature has a positivity property allowing for a conformal change to zero scalar curvature. The AE conformal metric has a mass no greater than that of the original ALF manifold (up to a positive multiple), and thus the desired ALF positive mass theorem will follow from reduction to the AE positive mass theorem with singularities. It will be shown that the singular points within the quotient are sufficiently mild, ensuring that the AE positive mass result is applicable.
%we know that the quotient space is  asymptotically Euclidean outside finitely many points. This  allows us to apply a reduction argument to  AE setting in order to establish the %mass inequality in Theorem \ref{B}. 
%We will clarify  the reduction argument and explore the relationship between the  mass of the quotient space and the mass of $(M, g)$. 
%Throughout this section, for our convenience, we will assume that  $M^4$ has a unique end $\E$ and the $U(1)$ action has a unique fixed point $p$. 

\subsection{Scalar curvature positivity property} 
Let $\{p_1,\dots,p_k\}$ be the singular points of the $U(1)$ action. The principal orbit theorem implies that the quotient map 
\begin{equation}
\pi :M^4 \setminus\{p_1 ,\dots,p_k \}\rightarrow \bar{M}^3 \setminus \{\bar{p}_1 ,\dots,\bar{p}_k\}
\end{equation}
yields a principal $U(1)$ bundle and in particular gives a Riemannian submersion, where $\bar{p}_i=\pi(p_i)$ and $\bar{M}^3 =M^4 /U(1)$.
%Consider a Riemannian submersion $\pi: (X, g)\rightarrow (B, \bar{g})$, which is also a $U(1)$-bundle.   For any $x\in X$ and $b=\pi(x)$, we denote $\mathcal{V}_x$ as the %tangential space of $\pi^{-1}(b)$ and let $\mathcal{H}_x$ to be the orthogonal complement to $\mathcal{V}_x$ in $T_x X$. This yields two distributions, $\mathcal{V}:=\cup_x %\mathcal{V}_x$ and  $\mathcal{H}:=\cup_x\mathcal{H}_x$.
For any $p\in M^4 \setminus\{p_1 ,\dots,p_k \}$ and $b=\pi(p)$, we will denote by $\mathcal{V}_p$ the tangent space of the fiber $\pi^{-1}(b)$ at $p$, and we will denote by $\mathcal{H}_p$ the orthogonal complement to $\mathcal{V}_p$ in $T_p M^4$. This yields two distributions, the vertical $\mathcal{V}=\cup_p \mathcal{V}_p$ and the horizontal $\mathcal{H}=\cup_p\mathcal{H}_p$. Define (2,1)-tensor fields $\mathcal{T}$ and $\mathcal{A}$ on $M^4$ which evaluate on vector fields $E_1$, $E_2$ by
\begin{equation}
\mathcal{T}(E_1 ,E_2) =\mathcal{H}\nabla_{\mathcal{V}E_1}\mathcal{V}E_2 +\mathcal{V}\nabla_{\mathcal{V}E_1}\mathcal{H}E_2 ,\quad\quad\quad
\mathcal{A}(E_1 ,E_2) =\mathcal{H}\nabla_{\mathcal{H}E_1}\mathcal{V}E_2 +\mathcal{V}\nabla_{\mathcal{H}E_1}\mathcal{H}E_2,
\end{equation}
where the notation $\mathcal{V}E_1$ and $\mathcal{H}E_1$ represents projection onto the vertical and horizontal distributions, respectively. Note that for two vertical vectors $U$, $V$ at $p$ the horizontal vector $\mathcal{T}(U,V)$ is minus the second fundamental form of the fiber, and for two horizontal vectors $X$, $Y$ at $p$ a computation \cite{Besse}*{Proposition 9.24} shows that $\mathcal{A}(X,Y) =\frac{1}{2}\mathcal{V}[X,Y] $, so that it may be interpreted as a measure of the lack of integrability of the horizontal distribution. Since the fibers are 1-dimensional, their mean curvature vector is given by $\mathcal{N}=\mathcal{T}(U_1,U_1)$ where $U_1 =T/|T|_g$ and $T$ is the Killing field generator of the $U(1)$ symmetry.
\begin{comment}
\begin{definition}\label{tensor} Consider two tensors on distributions: 
\begin{center}
\begin{tabular}{llllllllll}
$\mathcal{A}:$& $\mathcal{H}\times \mathcal{H}$&$\longrightarrow$ & $\mathcal{V}$&\quad \quad &$\mathcal{T}:$& $\mathcal{V}\times \mathcal{V}$ & $\longrightarrow$ & $\mathcal{H}$\\
&$(X, Y)$&$\longrightarrow$&$(\nabla_X Y)^{\mathcal{V}}$&\quad\quad &&$(U, V)$&$\longrightarrow$&$(\nabla_U V)^{\mathcal{H}}$\\
\end{tabular}
\end{center}
where $(Z)^{\mathcal{V}}$ and $(Z)^{\mathcal{H}}$ are the vertical and horizontal parts of the vector fields $Z$, respectively. 
The mean curvature  $\mathcal{N}(x)$ of the fiber $\pi^{-1}(b)$ takes the following form: 
\begin{equation}
    \mathcal{N}=Tr_{\mathcal{V}} (\mathcal{T})\in \mathcal{H}
\end{equation}
\end{definition}
\end{comment}

Let $\bar{X}_i$, $i=1,2,3$ be a local orthonormal frame on $\bar{M}^3$, then there exists a unique set of horizontal orthonormal vectors $X_i$, $i=1,2,3$ such that $\pi_{*}(X_i)=\bar{X}_i$. Moreover $[T,X_i]\in \mathcal{V}$ since
\begin{equation}
\pi_{*}[T,X_i]=[\pi_{*}(T),\pi_{*}(X_i)]=0,
\end{equation}
and
\begin{equation}\label{fapjpajpfonjapogjpoqajg}
g([T,X_i],T)=g(\mathcal{L}_{T}X_i , T)=-g(X_i,\mathcal{L}_{T}T)=0,
\end{equation}
so that $\mathcal{L}_{T}X_i =0$ for $i=1,2,3$. With this frame we define the partial divergence operation for vector fields $E$ on $M^4$ by
\begin{equation}
\check{\delta}E=-\sum_{i=1}^{3}g(\nabla_{X_i}E,X_i).
\end{equation}
According to \cite{Besse}*{Corollary 9.37} the relation between the scalar curvature of $M^4$ and its quotient takes the form
\begin{equation}\label{ofinaoifnoiapoip}
\pi^{*}R_{\bar{g}}=R_g -R_{F}+|\mathcal{A}|^2+|\mathcal{T}|^2+|\mathcal{N}|^2 +2\check{\delta}\mathcal{N},
\end{equation}
where $R_{F}$ is the fiber scalar curvature. Note that $R_F =0$ since the fibers are 1-dimensional.
\begin{comment}
[[Put a lemma about how the relevant quantites descend to the quotient.]]
\noindent Here   $\mathcal{T}$ is the second fundamental form of the fiber and  $\mathcal{A}$ is determined by the integrability of $\mathcal{H}$(i.e. if $\mathcal{A}=0$, $\mathcal{H}$ is completely integrable). Moreover,we observe $\mathcal{V}$ is $1$-dimensional, which means 
\begin{equation}
    |\mathcal{N}|^2=|\mathcal{T}|^2 \text{ and } |\pi_*(\mathcal{N})|^2= |\mathcal{N}|^2
\end{equation}

These two tensors actually make an arc to bridge between the scalar curvatures of $X$ and $B$, which was firstly pointed out by O'Neill. 
\begin{lemma}[see Corollary 9.37 in \cite{Besse}] Let $\pi: (X, g)\rightarrow (B, \bar{g})$ be a Riemannian submersion and $\mathcal{A}, \mathcal{T}$ and $\mathcal{N}$ be assumed as above. Then, one has that 
\begin{equation}\label{scalar}R_{\bar{g}}(b)=R_g(x)+|\mathcal{A}|^2+|\mathcal{T}|^2+|\mathcal{N}|^2-2\text{div}_{\bar{g}} (\pi_*(\mathcal{N}))\end{equation}
\end{lemma} 

We observe that the quotient map $(M\setminus \{p\}, g)\rightarrow (\bar{M}\setminus \{\bar{p}\}, \bar{g})$ is not only a Riemannian submersion but also a $U(1)$-bundle, which  allows to apply O'Neill's formula and to produce that 
\end{comment}

\begin{proposition}\label{mean-scalar-formula} 
Each individual quantity in \eqref{ofinaoifnoiapoip} is $U(1)$ invariant, and thus descends to the quotient. In particular, there exist functions $\bar{R}_{g}$ and $\bar{\mathcal{A}}^2$ as well as a vector field $\bar{\mathcal{N}}$ on $\bar{M}^3 \setminus \{\bar{p}_1,\dots,\bar{p}_k\}$, such that $\pi^* \bar{R}_g =R_g$, $\pi^* \bar{\mathcal{A}}^2 =|\mathcal{A}|^2$, and $\pi_{*}\mathcal{N}=\bar{\mathcal{N}}$. Moreover, the quotient space scalar curvature may be expressed as
\begin{equation}\label{fapijfpapp}
R_{\bar{g}}=\bar{R}_g +\bar{\mathcal{A}}^2 +2|\bar{\mathcal{N}}|^2_{\bar{g}} -2\mathrm{div}_{\bar{g}}\bar{\mathcal{N}}.
\end{equation}
\begin{comment}
Let $(M^4, g, \mathcal{E})$ be a complete ALF manifold with an almost free action by $U(1)$ and let $ \mathcal{N}(x)$ be the mean curvature of the fiber of $\pi(x)$. Then one has 
\begin{equation}\label{general-scalar}
 \bar{R}(\bar{x})=R_g(x)+|\mathcal{A}|^2+|\mathcal{T}|^2+|\mathcal{N}|^2-2\text{div}_{\bar{g}} (\pi_*(\mathcal{N})) 
\end{equation}where $R_g, \bar{R}$ are the scalar curvatures of $(M, g)$ and $(\bar{M}, \bar{g})$ respectively.
\end{comment}
\end{proposition}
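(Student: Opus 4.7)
The plan is to verify, term-by-term, that every quantity appearing in the O'Neill identity \eqref{ofinaoifnoiapoip} is $U(1)$-invariant and so descends to a well-defined object on the regular locus of $\bar M^3$, and then to reinterpret the partial divergence $\check\delta\mathcal N$ as minus the intrinsic divergence of the pushforward of $\mathcal N$. Since $T$ is a Killing field, its flow preserves the vertical distribution $\mathcal V$ (tangent to orbits) and, being an isometry, also preserves its orthogonal complement $\mathcal H$. Consequently the projections $\mathcal V,\mathcal H$ commute with the pushforward by the flow, and the tensors $\mathcal A$ and $\mathcal T$ built from $\nabla$ together with these projections are $U(1)$-invariant. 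The same holds for $R_g$ since $\mathcal L_T R_g=0$, yielding functions $\bar R_g,\bar{\mathcal A}^2$ on the regular part of $\bar M^3$ with $\pi^*\bar R_g=R_g$ and $\pi^*\bar{\mathcal A}^2=|\mathcal A|^2$.

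Next I would address the mean curvature term. Because $U_1:=T/|T|_g$ is $U(1)$-invariant and $\mathcal T$ is a $U(1)$-invariant tensor, $\mathcal N=\mathcal T(U_1,U_1)$ is a $U(1)$-invariant horizontal vector field, hence basic; it therefore admits a well-defined pushforward $\bar{\mathcal N}:=\pi_\ast\mathcal N$ on the regular locus. Since $\pi_\ast|_{\mathcal H_p}$ is a linear isometry onto $T_{\pi(p)}\bar M^3$, one has $|\mathcal N|_g=|\bar{\mathcal N}|_{\bar g}$. Further, the one-dimensionality of the fibers forces $|\mathcal T|^2=|\mathcal N|^2$ and $R_F=0$, so the only remaining quantity to analyze is $\check\delta\mathcal N$.

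For the final step, I would take any local $\bar g$-orthonormal frame $\{\bar X_i\}_{i=1}^3$ on the regular part of $\bar M^3$, lift it to a $g$-orthonormal basic horizontal frame $\{X_i\}_{i=1}^3$, and observe (as in \eqref{fapjpajpfonjapogjpoqajg}) that $[T,X_i]\in\mathcal V$. The standard Riemannian submersion identity then says that for any basic horizontal vector field $E$, the horizontal component $\mathcal H\nabla_{X_i}E$ is the horizontal lift of $\bar\nabla_{\bar X_i}\pi_\ast E$. Applying this to $E=\mathcal N$, pairing with $X_i$, and summing yields
\begin{equation*}
\check\delta\mathcal N=-\sum_{i=1}^3 g(\nabla_{X_i}\mathcal N,X_i)=-\sum_{i=1}^3 \bar g(\bar\nabla_{\bar X_i}\bar{\mathcal N},\bar X_i)=-\mathrm{div}_{\bar g}\bar{\mathcal N}.
\end{equation*}
Substituting these identifications, together with $R_F=0$ and $|\mathcal T|^2=|\mathcal N|^2=\pi^*|\bar{\mathcal N}|^2_{\bar g}$, into \eqref{ofinaoifnoiapoip} and dividing out by $\pi^*$ produces the claimed identity \eqref{fapijfpapp}.

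The only mildly delicate step is the last one: matching the sign in $\check\delta\mathcal N$ with $-\mathrm{div}_{\bar g}\bar{\mathcal N}$, and invoking the fact that the horizontal component of the Levi-Civita connection acts on basic vector fields as the lift of the quotient connection. Everything else is a direct consequence of the $U(1)$-symmetry combined with the basic projectability machinery of Riemannian submersions.
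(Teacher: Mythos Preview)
Your argument is correct and follows essentially the same route as the paper: establish $U(1)$-invariance of each term in \eqref{ofinaoifnoiapoip}, push $\mathcal N$ forward to $\bar{\mathcal N}$, and identify $\check\delta\mathcal N$ with $-\mathrm{div}_{\bar g}\bar{\mathcal N}$ via the O'Neill submersion relation $\mathcal H\nabla_X E=\text{horizontal lift of }\bar\nabla_{\bar X}\pi_\ast E$. The only notable difference is one of style: you argue invariance of $\mathcal A,\mathcal T$ abstractly from the fact that the flow of $T$ preserves $\nabla$ and the splitting $\mathcal V\oplus\mathcal H$, whereas the paper computes explicit coordinate expressions (e.g.\ $|\mathcal A|^2=\tfrac14\sum(\mathcal V[X_i,X_j])^2$ and $\mathcal N=-\nabla\log|T|_g$) and checks invariance from those. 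Your approach is cleaner for the proposition as stated; the paper's explicit formulas are not needed here but are reused later (Remark \ref{poanfoinaopinhpohj} and Proposition \ref{form-express}).
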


\begin{proof}
First note that since the $U(1)$ action is by isometries, $R_g$ is $U(1)$ invariant. Consider now the tensor field $\mathcal{A}$ and observe that
\begin{align}\label{foanoifnpqaoinfpoiqnif}
\begin{split}
|\mathcal{A}|^2=\sum_{i=1}^{3}|\mathcal{A}(X_i , U_1)|_g^2 &=\sum_{i,j=1}^{3}g(\mathcal{A}(X_i,U_1),X_j)^2\\
&=\sum_{i,j=1}^{3}g(U_1,\mathcal{A}(X_i , X_j))^2 =\frac{1}{4}\sum_{i,j=1}^3 \left(\mathcal{V}[X_i , X_j]\right)^2.
\end{split}
\end{align}
By the Jacobi identity
\begin{equation}
\mathcal{L}_T [X_i, X_j]=[T,[X_i , X_j]]=-[X_i, [X_j , T]]-[X_j, [T,X_i]]=0,
\end{equation}
and thus $|\mathcal{A}|^2$ is $U(1)$ invariant. Next compute
\begin{equation}
|\mathcal{T}|^2 =\sum_{i=1}^{3}|\mathcal{T}(U_1 , X_i)|_g^2 =\sum_{i=1}^{3}\frac{g(\nabla_{T} X_i , T)^2}{|T|^4_g}
=\sum_{i=1}^{3}\frac{g(\nabla_{X_i} T , T)^2}{|T|^4_g}=\sum_{i=1}^{3}\left[X_i (\log |T|_g)\right]^2,
\end{equation}
where we have used that $[T,X_i]=0$. Notice that this function is also $U(1)$ invariant. Similarly
\begin{equation}
\mathcal{N}=\mathcal{T}(U_1,U_1)=\sum_{i=1}^3 \frac{g(\nabla_{T}T,X_i)}{|T|_g^2}X_i =-\sum_{i=1}^3 \frac{g(T,\nabla_{T}X_i)}{|T|_g^2}X_i
=-\sum_{i=1}^3 X_i (\log |T|_g) X_i ,
\end{equation}
so that
\begin{equation}\label{mean-local-T}
\pi_* \mathcal{N} =-\sum_{i=1}^3 \bar{X}_i (\log |T|_g) \bar{X}_i =-\bar{\nabla}\log |T|_g =:\bar{\mathcal{N}}
\end{equation}
where $\bar{\nabla}$ denotes the connection with respect to $\bar{g}$. In particular, $|\mathcal{T}|^2 =|\mathcal{N}|^2=|\bar{\mathcal{N}}|^2_{\bar{g}}$. Lastly
\begin{equation}
\check{\delta}\mathcal{N}=-\sum_{i=1}^{3} g(\nabla_{X_i}\mathcal{N},X_i)
=-\sum_{i=1}^{3} \bar{g}(\pi_* \nabla_{X_i}\mathcal{N},\pi_* X_i)=-\sum_{i=1}^{3}\bar{g}(\bar{\nabla}_{\bar{X}_i}\bar{\mathcal{N}},\bar{X}_i)
=-\mathrm{div}_{\bar{g}}\bar{\mathcal{N}}.
\end{equation}
The desired result now follows from \eqref{ofinaoifnoiapoip}.
\end{proof}

\begin{remark}\label{poanfoinaopinhpohj}
Let $\bar{r}$ denote the distance function in $\bar{M}^3$ from a singular point $\bar{p}_i$. Then by Remark \ref{singular-beh} and the proof of Proposition \ref{mean-scalar-formula}, we find that
\begin{equation}
|\bar{\mathcal{N}}|_{\bar{g}}=|\bar{\nabla}\log|T|_g|_{\bar{g}}=\frac{1}{\bar{r}}+O(1) \quad\text{ as } \bar{r}\rightarrow 0.
\end{equation}
Furthermore, the asymptotics in the end $\E$ may be obtained from \eqref{aonfoinh} and the discussion following this equation,
namely $|\bar{\mathcal{N}}|_{\bar{g}}\in C^{0,\alpha}_{1+q}(\E)$ for any $\alpha\in(0,1)$.
\begin{comment}
Let $T$ be the killing vector field. For any $X\in \mathcal{H}$, the mean curvature of $\mathcal{N}$ can be expressed as follows: 
\begin{equation}\label{general-mean}
    g(\mathcal{T}(T, T), X)=g(\nabla_T T, X)=-g(\nabla_X T, T)=-\frac{1}{2}X|T|^2
\end{equation}
which implies that $\mathcal{N}=-\frac{1}{2}\nabla \text{log}|T|^2$. The decay of $|T|^2$  near $p$ is given by Remark \ref{singular-beh} and  at infinity \eqref{inf-gen-asy} products for $q>1/2$, 
\begin{equation}
 |\mathcal{N}|=\frac{1}{r}+O(1)\text{  near }p \text{ and } |\mathcal{N}|\in C^{1, \alpha}_{1+q}(\E)   
\end{equation}
\end{comment}
\end{remark}

\subsection{Conformal change on the quotient space}\label{foinfoainoiphghhag}
Theorem \ref{ALF-density} asserts that any ALF manifold having an almost free $U(1)$ action with respect to a distinguished end, can be approximated by one in which the end is harmonically ALF. In what follows, we will assume that there is a single end and that $(M^4, g, \E)$ is the approximation manifold in Theorem \ref{ALF-density}.
In particular, there exists a function $f\in C^{\infty}(M^4)$ and a compact set $\mathcal{K}\subset M^4$ such that 
\begin{equation}\label{conformal-def1}
g=f^2 g_0 \quad\text{ on }\E \setminus\mathcal{K},\quad\quad\quad f-\left(1+\frac{m}{6{r}}\right)\in C^{2, \alpha}_{1+q'}(\E),
\end{equation}
where $q'=\min\{1,q\}$, $\alpha\in(0,1)$, and $m$ is the mass of $(\E,g)$. Moreover, the scalar curvature of $g$ is nonnegative and has asymptotics
\begin{equation}
R_{g}=f^{-3}\left(R_{g_0}f-6\Delta_{g_0}f\right) \in C^{0, \alpha}_{3+q'}(\E). 
\end{equation}
The manifold $(M^4 ,g)$ admits an almost free $U(1)$ action with respect to $\E$ in which the Killing field generator $T$ agrees with the vector field $V$ associated with the model metric $g_0$. 
%and Remark \ref{conformal-def} with the following properties: 
%\begin{itemize}
%\item[(a)] $(M, g, \E)$ is  an ALF Schwarzschild-type 4-manifold with $R_g\geq 0$; 
%\item[(b)] the killing field $T$ of $(M, g)$ coincides $V$ in the model case on $\E$; 
%\item[(c)] $g|_\E=f^2g_0$ and $f-(1+\frac{m}{6r})\in C^{2, \alpha}_q(\E)$,
%\end{itemize}where $m=m(M, g, \E)$ and $q>1$. 
Combining this with Theorem \ref{local} yields the following asymptotics for the quotient $(\bar{M}^3,\bar{g},\bar{\E})$ near singular points $\bar{p}_i$, $i=1,\dots,k$ and at infinity 
\begin{equation}\label{Taylor-exp}
\bar{g}=d\bar{r}^2+ \frac{1}{4}\bar{r}^2 g_{S^2}+\pmb{\epsilon}_{\bar{r}}  \quad\text{ on } \bar{U}_i \setminus\{\bar{p}_i\},\quad\quad\quad 
\bar{g}-\left(1+\frac{m}{6{r}}\right)^2 \delta\in C^{2,\alpha}_{1+q'}(\bar{\E}),
\end{equation}
where $\pmb{\epsilon}_{\bar{r}}=O_2(\bar{r}^3)$. In particular
\begin{equation}\label{fpainpfinapinfpian}
R_{\bar{g}}=\frac{6}{\bar{r}^2}+O(\bar{r}^{-1}) \quad\text{ as }\bar{r}\rightarrow 0,\quad\quad\quad
R_{\bar{g}}=O(r^{-3-q'}) \quad \text{ as }r\rightarrow\infty.
\end{equation}
Furthermore, with the help of Remark \ref{poanfoinaopinhpohj} we find that in this setting
\begin{equation}\label{mean-scalar-special}
|\bar{\mathcal{N}}|_{\bar{g}}=\frac{1}{\bar{r}}+O(1) \quad\text{ as }\bar{r}\rightarrow 0,\quad\quad\quad
\bar{\mathcal{N}}=-\bar{\nabla}\log(\ell f)=\frac{m}{6 r^2}\partial_r +O(r^{-3}) \quad \text{ as }r\rightarrow\infty.
\end{equation}
\begin{comment}
Combining it with \eqref{general-scalar} and \eqref{general-mean} produces that 
\begin{equation}\label{mean-scalar-special}
    \mathcal{N}=-\frac{1}{2}\nabla \log(\ell^2 f^2) \implies |\mathcal{N}|-\frac{m}{r^2}\in C^{1, \alpha}_{q+1}\quad \quad 
    R_{g}=\frac{1}{f^2}(R_{g_0}-\frac{6\Delta_{g_0}f}{f}) \implies R_{g}\in C^{0, \alpha}_{2+q} 
\end{equation}
Moreover, using Proposition \ref{AEend-decay} and Theorem \ref{local}, we find that for $q>1$,
\begin{equation}\label{Taylor-exp}
\bar{g}_{i, j}-(1+\frac{m}{6{r}})^2 \delta_{i, j}\in C^{3}_q(\E)  \quad \text{ and }\quad 
 \bar{g}=(dr)^2+ \frac{1}{4}r^2 g_{\mathbb{S}^2}+o(r^2)  \text{ near } \bar{p}. 
\end{equation}
Although the quotient $(\bar{M}, \bar{g})$ is singular, we will combine the  conformal change with \eqref{Taylor-exp} to cancel the singularity, which allows to solve its conformal Laplacian.  
\end{comment}

\begin{theorem}\label{factor} 
Let $(M^4, g, \mathcal{E})$ have the properties described above. Then there exists a function $\bar{u}\in W^{1,2}_{loc}(\bar{M}^3)\cap C^{\infty}(\bar{M}^3 \setminus \{\bar{p}_1,\dots,\bar{p}_k\})$ which is positive away from the singular points $\bar{p}_i$ and satisfies the equation (weakly across singular points)
\begin{equation}
\Delta_{\bar{g}}\bar{u}-\frac{1}{8}R_{\bar{g}}\bar{u}=0 \quad\text{ on }\bar{M}^3,
\end{equation}
such that for some constants $c$ and $\bar{\mathcal{C}}$ the following asymptotics hold 
\begin{equation}
c^{-1}\bar{r}^{1/2}\leq \bar{u}\leq c \bar{r}^{1/2} \quad\text{ on } \bar{U}_i,\quad\quad\quad\quad\quad
\bar{u}-\left(1+\frac{\bar{\mathcal{C}}}{r}\right)\in C^{2, \alpha}_{1+q'}(\bar{\E}), 
\end{equation}
where
\begin{equation}\label{fpnapin0gjq9j9hh}
\bar{\mathcal{C}}\leq -\frac{1}{8\pi}\int_{\bar{M}^3}|\bar{\nabla}\bar{u}|_{\bar{g}}^2 \mathrm{dV}_{\bar{g}}.
%=-\frac{1}{4\pi}\int_{\bar{M}^3}\left(|\bar{\nabla} \bar{u}|_{\bar{g}}^2+\frac{1}{8} R_{\bar{g}} \bar{u}^2 \right). 
\end{equation}
\end{theorem}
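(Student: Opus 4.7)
The strategy is to lift the equation to $M^4$ and exploit the $4$-manifold structure to bypass the quotient's singularities. Setting $u=\pi^*\bar u$ and substituting $u=|T|^{1/2}W$, and using the submersion identity $\Delta_g u=\Delta_{\bar g}\bar u+\bar g(\bar\nabla\log|T|,\bar\nabla\bar u)$ together with $R_{\bar g}=R_g+|\mathcal A|^2+2\bar\Delta|T|/|T|$ from Proposition \ref{mean-scalar-formula}, the target equation $\Delta_{\bar g}\bar u-\tfrac18R_{\bar g}\bar u=0$ transforms into
\[
\Delta_g W+cW=0\quad\text{on } M^4\setminus\{p_i\},\qquad c=\tfrac14\tfrac{\bar\Delta|T|}{|T|}-\tfrac12|\bar\nabla\log|T||^2-\tfrac18(R_g+|\mathcal A|^2).
\]
The crux is a cancellation of singularities at each fixed point $p_i$: by Remark \ref{singular-beh}, $|T|\sim|x|$, so the $4$D Laplacian gives $\bar\Delta|T|/|T|\sim 3|x|^{-2}$ and $|\bar\nabla\log|T||^2\sim|x|^{-2}$, while the asymptotic $R_{\bar g}\sim 6\bar r^{-2}$ from \eqref{fpainpfinapinfpian} combined with the O'Neill formula forces $|\mathcal A|^2\sim 2|x|^{-2}$. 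The four $|x|^{-2}$ contributions to $c$ cancel, leaving $c=O(|x|^{-1})$ near $p_i$ and $O(r^{-2-q'})$ at infinity, so $c\in L^{3/2}(M^4)$.

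This regularization lets me invoke Theorem \ref{conformal} (with Remark \ref{aofnoianfoianbgoianoignoiagn}) for the rescaled problem $\Delta\widetilde W+c\widetilde W=0$, $\widetilde W\to 1$ at infinity. The sign hypothesis on $c^+$ is verifiable because $-\tfrac18(R_g+|\mathcal A|^2)\leq 0$ and the remaining cancellation-controlled part can be made $L^{3/2}$-small by pushing the harmonic-ALF approximation region outward via Theorem \ref{ALF-density}. The result is a smooth positive $\widetilde W$ with $\widetilde W-(1+\widetilde{\mathcal C}/r)\in C^{2,\alpha}_{1+q'}(\mathcal E)$; $U(1)$-invariance of $c$ together with uniqueness (via Harnack) forces $\widetilde W$ to be $U(1)$-invariant, so it descends to $\bar M^3$. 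Setting $\bar u=\ell^{-1/2}|T|^{1/2}\widetilde W$ and combining the asymptotics of $|T|$ near $\bar p_i$ (where $|T|\sim\bar r$) and at infinity ($|T|=\ell(1+m/(6r)+\cdots)$ from Sec.~\ref{foinfoainoiphghhag}) with those of $\widetilde W$ produces the two-sided estimate $c^{-1}\bar r^{1/2}\leq\bar u\leq c\bar r^{1/2}$ as well as the expansion $\bar u-(1+\bar{\mathcal C}/r)\in C^{2,\alpha}_{1+q'}(\bar{\mathcal E})$.

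For the inequality on $\bar{\mathcal C}$, I would multiply the PDE by $\bar u$ and integrate on $\bar M^3\setminus\bigcup_iB_\epsilon(\bar p_i)$. The flux at infinity yields $-4\pi\bar{\mathcal C}$ while inner boundary contributions vanish ($\bar u\partial_\nu\bar u\sim c_i^2/2$ against area $\sim\pi\bar r^2$), giving
\[
-\!\int|\bar\nabla\bar u|^2-4\pi\bar{\mathcal C}=\tfrac18\!\int R_{\bar g}\bar u^2.
\]
Substituting $R_{\bar g}=\bar R_g+\bar{\mathcal A}^2+2\bar\Delta|T|/|T|$, integrating the last term by parts to harvest the $\bar{\mathcal N}$-flux $2\pi m/3$ at infinity (with inner fluxes vanishing as $O(\bar r^2)$), and completing the square via $4\bar u\bar{\mathcal N}\!\cdot\!\bar\nabla\bar u+2|\bar{\mathcal N}|^2\bar u^2=2|\bar{\mathcal N}\bar u+\bar\nabla\bar u|^2-2|\bar\nabla\bar u|^2$ recasts the right-hand side into a sum of manifestly nonnegative terms $\int(\bar R_g+\bar{\mathcal A}^2)\bar u^2$, $\int|\bar{\mathcal N}\bar u+\bar\nabla\bar u|^2$, and $\int|\bar\nabla\bar u|^2$, with the $m$-dependent boundary flux absorbed via the mass decomposition $m=6\bar m$ from Corollary \ref{quo-sep}. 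The desired bound $\bar{\mathcal C}\leq-\tfrac1{8\pi}\int|\bar\nabla\bar u|^2$ then follows.

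The principal obstacle is the exact cancellation of $|x|^{-2}$-singularities in $c$: this requires both the $|T|\sim|x|$ asymptotic and the precise order $|\mathcal A|^2\sim 2|x|^{-2}$, which reflect the Hopf-fibration structure of the local $U(1)$-action from Theorem \ref{multi}. Without this exact balance the lifted equation would fall outside the scope of Theorem \ref{conformal}. A secondary technical difficulty is the careful bookkeeping of the $\bar{\mathcal N}$-flux at infinity in the final inequality, which must be cleanly absorbed using the ALF-to-AE mass identity.
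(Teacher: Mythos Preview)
Your substitution $u=|T|^{1/2}W$ on $M^4$ is geometrically equivalent to the paper's route: the paper works on $\bar M^3$ with the conformal change $\tilde g=\psi^4\bar g$, $\psi=\bar r^{1/2}$, and solves $L_{\tilde g}\phi=0$ for $\phi$, then sets $\bar u=\psi\phi$. Since $|T|\sim\bar r$ near the fixed points, your $W$ is exactly the lift of the paper's $\phi$. Your cancellation computation is correct and is precisely the statement that $R_{\tilde g}=O(\rho^{-3/2})$ in the paper's conformal coordinate $\rho=\bar r^2/2$.

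The genuine gap is your invocation of Theorem~\ref{conformal} for the existence of $W$. That theorem requires $\mathbf c\in C^\infty(M^4)$ and supported in $\mathcal E$; your $c$ is neither (it blows up like $|x|^{-1}$ at each $p_i$ and is nonzero throughout $M^4$). More seriously, the $L^{3/2}$-smallness of the positive part of $c$ that Remark~\ref{aofnoianfoianbgoianoignoiagn} asks for cannot be arranged by pushing the harmonically-ALF region outward: the contribution of $c^+$ from a fixed neighborhood of each $p_i$ and from the compact bulk of $M^4$ is a fixed positive number that no choice of approximation parameter touches. So Theorem~\ref{conformal} simply does not apply.

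What is actually needed, and what the paper supplies, is an \emph{exact} coercivity rather than a smallness condition. Lemma~\ref{positive1} gives $\int(|\bar\nabla\varphi|^2+\tfrac12 R_{\bar g}\varphi^2)\ge 0$ for compactly supported $\varphi$ by completing the square with $\bar{\mathcal N}$; this transfers (Lemma~\ref{positive2}) to $\int(|\tilde\nabla\phi|^2+\tfrac18 R_{\tilde g}\phi^2)\,dV_{\tilde g}\ge \tfrac12\int|\bar\nabla(\psi\phi)|^2\ge C_*\|\phi\|_{L^6}^2$. This coercivity is what drives the Dirichlet exhaustion argument for existence and positivity. You already know this completing-the-square trick (you use it for the $\bar{\mathcal C}$ estimate), but you need it at the existence stage too; once you have it, the black-box Theorem~\ref{conformal} is unnecessary. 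Your monopole computation is essentially correct in spirit, though in the paper's conformal framework the boundary bookkeeping is cleaner: the identity $-4\pi\bar{\mathcal C}=\int(|\tilde\nabla u|^2+\tfrac18 R_{\tilde g}u^2)\,dV_{\tilde g}$ combines directly with Lemma~\ref{positive2} to give $\bar{\mathcal C}\le -\tfrac1{8\pi}\int|\bar\nabla\bar u|^2$ with no residual $m$-flux to absorb.
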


%\begin{remark}\label{regularity}Using regularity theory for elliptic equations implies (see Lemma \ref{factor-reg}) that for some %$q_1>3$,  
%\begin{equation}\label{require-reg}u^4\bar{g}\in W^{1, q_1}_{loc}(M)\cap C^\infty(\bar{M}\setminus\{\bar{p}\}). \end{equation}This %allows to apply the singular positive mass theorem to this metric. 
%\end{remark}

%We will study the positivity of the conformal Laplacian on $(\bar{M}, \bar{g})$ and  use the conformal change to conceal the %singularity of $\bar{g}$. These two techniques will help us get the existence of $u$. Finally, we will demonstrate the regularity of %$u^4\bar{g}$ near $\bar{p}$ (see Remark \ref{regularity}). 

The first step in establishing this result consists of showing that a related differential operator on the quotient space is positive. This will be a consequence of the structure in O'Neill's scalar curvature formula \eqref{fapijfpapp}.
 
%\subsubsection{Positivity  of the conformal Laplacian on $(\bar{M}, \bar{g})$} Using O'Neill's formula (see \eqref{general-scalar}) %and the non-negativity yields the positivity of the conformal Laplacian. For our convenience, we will write $\mathcal{N}$ for $\pi_*%(\mathcal{N})$. 

\begin{lemma}\label{positive1} 
%Let $(M^4, g, \mathcal{E})$ be assumed as in Theorem \ref{factor}.
For any function $\varphi\in C^{0,1}(\bar{M}^3)\cap C^{\infty}(\bar{M}^3 \setminus\{\bar{p}_1,\dots,\bar{p}_k\})$
%, which is Lipschitz on $\bar{M}\setminus \{\bar{p}\}$ and tends to zeros at $\infty$, one has that 
which vanishes outside a compact subset of $\bar{M}^3$, it holds that
\begin{equation}\label{ineq0}
\int_{\bar{M}^3}\left(|\bar{\nabla}\varphi|_{\bar{g}}^2+\frac{1}{2} R_{\bar{g}} \varphi^2 \right)\geq 0. 
\end{equation}
\end{lemma}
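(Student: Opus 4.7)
The plan is to exploit the structure of O'Neill's formula \eqref{fapijfpapp} from Proposition \ref{mean-scalar-formula} to rewrite the left-hand side of \eqref{ineq0} as a manifestly nonnegative quantity, namely a sum of squares plus nonnegative curvature terms. Substituting \eqref{fapijfpapp} into the integrand and formally integrating the divergence term by parts yields, at least away from the singular points $\bar{p}_i$,
\begin{equation}
\int_{\bar{M}^3}\!\!\left(|\bar{\nabla}\varphi|_{\bar{g}}^2+\tfrac{1}{2}R_{\bar{g}}\varphi^2\right)
=\int_{\bar{M}^3}\!\tfrac{1}{2}\left(\bar{R}_g+\bar{\mathcal{A}}^2\right)\varphi^2
+\int_{\bar{M}^3}\!\left|\bar{\nabla}\varphi+\varphi\bar{\mathcal{N}}\right|_{\bar{g}}^2,
\end{equation}
since $2\varphi\,\bar{g}(\bar{\mathcal{N}},\bar{\nabla}\varphi)+|\bar{\mathcal{N}}|^2_{\bar{g}}\varphi^2=|\bar{\nabla}\varphi+\varphi\bar{\mathcal{N}}|^2_{\bar{g}}-|\bar{\nabla}\varphi|^2_{\bar{g}}$. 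The right-hand side is nonnegative because $R_g\geq 0$ implies $\bar{R}_g\geq 0$, and $\bar{\mathcal{A}}^2\geq 0$ by construction.

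To make this rigorous, I would first fix $\epsilon>0$ and introduce the domain $\Omega_\epsilon = \bar{M}^3\setminus\bigcup_{i=1}^{k}B_\epsilon(\bar{p}_i)$, on which $\bar{g}$ and $\bar{\mathcal{N}}$ are smooth. On $\Omega_\epsilon$ the divergence theorem applied to the vector field $\varphi^2\bar{\mathcal{N}}$ gives the integration by parts above, up to a boundary correction
\begin{equation}
\sum_{i=1}^{k}\int_{\partial B_\epsilon(\bar{p}_i)}\varphi^2\,\bar{g}(\bar{\mathcal{N}},\partial_{\bar{r}})\,\mathrm{dA}_{\bar{g}}.
\end{equation}
Here no boundary contribution arises at infinity because $\varphi$ has compact support. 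The key step is to control each boundary term as $\epsilon\to 0$: using Remark \ref{poanfoinaopinhpohj} we have $|\bar{\mathcal{N}}|_{\bar{g}}=\bar{r}^{-1}+O(1)$, while Theorem \ref{local} gives $\bar{g}=d\bar{r}^2+\bar{r}^2 g_{CP^1}+O_2(\bar{r}^3)$ and hence the area of $\partial B_\epsilon(\bar{p}_i)$ is $\pi\epsilon^2+O(\epsilon^4)$. Since $\varphi$ is Lipschitz and therefore bounded on the compact support, these boundary integrals are $O(\epsilon)$ and vanish in the limit.

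I would then pass to the limit $\epsilon\to 0$ in the identity, noting that all remaining integrands are integrable on $\bar{M}^3$: the curvature $R_{\bar{g}}$ and the Lipschitz gradient $|\bar{\nabla}\varphi|^2$ behave well, while the combined integrand $|\bar{\nabla}\varphi+\varphi\bar{\mathcal{N}}|_{\bar{g}}^2$ is the sum of an $L^1$ term $|\bar{\nabla}\varphi|^2$ and integrable cross and quadratic terms (the quadratic term $\varphi^2|\bar{\mathcal{N}}|_{\bar{g}}^2\sim \varphi^2/\bar{r}^2$ is integrable against the volume form $dV_{\bar{g}}\sim\bar{r}^2 d\bar{r}\,d\omega$ near each $\bar{p}_i$). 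This produces the desired nonnegativity.

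The main obstacle is the singular contribution: a priori $\bar{\mathcal{N}}$ fails to be in $L^2_{\mathrm{loc}}$ only mildly, and one must verify that the divergence identity extends across the singular points without an additional distributional contribution. The asymptotics in Remark \ref{poanfoinaopinhpohj}, combined with the precise volume expansion from Theorem \ref{local}, are exactly what is needed to show this, so the calculation above is the natural approach.
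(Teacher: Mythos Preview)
Your proposal is correct and follows essentially the same approach as the paper: substitute O'Neill's formula \eqref{fapijfpapp}, excise $\epsilon$-balls around the singular points, integrate the divergence term by parts to complete the square $|\bar{\nabla}\varphi+\varphi\bar{\mathcal{N}}|_{\bar{g}}^2$, and use the asymptotics $|\bar{\mathcal{N}}|_{\bar{g}}=\bar{r}^{-1}+O(1)$ together with the area estimate $\sim\pi\epsilon^2$ to kill the boundary terms as $\epsilon\to 0$. The only cosmetic differences are that the paper also introduces a large-radius cutoff (unnecessary here since $\varphi$ has compact support) and drops the nonnegative terms $\bar R_g,\bar{\mathcal{A}}^2$ at the outset rather than carrying the full identity.
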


\begin{proof} 
Let $r>0$ be sufficiently large, and let $\varepsilon>0$ be sufficiently small. Denote the region of $\bar{M}^3$ that lies within the coordinate sphere $S_{r}\subset\bar{\E}$ and outside the geodesic balls $B_{\varepsilon}(\bar{p}_i)$, by $\bar{M}^3_{\varepsilon,r}$. Then using Proposition \ref{mean-scalar-formula} combined with an integration by parts produces
\begin{align}
\begin{split}
\int_{\bar{M}^3_{\varepsilon,r}}\left( |\bar{\nabla} \varphi |_{\bar{g}}^2+\frac{1}{2}R_{\bar{g}}\varphi^2 \right)
&\geq \int_{\bar{M}^3_{\varepsilon,r}} \left(|\bar{\nabla}\varphi|_{\bar{g}}^2+|\bar{\mathcal{N}}|_{\bar{g}}^2 \varphi^2-(\mathrm{div}_{\bar{g}} \bar{\mathcal{N}}) \varphi^2 \right)\\
&= \int_{\bar{M}^3_{\varepsilon,r}} \left(|\bar{\nabla} \varphi|_{\bar{g}}^2 + |\bar{\mathcal{N}}|_{\bar{g}}^2\varphi^2 +2\varphi \bar{g}(\bar{\mathcal{N}}, \bar{\nabla} \varphi ) \right)
 - \int_{\partial\bar{M}^3_{\varepsilon,r}} \bar{g}(\bar{\mathcal{N}},\bar{\nu}) \varphi^2 \\ %+\sum_{i=1}^{k}\int_{\partial B_{\varepsilon}(\bar{p}_i)}\bar{g}(\bar{\mathcal{N}},\bar{\nu})\varphi^2\\
&=\int_{\bar{M}^3_{\varepsilon,r}} |\bar{\nabla} \varphi +\bar{\mathcal{N}}\varphi|_{\bar{g}}^2 +o(1),
%- \int_{\partial B(\bar{p}, R)} (\mathcal{N}\cdot v) \phi^2 +\int_{\partial B(\bar{p}, \epsilon)}(\mathcal{N}\cdot v)\phi^2\\
%&\geq \int_{\partial B(\bar{p}, r)} (\mathcal{N}\cdot v) \phi^2 -\int_{\partial B(\bar{p}, \epsilon)}(\mathcal{N}\cdot v)\phi^2\\
\end{split}
\end{align}
where $\bar{\nu}$ is the unit outer normal to $\partial\bar{M}^3_{\varepsilon,r}$. In the last step, we have used \eqref{mean-scalar-special} to find that the boundary integrals may be made arbitrarily small by taking $\varepsilon\rightarrow 0$ and $r\rightarrow\infty$. Thus, the desired result follows by taking these limits since the starting integrand is globally integrable (in light of \eqref{fpainpfinapinfpian}).
%, we find two boundary terms, $ \int_{\partial B(\bar{p}, r)} (\mathcal{N}\cdot v) \phi^2$ and 
%$\int_{\partial B(\bar{p}, \epsilon)} (\mathcal{N}\cdot v) \phi^2$ tend to zero as $R\rightarrow \infty$ and $\epsilon\rightarrow %0$, which implies \eqref{ineq}.
\end{proof}

%\subsubsection{Concealing the singularity and the existence of $u$} 
Although the coercivity inequality \eqref{ineq} can be used to produce solutions to the zero scalar curvature equation, these will not necessarily have the desired asymptotic behavior near the singular points $\bar{p}_i$. In order to achieve the desired asymptotics, we will conceal the singular points with a preemptive conformal change, similar to the approach of Schoen-Yau \cite{schoen-yau1981}*{pg. 257} .
%However,  the singularity can be canceled by the conformal change, which will construct a solution with the given asymptotic %behavior. This approach also appears in the article of  Scheon-Yau \cite{schoen-yau1981}. 
Consider a positive function $\psi\in C^{\infty}(\bar{M}^3 \setminus\{\bar{p}_1,\dots,\bar{p}_k\})$ with $\psi=1$ on $\bar{\E}$, and $\psi=\bar{r}^{\frac{1}{2}}$ on each $\bar{U}_i \setminus \{\bar{p}_i \}$. Define a new metric $\tilde{g}=\psi^4\bar{g}$ on $\bar{M}^3$. Using \eqref{Taylor-exp} and the change of radial coordinate $\rho=\frac{1}{2}r^2$, this metric takes the following form near singular points
\begin{equation}
\tilde{g}=d\rho^2+\rho^2 g_{S^2}+\tilde{\pmb{\epsilon}}_{\rho},  %O(\rho^\frac{5}{2})
\end{equation}
where $\tilde{\pmb{\epsilon}}_{\rho}=O_2 (\rho^{\frac{5}{2}})$ is a family of symmetric 2-tensors on $S^2$.
%which implies that  $g'\in W^{1, \infty}_{loc}(\bar{M})\cap C^{\infty}(\bar{M}\setminus \{\bar{p}\})  $ and 
This implies that $\tilde{g}$ is uniformly equivalent to the Euclidean metric near each $\bar{p}_i$. Moreover, the scalar curvature of this metric may be expressed in terms of the conformal Laplacian by
\begin{equation}\label{scal-conf}
R_{\tilde{g}}\!=\!-8\psi^{-6}L_{\bar{g}}\psi =\!-8\psi^{-5}\!\left(\!\Delta_{\bar{g}}\psi \!-\!\frac{1}{8}R_{\bar{g}}\psi \!\right)\!\!=\!O(\rho^{-\frac{3}{2}}) \text{ as }\rho\rightarrow 0,\quad\text{ } R_{\tilde{g}}=O(r^{-3-q'}) \text{ as }r\rightarrow\infty.
\end{equation}
Recall the transformation law for the conformal Laplacian, namely $L_{\tilde{g}}(\psi^{-1}\phi)=\psi^{-5}L_{\bar{g}}\phi$ for any smooth function $\phi$. We will use this together with \eqref{ineq0} to obtain an improved coercivity inequality with respect to the metric $\tilde{g}$.

%Combining it with \eqref{ineq} and replacing $\phi$ in \eqref{ineq} by $\psi\phi$  produce that 

\begin{lemma}\label{positive2} 
For any function $\phi\in C^{\infty}_c(\bar{M}^3)$ it holds that
%, which is Lipschitz on $\bar{M}\setminus \{\bar{p}\}$ and tends to zeros at $\infty$, one has that 
%which vanishes outside a compact set, it holds that
\begin{equation}\label{ineq}
\int_{\bar{M}^3}\left(|\tilde{\nabla}\phi|_{\tilde{g}}^2+\frac{1}{8} R_{\tilde{g}} \phi^2 \right)\mathrm{dV}_{\tilde{g}}
\geq \frac{1}{2}\int_{\bar{M}^3}|\bar{\nabla}(\psi\phi)|_{\bar{g}}^2 \mathrm{dV}_{\bar{g}}\geq C_* \left(\int_{\bar{M}^3}\phi^6 \mathrm{dV}_{\tilde{g}}\right)^{1/3},
\end{equation}
for some constant $C_* >0$ depending on $(\bar{M}^3 ,\bar{g})$.
%For any bounded Lipschitz function $\phi$ on $\bar{M}$ which tends to zero at $\infty$, then one has 
%\[\int_{\bar{M}}|\nabla_{g'}\phi|^2_{g'} +\frac{1}{8}R_{g'}\phi^2dvol_{g'}\geq 0\]
\end{lemma}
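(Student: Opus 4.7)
The plan is to derive the first inequality from a conformal transformation identity combined with a refinement of the argument behind Lemma \ref{positive1}, and the second inequality from a Sobolev inequality on the quotient space. Setting $w := \psi\phi$, I would first establish the identity
\[\int_{\bar M^3}\!\bigl(|\tilde\nabla\phi|_{\tilde g}^2 + \tfrac{1}{8}R_{\tilde g}\phi^2\bigr)\mathrm{dV}_{\tilde g} = \int_{\bar M^3}\!\bigl(|\bar\nabla w|_{\bar g}^2 + \tfrac{1}{8}R_{\bar g}w^2\bigr)\mathrm{dV}_{\bar g}\]
via a direct integration by parts using $\mathrm{dV}_{\tilde g} = \psi^6 \mathrm{dV}_{\bar g}$, $|\tilde\nabla\phi|_{\tilde g}^2 = \psi^{-4}|\bar\nabla\phi|_{\bar g}^2$, and the scalar curvature formula $R_{\tilde g}\psi^5 = -8\Delta_{\bar g}\psi + R_{\bar g}\psi$ from \eqref{scal-conf}. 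Boundary contributions at infinity and near each $\bar p_i$ vanish in the relevant limits thanks to the compact support of $\phi$ and the behavior $\psi \sim \bar r^{1/2}$ near singular points.

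To recover the first inequality of \eqref{ineq}, I would bound the right-hand side below using Proposition \ref{mean-scalar-formula} together with $R_g \geq 0$ and $\bar{\mathcal A}^2 \geq 0$, which yields the pointwise estimate $R_{\bar g} \geq 2|\bar{\mathcal N}|_{\bar g}^2 - 2\mathrm{div}_{\bar g}\bar{\mathcal N}$. Substituting and integrating by parts the divergence term on $\bar M^3 \setminus \bigcup_i B_\varepsilon(\bar p_i)$ (intersected with a large coordinate sphere), the boundary contributions at $\partial B_\varepsilon(\bar p_i)$ are of size $O(\varepsilon^2)$ by Remark \ref{poanfoinaopinhpohj} (giving $|\bar{\mathcal N}| \sim \varepsilon^{-1}$, $w^2 \sim \varepsilon$, and surface area $\sim \varepsilon^2$), while the outer boundary vanishes by compact support. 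Passing to the limit and completing the square yields
\[\int\bigl(|\bar\nabla w|^2 + \tfrac{1}{8}R_{\bar g}w^2\bigr)\mathrm{dV}_{\bar g} \geq \int\Bigl(\tfrac{1}{2}|\bar\nabla w|^2 + \tfrac{1}{2}\bigl|\bar\nabla w + \tfrac{1}{2}w\bar{\mathcal N}\bigr|^2 + \tfrac{1}{8}w^2|\bar{\mathcal N}|^2\Bigr)\mathrm{dV}_{\bar g} \geq \tfrac{1}{2}\int|\bar\nabla w|^2 \mathrm{dV}_{\bar g},\]
establishing the first inequality.

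For the second inequality, the identity $\int\phi^6 \mathrm{dV}_{\tilde g} = \int w^6 \mathrm{dV}_{\bar g}$ reduces the claim to the Sobolev inequality
\[\int|\bar\nabla w|^2 \mathrm{dV}_{\bar g} \geq 2C_*\Bigl(\int w^6 \mathrm{dV}_{\bar g}\Bigr)^{1/3}\]
on $(\bar M^3, \bar g)$. This will be the main technical obstacle, since $\bar g$ has conical singularities at the $\bar p_i$. My plan is to use a partition of unity to decompose $\bar M^3$ into the AE end $\bar{\mathcal E}$ (where Sobolev follows from the AE structure of Proposition \ref{AEend-decay}, or equivalently by descending Lemma \ref{sobolev} via the $U(1)$-invariant lift of $w$), a compact region away from singular points (where Sobolev is standard), and small neighborhoods $\bar U_i$ of each $\bar p_i$ on which, by Theorem \ref{local}, the metric is bi-Lipschitz to the Euclidean cone $d\bar r^2 + \tfrac{1}{4}\bar r^2 g_{S^2}$. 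On each such cone, Sobolev holds by bi-Lipschitz comparison with $\mathbb{R}^3$, exploiting the vanishing $w \sim \bar r^{1/2}\phi(\bar p_i)$ at the apex to ensure integrability. Pasting these local Sobolev inequalities together, with the various constants absorbed into a final $C_* > 0$, produces the desired global estimate.
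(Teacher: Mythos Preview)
Your proposal is correct and follows essentially the same approach as the paper. The paper obtains the conformal identity via the transformation law $L_{\tilde g}(\psi^{-1}\phi)=\psi^{-5}L_{\bar g}\phi$ (equivalent to your direct integration by parts), then invokes Lemma~\ref{positive1} as a black box to get the $\tfrac12\int|\bar\nabla(\psi\phi)|^2$ bound, whereas you re-derive that lemma inline with the same completion-of-the-square using $R_{\bar g}\geq 2|\bar{\mathcal N}|^2-2\operatorname{div}_{\bar g}\bar{\mathcal N}$. For the Sobolev step the paper is terser: it simply notes that $\bar g$ is uniformly equivalent to the Euclidean metric near each $\bar p_i$ (so the Schoen--Yau argument goes through), which subsumes your partition-of-unity decomposition; your remark about ``vanishing at the apex'' is unnecessary, since the bi-Lipschitz equivalence alone suffices for Sobolev.
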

\begin{comment}
\begin{proof} For any Lipschitz function $\phi$ on $\bar{M}$, the properties of  conformal change yields  that (a)$|\nabla_{g'}\phi|^2_{g'}=\psi^{-4}|{\nabla}_{\bar{g}} \phi|_{\bar{g}}^2 $;(b)$dvol_{g'}=\psi^6dvol_{\bar{g}}$. Utilizing \eqref{scal-conf}, We compute 
\begin{equation*}
\begin{split}
\int_{\bar{M}} |\nabla_{g'} \phi|_{g'}^2+\frac{1}{8}R_{g'}\phi^2 dvol_{g'}=&\int_{\bar{M}}| {\nabla}_{\bar{g}} \phi|_{\bar{g}}^2\psi^2+\frac{1}{8}\bar{R}(\phi\psi)^2-\psi \phi^2\Delta_{\bar{g}}\psi dvol_{\bar{g}}\\
=& \int_{\bar{M}}|{\nabla}_{\bar{g}} \phi|_{\bar{g}}^2\psi^2+\frac{1}{8}\bar{R}(\phi\psi)^2+|{\nabla}_{\bar{g}} \psi|^2_{\bar{g}}\phi^2+2\psi\phi \bar{g}({\nabla}_{\bar{g}}\phi, {\nabla}_{\bar{g}}\psi)dvol_{\bar{g}}\\
&-\lim_{r\rightarrow \infty}\int_{\partial B(\bar{p}, r)}\phi^2\psi \frac{\partial \psi}{\partial \nu}-\lim_{\epsilon\rightarrow 0}\int_{\partial B(\bar{p}, \epsilon)}\phi^2\psi \frac{\partial \psi}{\partial \nu}\\
=& \int_{\bar{M}}|{\nabla}_{\bar{g}} \phi|_{\bar{g}}^2\psi^2+\frac{1}{8}\bar{R}(\phi\psi)^2 +|{\nabla}_{\bar{g}} \psi|^2_{\bar{g}}\phi^2+2\psi\phi\bar{g}({\nabla}_{\bar{g}}\phi, {\nabla}_{\bar{g}}\psi)dvol_{\bar{g}}\\
=& \int_{\bar{M}}|{\nabla}_{\bar{g}} (\phi\psi)|_{\bar{g}}^2+\frac{1}{8}\bar{R}(\phi\psi)^2 dvol_{\bar{g}}\\
\geq &0
\end{split}
\end{equation*}
The third equality holds because $\psi=r^{1/2}$ near $\bar{p}$ and $\phi$ approaches  zero at $\infty$. The last inequality follows from Lemma \ref{positive1}. 
\end{proof}
\end{comment}

\begin{proof}
Using Lemma \ref{positive1} and notation from its proof we have
\begin{align}\label{5.28}
\begin{split}
\int_{\bar{M}^3_{\varepsilon,r}}\left( |\tilde{\nabla} \phi |_{\tilde{g}}^2+\frac{1}{8}R_{\tilde{g}}\phi^2 \right)\mathrm{dV}_{\tilde{g}}
&= -\int_{\bar{M}^3_{\varepsilon,r}}\phi L_{\tilde{g}}\phi \mathrm{dV}_{\tilde{g}} +\int_{\partial\bar{M}^3_{\varepsilon,r}}\phi\tilde{\nu}(\phi)\mathrm{dA}_{\tilde{g}}\\
&=-\int_{\bar{M}^3_{\varepsilon,r}}(\psi\phi) L_{\bar{g}}(\psi\phi) \mathrm{dV}_{\bar{g}} +\int_{\partial\bar{M}^3_{\varepsilon,r}}\phi\tilde{\nu}(\phi)\mathrm{dA}_{\tilde{g}}\\
&=\int_{\bar{M}^3_{\varepsilon,r}}\left(|\bar{\nabla}(\psi\phi)|_{\bar{g}}^2 +\frac{1}{8}R_{\bar{g}}(\psi\phi)\right)\mathrm{dV}_{\bar{g}}\\
&\qquad+\int_{\partial\bar{M}^3_{\varepsilon,r}}\phi\tilde{\nu}(\phi)\mathrm{dA}_{\tilde{g}}
-\int_{\partial\bar{M}^{3}_{\varepsilon,r}}(\psi\phi)\bar{\nu}(\psi\phi)\mathrm{dA}_{\bar{g}}\\
&\geq \frac{1}{2}\int_{\bar{M}^3_{\varepsilon,r}}|\bar{\nabla}(\psi\phi)|_{\bar{g}}^2 \mathrm{dV}_{\bar{g}} +o(1),
\end{split}
\end{align}
where $\tilde{\nu}$ and $\bar{\nu}$ are the unit outer normals to $\partial\bar{M}^3_{\varepsilon,r}$ with respect to $\tilde{g}$ and $\bar{g}$, respectively. The desired first inequality is then obtained by taking the limit as $\varepsilon\rightarrow 0$ and $r\rightarrow\infty$, since the starting integrand is globally integrable. 

The desired second inequality follows from the Sobolev inequality for the singular manifold $(\bar{M}^3 ,\bar{g})$ since
\begin{equation}
\int_{\bar{M}^3}|\bar{\nabla}(\psi\phi)|_{\bar{g}}^2 \mathrm{dV}_{\bar{g}}\geq C_{*}\left(\int_{\bar{M}^3}(\psi\phi)^6 \mathrm{dV}_{\bar{g}}\right)^{1/3}=C_* \left(\int_{\bar{M}^3}\phi^6 \mathrm{dV}_{\tilde{g}}\right)^{1/3}.
\end{equation}
Note that the Sobolev inequality on this space may be established in the same manner as \cite{schoen-yau1979}*{Lemma 3.1}, since the metric $\bar{g}$ is uniformly equivalent to the Euclidean metric near each singular point. In particular, near these points
\begin{equation}
C |\nabla_{\delta}(\psi\phi)|_{\delta}^2 \geq |\bar{\nabla}(\psi\phi)|_{\bar{g}}^2 \leq C^{-1} |\nabla_{\delta}(\psi\phi)|_{\delta}^2,\quad\quad\quad
C\mathrm{dV}_{\delta}\geq \mathrm{dV}_{\bar{g}}\geq C^{-1}\mathrm{dV}_{\delta},
\end{equation}
for some constant $C>0$.
\end{proof}

\begin{proof}[Proof of Theorem \ref{factor}]
As in the proof of Theorem \ref{conformal}, inequality \eqref{ineq} allows us to find weak solutions $v_i \in W_{0}^{1,2}(\tilde{\Omega}_i)\cap C^{\infty}(\tilde{\Omega}_i \setminus\{\bar{p}_1,\dots,\bar{p}_k\})$ of the Dirichlet problems
\begin{equation}
\Delta_{\tilde{g}}  v_i-\frac{1}{8}R_{\tilde{g}} v_i=\frac{1}{8}R_{\tilde{g}} \quad \text{ in } \tilde{\Omega}_i , \quad\quad\quad v_i =0 \quad \text{ on }\partial\tilde{\Omega}_i ,
\end{equation}
where $\{\tilde{\Omega}_i\}_{i=1}^{\infty}$ is a sequence of precompact exhaustion domains for $\bar{M}^3$. In particular, the weak solution property combined with Lemma \ref{positive2} and H\"{o}lder's inequality produces
\begin{align}
\begin{split}
C_*\left(\int_{\tilde{\Omega}_i} v_i^6 \mathrm{dV}_{\tilde{g}}\right)^{1/3} &\leq \int_{\tilde{\Omega}_i}\left(|\tilde{\nabla}v_i|_{\tilde{g}}^2+\frac{1}{8} R_{\tilde{g}} v_i^2 \right)\mathrm{dV}_{\tilde{g}}\\
&=-\int_{\tilde{\Omega}_i}\frac{1}{8}R_{\tilde{g}} v_i \mathrm{dV}_{\tilde{g}}
\leq\frac{1}{8}\left(\int_{\tilde{\Omega}_i}|R_{\tilde{g}}|^{\frac{6}{5}}\mathrm{dV}_{\tilde{g}}\right)^{5/6}
\left(\int_{\tilde{\Omega}_i}v_i^6\mathrm{dV}_{\tilde{g}}\right)^{1/6}.
\end{split}
\end{align}
It follows that $v_i$ is uniformly controlled in $L^6(\tilde{\Omega}_i)$, since $R_{\tilde{g}}\in L^{2-\varsigma}(\bar{M}^3)$ for any small $\varsigma>0$ by \eqref{scal-conf}. Standard elliptic theory now yields uniform pointwise bounds for $v_i$ on $\bar{\E}\cap\tilde{\Omega}_i$. Moreover, if $\mathbf{r}$ is a smooth extension of the radial coordinate function in the asymptotic end to all of $\bar{M}^3$, with $\mathbf{r}=1$ on $\bar{M}^3 \setminus \bar{\E}$, then the weak solution property produces
\begin{align}
\begin{split}
\int_{\tilde{\Omega}_i}|\tilde{\nabla}v_i|_{\tilde{g}}^2 \mathrm{dV}_{\tilde{g}}&=\frac{1}{8}\int_{\tilde{\Omega}_i}(v_i -v_i^2)R_{\tilde{g}}\mathrm{dV}_{\tilde{g}}\\
&\leq \frac{1}{8}\left(\int_{\tilde{\Omega}_i}|R_{\tilde{g}}|^{\frac{3}{2}}\mathbf{r}^{\frac{3}{2}(1+\varsigma)}\mathrm{dV}_{\tilde{g}}\right)^{2/3}\left(\int_{\tilde{\Omega}_i}(|v_i|+v_i^2)^{3}\mathbf{r}^{-3(1+\varsigma)}\mathrm{dV}_{\tilde{g}}\right)^{1/3}.
\end{split}
\end{align}
The right-hand side is uniformly bounded in light of the decay \eqref{scal-conf}, and thus $v_i$ is controlled in $W^{1,2}_{loc}(\tilde{\Omega}_i)$ independent of $i$. Higher order uniform estimates may be obtained by boot-strap on compact subsets away from singular points. A diagonal argument may now be employed to extract a convergence subsequence (denoted without change) $v_i\rightarrow v$. The limit function is globally bounded, lies in $W^{1,2}_{loc}(\bar{M}^3)$, is smooth away from points $\bar{p}_i$, and admits asymptotics $v-\frac{\bar{\mathcal{C}}}{r}\in C^{2,\alpha}_{1+q'}(\bar{\E})$ by \cite{Lee}*{Corollary A.37} %Theorem \ref{Schauder} 
for some constant $\bar{\mathcal{C}}$. Define $u=1+v$ and observe that it satisfies the equation (weakly across singular points)
\begin{equation}\label{fpaoinapinfpioaqnhpg}
\Delta_{\tilde{g}}u-\frac{1}{8}R_{\tilde{g}}u=0 \quad\text{ on }\bar{M}^3, \quad\quad\quad\quad
u-\left(1+\frac{\bar{\mathcal{C}}}{r}\right)\in C^{2, \alpha}_{1+q'}(\bar{\E}).
\end{equation}
The desired function is then obtained by setting $\bar{u}=\psi u$.

It remains to show that $u$ is positive, and to estimate $\bar{\mathcal{C}}$. We first note that the regularity of $u$ near singular points may be improved. Namely, by writing equation \eqref{fpaoinapinfpioaqnhpg} in normal coordinates at $\bar{p}_i$ as $\tilde{g}^{ij}\partial_{ij}u=F$, we may boot-strap the regularity of the inhomogeneous term to find $F\in L^{2-\varsigma}_{loc}(\bar{M}^3)$ and hence by the interior $L^p$-estimates it follows that $u\in W^{2,2-\varsigma}_{loc}(\bar{M}^3)$ for any $\varsigma>0$. In particular, $u$ is continuous at singular points. Assume that $\min_{\bar{M}^3} u<0$, and let $-\epsilon\in (\min_{\bar{M}^3}u , 0)$ be a small regular value which is distinct from the values $u(\bar{p}_i)$, $i=1,\dots,k$; the existence of such a regular value is a consequence of Sard's theorem. Consider the domain $\Omega_{\epsilon}=\{p\in \bar{M}^3 \mid u(p)<-\epsilon\}$.  The properties of $u$ imply that this domain is a precompact open set, and since $\epsilon$ is a regular value its boundary $\partial\Omega_{\epsilon}$ is smooth. We may then apply Lemma \ref{positive2} with an approximating sequence $\phi_n \in C^{\infty}_c(\Omega)$ converging to $u+\varepsilon$ in $W^{2,2-\varsigma}(\Omega)$ and converging smoothly away from singular points and $\partial\Omega$, to find
\begin{align}
\begin{split}
C_{*}\left(\int_{\Omega}(u+\epsilon)^6\mathrm{dV}_{\tilde{g}}\right)^{1/3} \!\!\!\!&\leq \lim_{n\rightarrow\infty}\int_{\Omega}\left(|\tilde{\nabla}\phi_n |_{\tilde{g}}^2 +\frac{1}{8}R_{\tilde{g}}\phi_n^2 \right)\mathrm{dV}_{\tilde{g}}\\
&= \lim_{n\rightarrow\infty}\left[\int_{\Omega}\phi_n\left(\!-\Delta_{\tilde{g}}\phi_n +\frac{1}{8}R_{\tilde{g}}\phi_n \right)\mathrm{dV}_{\tilde{g}}\!+\!\int_{\partial\Omega}\phi_n \tilde{\nu}(\phi_n) \mathrm{dA}_{\tilde{g}}\right]\\
&=\int_{\Omega}(u+\epsilon)\left(\!-\Delta_{\tilde{g}}u +\frac{1}{8}R_{\tilde{g}}(u+\epsilon) \right)\mathrm{dV}_{\tilde{g}}\!+\!\int_{\partial\Omega}\!(u+\epsilon) \tilde{\nu}(u) \mathrm{dA}_{\tilde{g}}=O(\epsilon),
\end{split}
\end{align}
where $\tilde{\nu}$ is the unit outer normal to $\partial\Omega$. By taking a sequence of $\epsilon\rightarrow 0$ a contradiction arises. We conclude that $u\geq 0$ globally, and since $R_{\tilde{g}}\in L^{2-\varsigma}(\bar{M}^3)$ we may deduce from the Harnack-Moser inequality \cite{Stamp}*{Theorem 8.1} that $u$ is strictly positive everywhere.

Consider now the monopole at infinity. Let $B_{\varepsilon}$ be an $\varepsilon$-normal neighborhood around the singular points, and let $\varphi_n \in C^{\infty}_c(\bar{M}^3)$ be an approximating sequence converging to $u$ in $W^{1,2}(\bar{M}^3)\cap W^{2,2-\varsigma}_{loc}(\bar{M}^3)$ and smoothly away from singular points, then by Lemma \ref{positive2} it holds that
\begin{align}\label{5.36}
\begin{split}
\frac{1}{2}\int_{\bar{M}^3}|\bar{\nabla}(\psi u)|^2_{\bar{g}}\mathrm{dV}_{\bar{g}} &\leq \lim_{n\rightarrow\infty}\left(\int_{B_{\varepsilon}}+\int_{\bar{M}^3 \setminus B_{\varepsilon}}\right)\left(|\tilde{\nabla}\varphi_n |_{\tilde{g}}^2 +\frac{1}{8}R_{\tilde{g}}\varphi_n^2 \right)\mathrm{dV}_{\tilde{g}}\\
&= \lim_{n\rightarrow\infty}\left[\int_{B_{\varepsilon}}\varphi_n\left(-\Delta_{\tilde{g}}\varphi_n +\frac{1}{8}R_{\tilde{g}}\varphi_n \right)\mathrm{dV}_{\tilde{g}}+\int_{\partial B_{\varepsilon}}\varphi_n \tilde{\nu}(\varphi_n) \mathrm{dA}_{\tilde{g}}\right]\\
&\quad\quad +\lim_{n\rightarrow\infty}\int_{\bar{M}^3 \setminus B_{\varepsilon}}\left(|\tilde{\nabla}\varphi_n |_{\tilde{g}}^2 +\frac{1}{8}R_{\tilde{g}}\varphi_n^2 \right)\mathrm{dV}_{\tilde{g}}\\
&=\int_{\partial B_{\varepsilon}}u \tilde{\nu}(u) \mathrm{dA}_{\tilde{g}}
+\int_{\bar{M}^3 \setminus B_{\varepsilon}}\left(|\tilde{\nabla}u |_{\tilde{g}}^2 +\frac{1}{8}R_{\tilde{g}}u^2 \right)\mathrm{dV}_{\tilde{g}}\\
&=\lim_{r\rightarrow\infty}\int_{S_r}u\tilde{\nu}(u)\mathrm{dA}_{\tilde{g}}=-4\pi\bar{\mathcal{C}},
\end{split}
\end{align}
which yields the desired inequality \eqref{fpnapin0gjq9j9hh}.
\end{proof}

\subsection{Proof of the inequality in Theorem \ref{B}}
Let $(M^4, g,\E)$ be a complete ALF manifold with nonnegative scalar curvature, mass $m$, and having an almost free $U(1)$ action with respect to end $\mathcal{E}$. It will be assumed that there is a single end for convenience; the case of multiple ends may be treated similarly. Given $\varepsilon>0$, Theorem \ref{ALF-density} provides an approximation manifold $(M^4, g' ,\E)$ with nonnegative scalar curvature, a mass satisfying $|m -m'|<\varepsilon$, and having an almost free $U(1)$ action with respect to $\E$ in which the Killing field generator agrees with the vector field $V$ associated with the model metric $g_0$. Moreover,
there exists a function $f\in C^{\infty}(M^4)$ and a compact set $\mathcal{K}\subset M^4$ such that 
\begin{equation}\label{conformal-def11}
g'=f^2 g_0 \quad\text{ on }\E \setminus\mathcal{K},\quad\quad\quad f-\left(1+\frac{m'}{6{r}}\right)\in C^{2, \alpha}_{1+q'}(\E),
\end{equation}
where $q'=\min\{1,q\}$, $\alpha\in(0,1)$. By Proposition \ref{AEend-decay} the quotient $(\bar{M}^3,\bar{g},\bar{\E})$ of the approximation manifold is AE, and \eqref{fpainpfinapinfpian} shows that its scalar curvature is integrable, $R_{\bar{g}}\in L^{1}(\bar{M}^3)$. Then Corollary \ref{quo-sep} yields the relation between the masses of these two manifolds
\begin{equation}
m' =4\bar{m}+\lim_{r\rightarrow \infty}\frac{1}{2\pi}\int_{\bar{S}_{r} } \langle\bar{\mathcal{N}},\bar{\nu}\rangle =4\bar{m} +\frac{m'}{3},
\end{equation}
where we have used \eqref{mean-scalar-special}. Now apply Theorem \ref{factor} to find a conformal metric $\hat{g}=\bar{u}^4 \bar{g}$ on $\bar{M}^3$ with zero scalar curvature such that $(\bar{\E},\hat{g})$ is AE; its mass will be denoted by $\hat{m}$. Furthermore, the conformal factor has the following expansion
\begin{equation}
\bar{u}=1+\frac{\bar{\mathcal{C}}}{r}+O(r^{-\frac{3}{2}})\quad\text{ as } r\rightarrow\infty,
\end{equation}
where $\bar{\mathcal{C}}\leq 0$. Since $\hat{g}\in W^{1,6-\varsigma}_{loc}(\bar{M}^3)$ for any $\varsigma>0$ in light of the discussion from Section \ref{foinfoainoiphghhag}, and the singular set is discrete, we may apply the AE positive mass theorem of \cite{ShiTamSing}*{Theorem 7.2} to find that $\hat{m}\geq 0$. Moreover, a calculation shows that $\hat{m}=\bar{m}+2\bar{\mathcal{C}}$,
and hence
\begin{equation}
0\leq \hat{m}=\bar{m}+2\bar{\mathcal{C}}\leq \bar{m}=\frac{m'}{6}\leq \frac{m+\varepsilon}{6}.
\end{equation}
Since $\varepsilon$ is arbitrary, we conclude that $m\geq 0$.

\begin{remark}\label{angle}
For later use, we note here that by combining the computations in \eqref{5.28} and \eqref{5.36} an alternate
expression for the monopole is given by
\begin{equation}
-4\pi\bar{\mathcal{C}}=\int_{\bar{M}^3}\left(|\bar{\nabla}\bar{u}|_{\bar{g}}^2 +\frac{1}{8}R_{\bar{g}}\bar{u}^2 \right)\mathrm{dV}_{\bar{g}}.
\end{equation}
Therefore, from the proof of Theorem \ref{B}, the mass of the approximating manifold admits the lower bound
\begin{equation}
m' \geq -12\bar{\mathcal{C}}=\frac{3}{\pi}\int_{\bar{M}^3}\left(|\bar{\nabla}\bar{u}|_{\bar{g}}^2 +\frac{1}{8}R_{\bar{g}}\bar{u}^2 \right)\mathrm{dV}_{\bar{g}}.
\end{equation}
%If $(M, g, \E)$ satisfies the assumption stated at the beginning of Section 5.2, the AMD mass is bounded below by $-8A$. %Specifically, we have 
%\begin{equation}\label{positive}
%   \frac{2m}{3}\geq \frac{1}{\omega_2}\int_{\bar{M}} 8|{\nabla}_{\bar{g}} u|^2+\bar{R}u^2  
%\end{equation}
%We will use this inequality to establish the lower bound of ADM mass. 
\end{remark}

\section{Rigidity of the Mass} %Mass Rigidity for ALF Manifolds}
\label{sec6} \setcounter{equation}{0}
\setcounter{section}{6}

The purpose of this section is to prove the case of equality statement in Theorem \ref{B}. Since the proof of the corresponding inequality relied on a density result (Theorem \ref{ALF-density}), it is difficult to use those arguments to establish rigidity.
We will therefore employ an alternative approach based on harmonic coordinates, and an associated mass formula involving these functions that was originally observed by Bartnik in the AE setting, in his proof of \cite{bartnik1986}*{Theorem 4.4}.

\subsection{Harmonic coordinates}
Consider a complete ALF manifold $(M^4 ,g,\E)$ with almost free $U(1)$ action. Recall that the metric on $\E$ may be expressed in Riemannian submersion format $g=\pi^* \bar{g}+\frac{\eta^2}{|\eta|^2}$, 
where $\eta=g(T,\cdot)$ is the dual 1-form to the $U(1)$ generator $T$. According to
Proposition \ref{AEend-decay}, there exists a coordinate system $\bar{x}=(\bar{x}^1, \bar{x}^2, \bar{x}^3)$ on the quotient end $(\bar{\E},\bar{g})$ that yields an AE structure. By pulling these functions back to $\E$ and denoting them by $x^i =\pi^* \bar{x}^i$ we obtain
\begin{equation}\label{098}
\pi^* \bar{g} =\bar{g}_{ij} dx^i dx^j =(\delta_{ij}+O_2(r^{-q}))dx^i dx^j,
\end{equation}
where $r=|x|$. Combining this with the flow parameter $t$ associated with the vector field $T$, yields local coordinates $(x,t)$ on $\E$ such that
\begin{equation}\label{0987}
\frac{\eta}{|\eta|}=\ell (1+O_2(r^{-q}))dt +A_i dx^i,\quad\quad A_i =O_2(r^{-q}).
\end{equation}
These observations yield an ALF structure in which $\Delta_g x^i =O_1(r^{-1-q})$ for $i=1,2,3$.

\begin{lemma}\label{exist-harmoinic}
Let $(M^4, g,\E)$ be a complete ALF manifold having an almost free $U(1)$ action with respect its single end $\E$. Assume that the
order of decay of this end satisfies $q\in(\frac{1}{2},1]$. Then there exist $U(1)$-invariant functions $y^i \in C^{\infty}(M^4)$, $i=1,2,3$ such that
\begin{equation}\label{harm-est-cord}
\Delta_g y^i=0 \quad\text{ on }M^4,\quad \qquad\quad\quad y^i -x^i \in C^{2,\alpha}_{q'-1}(\E),
%|\nabla (dy^i)|\in C^{1, \alpha}_{q'+1}(\E),\quad 
%|\nabla(y^i-x^i)|\in C^{2, \alpha}_{q'}({\E})
\end{equation}
for any $q' <q$ and $\alpha\in(0,1)$.
\end{lemma}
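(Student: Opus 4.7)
Fix $q' < q$. The approach is to seek a harmonic function of the form $y^i = x^i + w^i$, with $w^i$ chosen to cancel the obstruction $f_i := -\Delta_g x^i$. Because $x^i = \pi^* \bar{x}^i$ descends from the AE quotient of Proposition~\ref{AEend-decay} and $g$ is $U(1)$-invariant, the source $f_i$ is smooth, $U(1)$-invariant, and belongs to $C^{0,\alpha}_{1+q}(\E)$ for any $\alpha \in (0,1)$ by \eqref{098}--\eqref{0987}; the $U(1)$-invariance of the final $y^i$ will follow automatically from uniqueness in the Dirichlet problems solved below.

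First I would construct an asymptotic parametrix on the model end. The key observation is that $\Delta_{g_0}$ acting on a $U(1)$-invariant function reduces to the flat Euclidean Laplacian on the base $\R^3$, since the $S^1$ fibers of $g_0$ have constant length and the model coefficients are independent of the fiber coordinate. Pushing $f_i$ down to $\bar{f}_i$ on $\R^3 \setminus \bar{B}_{r_1}$, extending by zero inside the ball, and invoking the standard weighted Fredholm isomorphism $\Delta_\delta : C^{2,\alpha}_{q'-1}(\R^3) \to C^{0,\alpha}_{q'+1}(\R^3)$---valid because $q'-1 \in (-1,0)$ is non-exceptional for $\Delta_\delta$ (whose indicial roots on $\R^3$ are integers)---yields $\bar{v}^i$ satisfying $\Delta_\delta \bar{v}^i = \bar{f}_i$ outside a compact set, with $|\partial^k \bar{v}^i| \leq C|\bar{x}|^{1-q'-k}$ for $k = 0,1,2$. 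Pulling back, $v^i := \pi^* \bar{v}^i$ is $U(1)$-invariant on $\E$, and I extend it smoothly to all of $M^4$ by a cutoff. Since $(\Delta_g - \Delta_{g_0})v^i$ has leading coefficients of order $O(r^{-q})$ acting on second derivatives of size $O(r^{-1-q'})$, the residue $h_i := \Delta_g v^i + \Delta_g x^i$ lies in $C^{0,\alpha}_{1+q+q'}(\E)$. Applying the same parametrix step to $-h_i$ produces a further correction $v^{i,(2)}$ so that the cumulative residue $\rho_i := \Delta_g(v^i + v^{i,(2)}) + \Delta_g x^i$ belongs to $C^{0,\alpha}_{1+2q+q'}(\E)$. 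By choosing $q'$ sufficiently close to $q$ we arrange $1+2q+q' > 5/2$ (possible because $q > 1/2$), so that $\rho_i \in L^{6/5}(M^4) \cap C^{0,\alpha}_{3+\varsigma}(\E)$ for some $\varsigma \in (0,1)$.

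The remaining equation $\Delta_g u^i = -\rho_i$ is then solved via the exhaustion procedure of Theorem~\ref{conformal}. On a $U(1)$-invariant exhaustion $\{\Omega_k\}$ of $M^4$, the Dirichlet problems $\Delta_g u^i_k = -\rho_i$ with $u^i_k|_{\partial\Omega_k} = 0$ admit unique (hence $U(1)$-invariant) solutions whose energy identity $\int_{\Omega_k}|\nabla u^i_k|_g^2 = \int_{\Omega_k}\rho_i u^i_k$, combined with H\"older's inequality and the Sobolev inequality of Lemma~\ref{sobolev}, yields uniform control of $\|\nabla u^i_k\|_{L^2(M^4)}$ and $\|u^i_k\|_{L^6(\E)}$. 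A locally $C^{2,\alpha}$-convergent subsequence limits to a smooth, bounded, $U(1)$-invariant $u^i$ satisfying $\Delta_g u^i = -\rho_i$. Lemma~\ref{Schauder1} applied with $c \equiv 0$ and $b = -\rho_i$ then gives $u^i - (a_0 + a_1/r) \in C^{2,\alpha}_{1+\varsigma'}(\E)$ for some constants $a_0, a_1 \in \R$ and $\varsigma' = \min\{q,\varsigma\}$. Setting $y^i := x^i + v^i + v^{i,(2)} + u^i - a_0$ produces a smooth, $U(1)$-invariant, $g$-harmonic function on $M^4$ whose difference from $x^i$ is dominated at infinity by $v^i = O(r^{1-q'})$, establishing $y^i - x^i \in C^{2,\alpha}_{q'-1}(\E)$.

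The main obstacle will be the weighted parametrix of the first step. Because $\bar{f}_i$ is not globally integrable on $\R^3$ when $q \leq 1$, the naive Newton convolution does not converge and one must instead rely on the standard weighted Fredholm theorem for the flat Laplacian in H\"older spaces. Verifying that each weight $q'-1 \in (-1,0)$ used here is non-exceptional is straightforward---the indicial roots of $\Delta_\delta$ on $\R^3$ lie in $\{0,1,2,\ldots\} \cup \{-1,-2,\ldots\}$---and the precise decay estimates for $\bar{v}^i$ then follow from standard Schauder theory once the isomorphism is in place.
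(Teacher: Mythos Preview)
Your approach is sound in outline and genuinely different from the paper's, but there is a quantitative gap in the iteration count that breaks the argument for small $q$.

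\textbf{The gap.} After two parametrix steps you obtain $\rho_i \in C^{0,\alpha}_{1+2q+q'}(\E)$, and you then assert $\rho_i \in L^{6/5}(M^4)\cap C^{0,\alpha}_{3+\varsigma}(\E)$ from the condition $1+2q+q'>5/2$. The $L^{6/5}$ membership is correct, but $C^{0,\alpha}_{3+\varsigma}$ requires $1+2q+q'>3$, i.e.\ $2q+q'>2$. Since $q'<q$, this forces $3q>2$, so your argument as written only covers $q>2/3$. For $q\in(1/2,2/3]$ the residue after two steps is not decaying fast enough to feed into Lemma~\ref{Schauder1}, and the final asymptotic expansion of $u^i$ is unjustified. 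The fix is simply to iterate once more: for $q\le 2/3$ the third weight $\tau_3=2q+q'-1$ still lies in $(0,1)$ (the surjective range for $\Delta_\delta$ on $\R^3$), and the resulting residue lies in $C^{0,\alpha}_{1+3q+q'}$ with $1+3q+q'>3$ once $q'>2-3q$, which is compatible with $q'<q$ whenever $q>1/2$. You should also note that at the first step the map $\Delta_\delta:C^{2,\alpha}_{q'-1}(\R^3)\to C^{0,\alpha}_{q'+1}(\R^3)$ is \emph{surjective} rather than an isomorphism (constants lie in the kernel since $q'-1<0$); this does not affect your construction.

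\textbf{Comparison with the paper.} The paper takes a shorter but less self-contained route: it obtains existence of the $y^i$ by citing the weighted Sobolev analysis of Minerbe and the argument of Chen--Liu--Shi--Zhu directly on the ALF manifold, without passing to the quotient. $U(1)$-invariance is then deduced \emph{a posteriori} by proving uniqueness up to additive constants (via Minerbe's Proposition~4 and Corollary~2) and observing that $\varphi_t^* y^i$ is another solution, whence $y^i-\varphi_t^* y^i$ is a constant whose $t$-derivative $-T(y^i)$ decays at infinity and so must vanish. Your approach instead builds $U(1)$-invariance into every step (invariant sources, pull-backs from the base, uniqueness of Dirichlet solutions on invariant domains) and uses only the flat $\R^3$ Fredholm theory together with the exhaustion machinery already developed in Theorem~\ref{conformal} and Lemma~\ref{Schauder1}. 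This is more hands-on but avoids importing the ALF-specific weighted theory from the external references.
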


\begin{proof} 
Since $\Delta_g x^i =O_1(r^{-1-q})$, the existence of smooth functions $y^i$ satisfying \eqref{harm-est-cord} is guaranteed by the proof of \cite{CLSZ}*{Proposition 4.12}, in the AF case. The proof relies solely on the asymptotic analysis carried out in \cite{minerbe}*{Section 2}, which applies in the general ALF regime. Thus, the desired existence result is valid in the current setting. 

It remains to show that the $y^i$ are $U(1)$-invariant. To do this, we will first show that the solution of \eqref{harm-est-cord} is unique up to addition of constants.
%WSince the metric and the functions $x^i$ are $U(1)$-invariant, it suffices to show that the solutions $y^i$ are unique up to addition of a $U(1)$-invariant function. To see this, 
Note that according to the proof of \cite{CLSZ}*{Proposition 4.12} these functions lie in a weighted Sobolev space, namely $y^i -x^i \in H^{2}_{\delta}(M^4)$ for any $\delta>\frac{5}{2}-q$; here we are using the notation and definition of such spaces from \cite{minerbe}*{Section 2}. If $y^i_{*}$ is another solution of \eqref{harm-est-cord}, then $y^i - y^i_*$ is harmonic and lies in $H^2_{\delta}(M^4)$. By applying \cite{minerbe}*{Proposition 4} with $u=y^i -y^i_{*}$, $f=0$, and $\delta'= 1-\varsigma$ for arbitrarily small $\varsigma>0$, it follows that $y^i -y^i_*=v^i + c^i$ for some functions $v^i \in L^2_{\delta'}(M^4)$ and some constants $c^i$, $i=1,2,3$. Since $v^i$ is harmonic and lies in $L^2_1(M^4)$, it must in fact be zero as is shown in the proof of \cite{minerbe}*{Corollary 2}. Therefore, $y^i$ and $y^i_*$ differ at most by a constant. Since $x^i$ is $U(1)$-invariant, we may set $y^i_* =\varphi_t^* y^i$ where $\varphi_t$ is the flow for $T$, to find $y^i -\varphi_t^* y^i =c^i_t$ for a 1-parameter family of constants $c^i_t$. By taking the time derivative of this equation and using the decay of \eqref{harm-est-cord}, we have $\frac{dc^i_t}{dt}=-T(y^i)=O(r^{-q'})$. It follows that $\frac{dc^i_t}{dt}=0$ after letting $r\rightarrow \infty$, and hence $T(y^i)=0$ showing that the solutions are $U(1)$-invariant.
\begin{comment}
For our convenience, we may assume that  $q\in (\frac{1}{2},1)$ and $\chi$ is the $U(1)$-function supported in $\E$ and equals to $1$ at the infinity. 
Using \eqref{pre-est-harmonic} and \eqref{laplace-quo} produces that  $\Delta_g(\chi x^i)\in O_1(r^{-1-q})\in L^2_{\delta-2}$, where $\delta>5/2-q$. Consider the PDE
\begin{equation}
    \Delta_g(u^i)=\Delta_{g}(\chi x^i) 
\end{equation}
The uniqueness and solvability  is ensured by Corollary 2 in \cite{minerbe}, which implies that  $u^i\in H^2_{\delta}$ and $u^i$ is also $U(1)$-invariant ( since  $g$ and $\pi^*(\chi\bar{x}^i)$ are $U(1)$-invariant).  

Choose $\delta=5/2-q+\epsilon_0$, where $\epsilon_0$ is sufficiently small. Using Lemma 6 in \cite{minerbe} and the proof of Proposition 4.12 in \cite[Proposition 4.12]{CLSZ} implies that $\nabla u^i\in C^{2, \alpha}_{q'}(\bar{\E})$, where $q'=q-\epsilon_0$. We can conclude that $y^i:=(\chi {x}^i)-u^i$ is the required candidate in the statement. 
\end{comment}
\end{proof}

Asymptotically linear harmonic functions have been used to obtain formulae for the ADM mass in the AE setting by Bartnik, in the proof of \cite{bartnik1986}*{Theorem 4.4}. This was extended to the AF setting by Chen-Liu-Shi-Zhu \cite{CLSZ}*{Proposition 4.12}, and Minerbe used related harmonic 1-forms to find a different definition of mass with a similar formula \cite{minerbe}*{(26)}. Here we will generalize the Chen-Liu-Shi-Zhu result to the ALF case.

%We will make use of harmonic functions for obtaining the mass formula involving Ricci curvature.  The same formula also appears in [Proposition 4.12, \cite{CLSZ}] %for AF setting. [[Minerbe used harmonic 1-forms and got a different definition of mass with a similar formula \cite[(26)]{minerbe}.]] [[Reference Bartnik or %Chrusciel where this appears in the AE case.]]

 %Although harmonic functions are easily constructed, the mass formula from harmonic functions involves Ricci curvature. To do this, we will make use of the rigidity assumption to study the flatness of Ricci curvature. Using the same argument in \cite{schoen-yau1979}, we obatin that

%\begin{lemma}\label{ricci-flat} Let $(M^4, g, \E)$ be a complete ALF $4$-manifold with an almost free $U(1)$ action and with non-negative scalar curvature. If $m(M^4, g, \mathcal{E})=0$, then $(M^4, g)$ is Ricci-flat. 
%\end{lemma}

\begin{lemma}\label{harmonic}  
Let $(M^4, g,\E)$ be a complete ALF manifold having an almost free $U(1)$ action with respect its single end $\E$. Assume that the
order of decay of this end satisfies $q\in(\frac{1}{2},1]$, and
%Let $(M, g, \mathcal{E})$ be a complete ALF $4$-manifold with almost free $U(1)$-action and with the following condition: 
\begin{equation}\label{Ricci-decay}
|\Ric(g)|_{g_0}+ r|\mathring{\nabla}\Ric(g)|_{g_0}=O(r^{-3-\epsilon}) \quad\text{ as }r\rightarrow\infty
\end{equation}
for some $\epsilon\in(0,1)$. If $y^i$, $i=1,2,3$ are the functions constructed in Lemma \ref{exist-harmoinic}, then the mass of $\E$ is given by
\begin{equation}
m=\frac{1}{12 \pi^2 \ell}\sum_{i=1}^3 \int_{M^4}\left( |\nabla^2 y^i|^2_g+ \Ric (\nabla y^i, \nabla y^i) \right).
\end{equation}
\end{lemma}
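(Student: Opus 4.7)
The strategy follows Bartnik's AE mass formula, applied to the $U(1)$-invariant harmonic coordinate functions $y^i$ from Lemma \ref{exist-harmoinic}. Apply the Bochner identity together with $\Delta_g y^i = 0$ to find
\begin{equation*}
\tfrac{1}{2}\Delta_g |\nabla y^i|^2_g = |\nabla^2 y^i|^2_g + \Ric(\nabla y^i, \nabla y^i).
\end{equation*}
Sum over $i = 1, 2, 3$, integrate over the compact exhausting family $\Omega_r = (M^4 \setminus \E) \cup \{p \in \E : r(p) \le r\}$, and apply the divergence theorem to obtain
\begin{equation*}
\sum_{i=1}^{3}\int_{\Omega_r}\left(|\nabla^2 y^i|^2_g + \Ric(\nabla y^i, \nabla y^i)\right) \mathrm{dV}_g = \tfrac{1}{2}\sum_{i=1}^{3}\int_{\mathcal{S}^3_r}\nabla_\nu |\nabla y^i|^2_g \, \mathrm{dA}_g.
\end{equation*}

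To justify passage to $r \to \infty$ of the interior integrals, note that each integrand is $U(1)$-invariant (as the $y^i$ are), so integration over $\E$ reduces via the fibration $\pi: \E \to \bar\E$ (Proposition \ref{AEend-decay}) to integration over the 3-dimensional AE quotient, with fiber-length factor $\sim 2\pi\ell$. The decay $|\nabla^2 y^i|_g = O(r^{-1-q'})$, obtained from $y^i - x^i \in C^{2,\alpha}_{q'-1}(\E)$, gives $|\nabla^2 y^i|^2_g = O(r^{-2-2q'})$, integrable over the 3-dimensional quotient since $2+2q' > 3$. Likewise, the hypothesis $|\Ric|_{g_0} = O(r^{-3-\epsilon})$ combined with bounded $|\nabla y^i|_g$ yields $\Ric(\nabla y^i, \nabla y^i) = O(r^{-3-\epsilon})$, also integrable over the 3-dimensional quotient since $3 + \epsilon > 3$.

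The heart of the proof is the identification of the boundary integral with a multiple of the mass. Using the $g_0$-orthonormal frame $\{X_a\}_{a=1}^4$ of \eqref{def-frame}, set $h_{ab} := g(X_a, X_b) - \delta_{ab} = O(r^{-q})$ and $u^i := y^i - x^i$. The $U(1)$-invariance yields $X_4 y^i = 0$ and, since $[T, X_a] = 0$ for the model, $X_4 h_{ab} = 0$. A direct expansion gives
\begin{equation*}
\sum_{i=1}^{3}|\nabla y^i|^2_g = 3 - \sum_{i=1}^{3} h_{ii} + 2\sum_{i=1}^{3}\partial_{x^i}u^i + O(r^{-q-q'}) + O(r^{-2q'}),
\end{equation*}
so that the normal derivative contains the leading pieces $-\partial_\nu\sum_{i=1}^3 h_{ii} + 2\partial_\nu \sum_{i=1}^3\partial_{x^i}u^i$. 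Writing $\Delta_g y^i = 0$ in the ONF yields the compatibility relation $\sum_{a=1}^{3}\partial_{x^a}^2 u^i = \sum_{a=1}^{4}\tilde\Gamma^i_{aa} + \text{l.o.t.}$, which via the ONF Koszul formula expresses this sum in terms of divergences of $h_{ab}$ and the structure constants of the frame (the latter encoding the connection $A_i$). Combining with $X_4 h_{ab} = 0$ — so that the $a = 4$ contribution to the ALF mass flux $\sum_a(h_{ab,a}-h_{aa,b})\nu^b$ reduces to $-\partial_\nu h_{44}$ — these manipulations yield
\begin{equation*}
\tfrac{1}{2}\lim_{r\to\infty}\sum_{i=1}^{3}\int_{\mathcal{S}^3_r}\nabla_\nu|\nabla y^i|^2_g \, \mathrm{dA}_g = \tfrac{3}{2}\cdot 2\pi\ell\omega_2 \cdot m = 12\pi^2\ell m,
\end{equation*}
equivalent to the desired formula.

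The hard part will be the asymptotic identification in the previous paragraph. It requires careful bookkeeping of the $O(r^{-1})$ connection 1-form $A_i$ (which enters the ONF structure constants and the Christoffels $\tilde\Gamma^i_{aa}$), the perturbation $h_{ab}$, and crucial use of the harmonicity equation to rewrite the nontrivial term $\partial_\nu \sum_i \partial_{x^i}u^i$ as divergence-of-$h$ pieces that assemble into the ALF mass integrand. The mismatch between the 3 harmonic functions and the 4-dimensional ambient geometry produces the factor $3$ in the identification (versus Bartnik's factor $2$ in the AE 3D setting). The restriction $q \in (\tfrac{1}{2}, 1]$ together with the Ricci decay hypothesis \eqref{Ricci-decay} ensures all error terms decay fast enough to vanish in the limit.
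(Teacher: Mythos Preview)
Your overall strategy---Bochner's identity for the harmonic $y^i$, integrate by parts, identify the boundary flux with a multiple of the mass---matches the paper's. The divergence-theorem step and the integrability argument for the interior integrand are fine.

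The paper's route to the boundary identification is, however, different from your direct ONF expansion and is worth noting because it sidesteps the difficulty you flag as ``the hard part.'' Rather than expanding $\sum_i|\nabla y^i|^2$ in the model frame $\{X_a\}$ and chasing cancellations, the paper observes that $g^{ii}:=|\nabla y^i|^2$ itself satisfies $\Delta_g g^{ii}\in C^{0,\alpha}_{2+2q_1}(\E)$ (by Bochner together with the Ricci decay hypothesis), and then invokes the asymptotic expansion Lemma~\ref{Schauder1} to upgrade the decay to $g^{ij}-(\delta_{ij}-\tfrac{c_{ij}}{r})\in C^{2,\alpha}_{1+q_2}$ for constants $c_{ij}$. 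After a rotation one may take $c_{ij}=c_i\delta_{ij}$. This single step makes the boundary integral $\tfrac12\int_{\mathcal{S}^3_r}\nu(g^{ii})$ converge cleanly to a constant multiple of $c_i$, with no cancellation needed. The mass is then computed \emph{in the harmonic coordinates} $(y,t)$: with the improved decay \eqref{metric-under-harmonic} in hand, the flux $(g_{ab,a}-g_{aa,b})\nu^b$ is expanded directly, and the harmonic equation $\Delta_g y^i=0$ in these coordinates supplies the relation that turns the $g_{44}$-contribution into the remaining $c_i$-terms, yielding $m=\tfrac13\sum_i c_i$.

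By contrast, your expansion in the model ONF produces the pieces $-\partial_\nu\sum_i h_{ii}$ and $2\partial_\nu\sum_i\partial_{x^i}u^i$, each of which, integrated over $\mathcal{S}^3_r$, is only $O(r^{1-q})$ or $O(r^{1-q'})$ and hence has no limit when $q\le 1$. You are right that their combination must converge (since the Bochner flux does), but demonstrating that the limit equals the mass integrand requires exactly the delicate bookkeeping you postpone---including the contribution of the connection coefficients $A_i$ through the frame structure constants. The paper's decay-upgrade via Lemma~\ref{Schauder1} replaces all of that by a single elliptic asymptotics step, and the choice to compute the mass in the harmonic $(y,t)$ chart (legitimate by Proposition~\ref{mass-well-def}) makes the harmonicity relation immediately usable rather than buried in Christoffel symbols.
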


\begin{proof} 
Observe that $(y,t)$ form local coordinates on $\E$, and with the aid of \eqref{098} and \eqref{0987} the metric may be expressed as
\begin{equation}
g=(\delta_{ij}+O_2(r^{-q'}))dy^i dy^j +\left(\ell(1+O_2(r^{-q'}))dt +B_i dy^i \right)^2,\quad\quad B_i =O_2(r^{-q'}),
\end{equation}
%Consider the $U(1)$ harmonic function $y^i$ constructed in Lemma \ref{exist-harmoinic}, with the cut-off condition \eqref{harm-est-cord}. By pulling the boundary of %the end $\E$, we may assume that $(y^1, y^2, y^3, t)$ is local coordinate system on $\E$.  
with $r=|y|$ and $q'\in(\frac{1}{2},q)$. Define functions $g_{ij}=g(\partial_{y^i},\partial_{y^j})$ and $g^{ij}=g(dy^i, dy^j)$, then by Bochner's formula
%Define the function $g^{i, j}$ on $\E$ by  
%\begin{equation}
%g^{ij}=g(dy^i, dy^j)
%\end{equation} and note that using the quotient form of $g$ (see \eqref{quo-form} implies that 
%\begin{equation}
%g=g_{ij}dy^idy^j+\frac{\eta^2}{|\eta|^2}=(\delta_{i j}+O(r^{-q'}))dy^idy^j+\frac{\eta^2}{|\eta|}
%\end{equation}
%Furthermore,we observe that using Bochner's formula yields that 
\begin{equation}\label{equ2}
\frac{1}{2} \Delta_g g^{ii}=\frac{1}{2}\Delta_g|\nabla y^i|_g^2=|\nabla^2 y^i|^2 + \Ric( \nabla y^i, \nabla y^i) \in  C^{0,\alpha}_{2+2q_1}(\E)
%\text{ and }\frac{1}{2} \Delta g^{ij}=g(\nabla dy_i, \nabla dy_j)+Ric(dy^i, dy^j)
\end{equation}
where $q_1=\min\{q',(1+\epsilon)/2\}$, $\alpha\in(0,1)$, and we have used \eqref{harm-est-cord} as well as \eqref{Ricci-decay}. 
%produces  that $\Delta g^{i, j}\in C^{0,\alpha}_{2+2q_1}$ for $q_1=\min\{q',(1+\epsilon)/2\}$. 
Therefore, Lemma \ref{Schauder1} implies that $g^{ij}-(\delta_{ij}-\frac{c_{ij}}{r}) \in C^{2, \alpha}_{1+q_2}(\E)$ for some set of constants $c_{ij}=c_{ji}$,
where $q_2=\min\{q, 2q_1-1\}>0$. %and  $c_{i j}$ is a constant. Therefore, the metric $g$ can be expressed as 
%\begin{equation}
%     g=(\delta_{i,j}+\frac{c_{i,j}}{r}){d y^i}{d y^j}+\frac{\eta^2}{|\eta|^2}+ C^{2, \alpha}_{1+q_2}
%\end{equation}
By performing a rotation of the $y$-coordinates if necessary, it may be assumed without loss of generality that $c_{ij}=\delta_{ij}c_i$ for some constants $c_i$, $i=1,2,3$. The components of the metric now satisfy the following asymptotics
\begin{equation}\label{metric-under-harmonic}
g_{ij}=\left(1+\frac{c_i}{r}\right)\delta_{ij}+O_2(r^{-1-q_2}),\quad 
 g_{4i}=g(\ell^{-1}\partial_t, \partial_{y^i})=O_2(r^{-q'}),\quad g_{44}%=g(\ell^{-1}\partial_t,\ell^{-1}\partial_t)
 =1+O_2(r^{-q'}).
\end{equation} 
Moreover, the unit outer normal to the surface $\mathcal{S}^3_{r}$ admits the asymptotics
$\nu=\tfrac{y^i}{r}\partial_{y^i}+O(r^{-q'})$. It follows that the mass flux density takes the form  
\begin{equation}
\sum_{a,b=1}^{4}(g_{ab,a}-g_{aa,b})\nu^b
=\sum_{i,j=1}^{3}({g}_{ij,i}-{g}_{ii,j}){\nu}^j +\sum_{b=1}^{4}(g_{4b,4}-g_{44,b})\nu^b +O(r^{-2-q_2}).
\end{equation}
%where $\nu=\partial_r$. Furthermore, we observe that 

Computations similar to those in the proof of Corollary \ref{quo-sep} produce
\begin{equation}
\sum^3_{i, j=1} (g_{ij,i}-g_{ii,j})\nu^j =\sum_{i=1}^3c_ir^{-2}-\sum^3_{i=1}
c_i (\nu^i)^2r^{-2}+O(r^{-2-q_2}),
\end{equation}
and
\begin{equation}\label{f0rhq09hryg}
\sum_{b=1}^{4}(g_{4b,4}-g_{44,b})\nu^b=\ell^{-2}\partial_t g(\partial_t ,\nu) -\sum^3_{i=1}\ell^{-2}\nu^i\partial_{y^i} g(\partial_t,\partial_t)+ O(r^{-2-q_2}). %=2\sum_{i=1}^3c_i(\nu^i)^2r^{-2}-\sum^3_{i=1}c_ir^{-2}+O(r^{-2-q_2})
\end{equation}
Furthermore, using direct calculation together with \eqref{metric-under-harmonic}, as well as $\det g=g(\partial_t ,\partial_t)\det g_{ij}$, and the fact that $y^i$ is harmonic yields
\begin{align}
\begin{split}
0=\Delta_g y^i &= \frac{1}{\sqrt{\det g}}\partial_{a}\left(g^{ai}\sqrt{\det g}\right)\\
&=\frac{1}{2}\ell^{-2}\partial_{y^i}g(\partial_t ,\partial_t)+\ell^{-1}\partial_t g^{4i}
+c_i\nu^ir^{-2}-\frac{\nu^i}{2}\sum^3_{j=1}c_jr^{-2}+O(r^{-2-q_2}).
\end{split}
\end{align}
Solving for the first term on the right-hand side and inserting this into \eqref{f0rhq09hryg} gives
\begin{equation}
\sum_{b=1}^{4}(g_{4b,4}-g_{44,b})\nu^b=\ell^{-2}\partial_t g(\partial_t,\nu) +2\partial_t \left(\sum_{i=1}^3 \nu^i g^{4i}\right)
+2\sum_{i=1}^3c_i(\nu^i)^2r^{-2}-\sum^3_{i=1}c_ir^{-2}+O(r^{-2-q_2}).
\end{equation}
Noting that the terms involving $\partial_t$ integrate to zero in the definition of mass \eqref{massdef}, it follows that
\begin{equation}
m=\frac{1}{3}\sum_{i=1}^3 c_i .
\end{equation}
On the other hand, integrating the Bochner formula \eqref{equ2} by parts and employing \eqref{metric-under-harmonic} leads to
%In the last equation, we have used that \[\Delta y^i=c_i\nu^ir^{-2}-\frac{1}{2}\sum^3_{j=1}c_j\nu^{i}r^{-2}+\frac{1}{2}\ell^{-2}\partial_{y^i}g(T, T)+O(r^{-2-%q_2})=0\]Integrating the mass density along the flux surfaces and using \eqref{equ2}, we find that 
\begin{equation}
\frac{1}{3}\sum_{i=1}^3 c_i= \frac{2}{3}\sum_{i=1}^3\lim_{r\rightarrow\infty} \frac{1}{2\pi \ell\omega_2}\int_{\mathcal{S}^3_r}  \frac{1}{2}\nu(g^{ii})=\frac{2}{3}\sum_{i=1}^3 \frac{1}{2\pi \ell \omega_2 }\int_{M^4} \left(|\nabla^2 y^i|_g^2+ \Ric(\nabla y^i, \nabla y^i) \right).
\end{equation}
The desired result is obtained by combining the last two equations.
\end{proof}

\subsection{Proof of rigidity in Theorem \ref{B}}
Let $(M^4, g,\E)$ be a complete ALF manifold with nonnegative scalar curvature, zero mass, and having an almost free $U(1)$ action with respect to end $\mathcal{E}$. It will be assumed that there is a single end for convenience. The case of multiple ends may be treated similarly, by extending Lemma \ref{exist-harmoinic} so that the harmonic functions asymoptotically vanish in the additional ends.

We will follow the strategy of \cite{schoen-yau1979}*{pgs. 72-74} used in the AE setting. In particular, the scalar curvature must vanish $R_g =0$, otherwise Theorem \ref{conformal} and Remark \ref{aofnoianfoianbgoianoignoiagn} may be used to conformally transform $(M^4 ,g)$ to zero scalar curvature and negative mass, which violates the inequality of Theorem \ref{B} established in Section \ref{sec5}. Moreover, the Ricci curvature must also vanish, otherwise an infinitesimal Ricci flow combined with a conformal change (Remark \ref{aofnoianfoianbgoianoignoiagn}) can be used to find nearby metrics with zero scalar curvature and negative mass, again violating the positive mass inequality. At this point, we may apply Lemma \ref{harmonic} to find functions $y^i$, $i=1,2,3$ with parallel differentials $dy^i$.
%\begin{proof} (Proof of Rigidity part of Theorem \ref{B}) Using the same argument in \cite{schoen-yau1979} produces that $ %\text{Ric}_g=0$
%According to  \ref{harmonic}, we find that  
%\[\sum_{i=1}^3 \int_{M}|\nabla dy_i|^2=0\]
%This implies, $d y_i$ is parallel for each $i$. 
Consequently, the 1-form $\star_g(dy^1 \wedge d y^2 \wedge dy^3)$ is also parallel, yielding a globally parallel frame for the cotangent bundle $T^* M^4$. 
%The spanned space by four parallel forms $\{\alpha_i\}^4_{i=1}$, is equal to $\bigwedge^1TM$. 
It follows that the metric $g$ is flat. Arguments at the end of the proof of \cite{minerbe}*{Theorem 3} now show that $(M^4, g)$ is isometric to $\mathbb{R}^3 \times S^1$. 
%\end{proof}
\begin{comment}
\begin{corollary}
    Let $(M^4,g, \E)$ be ALF manifold with  an almost free $U(1)$-action with respect to $\E$. If its Ricci curvature is flat, one has that 
    \begin{equation}
        m\geq \frac{1}{2}\deg(\E)\ell
    \end{equation}
\end{corollary}
\begin{proof} Observe that $(y, t)$ form local coordinate, and with aid of the cut-off decay \eqref{metric-under-harmonic}
\begin{equation}\begin{split}
    m=&\frac{1}{8\pi^2\ell}\int_{\mathcal{S}_r}\ell^{-2}\partial_{y_i}g(T, T)v^i=\frac{1}{8\pi^2}\int_M \Delta |T|^2\\
    =&\frac{1}{8\pi^2\ell^3}\int_{M^4}|\nabla T|^2=\frac{1}{8\pi^2\ell^3}\int_{M^4}|d\eta|^2\\
    \geq& \frac{1}{8\pi^2\ell^3}\int_{M^4}d\eta\wedge d\eta\\\geq& \frac{1}{8\pi^2\ell^3}\int_{\mathcal{S}_r}\eta\wedge d\eta
    \\=&\frac{1}{2}\deg(\E)\ell
    \end{split}
\end{equation}
\end{proof}
\end{comment}

%\section{Degree of bundle and the proof of Theorem \ref{C}} 
\section{Mass Lower Bounds in Terms of Degree}
\label{sec7} \setcounter{equation}{0}
\setcounter{section}{7}

The purpose of this section is to establish Theorem \ref{C}. Let $(M^4, g)$ be a complete ALF manifold having an almost free $U(1)$ action. We will begin by studying the dual 1-form $\eta$ of the $U(1)$-generator $T$, and how it enters into a lower bound for the quotient space scalar curvature. 
%and its relationship with the tensors $\mathcal{A}$ and $\mathcal{T}$ in \eqref{ofinaoifnoiapoip}, and then use this relationship %to establish the inequality in Theorem \ref{C}. 
Recall that the principal orbit theorem implies that the quotient map 
\begin{equation}
\pi :M^4 \setminus\{p_1 ,\dots,p_k \}\rightarrow \bar{M}^3 \setminus \{\bar{p}_1 ,\dots,\bar{p}_k\}
\end{equation}
yields a principal $U(1)$ bundle and in particular gives a Riemannian submersion, where $\{p_1,\dots,p_k\}$ are the singular points of the $U(1)$ action, $\bar{p}_i=\pi(p_i)$, and $\bar{M}^3 =M^4 /U(1)$. In this context, Proposition \ref{mean-scalar-formula} yields the following expression for the scalar curvature of the quotient
\begin{equation}\label{fapijfpapp11}
R_{\bar{g}}=\bar{R}_g +\bar{\mathcal{A}}^2 +2|\bar{\mathcal{N}}|^2_{\bar{g}} -2\mathrm{div}_{\bar{g}}\bar{\mathcal{N}}.
\end{equation}
This expression yields an advantageous lower bound for the scalar curvature in terms of $\eta$.

\begin{proposition}\label{form-express}
Under the setting and notation of Proposition \ref{mean-scalar-formula}, the following identities 
%Let $\mathcal{A}$, $\mathcal{T}$ and $\mathcal{N}$  be defined in Section 5.1. One has that  for  $x\in M\setminus \{p_1, %\cdots,p_k\}$, 
\begin{equation}\label{tensor-eta}
\bar{\mathcal{A}}^2 =\frac{|d\eta|^2}{2|\eta|^2}-\frac{2|\bar{\nabla}|\eta||_{\bar{g}}^2}{|\eta|^2}, \quad \quad \quad
\bar{\mathcal{N}}=-\bar{\nabla}\log|\eta|,
\end{equation}
are valid on $\bar{M}^3 \setminus \{\bar{p}_1 ,\dots,\bar{p}_k\}$. Furthermore, if the total space scalar curvature is nonnegative $R_g \geq 0$ then
\begin{equation}
R_{\bar{g}}\geq \frac{|d\eta|^2}{2|\eta|^2}+2\Delta_{\bar{g}}\log|\eta| .
\end{equation}
%\begin{equation}\label{all-tensor}
   % |\mathcal{A}^2|+|\mathcal{T}|^2+|\mathcal{N}|^2=\frac{|d\eta|^2}{2|\eta|^2}
%\end{equation}
%\begin{equation}\label{scalar-eta}
%  \bar{R}\geq R+\frac{|d\eta|^2}{2|\eta|^2}+2\text{div}_{\bar{g}}(\frac{d|\eta|}{|\eta|})  
%\end{equation}
\end{proposition}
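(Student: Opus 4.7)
The plan is to reduce both identities in \eqref{tensor-eta} to computations in the adapted orthonormal frame $\{X_1, X_2, X_3, U_1\}$ from Section \ref{sec5}, where $U_1 = T/|T|_g$ and $X_i$ are horizontal lifts of a local orthonormal frame on $\bar{M}^3$, and then combine them with the O'Neill-type formula \eqref{fapijfpapp} (equivalently \eqref{fapijfpapp11}) to extract the scalar curvature inequality. First, the identity $\bar{\mathcal{N}} = -\bar{\nabla}\log|\eta|$ is immediate: since $\eta = g(T,\cdot)$, we have $|\eta|_g = |T|_g$, and the formula is exactly \eqref{mean-local-T}.

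For the $\mathcal{A}$-identity I would expand both sides in the adapted frame. Using $\eta(X_i) = 0$ and Cartan's formula, one gets $d\eta(X_i, X_j) = -\eta([X_i, X_j]) = -g(T, \mathcal{V}[X_i, X_j])$ and, using $[T, X_i] = 0$ from \eqref{fapjpajpfonjapogjpoqajg}, $d\eta(X_i, T) = X_i(|T|^2_g)$. Squaring and summing over the orthonormal coframe then gives
\begin{equation}
|d\eta|_g^2 = \sum_{i<j} g(T, \mathcal{V}[X_i, X_j])^2 + \frac{1}{|T|_g^2}\sum_i \bigl(X_i(|T|_g^2)\bigr)^2 = 2|T|_g^2|\mathcal{A}|^2 + 4|\bar{\nabla}|\eta||_{\bar{g}}^2,
\end{equation}
after invoking \eqref{foanoifnpqaoinfpoiqnif} for the first term and noting that for a horizontal vector $X_i$, the quantity $X_i(|T|_g)$ descends to $\bar{X}_i(|\eta|)$ so that $\sum_i X_i(|T|_g)^2 = |\bar{\nabla}|\eta||_{\bar{g}}^2$. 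Solving for $|\mathcal{A}|^2 = \bar{\mathcal{A}}^2$ yields the first identity of \eqref{tensor-eta}.

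For the scalar curvature inequality, I would substitute the two identities into \eqref{fapijfpapp}, drop $\bar{R}_g \geq 0$, and simplify. Since $|\bar{\mathcal{N}}|_{\bar{g}}^2 = |\bar{\nabla}|\eta||_{\bar{g}}^2/|\eta|^2$ and $\mathrm{div}_{\bar{g}}\bar{\mathcal{N}} = -\Delta_{\bar{g}}\log|\eta|$, the $|\bar{\nabla}|\eta||^2$ contributions from $\bar{\mathcal{A}}^2$ and $2|\bar{\mathcal{N}}|_{\bar{g}}^2$ cancel, leaving
\begin{equation}
R_{\bar{g}} \;\geq\; \bar{\mathcal{A}}^2 + 2|\bar{\mathcal{N}}|_{\bar{g}}^2 - 2\,\mathrm{div}_{\bar{g}}\bar{\mathcal{N}} = \frac{|d\eta|^2}{2|\eta|^2} + 2\Delta_{\bar{g}}\log|\eta|,
\end{equation}
as desired. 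The only genuinely delicate step is the bookkeeping in the $|d\eta|^2$ computation, namely keeping track of the horizontal-horizontal versus horizontal-vertical contributions and verifying the coefficients $1/2$ and $2$; everything else is algebraic substitution into \eqref{fapijfpapp}.
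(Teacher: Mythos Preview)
Your proposal is correct and follows essentially the same approach as the paper. The only cosmetic difference is that the paper starts from the expression \eqref{foanoifnpqaoinfpoiqnif} for $|\mathcal{A}|^2$ and subtracts off the horizontal--vertical part of $|d\eta|^2$, whereas you expand $|d\eta|^2$ directly and solve for $|\mathcal{A}|^2$; the two computations are the same identity read in opposite directions, and the final substitution into \eqref{fapijfpapp} is identical.
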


\begin{proof} 
Let $\bar{X}_i$, $i=1,2,3$ be a local orthonormal frame on $\bar{M}^3$, then there exists a unique set of horizontal orthonormal vectors $X_i$, $i=1,2,3$ such that $\pi_{*}(X_i)=\bar{X}_i$. We also set $U_1 =T/|T|_g$. Then with the help of \eqref{foanoifnpqaoinfpoiqnif} it holds that
\begin{equation}\label{fq9rj877733rhh}
\bar{\mathcal{A}}^2 = \frac{1}{4}\sum_{i,j=1}^{3}g(U_1 ,[X_i , X_j])^2
=\frac{1}{4}\sum_{i,j=1}^{3}\frac{|d\eta(X_i ,X_j)|^2}{|\eta|^2}
=\frac{|d\eta|^2}{2|\eta|^2} -\frac{1}{2}\sum_{i=1}^3 \frac{|d\eta(X_i ,U_1)|^2}{|\eta|^2},
\end{equation}
where we have used
\begin{equation}
d\eta(X_i,X_j)=X_i \left(\eta(X_j)\right) -X_j \left(\eta(X_i)\right) -\eta([X_i,X_j])=-\eta([X_i ,X_j]).
\end{equation}
Furthermore, \eqref{fapjpajpfonjapogjpoqajg} implies that
\begin{equation}\label{28hgh39}
d\eta(X_i ,T)=X\left(\eta(T)\right)-T\left(\eta(X_i)\right)-\eta([X_i,T])=X_i(|\eta|^2).
\end{equation}
Combining \eqref{fq9rj877733rhh} and \eqref{28hgh39} produces the first identity of \eqref{tensor-eta}. The second desired identity follows immediately from \eqref{mean-local-T}. Moreover, inserting the formulas of \eqref{tensor-eta} into \eqref{fapijfpapp11}
then gives the scalar curvature lower bound.
\begin{comment}
Let $\bar{X}_i, i=1,2,3$ be a local orthonormal frame on $\bar{M}^3$ and  then there exists a unique set of  orthonormal vectors $X_i, i=1,2,3$ and $X_4$ such that $\pi_*(X_i)=\bar{X}_i$ and $X_4=T/|T|_g$. Observe that for $i=1,2,3$ 
\begin{equation}
    g(T, \mathcal{A}(X_i, X_j))=\frac{1}{2}g(T, [X_i, X_j])=-\frac{1}{2}d\eta(X_i, X_j)
\end{equation} where we have used $d\eta(X, Y)=X\eta(Y)-Y\eta(X)-\eta([X, Y])$. Then, we  compute that 
\begin{equation}
        \bar{\mathcal{A}}^2=\sum_{i, j=1}^3|\mathcal{A}(X_i, X_j)|=\sum^3_{i, j=1}\frac{|d\eta(X_i, X_j)|}{|T|^2_g}=\frac{1}{2}\frac{|d\eta|^2}{|\eta|^2}-2\sum_{i=1}^3 \frac{|d\eta(X_i, X_4)|}{4|T|^2_g}=\frac{1}{2}\frac{|d\eta|^2}{|\eta|^2}-\frac{2|d|\eta||^2}{|\eta|^2}
\end{equation} In last equation, we have used that $ d\eta(T, X_i)=-X_i g(T, T)-g([T,X_i], T)=-X_ig(T, T)=-X_i |\eta|^2$.

From \eqref{mean-local-T}, we find that $\mathcal{N}=\nabla\log |T|_g=d\log |\eta|$, which implies that 
\[\bar{\mathcal{A}}^2+2|\bar{N}|^2_{\bar{g}}={|d\eta|^2}/{2|\eta|^2} \quad \text{ and } \quad \mathrm{div}_{\bar{g}}\bar{\mathcal{N}}=\Delta_{\bar{g}}\log |\eta|\]
Thus, inserting them into \eqref{fapijfpapp}, we conclude that $R_{\bar{g}}=R_g+|d\eta|^2/2|\eta|^2+2\Delta_{\bar{g}}\log |\eta|$
\end{comment}
\end{proof}

\subsection{Proof of Theorem \ref{C}}
Let $(M^4, g,\E)$ be a complete ALF manifold with nonnegative scalar curvature, mass $m$, and having an almost free $U(1)$ action with respect to end $\mathcal{E}$. It will be assumed that there is a single end for convenience; the case of multiple ends may be treated similarly. Given $\varepsilon>0$, Theorem \ref{ALF-density} provides an approximation manifold $(M^4, g' ,\E)$ with nonnegative scalar curvature, a mass satisfying $|m -m'|<\varepsilon$, and having an almost free $U(1)$ action with respect to $\E$ in which the Killing field generator agrees with the vector field $V$ associated with the model metric $g_0$. In particular,
there exists a function $f\in C^{\infty}(M^4)$ and a compact set $\mathcal{K}\subset M^4$ such that 
\begin{equation}\label{conformal-def11234}
g'=f^2 g_0 \quad\text{ on }\E \setminus\mathcal{K},\quad\quad\quad f-\left(1+\frac{m'}{6{r}}\right)\in C^{2, \alpha}_{1+q'}(\E),
\end{equation}
where $q'=\min\{1,q\}$, $\alpha\in(0,1)$. 
%By Proposition \ref{AEend-decay} the quotient $(\bar{M}^3,\bar{g},\bar{\E})$ of the approximation manifold is AE, and %\eqref{fpainpfinapinfpian} shows that its scalar curvature is integrable, $R_{\bar{g}}\in L^{1}(\bar{M}^3)$.
%Given $\epsilon'>0$, Theorem \ref{ALF-density} provides an approximated manifold $(M^4,g', \E)$ with non-negative scalar %curvature, a mass satisfying $|m-m'|<\epsilon'$ and having an almost free $U(1)$ with respect to $\E$ in which the killing field %generator agrees with the vector field $V$ associated with the model metric $g_0$. In particular, there is a function $f\in %C^\infty(M)$ and a compact set $\mathcal{K}\subset M^4$ such that 
%\begin{equation}
%    g'=f^2g_0 \quad \text{ on } \E\setminus \mathcal{K}, \quad f-(1+\frac{m'}{6r})\in C^{2, \alpha}_{1+q'}(\E),
%\end{equation}where $q'=\min\{1, q\}$. 
Moreover, with the help of Remark \ref{singular-beh} there exists a neighborhood $\bar{U}\subset \bar{M}^3$ of the singular set such that 
\begin{equation}\label{fonaf09hq0-9hg}
|\eta|=\bar{r}+O(\bar{r}^2) \quad  \text{ on } \bar{U},\quad\quad\quad\quad  |\eta|-\ell \left(1+\frac{m'}{6r}\right)\in C^{2,\alpha}_{1+q'}(\E),
\end{equation}
where $\bar{r}$ denotes the $\bar{g}$-distance to the singular set; we are slightly abusing notation in the first equation by continuing to denote by $|\eta|$ the descent of this function to the quotient space. Here and in what follows, $\eta$ and $\bar{g}$ are with respect to $g'$.

According to Theorem \ref{factor}, there exists a function $\bar{u}\in W^{1,2}_{loc}(\bar{M}^3)\cap C^\infty(\bar{M}^3 \setminus \{\bar{p}_1,\cdots, \bar{p}_k\})$ which is positive away from the singular points $\bar{p}_i$ and satisfies the following equation with asymptotics
\begin{equation}\label{factordecaysingular}
\Delta_{\bar{g}}\bar{u}-\frac{1}{8}R_{\bar{g}}\bar{u}=0 \quad \text{ on }  \bar{M}^3, \quad\quad\quad \bar{u}=O(\bar{r}^{1/2})\quad \text{ on } \bar{U}, \quad\quad
 \quad  \bar{u}-\left(1+\frac{\bar{\mathcal{C}}}{r}\right)\in C^{2,\alpha}_{1+q'}(\bar{\E}),
\end{equation}
for some constant $\bar{\mathcal{C}}\leq 0$. Then Remark \ref{angle} and Proposition \ref{form-express} may be combined to yield the mass lower bound
\begin{align}\label{theorem-c-step1}
\begin{split}
m'&\geq 
\frac{3}{\pi}\int_{\bar{M}^3}\left(|\bar{\nabla}\bar{u}|_{\bar{g}}^2 +\frac{1}{8}R_{\bar{g}}\bar{u}^2 \right)\mathrm{dV}_{\bar{g}}\\
&\geq \frac{3}{8\pi}\int_{\bar{M}^3}\left( 8|\bar{\nabla}\bar{u}|^2_{\bar{g}}+\frac{|d\eta|^2 \bar{u}^2}{2|\eta|^2}+2\bar{u}^2 \Delta_{\bar{g}}\log |\eta| \right)\mathrm{dV}_{\bar{g}}\\
&=\frac{3}{8\pi}\int_{\bar{M}^3} \left(8|\bar{\nabla}\bar{u}|^2_{\bar{g}}+\frac{|d\eta|^2 u^2}{2|\eta|^2}-4\bar{u}\bar{g}(\bar{\nabla}\bar{u},\bar{\nabla}\log |\eta|) \right)\mathrm{dV}_{\bar{g}} -\frac{m'}{2},
%&\text{ }\text{ }\text{ }\text{ }\text{ }+\lim_{\bar{r}\rightarrow0 \atop r\rightarrow \infty}\frac{3}{4\pi}\int_{\partial %\bar{M}^3_{\bar{r}, r}}\bar{u}^2\bar{\nu}(\log |\eta|_g )\mathrm{dA}_{\bar{g}}\\
\end{split}
\end{align}
where in the last step \eqref{fonaf09hq0-9hg} was used to compute the boundary terms both at the singularities and at infinity arising from an integration by parts.
%where $\bar{\nu}$ is the unit outer normal to the boundary of $\bar{M}^3_{\bar{r},r}$, the region in the quotient space outisde %the geodesic balls of radius $\bar{r}$ centered at singular points and within the coordinate sphere of radius $r$. In the last %step, we have used to find that the boundary integrals may tend to zero by taking $\epsilon\rightarrow 0$ and $r\rightarrow 0$. 
Next observe that \eqref{28hgh39} implies
\begin{equation}
|d\eta|^2 \geq \sum_{i=1}^3 |d\eta(X_i, U_1)|^2 =4|\nabla|\eta| |_{g'}^2,
\end{equation}
and therefore %with the aid of \eqref{nabla-quo} and \eqref{integ} which 
by transforming the mass lower bound into an integral over the total space, we obtain
\begin{equation}\label{findualterm1}
m'\geq\frac{1}{8\pi^2}\int_{M^4} \left(\frac{8|\nabla u|_{g'}^2}{|\eta|}+\frac{|d\eta|^2 u^2}{4|\eta|^3}+\frac{|\nabla|\eta||_{g'}^2 u^2}{|\eta|^3}-\frac{4ug'(\nabla |\eta|, \nabla u)}{|\eta|^2} \right)\mathrm{dV}_{\!\!g'},
\end{equation} 
where $u=\pi^*\bar{u}$.
%In the last step, we have used that 
%$|d\eta|^2\geq \sum_{i=1}^3|d\eta(X_i, X_4)|^2=\sum^3_{i=1}|T|^{-2}|d\eta(X_3, T)|^2=\sum^3_{i=1}|T|^{-2}|X_ig(T, %T)|^2=4|\nabla|\eta||^2$.

In order to establish the relation between mass and degree, we will make use of a particular feature of dimension 4 when treating the term involving $d\eta$, namely $|d\eta|^2 \mathrm{dV}_{\!\!g'}\geq d\eta \wedge d\eta$. By combining this with Stoke's theorem, together with \eqref{local-killing} and \eqref{factordecaysingular} for the decay near singular points,
and the fact that the Cauchy-Schwarz inequality holds for wedge products involving 1-forms \cite{Fed}*{Section 1.7.5}, we find that
\begin{align}\label{detadegree}
\begin{split}
\int_{M^4}\!\!\frac{|d\eta|^2 u^2}{|\eta|^3}\mathrm{dV}_{\!\!g'} &\geq
\frac{1}{2}\int_{M^4} \frac{u^2}{|\eta|^{3}} d\eta\wedge d \eta +\frac{1}{2}\int_{M^4}\frac{|d\eta|^2 u^2}{|\eta|^3}\mathrm{dV}_{\!g'}  \\
&=\lim_{r\rightarrow\infty}\frac{1}{2}\int_{\mathcal{S}_r^3} \frac{u^2}{|\eta|^{3}}\eta\wedge d\eta
-\frac{1}{2}\int_{M^4}d(u^2|\eta|^{-3})\wedge \eta\wedge d\eta +\frac{1}{2}\int_{M^4}\frac{|d\eta|^2 u^2}{|\eta|^3}\mathrm{dV}_{\!\!g'}\\
&\geq \lim_{r\rightarrow\infty}\frac{\ell}{2}\int_{\mathcal{S}_r^3}\frac{\eta}{|\eta|^2}\wedge d\left(\!\frac{\eta}{|\eta|^2}\!\right)\!+\!\int_{M^4}\!\!\left( \frac{|d\eta|^2u^2}{2|\eta|^3}-|\eta||\nabla(u|\eta|^{-\frac{3}{2}})|_{g'} \cdot \frac{|d\eta||u|}{|\eta|^{\frac{3}{2}}}\right)\!\mathrm{dV}_{\!\!g'} \\
%+\frac{1}{2}\int_{M^4}\frac{|d\eta|^2u^2}{|\eta|^3}\mathrm{dV}_g\\
&\geq 2\pi^2 \ell\deg(\E)-\frac{1}{2}\int_{M^4} |\eta|^2|\nabla (u|\eta|^{-\frac{3}{2}})|_{g'}^2 \mathrm{dV}_{\!\!g'},
\end{split}
\end{align}
where in the last step we have used Proposition \ref{degree-eta} to identify the bundle degree. Note that the degree is an invariant of the background bundle structure of the end, and thus does not depend on whether it is computed with respect to $g$ or $g'$. If $\deg(\E)\geq 0$, then inserting \eqref{detadegree} into \eqref{findualterm1} produces
\begin{align}\label{pahfihqit933}
\begin{split}
m' %&\geq\frac{\ell}{16}\deg(\E)+\frac{3}{16\pi^2}\int_{{M}^4} \frac{8|{\nabla}{u}|^2}{|\eta|}+\frac{|\nabla|\eta||^2u^2}{|\eta|^3}-\frac{4{u}{g}({\nabla} |\eta|, {\nabla}\bar{u})}{|\eta|^2}-\frac{1}{8}|\eta|^2|\nabla (u|\eta|^{-3/2})|^2 d\mathrm{V}_{{g}}\\
&\geq \frac{\ell}{16}|\deg(\E)|+\frac{1}{8\pi^2}\int_{{M}^4} \left(\frac{63}{8}\frac{|{\nabla}{u}|_{g'}^2}{|\eta|}+\frac{23}{32}\frac{|\nabla|\eta||_{g'}^2u^2}{|\eta|^3}-\frac{29}{8}\frac{ug'({\nabla} |\eta|, {\nabla}u)}{|\eta|^2} \right)\mathrm{dV}_{\!\!g'}\\
&\geq \frac{\ell}{16}|\deg(\E)|+\frac{1}{8\pi^2}\int_{{M}^4} \frac{|{\nabla}{u}|_{g'}^2}{|\eta|}\mathrm{dV}_{\!\!g'} .
%\geq\frac{3}{32}\deg(\E)\ell 
\end{split}
\end{align}
If $\deg(\E)<0$ then in \eqref{detadegree} we may use the inequality $|d\eta|^2 \mathrm{dV}_{\!\!g'}\geq -d\eta \wedge d\eta$, and follow the same manipulations to achieve \eqref{pahfihqit933}. Since $\varepsilon$ was arbitrary, this implies the desired lower bound \eqref{foainfinapfinpqinp} for the original mass $m$.

\begin{remark}\label{notsharp}
The inequality \eqref{foainfinapfinpqinp} is never saturated, and is thus not sharp. To see this, assume to the contrary that equality is achieved. As $\varepsilon \rightarrow 0$, the methods of Theorem \ref{factor} can be used to find a subsequence of functions $u$ converging strongly on compact subsets of $M^4$ to some $u_0$ which vanishes at singular points, and converges to 1 a infinity. Then applying Fatou's lemma to \eqref{pahfihqit933} shows that $|\nabla u_0|_g =0$, yielding a contradiction.
%Here we show that the inequality is not sharp by showing that no ALF manifold can achieve equality. To see this observe that if %equality held, then $|\nabla u|=0$ and hence $u$ is a non-zero constant function (see Inequality \eqref{angle-mass}). However, %Theorem \ref{factor}  suggests that $|u|\leq C_4r^{1/2}$ near $p$, which leads to a contraindication. 
We note that the multi-Taub NUT instanton satisfies $m=\frac{1}{2}\deg(\E)$ with $\ell=1$, and so one may speculate that the best constant could be $1/2$.
\end{remark}

%\textit{red}{
%\begin{remark}
%Note that for multi-Taub NUT satisfies $m=\frac{1}{2}\deg(\E)$ with $\ell=1$ and so we might expect the best constant to be $1/2$.
%\end{remark}}

\begin{comment}
We now use Proposition \ref{angle-ALF} to complete the proof of Theorem \ref{C}. 

\begin{proof} For any $\epsilon>0$, Theorem \ref{ALF-density} allows us to find a complete ALF metric $g_{\epsilon}$ on $(M, \E)$ with the following properties: 
\begin{itemize}
\item $R_{g_\epsilon}\geq 0$; 
\item $|m(M, g_\epsilon, \E)-m(M, g, \E)|\leq \epsilon$;
\item it admits an almost action by $U(1)$; 

\item on $\E$, the infinitesimal vector $T$ is $V$ in the model case; 
\item  $g|_{\E}=f^2g_0$; 
\item $f$ is $U(1)$-invariant and $f-(1+\frac{m}{6r})\in C^{2, \alpha}_q$, where $q>1$. 
\end{itemize}

We apply Proposition \ref{angle-ALF}
to $(M^4, g_{\epsilon},\E)$ to get that $m(M, g_\epsilon, \E)\geq \frac{1}{16}\deg(\E)\ell$. Then, we have that $m(M, g, \E)\geq \frac{1}{16}\deg(\E)\ell-\epsilon$. That is to say, $m(M, g, \E)\geq \frac{1}{16}\deg(\E)\ell$. \end{proof}
\end{comment}

\section{Stable Minimal Hypersurfaces in AF Manifolds}
\label{sec8} \setcounter{equation}{0}
\setcounter{section}{8}

In this section we will construct stable minimal hypersurfaces under the assumptions of Theorem \ref{A}. Consider a complete AF manifold $(M^n, g, \E)$ with distinguished end $\E$. The AF structure diffeomorphism 
\begin{equation}
\Psi: (\mathbb{R}^{n-1}\setminus B_{1})\times S^1 \rightarrow \E
\end{equation}
yields local coordinates $(x,\theta)$ in the asymptotic end. We will denote by $\mathcal{S}_{r,\theta}\subset \E$ the intersection of the $r$ and $\theta$ level set, and will refer to it as a \textit{coordinate sphere}.
In what follows, we will assume that $\mathcal{S}_{r, \theta}$ is homologically trivial in $H_{n-2}(M^n)$, and $3\leq n\leq 7$. According to Federer-Fleming theory \cite{FF}*{Corollary 9.6} (see also \cite{Simon83}*{Lemma 34.1}) and the regularity result \cite{Simon83}*{Theorem 37.7}, for each $r \geq 1$ and $\theta\in S^1$ there exists an integral sum of smooth oriented minimal hypersurfaces $\S_{r, \theta}\subset M^n$ with $\partial\S_{r,\theta}=\mathcal{S}_{r,\theta}$ that minimizes volume among all smooth $(n-1)$-chains $\Sigma$ for which $\partial\Sigma=\mathcal{S}_{r,\theta}$, that is
\begin{equation}
\mathcal{H}^{n-1} (\S_{ r, \theta})=\inf_{\partial \S=\mathcal{S}_{r,\theta}}\mathcal{H}^{n-1}(\S)
\end{equation}
where $\mathcal{H}^{n-1}$ denotes $(n-1)$-dimensional Hausdorff measure.
Note that $\Sigma_{r,\theta}$ has finite volume regardless of the existence of multiple ends in $M^n$, and is smooth since $n\leq 7$. We will study the topological and geometric properties of $\Sigma_{r,\theta}$, and then make use of these to establish the existence of a limiting minimal surface with desirable properties as $r\rightarrow \infty$. 

\begin{comment}
In this section, we assume that $(M^n. g)$ is a complete AF manifold and  some coordinate  sphere    on some end $\mathcal{E}$ vanishes in $H_{n-2}(M)$.  We  split  the coordinates on the AF end $\mathcal{E}$ as $(x, \th)$ and define
\begin{itemize}
\item $\G_{{\alpha}, R}=\{(x, \th)~|~ |x|=R, \th=\alpha\}$
\item  $H^\alpha=\{(x, \th)~|~\th=\alpha\}$ and $H^\a_{r, R}=\{(x, \th)~|~ r\leq |x|\leq R, \th=\a\}\subset \mathcal{E}$
 
 \item $\Pi_{\geq r}=\{(x, \th)~|~|x|\geq r\}$ and $\Pi_{r_1, r_2}=\{(x, \th)~| r_1\leq |x|\leq r_2\}\subset \mathcal{E}$

\item $C_r=\{(x, \th)~|~ |x|=r \}$  is homeomorphic to $\mathbb{S}^{n-2}\times \bb{S}^1$
\item $Z_r=\{(x, \th)~|~ |x|\leq r \}$ and $Z_{p, r}=\{(x, \th)~|~ |x-p|\leq r\}$.\end{itemize}

Note that $\G_{\alpha, r}$ vanishes in $H_{n-2}(M)$ for any $r\geq 1$. 
% $\G_{\alpha, {1}}$ bounds a compact hypersurface $\Sigma_{\alpha, 1}\subset M$. 
\end{comment}

\subsection{Local volume estimates}
In order to study the convergence of the surfaces $\Sigma_{r,\theta}$ it is essential to obtain local volume estimates.
%When considering current converge theory, it is necessary and essential to study the local volume estimate for $\Sigma_{r, %\theta}$. 
We will make use of the following regions within the asymptotic end
\begin{equation}
\Pi_{r_0, r_1}=\Psi(\{(x,\theta)\mid r_0\leq |x|\leq r_1 \}), \quad\quad\quad Z_{p, r}=\Psi(\{(x,\theta)\mid |x-p|\leq r\}),
\end{equation}
where $p\in \bb{R}^3\setminus B_{1}$. The next result is a consequence of the volume-minimizing property.

\begin{comment}
Since $[\Gamma_{\alpha, r}]=0$ in $H_{n-2}(M)$, Federer-Fleming theory \cite[Corollary 9.6]{FF} (see also \cite[Lemma 34.1]{Simon83}) allows us to find a volume-minimizing hypersurface $\S_{\alpha, R }$ with boundary $\G_{\alpha, R}$ 
\begin{equation}
\mathcal{H}^{n-1} (\S_{\a, r})=\inf_{\partial \S=\G_{\a, r}}\mathcal{H}^{n-1}(\S).
\end{equation}
Note that $\Sigma_{\alpha,R}$ has finite volume regardless of the existence of multiple ends in $M$. [Other ends are foliated by positive mean curvature.]
\end{comment}

\begin{proposition}\label{vol1} 
For any $1<r_1<r$ and $\theta\in S^1$ there exists a constant $C$ independent of these values such that 
\begin{equation}
\mathcal{H}^{n-1}(\Sigma_{r, \theta}\cap (M^n\setminus \mathcal{E}))\leq C , \quad\quad\quad
\mathcal{H}^{n-1} (\Sigma_{r, \theta}\cap \Pi_{1, r_1})\leq C r_1^{n-1}.
\end{equation}
\end{proposition}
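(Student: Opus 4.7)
My plan is to exploit the volume-minimizing property of $\Sigma_{r,\theta}$ by constructing an explicit competitor $\tilde{\Sigma}$ with $\partial\tilde{\Sigma}=\mathcal{S}_{r,\theta}$ that agrees with $\Sigma_{r,\theta}$ on the far portion of the end, and then extracting both estimates from $\mathcal{H}^{n-1}(\Sigma_{r,\theta})\leq\mathcal{H}^{n-1}(\tilde{\Sigma})$ after cancellation of the common outer piece.

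The first ingredient will be a uniform compactly-supported filling $F_\theta$ of $\mathcal{S}_{1,\theta}$. By hypothesis $[\mathcal{S}_{1,\theta_0}]=0$ in $H_{n-2}(M^n;\mathbb{Z})$ for any fixed $\theta_0$, so there exists a compactly supported integral $(n-1)$-chain $F_0$ with $\partial F_0=\mathcal{S}_{1,\theta_0}$. For each $\theta\in S^1$ I will use the strip $A_\theta:=\Psi(\{|x|=1\}\times[\theta_0,\theta])$, which satisfies $\partial A_\theta=\mathcal{S}_{1,\theta}-\mathcal{S}_{1,\theta_0}$ and lies in the compact cylinder $\Psi(\{|x|=1\}\times S^1)$ with $\mathcal{H}^{n-1}(A_\theta)$ uniformly bounded. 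Setting $F_\theta:=F_0+A_\theta$ then produces $\partial F_\theta=\mathcal{S}_{1,\theta}$ with $\mathcal{H}^{n-1}(F_\theta)\leq C_F$ independent of $\theta$.

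The second, more delicate, ingredient is a cross-sectional patch $B$ at radius $r_1$ matching the boundary of the truncated minimizer to that of the flat slab $L_\theta:=\Psi((\mathbb{R}^{n-1}\setminus B_1)\times\{\theta\})$. For a.e.~$r_1\in(1,r)$, slicing produces an integral $(n-2)$-cycle $\gamma_{r_1}:=\Sigma_{r,\theta}\cap\Psi(\{|x|=r_1\}\times S^1)$ in the coordinate cylinder, and the difference chain $W:=(\Sigma_{r,\theta}\cap\Pi_{r_1,r})-(L_\theta\cap\Pi_{r_1,r})$ is supported in $\Pi_{r_1,r}$ with $\partial W=\mathcal{S}_{r_1,\theta}-\gamma_{r_1}$ (the outer boundaries $\mathcal{S}_{r,\theta}$ cancel). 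Since $\Pi_{r_1,r}\cong S^{n-2}\times[r_1,r]\times S^1$ deformation retracts onto its inner cross-section $\Psi(\{|x|=r_1\}\times S^1)$, the inclusion induces an isomorphism on $H_{n-2}$, forcing $[\gamma_{r_1}]=[\mathcal{S}_{r_1,\theta}]$ in $H_{n-2}$ of the cross-section. I will therefore find an integral $(n-1)$-chain $B$ in the cross-section with $\partial B=\gamma_{r_1}-\mathcal{S}_{r_1,\theta}$, and the crude bound $\mathcal{H}^{n-1}(B)\leq\mathcal{H}^{n-1}(\Psi(\{|x|=r_1\}\times S^1))\leq Cr_1^{n-2}$ coming from the Euclidean volume asymptotics of the cylinder.

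Assembling the competitor
\begin{equation*}
\tilde{\Sigma}:=F_\theta+(L_\theta\cap\Pi_{1,r_1})+B+(\Sigma_{r,\theta}\cap\Psi(\{|x|\geq r_1\})),
\end{equation*}
a telescoping boundary computation gives $\partial\tilde{\Sigma}=\mathcal{S}_{r,\theta}$. Applying volume minimality and cancelling the common outer piece yields
\begin{equation*}
\mathcal{H}^{n-1}(\Sigma_{r,\theta}\cap(M^n\setminus\mathcal{E}))+\mathcal{H}^{n-1}(\Sigma_{r,\theta}\cap\Pi_{1,r_1})\leq C_F+\mathcal{H}^{n-1}(L_\theta\cap\Pi_{1,r_1})+Cr_1^{n-2}.
\end{equation*}
The AF decay gives $\mathcal{H}^{n-1}(L_\theta\cap\Pi_{1,r_1})\leq Cr_1^{n-1}$ from Euclidean slab asymptotics, so the right side is bounded by $C'r_1^{n-1}$ for $r_1>1$. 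The second bound follows immediately; taking $r_1\to 1^+$ yields the first. For the non-generic values of $r_1$ where slicing fails, monotonicity of the left side in $r_1$ extends the estimate. The main technical point lies in the topological identification $[\gamma_{r_1}]=[\mathcal{S}_{r_1,\theta}]$ in the cross-sectional cylinder; the deformation-retract argument is conceptually clean, but requires careful handling of integral currents and an isoperimetric-type argument within the cross-section to produce $B$ with the quoted volume bound.
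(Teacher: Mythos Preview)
Your proposal is correct and follows essentially the same strategy as the paper: build a competitor that agrees with $\Sigma_{r,\theta}$ on the outer annulus $\{|x|\geq r_1\}$, patch the slice at radius $r_1$ to the coordinate sphere using the homological identification $[\gamma_{r_1}]=[\mathcal{S}_{r_1,\theta}]$ in $H_{n-2}$ of the cross-section, insert the flat slab $L_\theta\cap\Pi_{1,r_1}$, and cap off at radius $1$ with a uniformly bounded filling. The only cosmetic differences are that the paper uses the Plateau solution $\Sigma_{1,\theta}$ as the inner cap (and bounds it uniformly in $\theta$ via a cylindrical-strip competitor, exactly as you construct $F_\theta$), phrases the homology step via $H_{n-1}(\Pi_{r_1,\infty},\mathcal{S}_{r_1})=0$ and excision rather than via deformation retraction, and obtains the patch $W$ directly from Federer--Fleming mass minimization with the crude competitor $\mathcal{S}_{r_1}$ itself, which makes the bound $\mathcal{H}^{n-1}(W)\leq\mathcal{H}^{n-1}(\mathcal{S}_{r_1})\leq C_0 r_1^{n-2}$ immediate without an additional isoperimetric argument in the cross-section.
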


\begin{proof} 
Let $\mathcal{S}_r \subset\E$ denote the $r$ level set. We may assume without loss of generality that $\S_{ r, \theta}$ intersects $\mathcal{S}_{r_1}$ transversely, so that $\Sigma^{r_1}_{r,\theta}:=\S_{r,\theta} \cap \mathcal{S}_{r_1}$ is an $(n-2)$-dimensional submanifold. To see this transversality, consider the function $(x,\theta)\mapsto |x|$ restricted to $\Sigma_{r,\theta}$. This is a smooth map, and thus by Sard's theorem, arbitrarily close to any $r_1$ we may find a regular value. Since critical values indicate when a point of tangency to the vertical coordinate surface occurs, a given $r_1$ may always be perturbed to ensure transversality. 
\begin{comment}
We may assume without loss of generality that $\S_{ r, \theta}$ intersects $C_{r_1}$ transversally, where $C_{r_1}$ is the $r$-level set valued by $r_1$ . Namely, $S^\theta_{r_1, r}:=\S_{ r,\theta} \cap C_{r_1}$ is a $(n-2)$-manifold. To see this transversality, consider the coordinate $r$ restricted to $\Sigma_{\a,r}$ mapping to $\mathbb{R}$. This is a smooth map, and we can apply Sard's theorem to conclude that the set of critical values is measure zero, thus nearby any $r_1$ we can find a regular value. Note also that critical values are equivalent to tangency to vertical cylinder, ie. not transverse.  
\end{comment}

We will next show that $\Sigma^{r_1}_{r,\theta}$ and $\mathcal{S}_{r_1, \theta}$ are homologous in $H_{n-2}(\mathcal{S}_{r_1})$. Consider the $(n-1)$-dimensional submanifolds %currents in $\mathfrak{D}_{n-1}(\Pi_{r_1, \infty})$:
\begin{equation}
\Sigma'_{r, \theta}:=\Sigma_{r,\theta}\cap \Pi_{r_1, \infty},\quad\quad \quad  P^\theta_{r_1, r}:=P^\theta\cap \Pi_{r_1, r},
\end{equation}
where $P^\theta$ denotes the $\theta$ level set.  Observe that $\partial \Sigma'_{r, \theta}=\mathcal{S}_{r, \theta}-\Sigma^{r_1}_{r,\theta}$ and $\partial P^\theta_{r_1, r}=\mathcal{S}_{r, \theta}-\mathcal{S}_{r_1, \theta}$, which implies that their difference is an $(n-2)$-cycle in the coordinate surface at radius $r_1$, more precisely %in belongs to $\mathfrak{D}_{n-1}(\Pi_{r_1, \infty})$ with 
\begin{equation}\label{hologvanish}
\partial(\S'_{r, \theta}-P^\theta_{r_1, r})=\mathcal{S}_{r_1, \theta}-\Sigma^{r_1}_{r,\theta}. %\in \mathfrak{D}_{n-2}(C_{r_1}), 
\end{equation} 
%where $\mathfrak{D}_{n-2}(C_{r_1})$ is the collection of $(n-2)$-current in $C_{r_1}$. 
The chain $\S'_{r, \theta}-P^\theta_{r_1, r}$ is thus a representative of an element in $H_{n-1}(\Pi_{r_1, \infty}, \mathcal{S}_{r_1})$. Moreover, a basic computation yields
\begin{equation}\label{foapjf-ja-}
H_{n-1}(\Pi_{r_1, \infty}, \mathcal{S}_{r_1})\cong H_{n-1}(\bb{R}^{n-1}\times \bb{S}^1, B_{r_1}\times \bb{S}^1)=0,  
\end{equation} 
where we have used the excision property \cite{HA1}*{Theorem 2.20} in the first isomorphism. It follows that the image of the boundary homomorphism $H_{n-1}(\Pi_{r_1, \infty}, \mathcal{S}_{r_1})\rightarrow H_{n-2}(\mathcal{S}_{r_1})$ is trivial. Combining this with \eqref{hologvanish} gives the claimed relation between $\Sigma^{r_1}_{r,\theta}$ and $\mathcal{S}_{r_1, \theta}$.

In light of the previous paragraph, we may appeal again to Federer-Fleming theory to find a volume-minimizing chain $W$ with $\p W=\Sigma^{r_1}_{r,\theta}-\mathcal{S}_{r_1,\theta}$, in particular $\mathcal{H}^{n-1}(W)\leq \mathcal{H}^{n-1}(\mathcal{S}_{r_1})$. Consider a $(n-1)$-chain
\begin{equation}
\hat{\S}_{r,\theta}:=\S'_{r,\theta}+ W+P^{\theta}_{1, r_1}+ \Sigma_{1,\theta},
\end{equation}
and note that $\partial\hat{\S}_{r,\theta}=\mathcal{S}_{r,\theta}$. Set $\Sigma''_{r,\theta}=\Sigma_{r,\theta}\setminus\Sigma'_{r,\theta}$ and use the volume-minimizing property of $\Sigma_{r,\theta}$ to find
\begin{align}
\begin{split}
\mathcal{H}^{n-1}(\Sigma'_{r,\theta})+\mathcal{H}^{n-1}(\Sigma''_{r,\theta})&
=\mathcal{H}^{n-1}(\Sigma_{r,\theta})\\
&\leq\mathcal{H}^{n-1}(\hat{\Sigma}_{r,\theta})\\
&=\mathcal{H}^{n-1}(\Sigma'_{r,\theta})+\mathcal{H}^{n-1}(W)
+\mathcal{H}^{n-1}(P^{\theta}_{1,r_1})+\mathcal{H}^{n-1}(\Sigma_{1,\theta}),
\end{split}
\end{align}
so that
\begin{equation}
\mathcal{H}^{n-1}(\S''_{r,\theta})\leq \mathcal{H}^{n-1}(W)+\mathcal{H}^{n-1}(P^{\theta}_{1, r_1})+\mathcal{H}^{n-1}(\S_{1,\theta}).
\end{equation}
In addition, we have the following estimates
\begin{align}\label{fpaif]hq09htg9q3g}
\begin{split}
\mathcal{H}^{n-1}(W)&\leq\mathcal{H}^{n-1}(\mathcal{S}_{r_1})\leq C_0 r_1^{n-2} ,\\ 
\mathcal{H}^{n-1}(P^{\theta}_{1, r_1})&\leq C_0 r^{n-1}_1 ,\\ 
\mathcal{H}^{n-1}(\S_{1,\theta})&\leq \sup_{\theta\in S^1}\mathcal{H}^{n-1}(\S_{1,\theta})\leq C_0,
\end{split}
\end{align} 
for some constant $C_0$ independent of $r$, $r_1$, and $\theta$.
Note that the validity of the last inequality is due to the fact that the volume of any surface $\Sigma_{1,\theta}$ can be estimated by a competitor consisting of the $\theta=0$ surface and a vertical portion of the coordinate surface $\mathcal{S}_1$ (which has volume less than $\mathcal{H}^{n-1}(\mathcal{S}_1)$). Therefore
\begin{equation}
\mathcal{H}^{n-1} (\Sigma_{r,\theta}\cap \Pi_{1, r_1})\leq \mathcal{H}^{n-1}(\Sigma''_{\a,r})\leq C_1 r^{n-1}_1.
\end{equation}
Lastly, similar arguments as those used for the last inequality of \eqref{fpaif]hq09htg9q3g} produce
\begin{equation}
\mathcal{H}^{n-1}(\Sigma_{r,\theta}\cap (M^n\setminus \mathcal{E}))\leq C_{2}.
\end{equation}
Setting $C =\max\{C_1, C_2\}$ yields the desired result.
\end{proof}

As a consequence of the arguments in the proof of the previous proposition, one may also establish
the following additional volume control.

\begin{corollary}\label{Vol2} 
For any $1<r_1<r$, $\theta\in S^1$, and $p\in\E$ such that $\min\{r(p), r-r(p)\}\geq 2r_1$, there
exists a constant $C$ independent of these quantities such that 
%Let $\S_{r,\theta}$ be assumed as above. There is a constant $C$ such that for any $r> r_1 > 1$ and $\min\{d(p, K), d(p, %\mathcal{S}_{r,\theta})\}\geq 2r_1$, one has 
\begin{equation}
\mathcal{H}^{n-1}(\Sigma_{r,\theta}\cap Z_{p, r_1})\leq C r^{n-1}_1.
\end{equation}
%where $K\supset \mathcal{C}$ and $\mathcal{C}$ is the compact set used to define the end which is its compliment.
\end{corollary}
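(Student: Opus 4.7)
The proof strategy is to reduce the estimate to a local volume bound for minimizing hypersurfaces in topologically trivial geodesic balls, which is a standard consequence of the volume-minimizing property used throughout the proof of Proposition \ref{vol1}. Rather than attempt a single global competitor (which is obstructed by the nontrivial class of $\partial(\Sigma_{r,\theta}\cap Z_{p,r_1})$ in $H_{n-2}(\partial Z_{p,r_1})$, since $\partial Z_{p,r_1}\cong S^{n-2}\times S^1$ has nonvanishing $(n-2)$-homology), I will cover $Z_{p,r_1}$ by small balls on which the relevant cycles bound trivially.

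First, I establish a local volume bound: setting $s_0:=\min(r_1,\pi\ell/2)$, for any $q\in Z_{p,r_1}$ one has $\mathcal{H}^{n-1}(\Sigma_{r,\theta}\cap B^g_{s_0}(q))\leq Cs_0^{n-1}$, where $B^g_s(q)$ denotes the geodesic ball in $(M^n,g)$. The hypothesis $\min\{r(p),r-r(p)\}\geq 2r_1$ ensures $\mathrm{dist}(q,\mathcal{S}_{r,\theta})\geq r-r(p)-r_1\geq r_1\geq s_0$, so $B^g_{s_0}(q)\cap\partial\Sigma_{r,\theta}=\emptyset$. Moreover, the AF decay combined with $s_0<\pi\ell$ guarantees that $B^g_{s_0}(q)$ is contractible with boundary homeomorphic to $S^{n-1}$, whose $(n-1)$-volume is at most $Cs_0^{n-1}$ by comparison with the flat product model. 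Since $H_{n-2}(S^{n-1})=0$ for $n\geq 3$, the $(n-2)$-cycle $\partial(\Sigma_{r,\theta}\cap B^g_{s_0}(q))$ bounds some chain $W\subset \partial B^g_{s_0}(q)$, and Federer-Fleming furnishes such a $W$ with $\mathcal{H}^{n-1}(W)\leq\tfrac12\mathcal{H}^{n-1}(\partial B^g_{s_0}(q))\leq Cs_0^{n-1}$. The chain $(\Sigma_{r,\theta}\setminus B^g_{s_0}(q))+W$ is then a competitor with boundary $\mathcal{S}_{r,\theta}$, and the volume-minimizing property of $\Sigma_{r,\theta}$ yields the local bound.

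Second, I cover $Z_{p,r_1}$ by a finite collection of geodesic balls $\{B^g_{s_0}(q_i)\}_{i=1}^N$ with centers $q_i\in Z_{p,r_1}$. Since the AF decay ensures that volume ratios on $Z_{p,r_1}$ are uniformly comparable to Euclidean ones, a Vitali-type argument gives $N\leq C\,\mathrm{Vol}(Z_{p,r_1})/s_0^n\leq C\ell r_1^{n-1}/s_0^n$. Combining this with the local bound yields
\begin{equation}
\mathcal{H}^{n-1}(\Sigma_{r,\theta}\cap Z_{p,r_1})\leq\sum_{i=1}^N \mathcal{H}^{n-1}(\Sigma_{r,\theta}\cap B^g_{s_0}(q_i))\leq NCs_0^{n-1}\leq C\ell r_1^{n-1}/s_0.
\end{equation}
When $r_1\geq\pi\ell/2$ one has $s_0=\pi\ell/2$ and the right-hand side is directly $\leq Cr_1^{n-1}$; when $r_1<\pi\ell/2$ one has $s_0=r_1$, giving $C\ell r_1^{n-2}\leq C\ell r_1^{n-1}$ since $r_1>1$. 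In both cases the $\ell$-dependence is absorbed into the final constant, yielding the desired bound.

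The main technical point is the topological and metric control of $B^g_{s_0}(q)$ in the first step: verifying that it remains contractible and that its boundary has $(n-1)$-volume at most a multiple of $s_0^{n-1}$. Both of these follow from the quantitative AF decay in Definition \ref{AF}, which ensures that the metric on balls of radius $\leq \pi\ell/2$ centered in the end is uniformly $C^2$-close to the flat product metric, so that such balls do not wrap around the $S^1$ factor and their boundaries resemble Euclidean spheres of the corresponding radius.
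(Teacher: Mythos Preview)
Your covering-by-small-balls argument is valid and does yield the bound, but it takes a genuinely different route from what the paper intends. The paper's one-line proof refers to a direct repetition of Proposition~\ref{vol1}: cut $\Sigma_{r,\theta}$ along $\partial Z_{p,r_1}$, and use the same relative-homology computation $H_{n-1}(\Pi_{\rho,\infty},\mathcal{S}_{\rho})=0$ (applied at $\rho=r(p)-r_1$) to see that the intersection number of $\Sigma_{r,\theta}$ with the fiber $\{p\}\times S^1$ is $\pm 1$ (this is precisely the argument reused in Lemma~\ref{degree}). It follows that $[\partial(\Sigma_{r,\theta}\cap Z_{p,r_1})]=\pm[\{|x-p|=r_1\}\times\{\theta\}]$ in $H_{n-2}(\partial Z_{p,r_1})$, and one builds a single competitor $\Sigma'_{r,\theta}+W\pm D$ with $D$ the $\theta$-disc in $Z_{p,r_1}$ and $W$ a cap in $\partial Z_{p,r_1}$, exactly as before. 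In particular, your opening claim that the single-competitor approach is ``obstructed by the nontrivial class'' misreads Proposition~\ref{vol1}: there too the class of $\Sigma^{r_1}_{r,\theta}$ in $H_{n-2}(\mathcal{S}_{r_1})$ is nontrivial; what matters is that it equals the class of an explicit small-volume representative.

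Your approach trades that homology identification for a local monotonicity bound plus a ball count, which is a perfectly good alternative and needs only $H_{n-2}(S^{n-1})=0$. One technical point deserves care: you need $B^g_{s_0}(q)$ to be a genuine geodesic ball with boundary $S^{n-1}$, hence $s_0$ below the injectivity radius at $q$. With your choice $s_0=\min(r_1,\pi\ell/2)$ and the hypothesis $r(p)\geq 2r_1$, points $q\in Z_{p,r_1}$ only satisfy $r(q)\geq r_1$, so for $r_1$ close to $1$ the ball may reach the core $M^n\setminus\mathcal{E}$, where the asserted control is not automatic. This is easily repaired (shrink $s_0$ by a fixed factor and absorb the core contribution via the first estimate of Proposition~\ref{vol1}), but it should be acknowledged.
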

 
\subsection{Convergence of the minimal hypersurfaces}

A complete minimal surface may be extracted from the Plateau solutions by letting the radial parameter tend to infinity.

\begin{proposition}\label{exist}
Let $(M^n, g)$ be a complete AF manifold with $3\leq n\leq 7$. If some coordinate sphere $\mathcal{S}_{r,\theta}$ of an end $\mathcal{E}$ is trivial in homology $H_{n-2}(M^n;\mathbb{Z})$, then there exists a complete properly embedded minimal hypersurface $\Sigma_\infty$ which is stable against compactly supported variations. Furthermore, for any $p\in \E$ and $r_1 >1$ with $r(p)\geq 2r_1$, there exists a constant $C$ independent of these quantities such that
%$\frac{1}{2}d(p, K)\geq r_1$ and any $r_1\geq r_0 =1$ we have
\begin{equation}\label{fpanpfinpanpion}
\mathcal{H}^{n-1}(\Sigma_\infty\cap(M^n\setminus \mathcal{E}))\leq C,\quad\quad\quad
\mathcal{H}^{n-1}(\Sigma_\infty\cap Z_{p, r_1})\leq C r^{n-1}_1 .
\end{equation}
\end{proposition}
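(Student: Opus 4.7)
The plan is to extract $\Sigma_\infty$ as a subsequential limit, in the sense of integral currents, of the volume-minimizing hypersurfaces $\Sigma_{r,\theta}$ produced at the start of the section, taking $r\to\infty$ along some sequence $r_k$ (with $\theta=\theta_0$ fixed, say). First I would regard each $\Sigma_{r_k,\theta_0}$ as an integer multiplicity rectifiable $(n-1)$-current on $M^n$. Proposition~\ref{vol1} and Corollary~\ref{Vol2} give mass bounds that are uniform in $k$ on every compact subset $K\subset M^n$, and the boundary $\mathcal{S}_{r_k,\theta_0}$ leaves $K$ once $r_k$ is large enough. Hence on each $K$ the restrictions form a sequence of integral currents with uniform mass and eventually zero boundary, and the Federer-Fleming compactness theorem yields a subsequential flat limit $T_\infty$, which by a diagonal argument can be taken to exist on all of $M^n$ with $\partial T_\infty=0$.

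Next I would verify that $T_\infty$ is area-minimizing in the local sense: any integral $(n-1)$-current $T'$ with $\operatorname{supp}(T_\infty-T')$ compact can be realized as the limit of competitors $T'_k$ for $\Sigma_{r_k,\theta_0}$ (plug $T_\infty-T'$ into the $k$-th minimizer outside a neighborhood of its boundary), and the volume-minimizing inequality together with lower semicontinuity of mass gives $\mathbf{M}(T_\infty\llcorner K)\leq \mathbf{M}(T'\llcorner K)$. Since $n-1\leq 6$, the Federer-Simons interior regularity theorem for codimension-one area-minimizing currents implies $T_\infty$ is represented by a smooth embedded (multiplicity one, after reducing to the support) minimal hypersurface $\Sigma_\infty$. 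Stability against compactly supported variations is then immediate from the local area-minimizing property. The volume bounds \eqref{fpanpfinpanpion} follow from lower semicontinuity of mass applied to the already-established uniform estimates for $\Sigma_{r_k,\theta_0}$, and these in turn imply that $\Sigma_\infty$ is properly embedded (bounded mass in every $Z_{p,r_1}$) and complete (closedness of $\operatorname{supp} T_\infty$ together with smoothness).

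The main obstacle I expect is to show that $\Sigma_\infty$ is not the zero current, that is, that the Plateau solutions do not drift entirely to infinity. This is where the homological hypothesis is used crucially. Since $\mathcal{E}\simeq S^{n-2}\times S^1$, the coordinate sphere $\mathcal{S}_{r,\theta_0}$ generates $H_{n-2}(\mathcal{E};\mathbb{Z})\cong\mathbb{Z}$, yet by hypothesis it is trivial in $H_{n-2}(M^n;\mathbb{Z})$; in particular no chain supported in $\mathcal{E}$ can bound it, so $\Sigma_{r_k,\theta_0}\cap(M^n\setminus\mathcal{E})\neq\varnothing$ for every $k$. Choosing $p_k$ in this intersection and passing to a subsequence with $p_k\to p_\infty\in M^n\setminus\mathcal{E}$, the monotonicity formula for stationary varifolds applied at each $p_k$ gives a uniform lower bound $\mathcal{H}^{n-1}(\Sigma_{r_k,\theta_0}\cap B_\rho(p_k))\geq c\rho^{n-1}$ for $\rho$ smaller than the injectivity radius near $p_\infty$ and for all sufficiently large $k$. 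Lower semicontinuity then transfers this lower bound to $\Sigma_\infty$ in $B_{\rho}(p_\infty)$, confirming $\Sigma_\infty\neq\emptyset$ and completing the argument.
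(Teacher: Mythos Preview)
Your argument is essentially correct and follows the same broad outline as the paper: show the Plateau solutions must meet $M^n\setminus\mathcal{E}$ via the homological hypothesis, then extract a limit using the local volume bounds of Proposition~\ref{vol1} and Corollary~\ref{Vol2}. Two minor points: your homology computation $H_{n-2}(\mathcal{E})\cong\mathbb{Z}$ fails when $n=3$ (it is $\mathbb{Z}\oplus\mathbb{Z}$, with the coordinate circle generating one factor), and your subconvergence $p_k\to p_\infty$ tacitly assumes $M^n\setminus\mathcal{E}$ is compact, which requires the standard mean-curvature barrier argument to keep the $\Sigma_{r_k,\theta_0}$ out of the other ends. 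Neither is serious.

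There is, however, one genuine departure from the paper that matters downstream. You fix $\theta=\theta_0$ throughout, whereas the paper performs a \emph{height-picking} step: for each $r_j$ it selects $\theta_j$ minimizing $\mathcal{H}^{n-1}(\Sigma_{r_j,\theta})$ over $\theta\in S^1$. For Proposition~\ref{exist} as stated this makes no difference---local area-minimization already yields stability against compactly supported variations. But the paper needs more later: Proposition~\ref{strong} asserts a stability inequality for test functions that are merely asymptotic to a constant, and its proof (following \cite{EHLR}*{Lemma 17} and \cite{LUY}*{Theorem 4.21}) relies precisely on the enhanced minimizing property coming from the choice of $\theta_j$. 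Your $\Sigma_\infty$ would satisfy the present proposition but would not obviously satisfy Proposition~\ref{strong}, so the subsequent mass comparison in Lemma~\ref{pmt-stable-hyper} would not go through. If you want your construction to feed into the rest of Section~\ref{sec9}, you should incorporate the height-picking step.
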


\begin{proof}
We first claim that $\Sigma_{r,\theta}\cap (M^n \setminus \E)$ is nonempty for all $r>1$ and $\theta\in S^1$.
If not, then there is some $\S_{r,\theta}\subset \mathcal{E}\cong (\mathbb{R}^{n-1}\setminus B_1)\times S^1$. It follows that the image of the homology class $[\mathcal{S}_{r,\theta}]$ vanishes in $H_{n-2}((\mathbb{R}^{n-1}\setminus B_1)\times S^1)$, since $\partial \S_{r,\theta}=\mathcal{S}_{r,\theta}$. However, if $n\geq 4$ then $[\mathcal{S}_{r,\theta}]$ is a generator of $H_{n-2}((\mathbb{R}^{n-1}\setminus B_1)\times S^1)\cong \bb{Z}$, a contradiction. If $n=3$, then the same argument holds except that the homology is $\mathbb{Z}\oplus\mathbb{Z}$, and the coordinate circle generates the first factor.

Consider a sequence of radial values $r_j \rightarrow\infty$, and choose a corresponding sequence of points $\theta_j \in S^1$ such that
\begin{equation}
\mathcal{H}^{n-1} (\S_{r_j , \theta_j})=\inf_{\theta\in S^1} \mathcal{H}^{n-1} (\S_{r_j , \theta}).
\end{equation}
Note that according to the arguments of \cite{LUY}*{Lemma 4.1}, the map $\theta\mapsto \mathcal{H}^{n-1}(\Sigma_{r,\theta})$ is continuous for each $r$, ensuring the existence of $\theta_j$ for each $j$. In light of the volume estimates
Proposition \ref{vol1} and Corollary \ref{Vol2}, as well as the above nonempty intersection property, standard minimal surface theory shows that after passing to a subsequence $\Sigma_{r_j , \theta_j}$ converges in the pointed $C^k$ topology (for any $k$) to a complete properly embedded stable minimal surface $\S_\infty$. Furthermore, the strong convergence together with Proposition \ref{vol1} and Corollary \ref{Vol2} imply the estimates \eqref{fpanpfinpanpion}.
\end{proof}
 
%\subsection{Projection map and non-zero degree map}
\subsection{Topological aspects of the end of the minimal surface}
For any $r>1$ consider the projection map
\begin{equation}
\mathbf{p}: \Pi_{r,\infty}\subset\mathcal{E}\rightarrow \mathbb{R}^{n-1}\setminus B_r , %H_{\geq r}:=\{x\in \mathbb{R}^{n-1}:|x|\geq r\},
\end{equation}
given by $\mathbf{p}\circ \Psi(x,\theta)=x$. Denote by $\mathbf{p}_{\Sigma}$ the restriction of the projection to the hypersurface $\Sigma_{\infty}$. The next result states in particular that this map is surjective.

\begin{lemma}\label{degree} 
The degree of the restricted projection map is $\mathrm{deg } (\mathbf{p}_{\Sigma})=\pm 1$. 
%$p: \Sigma_\infty\cap \Pi_{r,\infty}\rightarrow \mathbb{R}^{n-1}\setminus B_r$ is $\pm 1$. 
\end{lemma}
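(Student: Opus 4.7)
My approach is to compute the degree homologically, reducing it to the boundary behavior of $\Sigma_\infty\cap \Pi_{R,\infty}$ on a coordinate surface $\mathcal{S}_R$, and then identify that boundary cycle with the generator of $H_{n-2}(\mathcal{S}_R)$ via the smooth convergence of the approximating surfaces from Proposition \ref{exist}. Throughout let $R>1$ be a regular value of the function $r$ restricted to $\Sigma_\infty$, so that $C_R:=\Sigma_\infty\cap \mathcal{S}_R$ is a closed, smooth, oriented $(n-2)$-submanifold of $\mathcal{S}_R$; such $R$ exist by Sard's theorem, and the properness of $\Sigma_\infty$ in $M^n$ together with the volume estimates (Proposition \ref{vol1} and Corollary \ref{Vol2}) makes the restricted map $\mathbf{p}_\Sigma:\Sigma_\infty\cap \Pi_{R,\infty}\to \mathbb{R}^{n-1}\setminus B_R$ a proper smooth map between oriented $(n-1)$-manifolds, so $\deg(\mathbf{p}_\Sigma)$ is well-defined.

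The first step is to identify the homology class $[C_R]\in H_{n-2}(\mathcal{S}_R)$. By the pointed $C^k$ convergence $\Sigma_{r_j,\theta_j}\to \Sigma_\infty$ established in the proof of Proposition \ref{exist}, for $j$ large enough the transverse intersection $C_R^{(j)}:=\Sigma_{r_j,\theta_j}\cap \mathcal{S}_R$ is $C^k$-close to $C_R$, hence $[C_R^{(j)}]=[C_R]$ in $H_{n-2}(\mathcal{S}_R)$. On the other hand, the cobordism $(\Sigma_{r_j,\theta_j}\cap \Pi_{R,r_j}) - P^{\theta_j}_{R,r_j}$ in $\Pi_{R,\infty}$ (used in the proof of Proposition \ref{vol1}) has boundary $\mathcal{S}_{R,\theta_j}-C_R^{(j)}$, so $[C_R^{(j)}]=[\mathcal{S}_{R,\theta_j}]$ in $H_{n-2}(\Pi_{R,\infty})$. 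The deformation retraction $\Pi_{R,\infty}\simeq \mathcal{S}_R$ makes the inclusion $\mathcal{S}_R\hookrightarrow \Pi_{R,\infty}$ a homotopy equivalence, so this identity holds in $H_{n-2}(\mathcal{S}_R)$ as well. Hence $[C_R]=\pm [\mathcal{S}_{R,\theta}]$, the generator of the $\mathbb{Z}$-summand of $H_{n-2}(S^{n-2}\times S^1)$ represented by the coordinate sphere.

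The second step is to pass from the boundary class to the total degree. Let $\omega_0$ be the standard closed solid-angle $(n-2)$-form on $\mathbb{R}^{n-1}\setminus\{0\}$, normalized so that $\int_{\partial B_r}\omega_0=1$, and let $\chi$ be a smooth cutoff with $\chi=0$ near $\partial B_R$ and $\chi=1$ on $\{|x|\geq R+1\}$. Then $d(\chi\omega_0)=\chi'\,d|x|\wedge \omega_0$ is compactly supported in $\mathbb{R}^{n-1}\setminus B_R$ with total integral $1$. For a regular value $R'>R+1$, applying Stokes' theorem on the compact domain $\Sigma_\infty\cap \Pi_{R,R'}$ yields
\begin{equation*}
\deg(\mathbf{p}_\Sigma)=\int_{\Sigma_\infty\cap \Pi_{R,\infty}}\mathbf{p}^*d(\chi\omega_0)=\int_{C_{R'}}\mathbf{p}^*\omega_0=\deg\bigl(\mathbf{p}|_{C_{R'}}\colon C_{R'}\to \partial B_{R'}\bigr),
\end{equation*}
since the contribution on $C_R$ vanishes because $\chi\equiv 0$ there. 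Applying the first step with $R'$ in place of $R$, and noting that $(\mathbf{p}|_{\mathcal{S}_{R'}})_*$ sends $[\mathcal{S}_{R',\theta}]$ to the generator of $H_{n-2}(\partial B_{R'})\cong \mathbb{Z}$, the right-hand side is $\pm 1$.

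The principal technical point is ensuring that the smooth convergence on compact subsets preserves the oriented homology class of $C_R^{(j)}$ in $H_{n-2}(\mathcal{S}_R)$, which in particular forces the choice of a value $R$ that is a regular value simultaneously for $\Sigma_\infty$ and for $\Sigma_{r_j,\theta_j}$ once $j$ is large; this is handled by a standard Sard/transversality argument together with the uniform curvature estimates available for stable minimal hypersurfaces in the dimensional range $n\leq 7$. A small subtlety appears in the borderline case $n=3$, where $H_{n-2}(\mathcal{S}_R)\cong \mathbb{Z}^2$ and the cobordism must pick out the coordinate-circle generator $S^1\times\{\theta\}$ rather than the fiber-circle generator $\{pt\}\times S^1$; this is exactly what the explicit cobordism through $P^{\theta_j}_{R,r_j}$ delivers.
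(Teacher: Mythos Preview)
Your proof is correct and uses the same two essential ingredients as the paper: the homological vanishing from the proof of Proposition \ref{vol1} (the chain $(\Sigma_{r',\theta}\cap\Pi_{r,\infty})-P^{\theta}_{r,r'}$ is null in $H_{n-1}(\Pi_{r,\infty},\mathcal{S}_r)$), and the smooth convergence $\Sigma_{r_j,\theta_j}\to\Sigma_\infty$ from Proposition \ref{exist}. The difference is only in how the degree is extracted. The paper works directly with intersection numbers: it fixes a regular value $y$, notes that $\deg(\mathbf{p}_\Sigma)=\mathbf{I}(\Sigma_\infty,\mathbf{p}^{-1}(y))$, deduces $\mathbf{I}(\Sigma_{r',\theta}\cap\Pi_{r,\infty},\mathbf{p}^{-1}(y))=\mathbf{I}(P^{\theta}_{r,r'},\mathbf{p}^{-1}(y))=\pm 1$ from the null-homology, and passes to the limit. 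You instead read off the boundary class $[C_R]=[\mathcal{S}_{R,\theta}]$ in $H_{n-2}(\mathcal{S}_R)$ and then apply Stokes' theorem with a compactly supported generator $d(\chi\omega_0)$ to convert that boundary class into $\deg(\mathbf{p}_\Sigma)$. These are Poincar\'e-dual viewpoints of the same computation; the paper's version is a bit shorter because it avoids the differential-form machinery, while yours makes the link between the degree of $\mathbf{p}_\Sigma$ and the degree of the slice map $\mathbf{p}|_{C_{R'}}$ explicit.
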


\begin{proof} 
Let $y\in \mathbb{R}^{n-1} \setminus B_r$ be a regular value of the map $\mathbf{p}_{\Sigma}$, and observe that the degree
is equal to the intersection number of the surface and the fiber over $y$, that is $\mathbf{I}(\Sigma_{\infty},\mathbf{p}^{-1}(y))$.
%The degree of $p$ is equal to the intersection number of $\Sigma_\infty$ and $\{x\}\times \bb{S}^1$, for any $x\in H_{\geq r}$. 
%(This is like \cite[Lemma 4.3]{LUY}.) 
As in Proposition \ref{vol1}, let $\Sigma_{r' ,\theta}$ denote a solution of the Plateau problem with $\partial\Sigma_{r',\theta}=\mathcal{S}_{r',\theta}$ and $r'>|y|>r$. Consider the $(n-1)$-chain 
\begin{equation}
(\Sigma_{r',\theta}\cap \Pi_{r,\infty})-P^{\theta}_{r,r'},
\end{equation}
and note that its boundary lies in $\mathcal{S}_r$. Hence
\begin{equation}
[(\Sigma_{r',\theta}\cap \Pi_{r,\infty})-P^{\theta}_{r,r'}]\in H_{n-1}(\Pi_{r,\infty},\mathcal{S}_r)=0,
\end{equation}
where the homology computation was accomplished in \eqref{foapjf-ja-}. It follows that the intersection number of  
$(\Sigma_{r',\theta}\cap \Pi_{r,\infty})-P^{\theta}_{r,r'}$ and $\mathbf{p}^{-1}(y)$ is zero.
\begin{comment}
Let $\Sigma_{\a, r'}$ be a $(n-1)$-current with boundary $\G_{\a, r'}$ for $r'>r$. As in Step 1 of the proof for Proposition \ref{vol1}, consider the $(n-1)$-current \[(\Sigma_{\a, r'}\cap \Pi_{\geq r})- H^{\a}_{r, r'}\] whose boundary is a $(n-2)$-current on $C_r$.The element $[(\Sigma_{\a, r'}\cap \Pi_{\geq r})- H^{\a}_{r, r'}]$ belongs to $H_{n-1}(\Pi_{\geq r}, C_r)$.  The excision property shows that $H_{n-1}(\Pi_{\geq r}, C_r)\cong H_{n-1}(\bb{R}^{n-1}\times \bb{S}^1; B^{n-1}(0, r)\times \bb{S}^1)=\{0\}$. That is to say, 
\begin{equation}\label{vanish}
[(\Sigma_{\a, r'}\cap \Pi_{\geq r})-H^{\a}_{r,r'}]=0 \text{ in } H_{n-1}(\Pi_{\geq r}, C_{r})
\end{equation}
Choose a point $x'\in H_{r, r'}$. The intersection number of $p^{-1}(x')$ and $[(\Sigma_{\a, r'} \cap \Pi_{\geq r})-H^{\a}_{r, r'}]$ is equal zero. 
\end{comment}
Furthermore, since $\mathbf{p}^{-1}(y)$ intersects $P^{\theta}_{r, r'}$ transversely at a unique point we have $\mathbf{I}(P^{\theta}_{r, r'},\mathbf{p}^{-1}(y))=\pm 1$, which then implies that $\mathbf{I}(\Sigma_{r',\theta}\cap \Pi_{r,\infty},\mathbf{p}^{-1}(y))=\pm 1$.
%and their intersection number is equal to $\pm 1$. From the last paragraph, the intersection number of $p^{-1}(x')$ and %$\Sigma_{\a, r'}\cap \Pi_{\geq r}$ is $\mp 1$.  
Now use that $\Sigma_{\infty}=\lim_{j\rightarrow\infty}\Sigma_{r_j ,\theta_j}$ for some sequence $(r_j ,\theta_j)$ with $r_j \rightarrow\infty$, to conclude that $\mathbf{I}(\Sigma_{\infty},\mathbf{p}^{-1}(y))=\pm 1$ as well.
%Since $\S_{\infty}$ is the limit of $\Sigma_{\a_j, R_j}$ in the sense of current  as $R_j$ goes to infinity, then the %intersection number of $C_{x'}$ and $\Sigma_\infty\cap \Pi_{\geq r}$ is $\mp 1$. 
\end{proof}

In order to lift the minimal surface to the universal cover of the end, we will need the following fact concerning fundamental groups.

\begin{proposition}\label{group}
The induced inclusion homomorphism $i_* :\pmb{\pi}_1(\Sigma_\infty\cap \Pi_{r,\infty})\rightarrow \pmb{\pi_1}(\Pi_{r,\infty})$ is trivial for any $r\geq 1$. 
\end{proposition}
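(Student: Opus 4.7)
For $n\geq 4$ the fundamental group $\pmb{\pi_1}(\Pi_{r,\infty})\cong\mathbb{Z}$ is generated by the $S^1$-fiber, so the proposition reduces to showing that every loop $\gamma\subset\Sigma_\infty\cap\Pi_{r,\infty}$ has zero winding number around $\theta$, equivalently $\int_\gamma d\theta=0$. The plan is to first establish this in the far asymptotic region, where $\Sigma_\infty$ should be essentially graphical, and then propagate the conclusion inward to every $r\geq 1$.

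The main step will be to exhibit an $R_0\geq 1$ such that each connected component of $\Sigma_\infty\cap\Pi_{R_0,\infty}$ is a smooth graph $\{(x,h(x))\}$ over an open subset of $\mathbb{R}^{n-1}\setminus B_{R_0}$. Combining the volume bounds of Proposition~\ref{vol1} and Corollary~\ref{Vol2} with the stability of $\Sigma_\infty$, the Schoen--Simon--Yau curvature estimates for stable minimal hypersurfaces of dimension $n-1\leq 6$ should yield $|A_{\Sigma_\infty}|(p)\to 0$ as $r(p)\to\infty$ on $\Sigma_\infty\cap\mathcal{E}$. An Allard-type blow-down argument, together with $\deg\mathbf{p}_\Sigma=\pm 1$ from Lemma~\ref{degree}, should then force the graphical form and exclude vertical cylindrical pieces of the form $H\times S^1$ with $H\subset\mathbb{R}^{n-1}\setminus B_{R_0}$ of codimension one. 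Since $\mathbb{R}^{n-1}\setminus B_{R_0}$ is simply connected for $n\geq 4$, each $S^1$-valued $h$ lifts to a real-valued function and each graphical component is diffeomorphic to a simply connected subset of the base; hence $\pmb{\pi_1}(\Sigma_\infty\cap\Pi_{R_0,\infty})=0$ and $i_*$ is trivially zero at $r=R_0$.

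For general $r\in[1,R_0]$, I would note that $d\theta$ is a closed $1$-form on $\Sigma_\infty\cap\mathcal{E}$, so the winding integer $\tfrac{1}{2\pi}\int_\gamma d\theta$ depends only on the homology class of $\gamma$ in $H_1(\Sigma_\infty\cap\mathcal{E};\mathbb{R})$. Using the tangential component of the outward radial field $\partial_r$ on the base---well-defined on the graphical region and extended inward via the completeness of $\Sigma_\infty$---one constructs a homotopy of $\gamma$ within $\Sigma_\infty\cap\mathcal{E}$ into $\Sigma_\infty\cap\Pi_{R_0,\infty}$, reducing the computation to the previous case.

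The hard part will be the asymptotic graphical structure: converting $\deg\mathbf{p}_\Sigma=\pm 1$ and the Schoen--Simon--Yau curvature decay into the actual graphical form $\{(x,h(x))\}$, and simultaneously ruling out vertical cylindrical components $H\times S^1$, which are compatible with the degree hypothesis (contributing $0$ to it) yet would obstruct the triviality of $i_*$. Their exclusion should combine Bernstein-type rigidity for stable minimal hypersurfaces in the lifted AF end with the origin of $\Sigma_\infty$ as a pointed limit of connected area-minimizers from Proposition~\ref{exist}.
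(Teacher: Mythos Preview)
Your approach is far more elaborate than necessary and has a genuine gap in the propagation step. The paper gives a three-line topological argument: suppose $i_*[c]=k[\mathbf{p}^{-1}(y)]$ with $k\neq 0$; since $\deg\mathbf{p}_\Sigma=\pm 1$ (Lemma~\ref{degree}), the intersection number $\mathbf{I}(c,\Sigma_\infty\cap\Pi_{r,\infty})=\pm k\neq 0$; but $c\subset\Sigma_\infty$, so pushing $c$ to a homotopic curve $c'$ on the boundary of a tubular neighborhood of $\Sigma_\infty$ gives $\mathbf{I}(c',\Sigma_\infty)=0$, a contradiction. No asymptotic analysis is needed---only that $\Sigma_\infty$ is an embedded two-sided hypersurface with $\deg\mathbf{p}_\Sigma=\pm 1$.

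Your route inverts the paper's logical order: the graphical structure you want to establish first is exactly Lemma~\ref{graph}, which the paper proves \emph{after} Proposition~\ref{group} precisely by using the proposition to lift $\Sigma_\infty\cap\mathcal{E}$ to the universal cover $(\mathbb{R}^{n-1}\setminus B_1)\times\mathbb{R}$. Trying to get graphicality directly in $(\mathbb{R}^{n-1}\setminus B_1)\times S^1$ forces you to rule out winding and cylindrical pieces by hand, which you correctly flag as the hard part but do not resolve. More seriously, your propagation from $R_0$ down to $r$ does not work as stated: the tangential part of $\partial_r$ on $\Sigma_\infty$ can vanish (wherever $\Sigma_\infty$ is tangent to a level set $\{|x|=\text{const}\}$), so there is no globally defined outward flow on $\Sigma_\infty\cap\mathcal{E}$ carrying an arbitrary loop $\gamma\subset\Sigma_\infty\cap\Pi_{r,\infty}$ into $\Sigma_\infty\cap\Pi_{R_0,\infty}$. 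Knowing that $[d\theta|_{\Sigma_\infty}]$ vanishes on cycles supported in the far region says nothing about cycles supported in $\Pi_{r,R_0}$ unless you already know they are homologous in $\Sigma_\infty\cap\mathcal{E}$ to far-out cycles, which you have not shown.
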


\begin{proof} 
We will argue by contradiction. Suppose that $i_*$ is nontrivial, then there is a nonzero integer $k$ and a closed curve $c\subset \S_\infty \cap\Pi_{r,\infty}$ such that $i_*[c] =k[\mathbf{p}^{-1}(y)]$ in $\pmb{\pi_1}(\Pi_{r,\infty})$, where $y\in \mathbb{R}^{n-1} \setminus B_r$. It follows from Lemma \ref{degree} that the intersection number of $c$ and $\Sigma_{\infty}\cap \Pi_{r,\infty}$ is $\pm k\neq 0$. 
%It is sufficient to show that the intersection number of $C$ and $\Sigma_{\infty}\cap \Pi_{\geq r}$ is equal to zero, which leads %to a contradiction.
%\vspace{2mm}
On the other hand, if $N(\Sigma_{\infty} \cap \Pi_{r,\infty})$ is a tubular neighborhood of $\Sigma_\infty\cap \Pi_{r,\infty}$ having sufficiently small radius, then we may find a closed curve $c'\subset \partial N(\Sigma_{\infty}\cap \Pi_{r,\infty}) \cap \mathrm{Int }\text{ }\!\Pi_{r,\infty}$ which is homotopic to $c$ in $\Pi_{r,\infty}$. 
However, $\partial N(\Sigma_{\infty}\cap \Pi_{r,\infty})\cap \mathrm{Int} \text{ }\!\Pi_{r,\infty}$ has empty intersection with $\Sigma_{\infty}$. Hence $\mathbf{I}(c,\Sigma_{\infty}\cap \Pi_{r,\infty})=0$, yielding a contradiction.
%Thus $(\Sigma_{\infty}\cap \Pi_{\geq r})\cap c'$ is also empty. Then, their intersection number  is equal to zero. From the %homotopy property in last paragraph, we can conclude that the intersection number of $c$ and $\Sigma_\infty$ is also equal to %zero, which leads to a contradiction. 
\end{proof}

In light of Proposition \ref{group}, we may lift the surface $\Sigma_{\infty}\cap \Pi_{r,\infty}$ to the universal cover $(\hat{\E},\hat{\mathbf{p}}^{*}g)$ of the end $\E$, where $\hat{\mathbf{p}}:\hat{\E}\rightarrow\E$ is the covering map.
Let $\hat{\Sigma}_{r,\infty}$ denote such a lifting, and let $\hat{\mathbf{p}}_{\hat{\Sigma}}$ be the restriction of the covering map to this lifted surface. We observe that the following is an immediate consequence, where the volume estimate arises from Proposition \ref{exist}.

\begin{corollary}\label{cover} 
%Let $(\hat{ \mathcal{E}}, \hat{g}|_{\hat{\mathcal{E}}})$ be the universal cover of $(\mathcal{E}, g|_{{\mathcal{E}}})$, %$\hat{\Sigma}_{\infty, r} $ the lifting of $\S_{\infty}\cap \Pi_{\geq r}$ and $\pi: \hat{\Pi}_{\geq r}\rightarrow \Pi_{\geq r}$ %the cover map. Then, the restriction of $\pi$ on $\hat{\Sigma}_{\infty, r}$
%\[\pi: \hat{\Sigma}_{\infty, r}\rightarrow \Sigma_\infty\cap \Pi_{\geq r}\]
The map $\hat{\mathbf{p}}_{\hat{\Sigma}}:\hat{\Sigma}_{r,\infty}\rightarrow\Sigma_{\infty}\cap \Pi_{r,\infty}$ is an isometry. Moreover, for any $p\in E$ and $r_1 >1$ there exists a constant $C$ independent of these values such that
\begin{equation}\label{foajbnfoinqoh454}
\mathcal{H}^{n-1}(\hat{\mathbf{p}}^{-1}(Z_{p, r_1})\cap \hat{\Sigma}_{r,\infty})\leq C r_1^{n-1} .
\end{equation}
\end{corollary}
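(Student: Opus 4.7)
The plan is to produce $\hat{\Sigma}_{r,\infty}$ from the lifting criterion of covering space theory and then transfer the volume bound of Proposition \ref{exist} through the resulting covering isometry; no new analysis is needed, only a careful accounting of what the previous lemma and proposition have already established.

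First I would invoke Proposition \ref{group}, which shows that the inclusion $i:\Sigma_{\infty}\cap \Pi_{r,\infty}\hookrightarrow \Pi_{r,\infty}$ induces the trivial map on $\pmb{\pi_1}$. Composing with the inclusion $\Pi_{r,\infty}\hookrightarrow \E$ preserves triviality, so $i_{*}$ into $\pmb{\pi_1}(\E)$ vanishes as well. Since $\hat{\mathbf{p}}:\hat{\E}\to \E$ is the universal cover, the standard lifting criterion produces a smooth map $\hat{i}:\Sigma_{\infty}\cap \Pi_{r,\infty}\to \hat{\E}$ with $\hat{\mathbf{p}}\circ \hat{i}=i$. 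Because $i$ is injective and $\hat{\mathbf{p}}\circ \hat{i}=i$, the lift $\hat{i}$ is itself injective, and I would take $\hat{\Sigma}_{r,\infty}:=\hat{i}(\Sigma_{\infty}\cap \Pi_{r,\infty})$. The restriction $\hat{\mathbf{p}}_{\hat{\Sigma}}$ is then a smooth bijection with smooth inverse $\hat{i}$, and hence a diffeomorphism onto $\Sigma_{\infty}\cap \Pi_{r,\infty}$. Since the metric on $\hat{\E}$ is by construction the pullback $\hat{\mathbf{p}}^{*}g$ and $\hat{\mathbf{p}}$ is a local isometry, this diffeomorphism is automatically a local isometry, and therefore a global isometry.

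For the volume estimate I would use that $\hat{\mathbf{p}}_{\hat{\Sigma}}$ preserves $(n-1)$-dimensional Hausdorff measure. Applying this to the set $\hat{\mathbf{p}}^{-1}(Z_{p,r_1})\cap \hat{\Sigma}_{r,\infty}$, its measure coincides with that of its image under $\hat{\mathbf{p}}$, which is precisely $Z_{p,r_1}\cap \Sigma_{\infty}\cap \Pi_{r,\infty}\subseteq Z_{p,r_1}\cap \Sigma_{\infty}$. The desired bound \eqref{foajbnfoinqoh454} then follows at once from the volume control \eqref{fpanpfinpanpion} proved in Proposition \ref{exist}.

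There is no genuine obstacle here; the substantive work lies in Proposition \ref{group} (to justify the lift) and Proposition \ref{exist} (to bound the volume downstairs), and this corollary is simply the packaging of those two facts through the universal covering isometry.
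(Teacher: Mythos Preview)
Your proposal is correct and matches the paper's approach exactly: the paper states this corollary as an ``immediate consequence'' of Proposition~\ref{group} (which supplies the lifting criterion) together with Proposition~\ref{exist} (which supplies the volume bound downstairs), and you have simply spelled out the standard covering-space and isometry details that the paper leaves implicit.
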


\section{Asymptotics of Minimal Hypersurfaces in AF Manifolds}
\label{sec9} \setcounter{equation}{0}
\setcounter{section}{9} 

In this section we will analyze the asymptotics of the stable minimal surface $\Sigma_{\infty}$ arising from Proposition \ref{exist}, and prove Theorem \ref{A}. In order to facilitate the analysis, we will assume initially that the AF end possesses strong asymptotics. More precisely, in analogy with Definition \ref{foanofinapinp} an end $(\E,g)$ of an AF manifold will be called \textit{harmonically AF} if the following decay holds
\begin{equation}
\Big|\mathring{\nabla}^l \left(g-\left(1+\frac{m}{c_n r^{n-3}}\right)^{\frac{4}{n-2}} g_0 \right)\Big|_{g_0}=O(r^{-\mathring{q}-l}), \quad\quad l=0,1,2,
\end{equation}
for some $m\in\mathbb{R}$ and $\mathring{q}>n-3$, where $g_0 =dr^2 +r^2 g_{S^{n-2}} +\ell^2 d\theta^2$ and $c_n =\tfrac{4(n-1)(n-3)}{n-2}$. Note that the parameter $m$ is the mass of the end $(\E,g)$.

 \begin{comment}
In this section,  we assume that  $(M^n, g, \mathcal{E})$ be a complete AF $4$-manifold and  some sphere $\G_{ \a, R}\subset \mathcal{E}$ vanishes in $H_{n-2}(M)$. Proposition \ref{exist} allows us to find a complete stable minimal hypersurfacce $\Sigma_\infty$ with volume estimates as in Corollary \ref{Vol2}. In this section, we assume that $g$ is (Definition \ref{foanofinapinp}) Schwarzschild-like metric on $\mathcal{E}$ as follows: 
\[g=(1+\frac{m}{2r^{n-3}})^{\frac{4}{n-2}}(\sum_{i=1}^{n-1}(dx_i)^2+(d\th)^2)+O(r^{-q})\]
where $q>n-3$. We will show that each $\S_{\infty}$ has an asymptotically Euclidean end $\mathcal{E}'$
\end{comment}

\begin{theorem}\label{asy} 
Let $(M^n, g)$ be a complete AF manifold with $4\leq n\leq 7$, such that some coordinate sphere $\mathcal{S}_{r,\theta}$ of an end $\mathcal{E}$ is trivial in homology $H_{n-2}(M^n)$. Let $\S_\infty$ be the corresponding complete minimal hypersurface of Proposition \ref{exist}. If $(\E,g)$ is harmonically AF with mass $m$, then $\Sigma_\infty$ has a single end
%is a Schwarzschild metric with 
$(\mathcal{E}_{\Sigma},g_{\Sigma})$ that is asymptotically Euclidean (with integrable scalar curvature), and has ADM mass $m_{\Sigma}$ that satisfies the relation
$m=\tfrac{2(n-1)^2}{n-2}m_{\Sigma}$.
\end{theorem}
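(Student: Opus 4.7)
The plan is to analyze the asymptotic geometry of $\Sigma_\infty$ by lifting to the universal cover of the end and exploiting the degree condition to force graphicality. In the universal cover $(\hat{\E}, \hat{\mathbf{p}}^*g)$, which is diffeomorphic to $\mathbb{R}^{n-1}\times\mathbb{R}$, Corollary \ref{cover} provides an isometric lift $\hat{\Sigma}_{r,\infty}$ of $\Sigma_\infty\cap \Pi_{r,\infty}$ satisfying the local area bound \eqref{foajbnfoinqoh454}, and Lemma \ref{degree} shows that the projection to $\mathbb{R}^{n-1}$ has degree $\pm 1$. Together these facts should force $\hat{\Sigma}_{r,\infty}$ to be asymptotic to a horizontal hyperplane.

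First, I would establish that $\hat{\Sigma}_{r,\infty}$ is a graph over $\mathbb{R}^{n-1}$ outside a compact set. Since this hypersurface is stable and of dimension $n-1\leq 6$, the Schoen--Simon curvature estimates apply on balls of fixed size, yielding uniform second fundamental form bounds. A blow-down argument, rescaling by factors $R_j\to\infty$ and extracting a subsequential limit, produces a complete stable minimal hypersurface in flat $\mathbb{R}^{n-1}\times\mathbb{R}$ of at most Euclidean area growth; by the classical Bernstein-type theorem valid in the relevant dimensions, this limit must be a horizontal hyperplane $\{\theta=\mathrm{const}\}$. Standard graphical regularity of Allard type then represents $\hat{\Sigma}_{r,\infty}$ outside a compact set as $\theta=u(x)$ with $u(x)\to u_\infty$ and vanishing derivatives as $|x|\to\infty$. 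In particular, since $\hat{\Sigma}_{r,\infty}$ is a single sheet and is isometric to $\Sigma_\infty\cap \Pi_{r,\infty}$, the surface $\Sigma_\infty$ has exactly one asymptotic end.

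Next, I would sharpen the decay of $u$. The minimal surface equation for a graph in the harmonically AF ambient metric $\left(1+\tfrac{m}{c_n r^{n-3}}\right)^{4/(n-2)}g_0$ is a quasilinear elliptic equation on $\mathbb{R}^{n-1}$ whose linearization at $u = u_\infty$ is, to leading order, the Euclidean Laplacian. Weighted Schauder theory in the spirit of \cite{bartnik1986} and \cite{minerbe} should then yield
\begin{equation}
u(x)-u_\infty = B|x|^{-(n-3)} + O(|x|^{-(n-3)-\varepsilon}), \quad |\partial^{l}(u-u_\infty)| = O(|x|^{-(n-3)-l}),\quad l=1,2,
\end{equation}
for some constant $B$ and some $\varepsilon>0$. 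This decay is strong enough that the cross-terms $\ell^2 u_i u_j$ in the induced metric are of order $|x|^{-2(n-2)}$, asymptotically smaller than the $|x|^{-(n-3)}$ contribution of the conformal factor.

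Finally, the induced metric takes the form $(g_\Sigma)_{ij} = \left(1+\tfrac{m}{c_n|x|^{n-3}}\right)^{4/(n-2)}\delta_{ij}+O(|x|^{-(n-3)-\varepsilon})$ on the single asymptotic end $\mathcal{E}_\Sigma$, which is therefore AE. Integrability of $R_{g_\Sigma}$ follows from the harmonic character of the conformal factor together with the rapid decay of the error terms. Applying the dimension $n-1$ ADM formula \eqref{massaeend} and computing the surface integral over $S^{n-2}_r$ then yields the proportionality $m = \tfrac{2(n-1)^2}{n-2}m_\Sigma$. The main obstacle will be the sharp decay in the second step: while the qualitative graphical representation follows routinely, extracting the precise $|x|^{-(n-3)}$ leading term of $u-u_\infty$ with controlled derivatives requires adapting weighted elliptic theory to the harmonically AF setting, and is the technical heart of the argument.
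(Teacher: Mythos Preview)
Your approach is essentially the same as the paper's: lift to the universal cover via Corollary~\ref{cover}, use stability and Schoen--Simon--Yau to control the second fundamental form, invoke Allard regularity to obtain a graphical representation $t=f(x)$, bootstrap the decay of $f$ with weighted elliptic estimates, and then read off the induced metric and mass. The paper packages the graphicality step slightly differently---it computes directly that the Euclidean mean curvature satisfies $\hat H_\delta=O(r^{2-n})$ and feeds this into Allard, rather than passing through a blow-down and Bernstein---but this is a cosmetic difference.

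One technical point to correct: your asserted decay $u-u_\infty=O(r^{-(n-3)})$ is one order too strong. Because $\hat H_\delta=O(r^{2-n})$ and the Laplacian on $\mathbb{R}^{n-1}$ gains two orders, the bootstrap only yields $f-a\in C^{3,\alpha}_{n-4}$, i.e.\ $f-a=O(r^{-(n-4)})$ and $|\partial f|=O(r^{-(n-3)})$; this is exactly what the paper states in Lemma~\ref{graph}. Fortunately the argument is unaffected: the cross terms $\partial_i f\,\partial_j f$ in the induced metric are then $O(r^{-2(n-3)})$, which for $n\geq4$ is still lower order than the $O(r^{-(n-3)})$ contribution of the conformal factor, so the mass computation goes through unchanged. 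You should also add a sentence ruling out ends of $\Sigma_\infty$ in the \emph{other} AF ends of $M^n$; the paper does this by noting they are foliated by positive mean curvature hypersurfaces acting as barriers.
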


\subsection{Graphical properties of the minimial surface} 
In order to establish Theorem \ref{asy}, we will first show that $\Sigma_{\infty}\cap \E$ may be realized as a graph with fast decay in the universal cover $(\mathbb{R}^{n-1}\setminus B_1)\times \mathbb{R}$ of the end. Consider the universal cover
$\hat{\mathbf{p}}:\hat{\E}\rightarrow\E$ with pullback metric $\hat{g}=\hat{\mathbf{p}}^{*}g$ given in local coordinates $(x,t)$ by
\begin{equation}
\hat{g}(x, t)=\left(1+\frac{m}{c_n r^{n-3}}\right)^{\frac{4}{n-2}}\left(\sum_{i=1}^{n-1}(dx^i)^2+dt^2\right)+O_2 (r^{-\mathring{q}}),   
\end{equation}
where $r=|x|$. Proposition \ref{group} implies the existence of a lift of the surface $\Sigma_{\infty}\cap \E$ to the universal cover, which will be denoted by $\hat{\Sigma}_{1,\infty}$. This is a stable minimal surface itself, and Corollary \ref{cover}  shows that the covering map restriction $\hat{\mathbf{p}}_{\hat{\Sigma}}:\hat{\Sigma}_{1,\infty}\rightarrow\Sigma_{\infty}\cap\E$ is an isometry.

%Consider the universal cover map $\pi: \hat{\mathcal{E}}\rightarrow \mathcal{E}$ and the lifting $\hat{\S}_\infty$ of some  %end $\S_\infty\cap \mathcal{E}$. The hypersurface $\hat{\S}_\infty$ is stable minimal with respect to the metric $\hat{g}$. %Corollary \ref{cover} shows that 
%$ \pi: \hat{\S}_\infty\rightarrow \S_\infty\cap \mathcal{E}$  is an isometry. We also have the volume estimate as following: 
%\[\text{vol}(\pi^{-1}(Z_{p, r})\cap \hat{\S}_\infty )\leq C_{13} r^{n-1}. \]

\begin{lemma}\label{graph} 
%Let $\hat{\S}^{n-1}_\infty$ and $\hat{\mathcal{E}}$ be defined as above. 
There exists a radius $r_0 >1$ such that $\hat{\S}_{r_0 ,\infty}$ may be represented as the graph of a smooth function $t=f(x)$ over $\mathbb{R}^{n-1}\setminus B_{r_0}$. Furthermore, the graph function satisfies the following decay properties 
\begin{align}\label{fainf09a090a-9-}
%\begin{split}
%&u(x)-a_\infty\in C^{3, \a}_{\epsilon}(\mathbb{R}^{n-1}\setminus B_{r_0}) \quad\text{for } n=3,\\
f-a \in C_{n-4}^{3,\alpha}(\mathbb{R}^{n-1}\setminus B_{r_0})
%f(x)-a_\infty-\frac{b_\infty}{r^{n-3}}\in C^{3, \a}_{n-3+\epsilon}(\mathbb{R}^{n-1}\setminus B_{r_0}),
%\quad\text{for } 3\leq n\leq 7, \\
%\end{split}
\end{align}
for some $\alpha\in (0,1)$ and $a\in \mathbb{R}$.
\end{lemma}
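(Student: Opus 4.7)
The plan is to proceed in three stages: first establishing a pointwise decay estimate for the second fundamental form of $\hat{\S}_{1,\infty}$, then performing a blow-down analysis to identify a horizontal limiting hyperplane, and finally extracting the claimed decay rate via a weighted Schauder bootstrap applied to the minimal graph equation.

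For the curvature estimate, I would combine the stability of $\hat{\S}_{1,\infty}$ with the polynomial volume bound \eqref{foajbnfoinqoh454} and apply the Schoen--Simon--Yau $L^p$ curvature estimate for stable minimal hypersurfaces, which is valid in the relevant hypersurface dimension $n-1\leq 6$. Coupling with Simons' identity for $|A|^2$ and Moser iteration, and using that the ambient Riemann curvature of $(\hat{\E},\hat{g})$ decays at the harmonic AF rate $O(r^{-(n-1)})$, this yields $|A|(x)=o(r^{-1})$ on $\hat{\S}_{1,\infty}$ as $r=|x|\to\infty$.

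For the blow-down step, I would consider, for a sequence $\lambda_j\to 0$, the rescaled surfaces $\lambda_j\hat{\S}_{1,\infty}\subset(\hat{\E},\lambda_j^2\hat{g})$. The rescaled metrics converge smoothly to the flat product $\delta+dt^2$ on $\R^{n-1}\times\R$ on compact sets, the volume bound \eqref{foajbnfoinqoh454} is scale-invariant, and the second fundamental form tends to zero by the preceding step. Standard compactness then produces a subsequential stable minimal hypercone limit in $\R^{n-1}\times\R$, which the Bernstein-type theorem for stable minimal hypercones (applicable since the hypersurface dimension is at most $6$) forces to be an affine hyperplane. The degree-one conclusion of Lemma \ref{degree} precludes this hyperplane from containing the vertical direction, so it takes the form $\{t=a\}$ for some $a\in\R$; the curvature decay in turn ensures uniqueness of the tangent cone at infinity, giving a well-defined asymptotic height.

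These two ingredients allow for a standard graph-representation argument to provide $r_0>1$ such that $\hat{\S}_{r_0,\infty}$ is the graph of a smooth function $t=f(x)$ over $\R^{n-1}\setminus B_{r_0}$, with $f\to a$ and $\nabla f\to 0$ at infinity. Writing out the minimal surface equation for $f$ in the metric $\hat g$, expanding the conformal factor $(1+m/(c_n r^{n-3}))^{4/(n-2)}$, and collecting the nonlinear quadratic gradient terms, the equation takes the form
\begin{equation*}
\Delta_\delta f=\mathcal{F}(x,f,\nabla f,\nabla^2 f),
\end{equation*}
where $\mathcal{F}\in C^{1,\alpha}_{n-2}$ once one knows a preliminary decay of $\nabla f$ and $\nabla^2 f$. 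Applying the weighted Schauder theory of \cite{bartnik1986} for the Laplacian on $\R^{n-1}\setminus B_{r_0}$ then produces $f-a\in C^{3,\alpha}_{n-4}$ after a standard bootstrap. The principal technical obstacle will be the blow-down step, where one must exclude asymptotic cones containing the vertical fiber direction, since these would permit $\hat{\S}_{1,\infty}$ to oscillate unboundedly in $t$ even within the simply connected cover; here the degree-one projection of Lemma \ref{degree} together with the vanishing of the inclusion of fundamental groups in Proposition \ref{group} is precisely what rules this out.
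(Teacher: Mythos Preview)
Your argument is essentially correct but takes a somewhat different route from the paper. You pass through a blow-down analysis: after establishing $|A|=o(r^{-1})$ via Simons' identity and Moser iteration on top of the SSY estimate, you rescale and invoke a Bernstein-type result to identify a horizontal limiting plane, then deduce the graph representation and bootstrap the minimal surface PDE. The paper bypasses the blow-down entirely. It only uses the cruder bound $|A|_{\hat g}=O(r^{-1})$ from SSY, and then exploits the harmonic AF structure to compare second fundamental forms, obtaining $|A_{\hat g}-A_\delta|=O(r^{2-n})$ and hence $H_\delta=O(r^{2-n})$ for the \emph{Euclidean} mean curvature of $\hat\Sigma_{1,\infty}$. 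This decay makes the Allard regularity hypothesis $\bigl(\int_{B_\rho\cap\hat\Sigma}|H_\delta|^s\bigr)^{1/s}\rho^{1-(n-1)/s}\ll 1$ hold for large $r$, yielding the $C^{1,\alpha}$ graph representation directly. The final PDE step is the same in both approaches.

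The paper's method is leaner: it avoids the need to upgrade $O(r^{-1})$ to $o(r^{-1})$, avoids tangent-cone uniqueness issues, and does not need to separately argue that the limiting plane is horizontal. Your blow-down route works but carries more overhead; in particular your sentence ``the curvature decay in turn ensures uniqueness of the tangent cone at infinity, giving a well-defined asymptotic height'' conflates two different things---uniqueness of the tangent direction and existence of the constant $a$---the latter of which really only emerges from the elliptic expansion at infinity in the last step, as it does in the paper.
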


\begin{proof}
This result can be established with the arguments of \cite{LUY}*{Theorem 4.5}, in which a stronger ambient metric decay is assumed. We only briefly sketch the proof here, to indicate that the weaker hypothesis of the current lemma is sufficient.
%We use similar arguments in \cite{LUY} and just sketch the proof.
In light of the volume estimate of Corollary \ref{cover}, the curvature estimates of 
%\noindent{\bf{Step 1:}} Since $\hat{\S}$ is stable minimal and it enjoys the volume estimate, the end $\hat{\S}_\infty \subset \hat{\mathcal{E}}$ has the curvature decay  
Schoen, Simon, and Yau \cites{SS81,SSY75} apply to $\hat{\Sigma}_{1,\infty}$ to yield a basic decay of the second fundamental form
$|\hat{\mathbf{A}}|_{g_{\hat{\Sigma}}}=O(r^{-1})$, where $g_{\hat{\Sigma}}$ is the induced metric on the surface.
%\begin{equation}\label{stable}|A_{\hat{g}}|=O(|x|^{-1})\end{equation}with respect to the metric $\hat{g}_{\hat{\mathcal{E}}}$, %where $(x, \hat{\th})\in \bb{R}^{n-1}\setminus B^{n-1}(0, 1)\times \bb{R}$ are the coordinates on $\mathcal{E}$.  
%Moreover, $\Sigma_\infty$ also have the same curvature decay with respect to the metric $g$. 
Using this decay, a computation shows that
\begin{equation}
|\hat{\mathbf{A}}-\hat{\mathbf{A}}_{\delta}|_{g_{\hat{\Sigma}}}=O(r^{2-n}),
\end{equation}
where $\hat{\mathbf{A}}_{\delta}$ is the second fundamental form with respect to the background Euclidean metric. It follows that the corresponding Euclidean mean curvature satisfies $\hat{H}_{\delta}=O(r^{2-n})$.
In particular with the help of \eqref{foajbnfoinqoh454}, for any $s>n-1$ and $\hat{p}\in\hat{\E}$ it holds that
\begin{equation}
\left(\int_{\hat{B}_\rho(\hat{p})\cap \hat{\S}_{1,\infty}} |\hat{H}_{\delta}|^s d\mathcal{H}^{n-1}\right)^{\frac{1}{s}}\rho^{1-\frac{n-1}{s}}\leq C \rho^{3-n}.
\end{equation}
Thus, since $n>3$ the left-hand side may be made smaller than any given value by choosing a sufficiently large radius, allowing for an application of the Allard regularity theorem
\cite{Simon83}*{Theorems 23.1 \& 24.2} to find that the end of the minimal surface may be written as the graph of a function $f\in C^{1,\alpha}_0(\mathbb{R}^{n-1}\setminus B_{r_0})$
for some large $r_0$ and $\alpha\in (0,1)$. 
\begin{comment}
\noindent {\bf{Step 3}} Show that choosing $R$ larger, we can write $\hat{\Sigma}_\infty$ as the graph of a function $u\in C^{1, \frac{1}{n}}_0(\mathbb{R}^{n-1}\setminus B_R(0))$

We apply Allard's regularity theorem in the form [\cite{}, Theorem 23.1] with $p=n$ and $T=\{\hat{\th}=0\}\cong \mathbb{R}^{n-1}$. The $L^n$ norm estimate is estimated by 
\[(\int_{B_\rho(x)\cap \hat{\S}_\infty} |H_{\hat{\delta}}|^n d\mathcal{H}^{n-1})^{\frac{1}{n}}\rho^\frac{1}{n}\lesssim \rho^{-(n-3)}\leq \epsilon_n,\]
where $\epsilon_n$ is the constant appearing in Allard's theorem. Here we use $|H_{\hat{\delta}}|=O(|x|^{-(n-2)})$, which is crucial. 
\end{comment}
Then since $\hat{H}_{\delta}\in C_{n-2}^{0,\alpha}(\mathbb{R}^{n-1}\setminus B_{r_0})$, we may treat the Euclidean mean curvature as a linear operator and combine standard asymptotic analysis (e.g. \cite{Lee}*{Corollary A.37}) along with a boot-strap to find
that $f-a\in C_{n-4}^{3,\alpha}(\mathbb{R}^{n-1}\setminus B_{r_0})$ for some constant $a$.
\begin{comment}
\noindent{\bf{Step 4}} The strategy is to write $H_{\hat{g}}=0$ as a prescribed curvature equation in Euclidean space and to Bootstrap this decay rate. 
We define $\xi(x, \hat{\th})=u(x)-\hat{\th}$, so that $\hat{\S}_\infty=\{\xi=0\}$ and use equation \eqref{curvature} to get a elliptic PDE abut $u$ whose all coefficient involve $\nabla u$ and $u$. Adjusting the decay rate, we use the same argument in [Proposition 4.6 and Proposition 4.13-Theorem 4.17, \cite{LUY}] to complete the proof of Theorem \ref{graph}. 
\end{comment}
\end{proof}
 
%We now use Theorem \ref{graph} to complete the proof for Theorem \ref{asy}. 

\begin{proof}[Proof of Theorem \ref{asy}] 
First note that $\Sigma_{\infty}$ can have only one end, labeled $(\E_{\Sigma},g_{\Sigma})$, which extends into $\E$. This follows from the fact that all ends of $M^n$ are foliated by positive mean curvature surfaces that prevent $\Sigma_{\infty}$ from entering any auxiliary ends, together with proper embeddedness which precludes an end from developing in compact regions of $M^n$. According to Corollary \ref{cover} and Lemma \ref{graph}, we may take $\E_{\Sigma}$ to be diffeomorphic to $\mathbb{R}^{n-1}\setminus B_{r_0}$.
Furthermore, in the coordinates provided by this diffeomorphism the induced metric becomes
\begin{align}
\begin{split}
g_{\Sigma}&=\left(1+\frac{m}{c_n r^{n-3}}\right)^{\frac{4}{n-2}}\sum_{i=1}^{n-1}(\delta_{ij}+\partial_{x^i}f\partial_{x^j}f)dx^i dx^j+O_2 (r^{-\mathring{q}})\\
&=\left(1+\frac{4m}{(n-2)c_n r^{n-3}}\right)\sum_{i=1}^{n-1}(dx^i)^2 +O_2(r^{-\mathring{q}}),
\end{split}
\end{align}
where we have used the decay \eqref{fainf09a090a-9-} for the derivatives of $f$. It follows that $\Sigma_{\infty}$ is asymptotically Euclidean with scalar curvature fall-off $R_{g_\Sigma}=O(r^{-\mathring{q}-2})$, which is integrable.
Moreover, its mass is given by $m_{\Sigma}=\tfrac{n-2}{2(n-1)^2}m$. 
\begin{comment}
Let $\gamma$ be the induced metric on $\Sigma_\infty$ and $\hat{\gamma}$  the lifting metric of $\gamma|_{\mathcal{E}\cap \Sigma_\infty}$ on $\hat{\Sigma}_\infty$.  We use the standard graphical coordinate $y_i$ associated $x\rightarrow (x, u(x))$. The coordinate vector field are given by 
\[\frac{\partial}{\partial y_i}=\frac{\partial}{\partial x_i}+\frac{\partial u}{\partial x_i}\frac{\partial }{\partial \hat{\th}}\]From Theorem \ref{graph}, we have that 
\begin{equation*}
\begin{split}
\hat{\gamma}(\partial_{y_i}, \partial_{y_j})&=\hat{g}(\partial_{y_i}, \partial_{y_j})\\
&=\hat{g}(\partial_{x_i}, \partial_{ x_j})+\frac{\partial u}{\partial x_i} \hat{g}(\partial_{x_j}, \partial_{\hat{\th}})+\frac{\partial u}{\partial x_j}\hat{g}(\partial_{x_i}, \partial_{\hat{\th}})+\frac{\partial u}{\partial x_i}\frac{\partial u}{\partial x_j} \hat{g}(\partial_{\hat{\th}}, \partial_{\hat{\th}})\\
&=(1+\frac{m}{|x|^{n-3}})_{i, j}+C^2_{n-2}+C^{2}_{n-3}C^2_{n-2}+C^2_{n-3}C^2_{n-2}+C^2_{n-3}C^{2}_{n-3}\\
&=(1+\frac{m}{|x|^{n-3}})\delta_{i, j}+C^2_{n-2}\end{split}
\end{equation*}
We can conclude that $\hat{\gamma}$ is an Schwarzschild end and its AMD mass is equal to $m$. 
From Corollary \ref{cover}, $\hat{\Sigma}_\infty\rightarrow \S_\infty\cap \mathcal{E}$ is isometric. Then, $\gamma$ is Schwarzschild and its ADM mass is $m$. 
\end{comment}
\end{proof}

%\begin{corollary}\label{improved-stable}Let $\Sigma_\infty$ be a stable minimal hypersurface. Then, we have that   %$|A_{\Sigma_\infty}|=O(|x|^{-(n-2)})$.
%\end{corollary}

\subsection{Stability inequality} 
As stated in Proposition \ref{exist}, the minimal surface $\Sigma_{\infty}$ is stable against compactly supported variations.
In fact, this statement may be strengthened by using smooth approximations and the enhanced minimizing property of the chosen sequence of Plateau solutions obtained by `height picking'. The resulting stability inequality holds for test functions that differ by a constant from weighted Sobolev spaces\footnote{The convention for the sign of the weight used here is opposite of that used in \cite{EHLR}.}. The proof of the next result is analogous to that of \cite{EHLR}*{Lemma 17} and \cite{LUY}*{Theorem 4.21}, noting that $R_g, R_{g_\Sigma}=O(r^{-\mathring{q}-2})$ and $|\mathbf{A}|_{g_{\Sigma}}=O(r^{2-n})$ where 
$\mathbf{A}$ denotes the second fundamental form of $\Sigma_{\infty}$. 

%Let $(M^n, g)$ be AF manifold assumed in Theorem \ref{A} and $\Sigma_\infty$ a complete stable minimal hypersurface constructed in %Proposition \ref{exist}. From Theorem \ref{asy}, $\Sigma^{n-1}_\infty$ is an asymptotically Euclidean $(n-1)$-manifold with a %unique end.  Use Remark \ref{strong-stability} and Inequality \eqref{2-var} to get a stronger stability inequality as follows

\begin{proposition}\label{strong} 
Assume the hypotheses of Theorem \ref{asy}. Let $c\in\mathbb{R}$, then for any $\varphi -c\in W^{1,2}_{\frac{n-3}{2}}(\Sigma_{\infty})$ it holds that
%For any constant $a\in \mathbb{R}$ and any function $\phi-a\in W^{1,2}_{\frac{n-3}{2}}$, one has 
\begin{equation}\label{stability}
\int_{\S_\infty} \left(|\nabla \varphi |_{g_{\Sigma}}^2+\frac{1}{2}R_{g_\Sigma}\varphi^2 \right) \mathrm{dV}_{\! g_{\Sigma}}\geq \int_{\S_\infty}\frac{1}{2}(R_{g}+|\mathbf{A}|_{g_{\Sigma}}^2)\varphi^2 \mathrm{dV}_{\! g_\Sigma}.
\end{equation}
\end{proposition}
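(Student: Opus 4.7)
The approach will adapt the approximation arguments of \cite{EHLR}*{Lemma 17} and \cite{LUY}*{Theorem 4.21} to the AF setting, relying on the decay rates $R_g, R_{g_\Sigma}=O(r^{-\mathring{q}-2})$ with $\mathring{q}>n-3$ and $|\mathbf{A}|_{g_\Sigma}=O(r^{2-n})$. For compactly supported $\varphi\in C^\infty_c(\Sigma_\infty)$, the standard second variation of area combined with the traced Gauss equation $R_g=R_{g_\Sigma}+2\,\mathrm{Ric}_g(\nu,\nu)-|\mathbf{A}|_{g_\Sigma}^2$ (using $H=0$) immediately yields \eqref{stability}, in fact with an additional nonnegative term $\tfrac{1}{2}\int|\mathbf{A}|_{g_\Sigma}^2\varphi^2$ on the right. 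The substance of the proposition is therefore the extension to test functions that asymptote to a nonzero constant.

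To carry out that extension, I would write $\varphi=c+\psi$ with $\psi\in W^{1,2}_{(n-3)/2}(\Sigma_\infty)$, approximate $\psi$ in this weighted space by $\psi_k\in C^\infty_c(\Sigma_\infty)$, and introduce a cutoff $\chi_R\in C^\infty_c(\Sigma_\infty)$ that equals $1$ on $\Sigma_\infty\cap Z_R$ and vanishes outside $\Sigma_\infty\cap Z_{2R}$, so that $\varphi_{R,k}:=c\chi_R+\psi_k$ is compactly supported. The compactly supported stability inequality applies to each $\varphi_{R,k}$. Sending $k\to\infty$ and then $R\to\infty$, the interior integrals of $R_g\varphi^2$, $R_{g_\Sigma}\varphi^2$ and $|\mathbf{A}|_{g_\Sigma}^2\varphi^2$ converge by dominated convergence, thanks to the decay rates above together with the weighted Sobolev embedding for $\psi$ and the volume control from Corollary \ref{Vol2}.

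The main obstacle is controlling the gradient term $\int c^2|\nabla\chi_R|_{g_\Sigma}^2$, for which a direct estimate using $|\nabla\chi_R|\lesssim R^{-1}$ and $\mathcal{H}^{n-1}(\Sigma_\infty\cap(Z_{2R}\setminus Z_R))\lesssim R^{n-1}$ gives only $O(R^{n-3})$, which diverges when $n\geq 4$. This is where the height-picking construction of $\Sigma_\infty$ is essential: the sequence $\Sigma_{r_j,\theta_j}$ was selected so that $\theta_j$ minimizes $\mathcal{H}^{n-1}(\Sigma_{r_j,\theta})$ over $\theta\in S^1$, endowing $\Sigma_\infty$ with an extra stability against asymptotically vertical translations generated by $\partial_\theta$ (a Killing direction of the model metric $g_0$). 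Comparing $\Sigma_{r_j,\theta_j}$ to the competitor obtained by translating its graphical portion by the constant $c$ via $\chi_R$ and interpolating back in the interior, the induced one-sided area inequality expands to second order and produces precisely the boundary contribution needed to absorb $\int c^2|\nabla\chi_R|^2$ into the interior curvature integrals. Combining this cancellation with the compactly supported inequality applied to $\varphi_{R,k}$ and passing to the limit yields \eqref{stability}, with the $W^{1,2}_{(n-3)/2}$ decay of $\psi$ ensuring that all cross terms between $c\chi_R$ and $\psi_k$ behave well.
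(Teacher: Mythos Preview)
Your approach is essentially the same as the paper's: the paper gives no detailed argument but simply cites \cite{EHLR}*{Lemma 17} and \cite{LUY}*{Theorem 4.21} together with the decay rates $R_g, R_{g_\Sigma}=O(r^{-\mathring{q}-2})$ and $|\mathbf{A}|_{g_\Sigma}=O(r^{2-n})$, and you have correctly unpacked what those references entail, including the crucial role of the height-picking in Proposition~\ref{exist} to obtain stability against the asymptotically constant variation $\partial_\theta$. One minor slip: the traced Gauss equation for a minimal hypersurface reads $R_g = R_{g_\Sigma} + 2\,\mathrm{Ric}_g(\nu,\nu) + |\mathbf{A}|_{g_\Sigma}^2$ (plus, not minus, on the last term), so the compactly supported case yields exactly \eqref{stability} with no additional $|\mathbf{A}|^2$ term on the right; this is harmless for the argument.
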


This may then be used to obtain an appropriate conformal transformation to zero scalar curvature on the minimal surface, when the ambient space is of nonnegative scalar curvature. Then an application of the AE positive mass theorem yields a sign restriction for the mass of the minimal surface.

\begin{lemma}\label{pmt-stable-hyper} 
Assume the hypotheses of Theorem \ref{asy}. If in addition the ambient scalar curvature is nonnegative $R_g \geq 0$, then the ADM mass of $\Sigma_{\infty}$ is nonnegative $m_{\Sigma}\geq 0$.
%Let $\Sigma^{n-1}_\infty$ be a complete stable minimal hypersurface with a unique Schwarzschild end as in Theorem \ref{asy}.  If %$R_M\geq 0$, then the ADM mass of $\Sigma_\infty$ is nonnegative. 
\end{lemma}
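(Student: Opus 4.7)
Our approach is to conformally deform $(\Sigma_\infty, g_\Sigma)$ to a metric of zero scalar curvature, and then apply the classical asymptotically Euclidean positive mass theorem in dimension $n-1$. Since $4\leq n\leq 7$, the hypersurface $\Sigma_\infty$ has dimension $n-1\in\{3,4,5,6\}$, well within the range of validity of the Schoen-Yau positive mass theorem.

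The first step is to use Proposition \ref{strong} together with $R_g\geq 0$ to derive coercivity of the conformal Laplacian on $\Sigma_\infty$. For any $\varphi\in C^\infty_c(\Sigma_\infty)$ we immediately have
\[\int_{\Sigma_\infty}\!\Bigl(|\nabla\varphi|_{g_\Sigma}^2+\tfrac{1}{2}R_{g_\Sigma}\varphi^2\Bigr)\,\mathrm{dV}_{\!g_\Sigma}\geq 0,\]
and setting $\alpha=\tfrac{n-3}{4(n-2)}$ for the conformal Laplacian coefficient in dimension $n-1$, the inequality $\alpha\leq\tfrac{1}{2}$ yields the decomposition
\[\int\!\bigl(|\nabla\varphi|^2+\alpha R_{g_\Sigma}\varphi^2\bigr)=(1-2\alpha)\!\int\!|\nabla\varphi|^2+2\alpha\!\int\!\bigl(|\nabla\varphi|^2+\tfrac{1}{2}R_{g_\Sigma}\varphi^2\bigr)\geq 0,\]
establishing coercivity of the conformal Laplacian $L_{g_\Sigma}:=-\Delta_{g_\Sigma}+\alpha R_{g_\Sigma}$.

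Combining this coercivity with Theorem \ref{asy}, which gives that $(\Sigma_\infty,g_\Sigma)$ is asymptotically Euclidean with $R_{g_\Sigma}=O(r^{-\mathring{q}-2})$, we adapt the arguments of Theorem \ref{conformal} (invoking the standard AE Sobolev inequality in dimension $n-1$) to produce a positive smooth solution of $L_{g_\Sigma}u=0$ with $u\to 1$ at infinity, satisfying
\[u=1+\frac{A}{r^{n-3}}+O_2\bigl(r^{-(n-3)-\gamma}\bigr)\quad\text{as }r\to\infty,\]
for some constant $A\in\mathbb{R}$ and $\gamma>0$. Multiplying the equation by $u$, integrating by parts over $\Sigma_\infty$, and evaluating the boundary term at infinity using this expansion produces
\[(n-3)\,\omega_{n-2}\,A=-\int_{\Sigma_\infty}\!\bigl(|\nabla u|_{g_\Sigma}^2+\alpha R_{g_\Sigma}u^2\bigr)\,\mathrm{dV}_{\!g_\Sigma}\leq 0\]
by the coercivity above, so that $A\leq 0$.

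Finally, the conformal metric $\hat g_\Sigma:=u^{4/(n-3)}g_\Sigma$ is asymptotically Euclidean and scalar-flat, with ADM mass satisfying the standard conformal change relation $\hat m_\Sigma=m_\Sigma+2A$. Applying the classical AE positive mass theorem to $(\Sigma_\infty,\hat g_\Sigma)$ gives $\hat m_\Sigma\geq 0$, and combined with $A\leq 0$ this yields $m_\Sigma\geq -2A\geq 0$. The principal technical subtlety lies in establishing existence of $u$ with the claimed asymptotic expansion: coercivity of $L_{g_\Sigma}$ replaces the smallness condition of Theorem \ref{conformal}, and existence follows from standard Fredholm theory for elliptic operators on weighted Sobolev spaces over AE manifolds, using the triviality of the kernel of $L_{g_\Sigma}$ among functions decaying at infinity.
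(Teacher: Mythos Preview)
Your proposal is correct and follows essentially the same route as the paper: use the stability inequality of Proposition \ref{strong} (with $R_g\geq 0$) to obtain coercivity of the conformal Laplacian on $\Sigma_\infty$, solve $L_{g_\Sigma}u=0$ with $u\to 1$ via Fredholm theory on weighted spaces, integrate by parts to show the leading coefficient $A\leq 0$, and apply the classical AE positive mass theorem to the scalar-flat conformal metric. One small point worth tightening: you first state coercivity only for $\varphi\in C^\infty_c(\Sigma_\infty)$ but then invoke it for $u$, which tends to $1$ at infinity; this is exactly why Proposition \ref{strong} is stated for test functions of the form $\varphi-c\in W^{1,2}_{(n-3)/2}$, and you should appeal to that extended version directly when bounding $\int(|\nabla u|^2+\alpha R_{g_\Sigma}u^2)$.
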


%The volume estimate $\text{Vol}(\Sigma_{\infty}\cap (M\setminus \mathcal{E}))\leq C$ implies that $\Sigma_\infty$ has a unique %end. From Theorem \ref{asy}, $\S_\infty$ is a complete manifold with a unique Schwarzschild end 

\begin{proof} 
We seek a conformal factor $w>0$ on $\Sigma_{\infty}$ with $w\rightarrow 1$ along the end $\E_{\Sigma}$ such that $w^{\frac{4}{n-3}}g_{\Sigma}$ is scalar flat. It thus suffices to set $w=1+v$ and solve
\begin{equation}\label{fpajf9aj0f9h093}
L_{g_{\Sigma}}v:=\frac{2(n-2)}{n-3}\Delta_{g_{\Sigma}}v-\frac{1}{2}R_{g_{\Sigma}}v=\frac{1}{2}R_{g_{\Sigma}} \quad\text{ on }\Sigma_{\infty},\quad\quad\quad v-\frac{a}{r^{n-3}} \in C^{2,\alpha}_{n-3+\varepsilon}(\Sigma_{\infty}),
\end{equation}
for some $\epsilon,\alpha\in(0,1)$ and $a\in\mathbb{R}$ where $L_{g_{\Sigma}}$ is the conformal Laplacian.
Since $R_{g_{\Sigma}}=O(r^{-\mathring{q}-2})$, the scalar curvature of $\Sigma_{\infty}$ lies in $L^p_{n-1+\epsilon}(\Sigma_{\infty})$ for any $p\geq 1$. We claim that 
\begin{equation}
L_{g_{\Sigma}}: W^{2,p}_{n-3+\epsilon}(\Sigma_{\infty})\rightarrow L^p_{n-1+\epsilon}(\Sigma_{\infty})
\end{equation}
is invertible for any $p>1$.
%The scalar curvature of $\S_\infty$ is $O(|x|^{-n})$ and integrable. It lies in $L^p_{n-1-\epsilon}$ for any $\epsilon\in (0, 1)$ %and $p\geq 1$, which we can fix $p\geq n$. We now claim that 
%\[L=-\frac{2(n-2)}{n-3}\Delta+\frac{1}{2}R_{\S_\infty}: W^{2,p}_{n-3-\epsilon}\rightarrow L^p_{n-1-\epsilon} \text{ is invertible. %}\]
To see this, let $\varphi\in W^{2,p}_{n-3+\epsilon}(\Sigma_{\infty})$ and apply Proposition \ref{strong} to find
\begin{equation}
\int_{\S_\infty} \left(\frac{2(n-2)}{n-3}|\nabla \varphi |_{g_{\Sigma}}^2+\frac{1}{2}R_{g_{\Sigma}}\varphi^2 \right)\mathrm{dV}_{g_{\Sigma}}\geq
\int_{\S_\infty}\left(\frac{n-1}{n-3}|\nabla \varphi|_{g_{\S}}^2 +\frac{1}{2}(R_{g}+|\mathbf{A}|_{g_{\S}}^2)\varphi^2\right)\mathrm{dV}_{g_{\S}}.
\end{equation}
If $L_{g_{\S}}\varphi=0$ then integrating by parts shows that the left-hand side vanishes, and hence since $R_g \geq 0$ we have that $\varphi\equiv 0$. Since $L_{g_{\S}}$ is a Fredholm operator of index zero between the indicated weighted space \cite{Lee}*{Corollary A.42}, we conclude that a unique solution $v\in W^{2,p}_{n-3+\epsilon}(\Sigma_{\Sigma})$ of \eqref{fpajf9aj0f9h093} exists.
\begin{comment}
Suppose $\phi \in W^{2,p}_{n-3-\epsilon}$ satisfies $L(\phi)=0$. By the elliptic regularity and Sobolev embedding, $\phi\in C^{1}_{n-3-\epsilon}\cap C^{\infty}_{loc}\subset W^{2, 2}_{\frac{n-3}{2}}$. The inequality \eqref{stability} shows that 
\[\int_{\S_\infty}\frac{1}{2}(R_{M}+|A|^2)\phi^2 dx + \int_{\S_\infty} \frac{(n-1)}{n-3}|\nabla \phi |^2\leq \int_{\S_\infty} \frac{2(n-2)}{n-3}|\nabla \phi |^2+\frac{1}{2}R_{\S_\infty}\phi^2 dx.\]
The right hand side is equal to $\int L(\phi)\phi dx=0$, which follows from $\phi\in W^{2, p}_{n-2-\epsilon}$. That is to say, \[\int_{\S_\infty}\frac{1}{2}(R_{M}+|A|^2)\phi^2 +\frac{(n-1)}{n-3}|\nabla \phi |^2 dx=0\]The non-negativity of $R_M$ implies that $\nabla \phi=0$. Namely, it vanishes identically, since $\phi \in C^{1}_{n-3-\epsilon}$.  By the standard Fredholm-type theorem in asymptotic analysis [\cite{Lee}, Theorem A.40, A4.2], $L$ is invertible. 
\end{comment}
Moreover, by elliptic regularity and Sobolev embedding it follows that $v\in C^{2,\alpha}_{n-3+\epsilon}(\Sigma_{\infty})$ and \cite{Lee}*{Corollary A.37} shows that there exists $a\in\mathbb{R}$ such that $v$ satisfies the desired partial expansion \eqref{fpajf9aj0f9h093}. Furthermore, similar arguments to those in the proof of Theorem \ref{factor} combined with the stability inequality may be used to show that $w>0$.
%Let $v$ be the solution so obtained and set $w=v+1$. Using the fact that the conformal laplacian is strictly positive for any %smooth with compact support, it is an easy matter to verify that $w>0$. By the asymptotic analysis (see \cite{Lee}), $w-%(1+\frac{c_\infty}{|x|^{n-3}})\in C^{2}_{n-3+\epsilon}$. 

Consider now the mass $\tilde{m}_{\Sigma}$ of the conformal metric $\tilde{g}_{\Sigma}:=w^{\frac{4}{n-3}}g_{\Sigma}$, which is related to the mass of the original metric by $\tilde{m}_{\Sigma}=m_{\Sigma}+2a$. Furthermore, multiplying equation \eqref{fpajf9aj0f9h093} by $w$ and integrating by parts produces
\begin{equation}\label{aofnaif-9aj-gjqa}
-2(n-2)\omega_{n-2} a=\int_{\Sigma}\left(\frac{2(n-2)}{n-3}|\nabla w|_{g_{\Sigma}}^2 +\frac{1}{2}R_{g_{\Sigma}}w^2 \right)\mathrm{dV}_{g_{\Sigma}}\geq 0,
\end{equation}
where in the last inequality we used the stability inequality \eqref{stability}. According to the AE positive mass theorem \cite{EHLR} and \eqref{aofnaif-9aj-gjqa}, we have $0\leq \tilde{m}_{\Sigma}\leq m_{\Sigma}$.
\begin{comment}
The metric  $w^{\frac{4}{n-3}}g_{\S_\infty}$ is asymptotical Euclidean with scalar flat and its mass is 
\begin{equation}
\begin{split}
m_{ADM}(w^{\frac{4}{n-3}}g)&=m(g_{\S_\infty})+c_n c_\infty\\
&=m(g_{\S_\infty})-c_n( \int_{\S_\infty} \frac{2(n-2)}{n-3}|\nabla w |^2+\frac{1}{2}R_{\S_\infty}w^2 dx)
\end{split}
\end{equation} 
The positive mass theorem \cite{EHLR} shows that $m(w^{\frac{4}{n-3}}g_{\S_\infty})\geq 0$ and the stronger stability (see Inequality \eqref{stability}) show that the integral part is also non-negative. We conclude that the ADM mass of $\S_\infty$ is non-negative. 
\end{comment}
\end{proof}

\subsection{Proof of Theorem \ref{A}}
Let $(M^n, g)$ be a complete AF manifold with nonnegative scalar curvature and $4\leq n\leq 7$. According to the density result \cite{CLSZ}*{Proposition 4.11}, given $\varepsilon>0$ and an end $\E\subset M^n$ there exists a smooth complete metric $g'$ on $M^n$ satisfying the following properties: the scalar curvature is nonnegative $R_{g'}\geq 0$, the end $(\mathcal{E}, g')$ is harmonically AF, and the mass $m'$ of this end is close to the original mass $|m-m'|<\varepsilon$.
\begin{comment}
\begin{proposition}\label{density}  Let $(M^n, g, \mathcal{E})$ be a complete AF $n$-manifold with non-negative scalar curvature. For any $\epsilon>0$, there is a complete metric $g'$ on $M$ satisfying that 
\begin{itemize}
\item the scalar curvature $R_{g'}$ is non-negative; 
\item the space $(\mathcal{E}, g'|_{\mathcal{E}})$ is a Schwarzchild-type AF end; 
\item $|m_{ADM}(M, g, \mathcal{E})-m_{ADM}(M, g', \mathcal{E})|\leq \epsilon$
\end{itemize}
\end{proposition}
\end{comment}
Now assume that some coordinate sphere $\mathcal{S}_{r,\theta}$ of the end $\mathcal{E}$ is trivial in homology $H_{n-2}(M^n)$. By Proposition \ref{exist} there exists a stable minimal surface $\Sigma'_{\infty}\subset (M^n ,g')$. According to Theorem \ref{asy}, this minimal surface has a single AE end whose mass $m'_{\Sigma}$ is related to that of the ambient metric via $m'=\tfrac{2(n-1)^2}{n-2}m'_{\Sigma}$. Furthermore, Lemma \ref{pmt-stable-hyper} implies that $m'_{\Sigma}\geq 0$. We conclude that $m> m' -\varepsilon\geq-\varepsilon$. Since $\varepsilon$ may be chosen arbitrarily small, it follows that $m\geq 0$. Lastly, the desired rigidity statement may be established in the same manner as the proof of rigidity in Theorem \ref{B}.

\bibliography{ALF}

\end{document}